\documentclass[12pt]{amsart}
\usepackage{hyperref}
\usepackage{amsfonts,amssymb,amsmath,amsthm,amscd,euscript,array,mathrsfs,comment}
\usepackage[all]{xy}
\usepackage{bbm}
\usepackage{booktabs}
\usepackage{color}
\usepackage{makecell}
\input{mymacros-3.sty}

\renewcommand{\arraystretch}{1.2}
\title[Real modules of real Lie superalgebras]{Classification of irreducible real modules of  real Lie superalgebras}
\begin{document}

\author{Siddhartha Sahi}
\address{Department of Mathematics,    
    Rutgers University,paper
    110 Frelinghuysen Rd, 
    Piscataway, NJ 08854-8019,
    USA
  }
  \email{sahi@math.rutgers.edu}

  \author{Hadi Salmasian}

  \address{
Department of Mathematics and Statistics,
    University of Ottawa,
    585 King Edward Ave,
    Ottawa, Ontario,
    Canada K1N 6N5%
  }

  \email{hadi.salmasian@uottawa.ca}

\author{Vera Serganova}
\address{
Department of Mathematics,
    University of California at Berkeley,
    969 Evans Hall,
    Berkeley, CA 94720,
    USA
    } 
\email{serganov@math.berkeley.edu}

\keywords{Real Lie superalgebras, basic Lie superalgebras, Cartan type Lie superalgebras, Brauer--Wall group, Harish-Chandra homomorphism, Kostant's cascade of strongly orthogonal roots.}
\subjclass[2020]{Primary:  	17B10, 17B35.}

\maketitle

\begin{abstract}
We classify irreducible finite-dimensional modules of a collection of real Lie superalgebras that includes the simple ones, their classical variants, complex Lie superalgebras after restriction of scalars, and all real Lie algebras. 
Our strategy is to reduce this classification to determining the orbits of the parity and conjugation functors on  irreducible modules of the complexifications of the aforementioned algebras. Then we provide explicit results for the computation of these orbits.
 For Lie superalgebras of basic type or of type $\mathbf{Q}(n)$, our classification applies to any highest-weight parametrization of irreducible complex modules with respect to an arbitrary Borel subalgebra. 

As a consequence, in the special case of real simple Lie algebras we obtain  a new perspective on the classification  of real simple modules  and  establish a conceptual connection with Kostant’s cascade of strongly orthogonal roots.
\end{abstract}

\section{Introduction}

While the study of finite dimensional modules of  
complex simple Lie (super)algebras has been a mainstream research direction in algebra, much less attention has been paid to such modules over real numbers. Still, the problem of characterizing modules of real simple Lie algebras  has a long history, going back to the works of \'{E}. Cartan, Iwahori, and Karpelevich~\cite{Cartan,Iwahori,Karpelevich}. The monograph by Onishchik~\cite{Onishchik} offers a simplified exposition of the classification from the viewpoint of~\cite{Karpelevich}. We refer the reader to the introduction of~\cite{Onishchik} for a meticulous examination of the history of this problem. For the analogous problem over more general fields (but in the framework of representations of reductive groups), we refer the reader to~\cite{Tits}.

In the super setting, the only work on classification of representations over general fields that we are aware of is~\cite{Hayashi}. The qualitative result of Hayashi's paper (see~\cite[Thm 1.2]{Hayashi}) is in the  setting of representations of affine supergroup schemes, and its more quantitative result (see~\cite[Thm 1.7]{Hayashi}) is 
stated for quasi-reductive algebraic supergroups. Despite working in a general setting, Hayashi's paper has three 
drawbacks. First, the characterization given in~\cite[Thm 1.7]{Hayashi} is only applicable when irreducible modules 
are parametrized by their highest weights with respect to \emph{special} choices of a Borel in the quasi-reductive supergroup $\mathbb G$.  When the base field is $\R$, the condition on the Borel  $\mathbb B$ is that
$\tau(\mathbb B)$ and $\mathbb B$ are conjugate under the Weyl group of the even part, where $\tau$ denotes the anti-holomorphic involution of $\mathbb G$ that determines its real form. However, the majority of Borels $\mathbb B$ do not satisfy this property.   For example, for the unitary supergroups $U(p,q|r,s)$, the Borel subalgebra has to be of a special  palindromic form given in~\cite[Example 5.25]{Hayashi}; see also  Remark~\ref{rmk:specialBor} below.  
Second, the results of~\cite{Hayashi} only apply to integrable modules  of real simple Lie superalgebras, i.e., those modules that can be realized as representations of the corresponding  algebraic supergroup. However, finite dimensional modules of $\g{sl}(m|n)$ are generically  \emph{not} integrable to the supergroup $\mathrm{SL}(m|n)$.  Third, Hayashi's paper does not address the case of Lie superalgebras of Cartan type.

In the present paper, we give a concrete characterization of irreducible finite dimensional modules of 
a collection  of real Lie superalgebras 
that includes  real simple Lie superalgebras and their classical variants (see Definition~\ref{dfn:classM} for a precise description). 
Furthermore,  
our characterization is explicit for any parametrization of highest weights by an \emph{arbitrary} Borel subalgebra. As such, our paper addresses the three drawbacks of~\cite{Hayashi}.

\subsection{Main results}
 Let $\g g$ be a  complex Lie superalgebra, and let  $\tau:\g g\to\g g$ be an antilinear involution of $\g g$ corresponding to a real form $\g g^\R$ of $\g g$ (throughout this paper, all Lie superalgebras are finite dimensional). Let $\catR_{\g g,\tau}$ denote the category of finite dimensional 
$\g g^\R$-modules (over real vector spaces). Then $\catR_{\g g,\tau}$ is an $\R$-linear abelian category, equipped with an $\R$-linear involutive endofunctor $\Pi$, namely the parity change.  Using $\Pi$, we can define enriched endomorphism superalgebras 
\[
\wEnd_{\catR_{\g g,\tau}}(V):=
\Hom_{\catR_{\g g,\tau}}(V,V)\oplus 
\Hom_{\catR_{\g g,\tau}}(V,\Pi V),
\]
for any object $V\in\Obj(\catR_{\g g,\tau})$; see Remark~\ref{rmk-superhoms} for the connection to superized endomorphism algebras.
If $V$ is irreducible, then $\wEnd_{\catR_{\g g,\tau}}(V)$ is a division superalgebra over $\R$. Up to isomorphism, there are 10 such division superalgebras (sometimes physicists call this fact the \emph{10-fold way}). 
By an abstract construction from category theory, in Section~\ref{sec:cat-gener} we associate to  $\catR_{\g g,\tau}$  a $\C$-linear abelian category whose objects are those of $\catR_{\g g,\tau}$ that possess a complex structure. 
Not surprisingly, the resulting category is equivalent to the category $\catC_{\g g}$ of finite dimensional $\g g$-modules (over complex vector spaces). The importance of $\catC_{\g g}$ is that it is automatically equipped with a complex conjugation functor $\funcB_{\g g,\tau}$. 

Let  
$\Irr(\catX)$ denote the set of isomorphism classes of simple objects of a given small category $\catX$. We write $[M]$ for the  isomorphism class of any $M\in\Obj(\catX)$. Given $[W]\in \Irr(\catC_{\g g})$, by restriction of scalars we can consider $W$ as a $\g g^\R$-module. This restriction either remains irreducible, or is a direct sum of two isomorphic irreducible $\g g^\R$-modules.  Denoting an irreducible $\g g^\R$-summand of $W$ by $V$, we obtain a map \[
\Phi:\Irr(\catC_{\g g})\to\Irr(\catR_{\g g,\tau}),
\] given by $\Phi([W])=[V]$; see~\eqref{eq:themapPhi}.
In 
Theorem~\ref{thm:Irr=Irr}, we show that $\Phi$ induces a bijection from $\funcB_{\g g,\tau}$-orbits in $\Irr(\catC_{\g g})$ onto  $\Irr(\catR_{\g g,\tau})$. 
Furthermore, in Theorem~\ref{thm:Table} we show that we can decide between $W=V$ and $W=V\oplus V$ 
if we can compute the \emph{endotype} of $W$, defined as 
$\EType_{\g g,\tau}(W):=\wEnd_{\catR_{\g g,\tau}}(V)$. 
Theorems~\ref{thm:Irr=Irr} and~\ref{thm:Table} reduce the classification of irreducible real representations of $\g g^\R$ to the computation of the endotypes $\EType_{\g g,\tau}(W)$; see 
Remark~\ref{rmk:Endo-to-Class}. We solve the  problem of computing 
$\EType_{\g g,\tau}(W)$
 for $(\g g,\tau)$ corresponding to real Lie superalgebras  in  the collection $\cM$ described in Definition~\ref{dfn:classM}. This is accomplished in two steps: first, in  Sections~\ref{sec:reduction-E0} and~\ref{sec:realified} we reduce the problem to the case of real forms of Lie superalgebras of Definition~\ref{dfn:Liestypes}. Next, explicit results for the Lie superalgebras of 
Definition~\ref{dfn:Liestypes} 
  are given in Theorems~\ref{thm-endotypes-basic},~\ref{thm-endotypes-Q(n)}, and~\ref{cor:Etype}. 
  Finally, in Section~\ref{sec-examples} we give explicit examples that exhibit how the latter theorems can be used to compute endotypes. 

We now define the collections of Lie superalgebras mentioned in the previous paragraph.

\begin{dfn}
\label{dfn:Liestypes}
We say a Lie superalgebra $\g g$ is of \emph{basic type} if it is isomorphic to one of the following Lie superalgebras:
\begin{itemize}
\item[(i)] A simple Lie algebra. 
\item[(ii)] One of the Lie superalgebras $\g{sl}(m|n)$, $\g{gl}(m|n)$, $\g{psl}(m|n)$, or $\g{osp}(m|2n)$  for $m,n\geq 1$.
\item[(iii)] An exceptional Lie superalgebra $G(2|1)$, $F(3|1)$, or $D(2|1,\alpha)$. 
\end{itemize}
We say $\g g$ is of \emph{type $\mathbf Q(n)$} if it is isomorphic to one of $\g q(n)$, $\g{pq}(n)$, $\g{sq}(n)$, or $\g{psq}(n)$ for $n\geq 1$. We say $\g g$ is of \emph{type $\mathbf P(n)$} if it is isomorphic to  $\g{pe}(n)$ or $\g{spe}(n)$ for $n\geq 1$. Finally, we say $\g g$ is of \emph{Cartan type} if it is isomorphic to one of the Lie superalgebras $\mathbf W(n)$, $\mathbf H(n)$, $\mathbf S(n)$ or $\tilde{\mathbf{S}}(2n)$ for $n\geq 1$. 
\end{dfn}
Given a complex Lie superalgebra $\g g$, we denote the intersection of kernels of irreducible finite dimensional $\g g$-modules by $\g f_{\g g}$. For a real Lie superalgebra $\g g^\R$, we define $\g f_{\g g^\R}$ analogously.  We now give the definition of the collection $\cM$ of Lie superalgebras whose irreducible real modules are classified in this paper.
\begin{dfn}
\label{dfn:classM} Let $\cM$ be the smallest collection of finite dimensional real Lie superalgebras that is closed under taking direct sums and satisfies  the following properties.
\begin{itemize}
\item[(i)] $\cM$ contains real forms of the four types of Lie superalgebras of Definition~\ref{dfn:Liestypes}, every real Lie algebra, 
and every complex Lie superalgebra (considered as a real Lie superalgebra via restriction of scalars).

\item[(ii)] If $\g g^\R$ is a  real Lie superalgebra
such that either   $\g g^\R/\g f_{\g g^\R}\in\cM$, or 
 $\g g^\R$  has a hyperbolic type triangular decomposition $\g g^\R=\oline{\g u}^\R\oplus\g l^\R\oplus \g u^\R$  (see Subsection~\ref{subsec:hyperb}) such that $\g l^\R\in\cM$, then $\g g^\R\in\cM$.

\end{itemize}\end{dfn}

 When $\g g$ is either of basic type or of type $\mathbf Q(n)$, our method for the  computation of $\EType_{\g g,\tau}(W)$  from the highest weight of $W$ 
 (which is carried out in  Theorems~\ref{thm-endotypes-basic} and~\ref{thm-endotypes-Q(n)}) is quite different from the one used in~\cite{Hayashi}: rather than using tools from Galois cohomology in the spirit of~\cite{Tits}, we use the Harish-Chandra projection of products of root vectors of a sequence of isotropic roots associated to the highest weight. 

For Lie superalgebras of basic type or of type $\mathbf Q(n)$, 
in Section~\ref{sec:tau-compatible-parab} we establish a slightly different method for computing $\EType_{\g g,\tau}(W)$ that relies on the  evaluation of the highest weight  $\lambda$ of $W$ on the sum of coroots associated to Kostant's cascade of strongly orthogonal roots in a Levi subalgebra of $\g g_\eev$; see Theorem~\ref{thm:alternateclam}. 
Even in the special case of real reductive Lie algebras, this viewpoint leads to a classification of irreducible modules that (in our opinion) is substantially simpler than the one given in~\cite{Onishchik}; see Theorem~\ref{thm-reductivespecial}. 
In~\cite{Onishchik}, the characterization of irreducible modules of real simple Lie algebras reduces to the calculation of an  invariant of complex modules known as the Karpelevich index. However, the arguments to arrive at this result are rather intricate. Furthermore, the conceptual connection with Kostant's cascade has, to our best knowledge, remained unnoticed in~\cite{Onishchik}. 

Some of the proofs of the assertions in Sections~\ref{sec:E-types-module} and~\ref{sec:realified} require routine but probably  tedious sign calculations. The reader interested in these calculations can find them in the ``\texttt{details}'' environments in the source \LaTeX\ file.

\subsection*{Acknowledgement} Some of the ideas of this work were conceived during the series of AiM SQuaRE meetings on \emph{Symmetric spaces and Capelli operators for Lie superalgebras}. The authors thank the 
American Institute of Mathematics for sponsoring these SQuaRE meetings.  
The research of S.S. was partially supported by Simons Foundation Grant 00006698. 
The research of H.S. was partially supported by  Discovery Grant 
RGPIN-2024-04030 from the Natural Sciences and Engineering Research Council of
Canada.
The research of V.S. was partially supported by NSF Grant 2001191.

\section{Categorical generalities}

\label{sec:cat-gener}
The goal of this section is to lay out some abstract categorical foundations that will be used later in the setting of  module categories. Let $\catR$ be an essentially small $\R$-linear abelian category with the property  that $\End_{\catR}(V)$ is a finite dimensional real vector space for every simple object $V\in\mathrm{Obj}(\catR)$. 
 Consider the category $\catC$  defined as follows: 
\begin{itemize}
\item[(i)] Objects of $\catC$ are pairs $(V,\iota)$ where $\iota\in\Hom_{\catR}(V,V)$ and $\iota^2=-1$. 
\item[(ii)] For objects $W_1:=(V_1,\iota_1)$ and $W_2:=(V_2,\iota_2)$ of $\catC$ we set
\[
\Hom_{\catC}(W_1,W_2):=\{\phi\in\Hom_{\catR}(V_1,V_2)\,:\,\phi\iota_1=\iota_2\phi\}.
\] 
\end{itemize}
We will need the following three functors that relate the categories $\catR$ and $\catC$.
\begin{itemize}

\item[(i)]$\funcB:\catC\to\catC$, where $\funcB(V,\iota):=(V,-\iota)$ and  $\funcB(\alpha):=\alpha$ for every $\catC$-morphism $\alpha$.

\item[(ii)]  $\funcE:\catR\to\catC$, defined by 
\[
\funcE(V):=(V\oplus V,-\epsilon_{1}\pi_{2}+ \epsilon_{2}\pi_{1})
,\]
where the morphisms $\pi_i=\pi_{V,i}:V\oplus V\to V$ 
and 
 $\epsilon_i=\epsilon_{V,i}:V\to V\oplus V$ for $i=1,2$ are 
the canonical projections and embeddings, and  \[
\funcE(\phi):=\epsilon'_{1}\phi\pi_{1}^{}+ \epsilon'_{2}\phi\pi_{2}^{}
\quad\text{ for }\phi\in\Hom_{\catR}(V,V'),
\]  
where $\epsilon_i':=\epsilon_{V',i}^{}$ for $i=1,2$.  

\item[(iii)] $\funcF:\catC\to\catR$, where $\funcF$ maps $(V,\iota)$ to $V$ and  $\funcF(\alpha):=\alpha$ for every $\catC$-morphism $\alpha$.

\end{itemize}
We denote the identity functors on $\catR$ and $\catC$ by $\funcI_\catR$ and $\funcI_\catC$, respectively.

The category $\catC$ is a $\mathbb C$-linear abelian category,
with zero object $\mathbf{0}_{\catC}:=\funcE(\mathbf{0}_{\catR})$
and the biproduct of   $W_1:=(V_1,\iota_1)$ and $W_2:=(V_2,\iota_2)$ defined to be $(V_1\oplus V_2,\iota_1\oplus \iota_2)$.
This can be verified directly 
or using the 
Freyd-Mitchell embedding theorem:
if we realize $\catR$ as a  category of $R$-modules for a ring $R$, then  $\catC$ is  a  category of $S$-modules where $S:=R[x]/\lag x^2+1\rag$.

The functor $\funcB$ induces an involution $\Theta_\funcB$ on $\Irr(\catC)$ because $\funcB^2=\funcI_\catC$ (see Remark~\ref{rmk:WsimpleBWsimple}). We denote the set of $\Theta_\funcB$-orbits by $\Irr(\catC)/\funcB$. 
We now define three subsets of $\Irr(\catC)$:
\begin{itemize}
\item $\Irr_+^{}(\catC):=\left\{[W]\in\Irr(\catC)
\,:\,\funcF(W)\cong V\oplus V\text{ for some }V\in\Irr(\catR) \right\}$,
\item
$\Irr_\circ^{}(\catC):=\left\{[W]\in\Irr(\catC)\,:\,\funcF(W)\text{ is simple and }W\not\cong\funcB(W)\right\}$,
\item
$\Irr_-^{}(\catC):=\left\{[W]\in\Irr(\catC)\,:\,\funcF(W)\text{ is simple and }W\cong\funcB(W)\right\}$.

\end{itemize}
By Schur's Lemma in $\catR$, we have $\End_\catR(V)\in\{\R,\C,\qH\}$ whenever $[V]\in \Irr(\catR)$. We define three subsets of $\Irr(\catR)$ 
as follows: 
\begin{itemize}
\item $\Irr_+^{}(\catR):=\left\{
[V]\in\Irr(\catR)\,:\,
\End_\catR(V)\cong\R\right\}$.

\item $\Irr_\circ^{}(\catR):=\left\{
[V]\in\Irr(\catR)\,:\,
\End_\catR(V)
\cong \C\right\}$.

\item $\Irr_-^{}(\catR):=\left\{
[V]\in\Irr(\catR)\,:\,
\End_\catR(V)\cong \qH\right\}$.

\end{itemize}
By Lemma~\ref{lem:F2}, for any $[W]\in\Irr(\catC)$ either $\funcF(W)$ is simple or $\funcF(W)\cong V\oplus V$ for a simple $V\in\Obj(\catR)$ where $[W]$ uniquely determines $[V]$. 
Thus, we have a map
\begin{equation}
\label{eq:themapPhi}
\Phi:\Irr(\catC)\to\Irr(\catR)
\quad,\quad
\Phi([W]):=
\begin{cases}
[\funcF(W)]&\text{ if }\funcF(W)\text{ is simple},\\
[V]&\text{ if }\funcF(W)\cong V\oplus V.
\end{cases}
\end{equation}
We denote the set of $\Theta_\funcB$-orbits of $\Irr(\catC)$ by $\Irr(\catC)/\funcB$.
Then $\Phi([\funcB(W)])=\Phi([W])$ because
$\funcF\funcB=\funcF$. Thus,
 $\Phi$ 
 induces a map \[
 \bar \Phi:\Irr(\catC)/\funcB\to \Irr(\catR).
 \] 
The following theorem will be proved in Appendix~\ref{sec:proofsthms}.
\begin{thm}
\label{thm:Irr=Irr}
The following statements hold.
\begin{itemize}
\item[\rm (i)]
The sets $\Irr_\star(\catR)$, for $\star\in\{-,\circ,+\}$, form a partition of $\Irr(\catR)$.
\item[\rm (ii)] The  
sets $\Irr_\star(\catC)$, for $\star\in\{-,\circ,+\}$, form a partition of $\Irr(\catC)$. 
\item[\rm (iii)] The sets 
$\Irr_\star(\catC)$ are $\Theta_\funcB$-stable. Furthermore, the elements of $\Irr_+(\catC)$ are $\Theta_\funcB$-fixed. 
\item[\rm (iv)]
$\Phi(\Irr_\star(\catC))=\Irr_\star(\catR)$ for $\star\in\{+,\circ,-\}$. Furthermore,  the  map $\bar\Phi$ is a  bijection from $\Irr(\catC)/\funcB$ onto $\Irr(\catR)$. 
\end{itemize}
\end{thm}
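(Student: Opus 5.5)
The plan is to work with the adjunction between $\funcE$ and $\funcF$ and with Schur's lemma for the real division algebras $\R,\C,\qH$. Part (i) is immediate: by Schur's lemma in $\catR$ and Frobenius' theorem, $\End_\catR(V)$ is isomorphic to exactly one of $\R,\C,\qH$. For (ii), we use Lemma~\ref{lem:F2}, which says $\funcF(W)$ is either simple or of the form $V\oplus V$. This separates $\Irr_+(\catC)$ from $\Irr_\circ(\catC)\cup\Irr_-(\catC)$. The latter two are separated by whether $W\cong\funcB(W)$. So the three sets partition $\Irr(\catC)$.

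For (iii), stability follows from $\funcF\funcB=\funcF$, since the defining conditions depend only on $\funcF(W)$ and on the $\Theta_\funcB$-orbit. To see that elements of $\Irr_+(\catC)$ are $\Theta_\funcB$-fixed, suppose $\funcF(W)\cong V\oplus V$. Then $\iota$ transports to an element $J\in M_2(D)$ with $J^2=-1$, where $D=\End_\catR(V)$. If $D=\R$, any such $J$ is conjugate to $-J$ by $\mathrm{diag}(1,-1)$ composed with a change of basis, which gives $W\cong\funcB(W)$. If $D=\C$ or $\qH$, simplicity of $W$ forces $J$ to commute with no proper idempotent. Writing $J=j\cdot 1$ with $j\in D$ and $j^2=-1$, one sees the module would decompose as $(V,j)\oplus(V,j)$, which contradicts simplicity. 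Hence $D=\R$ in the $+$ case, and this also shows $\Phi(\Irr_+(\catC))\subseteq\Irr_+(\catR)$.

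For (iv), I would compute in both directions using $\funcE$. Given simple $V$ with $D=\End_\catR(V)$, we have $\End_\catC(\funcE(V))\cong D\otimes_\R\C$. This is $\C$ when $D=\R$, it is $\C\times\C$ when $D=\C$, and it is $M_2(\C)$ when $D=\qH$. Accordingly, $\funcE(V)$ is simple with $\funcF\funcE(V)=V\oplus V$ when $D=\R$. When $D=\C$, it splits as $W\oplus\funcB(W)$ with $W=(V,j)$ and $W\not\cong\funcB(W)$, because $j\mapsto -j$ is not induced by an inner automorphism of $\C$. When $D=\qH$, it is $W\oplus W$ with $W=(V,j)$ and $W\cong\funcB(W)$, because $j$ is conjugate to $-j$ in $\qH$. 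This gives surjectivity of $\Phi$ onto each $\Irr_\star(\catR)$, with the right labels. Conversely, any simple $W$ is a summand of $\funcE\funcF(W)$, via the map $v\mapsto(v,\iota v)$ split appropriately. So $W$ is among the summands just listed for $V=\Phi([W])$, which yields $\Phi(\Irr_\star(\catC))=\Irr_\star(\catR)$. The same listing shows that the fibre of $\Phi$ over $[V]$ is exactly one $\Theta_\funcB$-orbit, so $\bar\Phi$ is bijective.

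The main obstacle is the bookkeeping in (iv): showing that every simple $W$ with $\Phi([W])=[V]$ is a direct summand of $\funcE(V)$. This amounts to the statement that $\Hom_\catC(W,\funcE(V))\cong\Hom_\catR(\funcF(W),V)$, i.e.\ $\funcE$ is right adjoint to $\funcF$. I would verify this adjunction directly from the formulas for $\funcE$: a $\catC$-map $W\to\funcE(V)$ is determined by its first component, so it corresponds to the pair $(\psi,\psi\iota)$ for $\psi\in\Hom_\catR(\funcF(W),V)$, up to sign. The remaining care goes into the decomposition of $D\otimes_\R\C$ and into identifying which summands are $\funcB$-twists of each other.
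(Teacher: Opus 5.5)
Your argument is correct in outline. For (i), (ii) and the computation $\End_\catC(\funcE(V))\cong D\otimes_\R\C$ it coincides with the paper; it differs in (iii) and in how (iv) is organized.

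\textbf{Part (iii).} You prove $\Theta_\funcB$-fixedness with square roots of $-1$ in $\Mat_2(D)$. The paper simply reads it off from Lemma~\ref{lem:F2}, whose conclusion already contains $\funcE(V)\cong W\cong\funcB(W)$; this is obtained functorially from exactness of $\funcE$, $\funcE\funcF\cong\funcI_\catC\oplus\funcB$ and $\funcB\funcE\cong\funcE$. Since you cite that lemma for (ii) anyway, your matrix argument is valid but redundant.

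\textbf{Part (iv).} The paper handles $\Irr_+$ through the bijection $[V]\mapsto[\funcE(V)]$. It handles the other two classes through $\funcE\funcF(W)\cong W\oplus\funcB(W)$, and gets injectivity of $\bar\Phi$ by cancelling in $W\oplus\funcB(W)\cong W'\oplus\funcB(W')$. You instead decompose $\funcE(V)$ for each $D$, and use the adjunction $\Hom_\catC(W,\funcE(V))\cong\Hom_\catR(\funcF(W),V)$ to show that every $W$ with $\Phi([W])=[V]$ embeds in $\funcE(V)$. This computes each fibre of $\Phi$ at once, giving surjectivity, the matching of labels, and injectivity simultaneously, with no cancellation step. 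The cost is that you must know the decomposition of $\funcE(V)$ completely in all three cases.

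Two steps need tightening.

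First, $\End_\catC(\funcE(V))\cong\C$ does not by itself make $\funcE(V)$ simple: an indecomposable object of length two can have a division endomorphism ring. You need either semisimplicity of $\funcE(V)$ (Lemma~\ref{lem:E2}) or a direct argument. The direct argument is one line: a proper simple subobject $W'\subset\funcE(V)$ would have $\funcF(W')\cong V$, so its complex structure would give an element of $\End_\catR(V)\cong\R$ squaring to $-1$.

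Second, in (iii) a $J\in\Mat_2(D)$ with $J^2=-1$ need not be of the form $j\cdot 1$. For example, $J=\mathrm{diag}(i,-i)$ with $D=\C$ yields $(V,i)\oplus(V,-i)$. What is true is that for $D\in\{\C,\qH\}$ such a $J$ is conjugate to a diagonal matrix whose entries square to $-1$. Hence its centralizer contains a proper idempotent, and that is all your contradiction with simplicity requires.
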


\subsection{Enriched endomorphism algebras in $\catR$ and $\catC$}

In the rest of this section we assume that $\catR$ is equipped with an $\R$-linear  endofunctor $\Pi$ such that $\Pi^2=\funcI_\catR$. 
From naturality of the family of isomorphisms 
\begin{equation}
\label{eq:HomVPiV'}
\Hom_\catR(V,\Pi (V'))\cong \Hom_\catR(\Pi (V),V')\quad,\quad \phi\mapsto\Pi(\phi)
\end{equation}
it follows that $\Pi$ is left and right adjoint to itself and hence it is also exact.  
For $V,V'\in\Obj(\catR)$ we define a $\Z_2$-graded space of 
enriched \footnote{The category with the same objects as $\catR$ but with morphisms defined as in~\eqref{eq:enrHom} is enriched over the category of super vector spaces. 
} morphisms
\begin{equation}
\label{eq:enrHom}
\wHom_\catR(V,V'):=\Hom_\catR(V,V')\oplus \Hom_\catR(V,\Pi(V')).
\end{equation}
The isomorphism~\eqref{eq:HomVPiV'} allows us to define composition of enriched morphisms.
%
%
%
%
%
%
%
%
%
We set \[
\wEnd_\catR(V):=\wHom_\catR(V,V),
\] and note that $\wEnd_\catR(V)$ is an $\R$-superalgebra. 

The functor $\Pi$ induces a functor on $\catC$ which  we also denote by $\Pi$.  Clearly $\Pi\funcB=\funcB\Pi$.
 We  define $\wHom_\catC(W,W')$ and $\wEnd_\catC(W,W')$ for $W,W'\in\Obj(\catC)$ analogously. 
 
Recall that an associative superalgebra $A:=A_{\eev}\oplus A_{\ood}$ is called a \emph{division superalgebra} if its nonzero homogeneous  elements are invertible. By Schur's Lemma,  
if $V\in\Obj(\catR)$ is simple then $\wEnd_\catR(V)$ is a division superalgebra over $\R$. Similarly, if $W\in\Obj(\catC)$ is simple then  $\wEnd_\catC(W)$
is a division superalgebra over $\C$.

Let $\phi\in\wHom_\catC(W,\funcB (W))$ be  homogeneous. Since  $\funcB(\phi)=\phi$ and $\funcB^2(W)=W$, by a slight abuse of notation we can write 
$\phi^2$ instead of $\funcB(\phi)\phi$ and consider it  as an element of  $\End_\catC(W)$. In particular, if $W$ is simple then $\phi^2=\bfc(\phi) 1_{W}$ for a scalar $\bfc(\phi)\in\C$. 
\begin{lem}
\label{lem:sig}
Let $W\in\Obj(\catC)$ be simple.
\begin{itemize}
\item[\rm (i)]
Suppose that $\phi\in\wHom_\catC(W,\funcB(W))$ is homogeneous. Then $\bfc(\phi)\in\R$.
\item[\rm(ii)]
For nonzero $\phi,\phi'\in\wHom_\catC(W,\funcB(W))_i$ where $i\in\{\eev,\ood\}$, we have $\bfc(\phi)\bfc(\phi')>0$.  
\end{itemize}
\end{lem}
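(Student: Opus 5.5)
Both parts follow from the observation that a morphism $W\to\funcB(W)$ is the same underlying $\catR$-morphism $\phi:V\to V$ as a morphism $\funcB(W)\to W$. Write $W=(V,\iota)$. A homogeneous $\phi\in\wHom_\catC(W,\funcB(W))$ is an element of $\wEnd_\catR(V)$ with $\phi\iota=-\iota\phi$, since $\funcB(W)=(V,-\iota)$ and $\Pi$ commutes with $\iota$. Its square $\phi^2=\funcB(\phi)\phi$ is just the composite of $\phi$ with itself in $\wEnd_\catR(V)$, and it commutes with $\iota$. By Schur's Lemma in $\catC$ we have $\phi^2=\bfc(\phi)1_W$, where a complex scalar $a+bi$ acts by $a+b\iota$.

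For (i), $\phi$ commutes with $\phi^2$, so $\phi(a+b\iota)=(a+b\iota)\phi$. Since $\phi\iota=-\iota\phi$, this gives $2b\,\iota\phi=0$. If $\phi=0$ then $\bfc(\phi)=0\in\R$. Otherwise $\phi$ is invertible: it is a nonzero homogeneous morphism between simple objects $W$ and $\funcB(W)$, and $\funcB(W)$ is simple by Remark~\ref{rmk:WsimpleBWsimple}. Since $\iota$ is also invertible, $\iota\phi\neq 0$, and hence $b=0$.

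For (ii), take nonzero $\phi,\phi'$ of the same parity $i$. Then $\psi:=\phi^{-1}\phi'$, where $\phi^{-1}:\funcB(W)\to W$, is an even element of $\End_\catC(W)$ (as a composite it is even and commutes with $\iota$). So $\psi=z\cdot 1_W$ for some nonzero $z\in\C$, that is, $\phi'=\phi\,(x+y\iota)$ with $z=x+y\iota$. We now compute $\phi'^2$ using $\iota\phi=-\phi\iota$. Since $x+y\iota$ is even, no Koszul signs arise in commuting it past $\phi$. Moving $\phi(x+y\iota)$ past $\phi$ conjugates the scalar, giving
\[
\phi'^2=\phi(x+y\iota)\phi(x+y\iota)=\phi^2(x-y\iota)(x+y\iota)=\bfc(\phi)\,|z|^2 .
\]
Therefore $\bfc(\phi')=|z|^2\bfc(\phi)$. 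It remains to check that $\bfc(\phi)\neq 0$, which holds because $\phi$ is invertible, so $\phi^2\neq0$. Hence $\bfc(\phi)\bfc(\phi')=|z|^2\bfc(\phi)^2>0$.

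The main obstacle is bookkeeping. One must check carefully that composition in $\wHom$, which uses the identification~\eqref{eq:HomVPiV'} for odd morphisms, still interacts with $\iota$ as an ordinary composite does. The key point is that $\Pi(\iota)$ is the complex structure on $\Pi V$, so $\Pi(\phi)$ for odd $\phi$ still anticommutes with $\iota$. One must also check that the even scalar $z$ passes through odd $\phi$ without acquiring a sign beyond the conjugation.
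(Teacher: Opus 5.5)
Your proposal is correct and follows the paper's proof: for (i), the associativity identity $\phi\cdot\phi^2=\phi^2\cdot\phi$ combined with your relation $\iota\phi=-\iota\phi$ is exactly the paper's $\bfc(\phi)\phi=\phi^3=\overline{\bfc(\phi)}\phi$, with antilinearity expressed through $\iota$; for (ii), Schur's lemma gives $\phi'=z\phi$ (you write $\phi z$), and then $(\phi')^2=|z|^2\phi^2$. Your explicit checks that $\bfc(\phi)\neq 0$ because $\phi$ is invertible, and the separate handling of $\phi=0$ in (i), fill in small steps the paper leaves implicit.
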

\begin{proof}
(i) Follows from $
\bfc(\phi)\phi=(\phi^2)\phi=
\phi^3=\phi(\phi^2)=\phi \circ(\bfc(\phi)1_W))=\oline{\bfc(\phi)}\phi$.

(ii) By Schur's Lemma $\phi'=z\phi$ for some $z\in\C^*$. The assertion follows from \[
\bfc(\phi')1_W=(\phi')^2=(z\phi)(z\phi)=z\bar z\phi^2=|z|^2\bfc(\phi)1_W.\qedhere
\] 
\end{proof}

\begin{dfn}
Let $W\in\Obj(\catC)$ be simple. \begin{itemize}
\item[(i)] Let $\phi\in\Hom_\catC(W,W')$ where $W':=\funcB(W)$ or $W':=\Pi\funcB(W)$.  We define \[
\sgn(\phi):=\begin{cases}
+&\text{ if }\bfc(\phi)>0,\\
-&\text{ if }\bfc(\phi)<0.
\end{cases}
\] 
\item[(ii)] The symmetry datum of $W$ is the set \[
\mathfrak S_W\sseq \left\{\Pi,(\funcB,+),(\funcB,-),(\Pi\funcB,+),(\Pi\funcB,-)\right\},
\] defined by  the following properties:
\begin{itemize}
\item[(ii-a)] $\Pi\in \mathfrak S_W$ if and only if $W\cong \Pi(W)$.
\item[(ii-b)] $(\funcB,\pm)\in \mathfrak S_W$ if and only if $W\cong \funcB(W)$ and the isomorphism has sign $\pm$. 
\item[(ii-c)]
$(\Pi\funcB,\pm)\in \mathfrak S_W$ if and only if $W\cong \Pi\funcB(W)$ and the isomorphism has sign $\pm$.
  
\end{itemize} 
\end{itemize}
\end{dfn}

\subsection{The Brauer--Wall monoid}
Isomorphism classes of finite dimensional central division superalgebras over a field $\F$ form a group $\BRm_\F$ with the binary operation uniquely determined by the property that  $[D]\cdot [D']=[D'']$ if and only if $D\otimes_\F D'$ is Morita equivalent to $D''$. 
The disjoint union 
$\BRm:=\BRm_\R\cup\BRm_\C$ naturally inherits a monoid structure that extends the group structures of $\BRm_\R$ and $\BRm_\C$: the product of $[D]\in\BRm_\R$ and $[D']\in\BRm_\C$ is defined to be $[D\otimes_\C (\C\otimes_\R D')]$.  This monoid is called  the \emph{Brauer--Wall monoid} of the field $\R$ (see~\cite{Wall} for a theory  over general fields). It  has 10 elements: 2 division superalgebras with supercenter $\mathbb C$ that form a group isomorphic to $\Z_2$, and 8 division superalgebras with supercenter $\R$ that form a group isomorphic to $\Z_8$. 
In what follows,  we denote the former 2 division superalgebras by $k_\C$ for $0\leq k\leq 1$ and the  latter 8 by $k_\R$ for $0\leq k\leq 7$. In this notation, $k_\F$ is Morita equivalent to the Clifford $\F$-algebra on $k$ square roots of unity.

\begin{dfn}
The endotype of $[V]\in\Irr(\catR)$ is the element of $\BRm$  that corresponds to $\wEnd_{\catR}(V)$. The endotype of $[W]\in\Irr(\catC)$ is defined to be the endotype of $[\Phi(W)]$. We denote the endotype of $[W]$ by $\EType_\catR(W)$. 
\end{dfn} 

The following theorem, which will be proved in Appendix~\ref{sec:proofsthms}, reduces the characterization of endotypes in  $\Irr(\catR)$ to the computation of the symmetry datum 
in $\Irr(\catC)$. 
\begin{thm}
\label{thm:Table}
Let $[V]\in\Irr(\catR)$ and 
let $W$  be an irreducible summand of $\funcE(V)$, so that  $\Phi([W])=[V]$. Then
the endotype of $V$  is uniquely determined by the symmetry datum of $W$, according to Table~\ref{Table-1}. 
\begin{table}[ht]
\begin{tabular}{|c|c|c|c|c|c|c|c|}
\hline
& $\wEnd_\catR(V)$ & $\wEnd_\catC(\funcE(V))$ & 
$\funcE(V)$ & 
Symmetry datum of $W$
\\
\hline
$0_\C$ 
& $\C$ 
& $\C\oplus\C$ 
& 
$W\oplus \funcB(W)$
&  
\\
\hline
$1_\C$ 
&   
$\mathrm Q(1)\cong \C\oplus \C \iota$, $\iota^2=1$, $\iota i=i\iota$
&
$\mathrm Q(1)\oplus \mathrm Q(1)$
&
$W\oplus \funcB(W)$
&
$\Pi$
\\

\hline
$0_\R$
&
$\R$ 
&
$\C$ 
&
$W$

& 

$(\funcB,+)$
\\
\hline 
$1_\R$ &
$\R\oplus \R\iota$, $\iota^2=-1$
&
$\mathrm Q(1)$
&
$W$
&
$\Pi$, $(\funcB,+)$,  
$(\Pi\funcB,-)$\\
\hline
$2_\R$ &
$\C\oplus \C\iota$, $\iota^2=-1$, $\iota i=-i\iota$ & 
$\Mat_{1|1}(\C)$ & $W\oplus \funcB(W)$
& 
{
$(\Pi\funcB,-)$
}

\\
\hline
$3_\R$ & $\qH\oplus \qH\iota$, $\iota^2=1$, $\iota i=i\iota$, etc.
& $\Mat_{2}(\mathrm Q(1))$ & 
$W\oplus \funcB(W)$ & 
{
$\Pi$, $(\funcB,-)$, 
$(\Pi\funcB,-)$}\\
\hline
$4_\R$ & $\qH$ & $\Mat_{2}(\C)$ & 
$W\oplus \funcB(W)$ & 
{
$(\funcB,-)$
} 
\\
\hline $5_\R$ & $\qH\oplus \qH\iota$, $\iota^2=-1$, $\iota i=i\iota$, etc.
& 
$\Mat_{2}(\mathrm Q(1))$ & $W\oplus \funcB(W)$ & 
{
$\Pi$, $(\funcB,-)$, $(\Pi\funcB,+)$}
\\

\hline $6_\R$ & $\C\oplus\C\iota$, $\iota^2=1$, $\iota i=-i\iota$ & 
{$\Mat_{1|1}(\C)$} & $W\oplus \funcB(W)$ & 
{$(\Pi \funcB,+)$} \\

\hline $7_\R$ & $\R\oplus\R\iota$, $\iota^2=1$ & $\mathrm Q(1)$ & $W$ & $\Pi$, $(\funcB,+)$, 
$(\Pi\funcB,+)$
\\
\hline
\end{tabular}
\vspace{3mm}

\caption{\label{Table-1} Enriched endomorphism algebra vs. symmetry datum}
\vspace{-8mm}
\end{table}
\end{thm}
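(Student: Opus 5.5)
The plan is to compare $\wEnd_\catR(V)$ with its complexification $\wEnd_\catC(\funcE(V))$ and read off the symmetry datum from the latter. First I establish the general identity
\[
\wEnd_\catC(\funcE(V))\cong \C\otimes_\R \wEnd_\catR(V).
\]
A morphism $\funcE(V)\to\funcE(V')$ commuting with the complex structures $-\epsilon_1\pi_2+\epsilon_2\pi_1$ is a $2\times2$ matrix of the form $\begin{pmatrix}a&-b\\ b&a\end{pmatrix}$ with $a,b\in\Hom_\catR(V,V')$, and the same holds with $\Pi$ inserted. In addition, $\funcB\funcE(V)\cong\funcE(V)$ via $\mathrm{diag}(1,-1)$. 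This antilinear identification realizes complex conjugation on $\C\otimes_\R\wEnd_\catR(V)$.

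Next I treat each of the ten real division superalgebras $D=\wEnd_\catR(V)$ in turn. For each one I compute $\C\otimes_\R D$, which gives the third column of the table. The fourth column follows from the splitting of this complexification:
\begin{itemize}
\item if $\C\otimes_\R D_\eev$ is $\C$, or $\C\oplus\C\iota$ with $\iota$ odd, then $\End_\catC(\funcE(V))_\eev$ is a division algebra, so $\funcE(V)=W$ is simple;
\item otherwise $\C\otimes D_\eev$ contains $\C\oplus\C$ or $\Mat_2(\C)$, and the corresponding idempotents split $\funcE(V)$ as $W\oplus W'$;
\item by Lemma~\ref{lem:F2} and Theorem~\ref{thm:Irr=Irr}, we have $W'\cong\funcB(W)$ in the split cases.
\end{itemize}
This sorts the list as follows: $0_\R,1_\R,7_\R$ are the cases with $\funcE(V)=W$, and the remaining seven are the split cases. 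In each split case $W\cong\funcB(W)$ exactly when $D_\eev\cong\qH$, and $W\not\cong\funcB(W)$ when $D_\eev=\C$.

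For the symmetry datum I take the cases one at a time.
\begin{itemize}
\item \emph{$\funcE(V)=W$.} Here $\wEnd_\catC(W)=\C\otimes D$, and an isomorphism $W\to\funcB(W)$ is an antilinear element commuting with the action. Concretely it is $x\mapsto\bar x$ composed with an element $d$ of $D$, real or odd. Its square equals $d^2$. Taking $d=1$ gives $(\funcB,+)$. Taking $d=\iota$ odd gives $(\Pi\funcB,\mathrm{sgn}(\iota^2))$. This yields the rows $0_\R$, $1_\R$, $7_\R$.
\item \emph{Split cases.} An isomorphism $W\to\funcB(W)$ or $W\to\Pi\funcB(W)$ is obtained by restricting a $\funcB$-twisted element of $D$ to the summand $W$. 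Such a restriction squares to a real scalar by Lemma~\ref{lem:sig}. For $D_\eev=\qH$ I use $j$, with $j^2=-1$ and $ji=-ij$, so that $j$ is conjugate-linear relative to $i$ and swaps the two $i$-eigenspace pieces. This gives $(\funcB,-)$. Similarly, for $D=\C\oplus\C\iota$ with $\iota i=-i\iota$, the element $\iota$ is an odd antilinear map, so the sign of $\iota^2$ gives $(\Pi\funcB,\pm)$. In the $3_\R$ and $5_\R$ rows the odd antilinear element is $j\iota$ (or $\iota j$), and its square is $-\iota^2$ up to the commutation sign. This is why $3_\R$ gives $-$ and $5_\R$ gives $+$.
\item \emph{Membership of $\Pi$.} The symbol $\Pi$ belongs to the datum exactly when $\wEnd_\catC(W)_\ood\neq0$, that is, when there is an odd element of $D$ commuting with $i$.
\end{itemize}

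Uniqueness, i.e. that the endotype is determined by the datum, then follows from checking that the ten data in the last column are pairwise distinct. The first and second rows are distinguished from the rest by $W\not\cong\funcB(W)$ and $W\not\cong\Pi\funcB(W)$.

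I expect the main obstacle to be the sign bookkeeping in the quaternionic super cases $3_\R$ and $5_\R$. There one must carefully identify which element of $D$ restricts to an antilinear odd isomorphism $W\to\Pi\funcB(W)$. One must also correctly account for the super sign conventions in the composition defined via \eqref{eq:HomVPiV'} when squaring it. I would fix explicit models of $D$, with $i,j,k$ even, $\iota$ odd and central up to sign, and then compute directly.
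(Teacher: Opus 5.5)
Your proposal is correct and follows essentially the same route as the paper's proof. The complexification identity (Lemma~\ref{lem:Hom(EV,EV)}) gives the third column; the splitting of $\funcE(V)$ and the $(\funcB,\pm)$ symmetries come from Lemma~\ref{lem:EndRCH}; and $\Pi$ and $(\Pi\funcB,\pm)$ are read off from an odd element $\varepsilon\in D$ with $\varepsilon^2=\pm1$, treated separately in the cases $\funcE(V)=W$ and $\funcE(V)=W\oplus\funcB(W)$. Your explicit choice of $j\iota$ in rows $3_\R$ and $5_\R$, and your check that the ten symmetry data are pairwise distinct, spell out details the paper leaves implicit. One small correction: Lemma~\ref{lem:E2}, rather than Lemma~\ref{lem:F2}, is the direct reference for $\funcE(V)\cong W\oplus\funcB(W)$.
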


\begin{rmk}
Let us describe the content of Table~\ref{Table-1} more explicitly. The $1^\mathrm{st}$  column represents an element  of
the Brauer--Wall monoid.  The $2^\mathrm{nd}$ column describes the explicit realization of $\wEnd_\catR(V)$ for a simple $V\in\Obj(\catR)$. The   
complexification of $\wEnd_\catR(V)$ is given in the $3^\mathrm{rd}$ column. The $4^\mathrm{th}$ column
indicates if $\funcE(V)$ is simple or not, according to  Lemma~\ref{lem:E2}; thus in every row  $[W]\in\Irr(\catC)$. 
Finally, the $5^\mathrm{th}$ column provides the symmetry datum of $W$.  \end{rmk}

\begin{rmk}
\label{rmk:End(W)}
From Table~\ref{Table-1} it follows that in rows $k_\R$ or $k_\C$ with $k$ even we have $\wEnd_\catC(W)\cong \C$, and in other rows we have  
$\wEnd_\catC(W)\cong \mathrm{Q}(1)$.
\end{rmk}

\begin{rmk}
\label{rmk:endoDM2D}
Let  
$[W]\in\Irr(\catC)$. 
By Lemma~\ref{lem:F2} we have  $\wEnd_{\catR}(\funcF(W))\cong \mathbb D$ or $\Mat_2(\mathbb D)$, where  
$\mathbb D:=\EType_\catR(W)$. 
\end{rmk}

\section{Endotypes for tensor products of modules}
\label{sec:E-types-module}
In this section we investigate endotypes of  irreducible modules of  tensor products of  associative superalgebras. We begin by introducing some notation that will be used in the rest of the paper.

Throughout the paper, we denote the parity of a vector $v$ in a superspace by $|v|$.
For a (Lie or associative) $\F$-superalgebra $\cA=\cA_\eev\oplus\cA_\ood$, we use $\Rep(\mathcal A)$ (respectively, $\Rep_\mathrm{fin}(\mathcal A)$) to denote the category of $\cA$-modules on $\F$-vector superspaces (respectively, finite dimensional $\F$-vector superspaces). The morphisms in these categories are assumed to respect the $\mathbb Z_2$-grading.

\begin{rmk}
\label{rmk-superhoms}
For $W,W'\in\Obj(\Rep_\mathrm{fin}(\mathcal A))$ we define the super endomorphism spaces \[
\Hom_\cA^s(W,W'):=\{T:W\to W'\,:\,T\text{ is linear and }Ta=(-1)^{|T|\cdot |a|}aT\text{ for }a\in\cA\},
\]
where, as usual, $|T|,|a|$, etc. denote parity. 
As in~\eqref{eq:enrHom}, 
one can also define 
\[
\Hom_\cA^s(W,W'):=\Hom_{\Rep_\mathrm{fin}(\mathcal A)}(W,W')\oplus \Hom_{\Rep_\mathrm{fin}(\mathcal A)}(W,\Pi{^\sigma W'}),
\] where $^\sigma W'$ is the twist of $W'$ by the automorphism $\sigma$ of $\cA$ given by $\sigma(a):=(-1)^{|a|}a$. There is an isomorphism $\Hom_\cA^s(W,W')\to \wHom_{{\Rep_\mathrm{fin}(\mathcal A)}}(W,W')$ given by $T\mapsto T^\diamond$ where 
$T^\diamond w:=(-1)^{|T|\cdot |w|} Tw$ for $w\in W$. 
Setting $\End_\cA^s(W):=\Hom_\cA^s(W,W)$, the latter map induces an isomorphism of superalgebras
\begin{equation}
\label{eq:soops}
\End_\cA^s(W)\cong ((\wEnd_{\Rep_\mathrm{fin}(\mathcal A)}(W))^\mathrm{op})^\mathrm{sop},
\end{equation} 
where $(X^\mathrm{op})^\mathrm{sop}$ denotes the  super opposite of the opposite of a superalgebra $X$.  
\end{rmk}

Let
$\cA$ be a (Lie or associative) $\C$-superalgebra and let $\cA^\R$ be a real form of $A$, obtained as fixed points of an antilinear involution $\tau:\cA\to \cA$. 
Henceforth we set \[
\catR:=\catR_{\cA,\tau}:=\Rep_\mathrm{fin}(\cA^\R).
\]  
The category $\catC$ associated to $\catR$ as in Section~\ref{sec:cat-gener} is $\catC_\cA:=\Rep_\mathrm{fin}(\cA)$. 
The yoga of functors 
in Section~\ref{sec:cat-gener} applied to these choices of categories $\catR$ and $\catC$ results in functors
$\funcB_{\cA,\tau},\funcE_{\cA,\tau},\funcF_{\cA,\tau}$. When the choice of $\cA$ is clear, we simplify our notation to 
$\funcB_{\tau},\funcE_{\tau},\funcF_{\tau}$. For simplicity we define
\begin{equation}
\label{eq:EtypetauA}
\EType_{\tau}(W):=
\EType_{\cA,\tau}(W)
:=\EType_{\catR_{\cA,\tau}}(W)\quad\text{ for }W\in\Obj(\catC_\cA). 
\end{equation}

\begin{rmk}
\label{rmk:Abar=}Given $[W]\in\Irr(\catC_A)$,
if $\oline{\cA}$ denotes the image of $\cA$ in $\End_{\cA}^s(W)$ then by the classification of primitive $\C$-superalgebras we have $\oline{\cA}= \End_\mathbb D^s(W)$ where $\mathbb D:=\End_{\cA}(W)\cong\C$ or $\mathrm Q(1)$. (In particular, it happens that  $\wEnd_{\catC_\cA}(W)\cong\End_\cA^s(W)$.) 
We equip $W$ with a right $\mathbb D$-module structure with the action $w\cdot d:=(-1)^{|d|\cdot |w|}d\cdot w$ for $d\in\mathbb D$, $w\in W$. 
\end{rmk}

In the rest of this section we assume that  $A$ and $B$ are associative $\C$-superalgebras. Given
$[W_A]\in \Irr(\catC_A)$ and $[W_B]\in\Irr(\catC_B)$, we set
 $\mathbb D_A:=\End_{A}^s(W_A)$ and $\mathbb D_B:=\End_{B}^s(W_B)$. From the general rules about external tensor products of irreducible modules of types $\mathtt M$ and $\mathtt Q$ (for example see~\cite[Sec. 3.1.3]{ChengWang2012}), it follows that $[W_A\otimes_{\mathbb D} W_B]\in\Irr(\catC_{A\otimes B})$, where $\mathbb D:=\mathbb D_{W_A,W_B}$ is defined as follows: $\mathbb D=\C$ if 
either $\mathbb D_A\cong \C$ or $\mathbb D_B\cong \C$, and $\mathbb D=\mathrm{Q}(1)$ otherwise.  
In what follows, tensor products over unspecified rings are always construed to be over $\C$. 

\begin{prp}
\label{prp:W=WAWB}
Let $[W]\in\Irr(\catC_{A\otimes B})$. 
Then 
there exist $[W_A]\in\Irr(\catC_A)$ and $[W_B]\in\Irr(\catC_B)$  such that $W\cong W_A\otimes_\mathbb D W_B$, where $\mathbb D=\mathbb D_{W_A,W_B}$.  
\end{prp}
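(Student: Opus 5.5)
We will prove this through the image of $A\otimes B$ in $\End(W)$, using the structure theory of simple finite dimensional superalgebras over $\C$. Since $W$ is finite dimensional, the images $\oline{A}$ and $\oline{B}$ of $A\otimes 1$ and $1\otimes B$ in $\End(W)$ are finite dimensional. They supercommute, and together they generate the image $\oline{A\otimes B}$. By Remark~\ref{rmk:Abar=}, the latter equals $\End^s_{\mathbb D_W}(W)$, a simple superalgebra.

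The first step is to choose $W_A$. Restricting $W$ to $A$ (that is, to $A\otimes 1$), we pick a simple $A$-submodule $W_A\subseteq W$, using finite dimensionality. Next we consider the multiplicity space
\[
M:=\Hom^s_A(W_A,W),
\]
which carries a natural $B$-action by post-composition, with the usual sign twist so that it is compatible with the supercommutation. It is also a right $\mathbb D_A$-module.

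The second step is to show that evaluation $W_A\otimes_{\mathbb D_A} M\to W$ is an isomorphism of $A\otimes B$-modules. Its image is a nonzero $A\otimes B$-submodule, hence it is all of $W$ by simplicity. For injectivity we argue that $W|_A$ is semisimple and isotypic of type $W_A$. It is a sum of the $B$-translates $bW_A$, each of which is an image of $W_A$ or $\Pi W_A$ under an $A$-supermorphism. Hence $W|_A$ is a quotient of a direct sum of copies of $W_A$ and $\Pi W_A$, so it is semisimple and isotypic. Standard density/double-centralizer arguments for semisimple supermodules then give injectivity.

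The third step is to extract $W_B$. Simplicity of $W$ forces $M$ to be simple as a $B\otimes\mathbb D_A$-module, because $B\otimes\mathbb D_A$-submodules of $M$ correspond to $A\otimes B$-submodules of $W$. We then split into cases:
\begin{itemize}
\item[(a)] If $\mathbb D_A\cong\C$, then $M$ is a simple $B$-module; we set $W_B:=M$, and $\mathbb D=\C$.
\item[(b)] If $\mathbb D_A\cong \mathrm Q(1)$, with odd generator $\iota$, then $M$ is a simple $B\otimes \mathrm Q(1)$-module.
  \begin{itemize}
  \item[(b1)] If $M$ is simple already as a $B$-module of type $\mathtt Q$, then its $\mathrm Q(1)$-structure can be matched with $\mathbb D_B$, giving $W\cong W_A\otimes_{\mathrm Q(1)} M$ with $W_B:=M$.
  \item[(b2)] If $M$ restricted to $B$ is simple of type $\mathtt M$, then $W_A\otimes_{\mathrm Q(1)} M\cong W_A'\otimes M$. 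Here we replace $W_A$ by $W_A$ viewed over $\C$ in the tensor formula: for $\mathbb D=\C$ one simply has $W\cong W_A\otimes W_B$ with $W_B$ the simple $B$-module.
  \item[(b3)] Otherwise $M|_B\cong N\oplus\Pi N$ with $\iota$ swapping the summands, and $M\cong N\otimes \mathrm Q(1)$. This gives $W\cong W_A\otimes_{\mathrm Q(1)}(N\otimes\mathrm Q(1))\cong W_A\otimes N$; we set $W_B:=N$, which is of type $\mathtt M$, so again $\mathbb D=\C$.
  \end{itemize}
\end{itemize}

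The main obstacle is this case analysis for $\mathbb D_A\cong\mathrm Q(1)$. It amounts to the classification of simple modules over $B\otimes\mathrm Q(1)$ in terms of simple $B$-modules, together with careful sign bookkeeping in the definition of $\otimes_{\mathbb D}$. We expect to handle it by invoking the standard rules from~\cite[Sec.~3.1.3]{ChengWang2012}. In particular, the identity $\mathrm Q(1)\otimes\mathrm Q(1)\cong \Mat_{1|1}(\C)$ shows that a $\mathtt Q\otimes\mathtt Q$ tensor product splits as two copies of $W_A\otimes_{\mathrm Q(1)}W_B$. This accounts for the appearance of $\otimes_{\mathrm Q(1)}$ exactly when both factors are of type $\mathtt Q$.
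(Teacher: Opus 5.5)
Your proof is correct in substance, but it takes a different route from the paper's.

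\textbf{Comparison with the paper.} The paper fixes a simple $B$-submodule $W_B\subseteq W$ and takes a simple $A$-submodule $W_A$ of the multiplicity space $\Hom^s_B(W_B,W)$. This gives only a surjection $W_A\otimes W_B\to W$. The paper then uses Remark~\ref{rmk:Abar=} and the known tensor product rules for $\mathrm{Mat}_{m|n}(\C)$ and $\mathrm Q(n)$: $W_A\otimes W_B$ is already simple unless both factors are of type $\mathtt Q$. In that case $W_A\otimes W_B\cong W\oplus\Pi W$, and $W$ is the proper quotient $W_A\otimes_{\mathbb D}W_B$.

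You instead prove the full isotypic decomposition $W\cong W_A\otimes_{\mathbb D_A}\Hom^s_A(W_A,W)$ and reduce to classifying simple $B\otimes\mathbb D_A$-modules. Your route buys two things:
\begin{itemize}
\item the balanced tensor product, and the identification of $\mathbb D_A$ with $\mathbb D_B$ in the $\mathtt Q\otimes\mathtt Q$ case, come out intrinsically from the $\mathbb D_A$-action on $M$, so no separate factoring step is needed;
\item you only need the small classification of simple $B\otimes\mathrm Q(1)$-modules.
\end{itemize}
The paper's route is shorter because it hands everything to the standard $\mathtt M/\mathtt Q$ tensor rules.

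For injectivity in your second step you do not need a density theorem. Both $\Hom^s_A(W_A,-)$ and $W_A\otimes_{\mathbb D_A}-$ are additive. For a single summand $X\cong W_A$ or $\Pi W_A$, Schur's Lemma makes $\Hom^s_A(W_A,X)$ free of rank one over $\mathbb D_A$, up to parity shift.

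\textbf{Repairs needed in case (b).}
\begin{itemize}
\item Case (b2) cannot occur, and the argument written for it does not hold up. $W_A'$ is never defined, and the conclusion $W\cong W_A\otimes W_B$ would be off by a factor of two in dimension. The case is vacuous because right multiplication by the odd generator $\iota$ of $\mathbb D_A$ is an invertible odd map on $M$ that (super)commutes with $B$. This gives a $B$-isomorphism $M\cong\Pi M$, so whenever $M|_B$ is simple it is of type $\mathtt Q$, which is (b1).
\item In (b3), the claim that $N$ is of type $\mathtt M$ needs a justification. Suppose $N$ were of type $\mathtt Q$. Then the image of $B$ in $\End(N)$ would be some $\mathrm Q(n)$ with $\dim N=2n$. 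Since $\mathrm Q(n)\otimes\mathrm Q(1)\cong\mathrm{Mat}_{n|n}(\C)$ has simple modules of dimension $2n$, the $4n$-dimensional module $M\cong N\otimes\mathrm Q(1)$ could not be simple over $B\otimes\mathrm Q(1)$.
\end{itemize}
The splitting $M|_B=N\oplus N\iota$ itself is fine, since $N+N\iota$ and $N\cap N\iota$ are $B\otimes\mathbb D_A$-submodules.

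With these two fixes, (b1) and (b3) exhaust case (b), and your proof is complete.
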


\begin{proof}
For any irreducible $B$-submodule $W_B\sseq W$, the canonical $A\otimes B$-module map 
\[\Hom^s_B(W_B,W)\otimes W_B\to W\ ,\ T\otimes w\mapsto Tw\] 
is a surjection. Thus, as a $B$-module $W$ is a direct sum of copies of $W_B$ and $\Pi W_B$. 
Similarly, if $W_A\sseq \wHom_B(W_B,W)$ is an irreducible $A$-submodule, then the restriction map $W_A\otimes W_B \to W$ is a surjection, so that $W$ as $A$-module is a direct sum of copies of $W_A$ and $\Pi W_A$.

\begin{details}
$A$-module structure on $\Hom^s_B(W_B,W)$ is:
\[
(a\cdot T)v:=(a\otimes 1)(Tv).
\]
By our assumption, $W_B$ is a left $\mathbb D$-module:  we have $d(bw)=(-1)^{bd}b(dw)$ for $b\in B$, $d\in \mathbb D$, $w\in W_B$.

The left $\mathbb D$-module structure on $\Hom^s_B(W_B,W)$ is: 
\[
(d\cdot T)w=(-1)^{dT}T(dw)\quad\text{ for }w\in W_B.
\]
This action is well-defined because
\[
(d\cdot T)(bw)=(-1)^{dT}T(dbw)=(-1)^{dT+bd}T(bdw)=(-1)^{dT+bd+bT}(1\otimes b)T(dw)=(-1)^{b(d+T)}(1\otimes b)(d\cdot T)(w)
\]
The corresponding right $\mathbb D$-module structure on $\Hom^s_B(W_B,W)$ is given by $(T\cdot d)(w):=T(dw)$.

The left action of $\mathbb D$ on $\Hom^s_B(W_B,W)$ supercommutes with the $A$-module structure because:
\[
(d\cdot (a\cdot T))v=(-1)^{d(a+T)}(a\cdot T)(dv)=
(-1)^{d(a+T)}(a\otimes 1)(T(dv)),\]
and
\[
a\cdot (d\cdot T)v=(a\otimes 1)((d\cdot T)(v))=(a\otimes 1)(-1)^{dT}T(dv),
\]
hence $d\cdot (a\cdot T)=(-1)^{ad}a\cdot (d\cdot T)$.

We also have 
\[
a\cdot (T\cdot d)=(a\cdot T)\cdot d=(a\otimes 1)(T(dv)).
\]
The map $\Hom^s_B(W_B,W)\otimes W_B\to W$, $T\otimes w\mapsto Tw$ is an $A\otimes B$-module homomorphism:
\begin{align*}
(a\otimes b)(T\otimes w)&=(-1)^{bT}a\cdot T\otimes bw\mapsto (-1)^{bT}(a\cdot T)(bw)\\
&=(-1)^{bT}(a\otimes 1)(T(bw))=(a\otimes 1)(1\otimes b)Tw=(a\otimes b)(Tw).
\end{align*}
The map $\Hom^s_B(W_B,W)\otimes W_B\to W$ factors to a map
$\Hom^s_B(W_B,W)\otimes_\mathbb D W_B\to W$:
\[
Td\otimes w\mapsto (T\cdot d)w=T(dw)\quad{ and }
T\otimes (dw)\mapsto T(dw).
\]
Furthermore, the action factors through $\Hom^s_B(W_B,W)\otimes_{\mathbb D} W_B\to W$. This follows from:
\[
(a\otimes b)(T\cdot  d\otimes w)=(-1)^{b(T+d)}a\cdot (T\cdot d)\otimes bw,
\]
and 
\begin{align*}
(a\otimes b)(T\otimes dw)&=(-1)^{bT}a\cdot T\otimes b(dw)=
(-1)^{bT+bd}a\cdot T \otimes d(bw)\\&
=
(-1)^{bT+bd}(a\cdot T)\cdot d \otimes bw
=
(-1)^{bT+bd}a\cdot (T\cdot d) \otimes bw.
\end{align*}
\end{details}

\begin{details}
If $W_A$ is an irreducible module with $\End_\mathbb D^s(W_A)=\mathbb D$, then the $A\otimes B$-action is consistent with the tensor product $W_A\otimes_\mathbb D W_B$:
\[
(a\otimes b)(v\cdot d\otimes w)=(-1)^{b(v+d)}a\cdot (v\cdot d)\otimes bw
\]
and 
\[
(a\otimes b)(v\otimes d\cdot w)=(-1)^{bv}a\cdot v\otimes b(d\cdot w)=(-1)^{bv+bd}a\cdot v\otimes d\cdot (bw).
\]
But we have 
\[
a\cdot (v\cdot d)=(-1)^{vd}a\cdot (d\cdot v)=(-1)^{ad+vd}d\cdot (a\cdot v)=(a\cdot v)\cdot d. 
\]
This proves that the span of $v\cdot d\otimes w-v\otimes d\cdot w$ is invariant under the action of $A\otimes B$. 
\end{details}

If  $\mathbb D_A\cong\C$, then $\End_{\mathbb D_A}^s(W_A)\cong \mathrm{Mat}_{m|n}(\C)$ where $(m|n)$ is the graded dimension of $W_A$. Similarly, if $\mathbb D_A\cong\mathrm{Q}(1)$, then $\End_{\mathbb D_A}^s(W_A)\cong \mathrm{Q}(n)$ where $n=\dim (W_A)_\eev=\dim (W_A)_\ood$. From 
Remark~\ref{rmk:Abar=} and 
well-known descriptions of tensor products of superalgebras $\mathrm{Mat}_{m|n}(\C)$ and $\mathrm{Q}(n)$ it follows that if either $\mathbb D_A\cong\C$ or $\mathbb D_B\cong\C$, then $W_A\otimes W_B$ is irreducible and hence isomorphic to $W$, whereas if $\mathbb D_A\cong \mathbb D_B\cong \mathrm{Q}(1)$, then 
$W_A\otimes W_B\cong W\oplus \Pi W$. Since the map $W_A\otimes W_B\to W$ factors through the proper quotient $W_A\otimes_\mathbb D W_B$, we have $ W_A\otimes_{\mathbb D} W_B\cong W$.
\end{proof}

\begin{details}
Factoring through the proper quotient $W_A\otimes_{\mathbb D} W_B$ is inherited from the factoring of the map $\Hom^s_B(W_B,W)\otimes  W_B\to W$ to tensor product over $\mathbb D$. 

The tensor product of two modules of type $\mathsf Q$ is of the form $E\oplus E$ where $E$ is an irreducible module of type $\mathsf M$. Thus $W_A\otimes W_B=E\oplus E$, and from surjectivity of $W_A\otimes W_B\to W$ it follows that $W\cong E$. Now $W_A\otimes_\mathbb D W_B$ is a proper quotient of $W$, hence $E\cong W_A\otimes_\mathbb D W_B$. 
\end{details}

\begin{prp}
\label{prp:EtypeofWWAWB}
Let $W$, $W_A$, $W_B$, and $\mathbb D$ be as in Proposition~\ref{prp:W=WAWB} and let $A^\R$ and $B^\R$ be real forms of $A$ and $B$, corresponding to involutions $\tau_A:A\to A$ and $\tau_B:B\to B$, respectively. 
Set $C:=A\otimes B$ with real form $C^\R:=A^\R\otimes_\R B^\R$ corresponding to  $\tau_C:=\tau_A\otimes \tau_B$. 
Then $\EType_{{\tau_C}}(W)=\EType_{{\tau_A}}(W_A)\bullet\EType_{{\tau_B}}(W_B)$, where $\bullet$ denotes the product of $\BRm$. 
\end{prp}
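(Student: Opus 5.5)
Since $\EType_{\tau}(W)$ is by definition the class in $\BRm$ of $\wEnd_{\catR}(V)$ with $V$ a simple summand of $\funcF(W)$, I will pass to a description that behaves well under tensor products: the centralizer of the real image of the algebra. Let $V_A$ and $V_B$ be simple summands of $\funcF_{\tau_A}(W_A)$ and $\funcF_{\tau_B}(W_B)$. Put $D_A:=\wEnd_{\catR}(V_A)$ and $D_B:=\wEnd_{\catR}(V_B)$; by Remark~\ref{rmk-superhoms} I may work with the super endomorphism algebras $\End^s_{A^\R}(V_A)$ and $\End^s_{B^\R}(V_B)$, since~\eqref{eq:soops} only applies op and sop, and these preserve Brauer--Wall classes up to the standard identification. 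By the real analogue of Remark~\ref{rmk:Abar=} (a real super Wedderburn/density theorem for simple modules), the image of $A^\R$ in $\End_\R(V_A)$ is the full supercentralizer $\End^s_{D_A}(V_A)$, and likewise for $B^\R$.

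The key step is to take the external tensor product $V_A\otimes_\R V_B$ as a $C^\R$-module. The image of $C^\R$ in it is $\End^s_{D_A}(V_A)\otimes_\R\End^s_{D_B}(V_B)$. Its supercentralizer is the graded tensor product $D_A\hat\otimes_\R D_B$, by the super double centralizer theorem over $\R$. Hence $V_A\otimes_\R V_B$ is a module over $\End^s_{D_A}(V_A)\hat\otimes\End^s_{D_B}(V_B)$, which is Morita equivalent to $(D_A\hat\otimes D_B)^{\mathrm{op}}$. Every simple summand $V'$ therefore has $\wEnd(V')$ equal to the division superalgebra Morita equivalent to $D_A\hat\otimes_\R D_B$. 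When one of the factors has supercenter $\C$, the same computation gives exactly the mixed product in $\BRm$. By the definition of $\bullet$, this says $\EType(V')=[D_A]\bullet[D_B]$.

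It remains to check that $V'$ is $\Phi([W])$. Complexifying, $\C\otimes_\R(V_A\otimes_\R V_B)\cong \funcE(V_A)\otimes_\C\funcE(V_B)$ as $C$-modules. Here $\funcE(V_A)$ is $W_A$ or $W_A\oplus\funcB W_A$ by Table~\ref{Table-1}, and similarly for $V_B$. Consequently the complexification contains $W_A\otimes W_B$, which by Proposition~\ref{prp:W=WAWB} contains $W\cong W_A\otimes_{\mathbb D}W_B$ as a summand. Thus some simple summand $V'$ of $V_A\otimes_\R V_B$ has complexification containing $W$, and by Theorem~\ref{thm:Irr=Irr}(iv) we get $\Phi([W])=[V']$. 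Since all simple summands of $V_A\otimes_\R V_B$ are isomorphic up to $\Pi$, which does not change $\wEnd$, the endotype of $W$ is $[D_A]\bullet[D_B]$.

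I expect the main obstacle to be the sign and Morita bookkeeping. Three points need care: checking that the supercentralizer of the tensor product is the graded tensor product with the correct Koszul signs; confirming that op/sop in~\eqref{eq:soops} are compatible with the product in $\BRm$, where the inverse of a class is the class of its opposite, so that applying op to both sides is consistent; and handling the mixed real/complex supercenter case so that it matches the paper's definition $[D\otimes_\C(\C\otimes_\R D')]$. For the latter I would use that if $D_A$ has center $\C$ then $D_A\hat\otimes_\R D_B\cong D_A\hat\otimes_\C(\C\otimes_\R D_B)$. If the double-centralizer route becomes messy, a fallback is a case check via Table~\ref{Table-1}: compute the symmetry datum of $W_A\otimes_{\mathbb D}W_B$ from those of $W_A$ and $W_B$, taking tensor products of the isomorphisms $W_A\to\funcB W_A$ and similar maps, and multiplying signs $\bfc$ with a Koszul correction.
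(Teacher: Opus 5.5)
Your route is essentially the paper's. You identify the supercommutant of the external tensor product of the real modules with the tensor product of the supercommutants, and then do a Brauer--Wall case analysis. The paper uses $\funcF_{\tau_A}(W_A)\otimes_\R\funcF_{\tau_B}(W_B)$ instead of simple summands, which only adds matrix factors. Your complexification step locating $\Phi([W])$ is more careful than the paper's one-line remark that $\funcF_{\tau_C}(W)$ is a quotient of that module.

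However, one case is wrong as written: when both $D_A$ and $D_B$ have supercenter $\C$. Then $\C\otimes_\R\C\cong\C\times\C$ gives
\[
D_A\hat\otimes_\R D_B\cong (D_A\hat\otimes_\C D_B)\times (D_A\hat\otimes_\C \overline{D_B}),
\]
which is not simple. So there is no single division superalgebra ``Morita equivalent to $D_A\hat\otimes_\R D_B$''. Your closing claim that all simple summands of $V_A\otimes_\R V_B$ are isomorphic up to $\Pi$ also fails here. The two factors give two real simple summands whose complexifications carry the $\funcB$-orbits of $W_A\otimes_{\mathbb D}W_B$ and of $W_A\otimes_{\mathbb D}\funcB W_B$. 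These orbits are distinct even up to parity, because in rows $0_\C$ and $1_\C$ of Table~\ref{Table-1} we have $W_A\not\cong\funcB W_A$ and $W_B\not\cong\funcB W_B$, also after applying $\Pi$. Your identity $D_A\hat\otimes_\R D_B\cong D_A\hat\otimes_\C(\C\otimes_\R D_B)$ does not help, since $\C\otimes_\R D_B\cong D_B\times\overline{D_B}$ itself splits.

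The missing observation is the one the paper uses in its third case. Since $\overline{\C}\cong\C$ and $\overline{\mathrm{Q}(1)}\cong\mathrm{Q}(1)$ as $\C$-superalgebras, both factors represent the same class $[D_A]\bullet[D_B]$ in $\BRm_\C\cong\mathbb Z_2$. Hence every simple summand of $V_A\otimes_\R V_B$ has endotype $[D_A]\bullet[D_B]$, in particular the one your complexification argument identifies with $\Phi([W])$. With this case added, the argument is complete. Your all-real and mixed cases go through as you describe, modulo the op/sop bookkeeping you already flagged.
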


\begin{proof}
Let $\mathbb D_A,\mathbb D_B,\mathbb D_C$ be division superalgebras corresponding to endotypes of $W_A,W_B,W_C$, respectively. Our goal is to prove that 
$[\mathbb D]=[\mathbb D_A]\bullet [\mathbb D_B]$. 
The $C^\R$-module $\funcF_{\tau_A}(W_A)\otimes_\R \funcF_{\tau_B}(W_B)$ is semisimple. 
Furthermore, $\funcF_{\tau_C}(W)$ is a quotient of $\funcF_{\tau_A}(W_A)\otimes_\R \funcF_{\tau_B}(W_B)$.

\begin{details}
$\funcF_{\tau_A}(W_A)$ is semisimple, hence the image ${\bar A}^\R$ of $A^\R$ in $\End_\R(\funcF_{\tau_A}(W_A))$ is a semisimple algebra. Same holds for $\funcF_{\tau_B}(W_B)$. It follows that  
the image of $A^\R\otimes_\R B^\R$ in $\End_\R(\funcF_{\tau_A}(W_A)\otimes_\R\funcF_{\tau_B}(W_B))\cong \End_\R(\funcF_{\tau_A}(W_A))\otimes_\R\End_\R(\funcF_{\tau_B}(W_B))$, which  is ${\bar A}^\R\otimes_\R {\bar B}^\R$, is also semisimple.
This is because over a perfect field, tensor products of semisimple superalgebras are semisimple. 
\end{details}

\begin{details}
Concretely, $\funcF_{\tau_A}(W_A)\otimes_\R \funcF_{\tau_B}(W_B)$ is $W_A\otimes_\R W_B$ whereas $\funcF_{\tau_C}(W)$ is $W_A\otimes_\mathbb D W_B$. The latter is a quotient of $W_A\otimes_\R W_B$ by definition. 
\end{details}

For any superalgebra $X$, we denote $(X^\mathrm{op})^\mathrm{sop}$ by $X^\diamond$.
By Remark~\ref{rmk:endoDM2D} and~\eqref{eq:soops}  we obtain  
\begin{align*}
\End^s_{C^\R}(\funcF_{\tau_A}(W_A)\otimes_\R \funcF_{\tau_B}(W_B))&
\cong 
\End^s_{A^\R}(\funcF_{\tau_A}(W_A))\otimes_\R^{}
\End^s_{B^\R}(\funcF_{\tau_B}(W_B))\\
&\cong
\Mat_{d_1}(\mathbb D_A^\diamond)\otimes_\R \Mat_{d_2}(\mathbb D_B^\diamond)\cong\Mat_{d_1d_2}(\mathbb D_A^\diamond\otimes_\R\mathbb D_B^\diamond),
\end{align*}
where $d_1,d_2\in\{1,2\}$. Furthermore, $\mathbb D_A^\diamond\otimes \mathbb D_B^\diamond\cong (\mathbb D_A\otimes_\R \mathbb D_B)^\diamond $. \begin{details}Note that for an irreducible module $E$ over $A^\R$ we have $\wEnd_{A^\R}(E)\cong \End_{A^\R}^s(E)^\diamond$. Also note that the map $\mathbb D\to\mathbb D^\diamond$ is an automorphism of the Wall-Brauer monoid.  
\end{details}It suffices to verify that $\mathbb D_A\otimes_\R \mathbb D_B$ is a direct product of central simple algebras that are Brauer equivalent to $[\mathbb D_A]\bullet [\mathbb D_B]$. If both $\mathbb D_A$ and $\mathbb D_B$ are $\R$-central then $\mathbb D_A\otimes_\R\mathbb D_B$ is also an $\R$-central simple superalgebra and we have $[\mathbb D_A\otimes_\R\mathbb D_B]=[\mathbb D_A]\bullet [\mathbb D_B]$. If exactly one of $\mathbb D_A$ and $\mathbb D_B$ (say $\mathbb D_A$) is a $\C$-central superalgebra, then 
\[
[\mathbb D_A\otimes_\R\mathbb D_B]= 
[\mathbb D_A\otimes_\C(\C\otimes_\R\mathbb D_B)]= [\mathbb D_A]\bullet [\mathbb D_B].
\]
Finally, if both of $\mathbb D_A$ and $\mathbb D_B$ are $\C$-central, then from $\C\otimes_\R\C\cong \C\times \C$ we obtain 
\[
\mathbb D_A\otimes_\R\mathbb D_B\cong 
\mathbb D_A\otimes_\C(\C\otimes_\R\C)\otimes_\C\mathbb D_B
\cong 
(\mathbb D_A\otimes_\C\mathbb D_B)\times 
(\mathbb D_A\otimes_\C\mathbb D_B),
\]
hence $[\mathbb D_A\otimes_\C\mathbb D_B]=[\mathbb D_A]\bullet [\mathbb D_B]$. 
\begin{details}
This last bit requires that the base field is $\mathbb R$: we are using the fact that $\mathbb C\otimes_\mathbb R\mathbb C$ is a direct product of isomorphic fields; this is not true for a general tensor product $L\otimes_K L$.  
\end{details}
\end{proof}

\section{Irreducible modules of $\g g^\R$, endotypes, and reductions}
\label{sec:reduction-E0}
In the rest of this paper we utilize the general framework of Section~\ref{sec:cat-gener} in the context of categories of modules for real and complex Lie superalgebras. 
Given a finite dimensional complex Lie superalgebra $\g g$, we use $\g g^\R$ to denote a real form of $\g g$ obtained as fixed points of an antilinear involution $\tau:\g g\to \g g$.
We set
$\catR:=
\Rep(\g g^\R)$. The corresponding category $\catC$ is 
$\Rep(\g g)$. 
As in Section~\ref{sec:cat-gener}, we have 
functors $\funcB_{\g g,\tau}$, $\funcE_{\g g,\tau}$, $\funcF_{\g g,\tau}$ relating  $\catR$ and $
\catC$, that  descend to functors relating the categories $\catR_{\g g,\tau}:=\Rep_\mathrm{fin}(\g g^\R)$ and $\catC_\g g:=\Rep_\mathrm{fin}(\g g)$. Recall from~\eqref{eq:EtypetauA} that for $[W]\in\Irr(\catC_\g g)$ we set
\[
\EType_{\g g,\tau}(W):=\EType_{\catR_{\g g,\tau}}^{}(W).
\]

\begin{rmk}
\label{remark:antilin-intertwiner}
Given an object $W$ of $\catC_\g g$, it will be helpful to think of $\funcB_{\g g,\tau}W$ in the following concrete way. As a vector (super)space, $\funcB_{\g g,\tau}W$ is just $W$ equipped with the conjugate complex structure. The action of any $x\in\g g$ on $\funcB_{\g g,\tau}W$ 
is the same as the action of $\tau(x)$ on $W$. 
Any homogeneous $\phi\in\wEnd_{\catC_\g g}(W,\funcB_{\g g,\tau}W)$ can be thought of as an (either even or odd) antilinear map $\phi:W\to W$ such that $\phi (x\cdot w)=\tau(x)\cdot \phi(w)$ for $x\in \g g$ and $w\in W$.
\end{rmk}

\begin{rmk}
\label{rmk:Endo-to-Class} Concrete realizations  of irreducible real modules of $\g g^\R$ can be obtained from the complete description of endotypes $\EType_{\g g,\tau}(W)$ for $[W]\in\Irr(\catC_\g g)$. Indeed by Lemmas~\ref{lem:E2} and~\ref{lem:EFandEF}(i) it follows that  for any row in Table~\ref{Table-1}, if we have $\funcE(V)=W\oplus \funcB W$,  then $\funcF(W)$ is irreducible and $\Phi(W)=[\funcF(W)]$. In other words, if $\EType_{\g g,\tau}(W)\not\in\{0_\R,1_\R,7_\R\}$, then $W$ itself is an irreducible real $\g g^\R$-module. 
If 
$\EType_{\g g,\tau}(W)\in\{0_\R, 1_\R,7_\R\}$,
  then $\funcF(W)=V\oplus V$ for a simple $V$ and we have $[\Phi(W)]=[V]$. In this case  $(\funcB,+)\in\mathfrak S_W$, i.e., there exists an even antilinear map $T:W\to W$ such that $Tx=xT$ for $x\in\g g^\R$. Having such a $T$, we can obtain a direct sum decomposition $W=V\oplus V$ by considering the two subspaces $\{w\pm Tw\,:\,w\in W\}$. When $W$ is a highest weight module, an explicit $T$ can also be computed by first defining it on the highest weight space of $W$ and then extending it to all of $W$.  
\end{rmk}

In the rest of this section we  show that  computing endotypes for modules of  $\g g^\R\in\cM$ can be reduced to the case of modules of the Lie superalgebras in Definition~\ref{dfn:Liestypes}.

\subsection{The direct sum reduction}
Suppose that $\g g=\g a\oplus\g b$ and $\tau=\tau_\g a\oplus \tau_\g b$ where $\tau_\g a:\g a\to \g a$ and $\tau_\g b:\g b\to \g b$ are antilinear involutions associated to real forms $\g a^\R$ and $\g b^\R$. 
The results of Section~\ref{sec:E-types-module} have the following immediate consequence. 
\begin{thm}
\label{thm:DirectSum}
Let $W\in\Irr(\catC_\g g)$. Then the following statements hold.
\begin{itemize}
\item[\rm(i)] There exist $[W_\g a]\in\Irr(\catC_\g a)$ and  $[W_\g b]\in\Irr(\catC_\g b)$ such that $W\cong W_\g a\otimes_\mathbb D W_\g b$, where $\mathbb D$ is defined from $\mathbb D_\g a:=\End^s_{\g a}(W_\g a)$ and  
$\mathbb D_\g b:=\End^s_{\g b}(W_\g b)$ by
\begin{equation}
\label{eq:DWAWB=}
\mathbb D:=\begin{cases}
\C& \text{if either $\mathbb D_{\g a}\cong \C$ or $\mathbb D_{\g b}\cong \C$},\\
\mathrm{Q}(1)& \text{otherwise}.
\end{cases}
\end{equation}
\item[\rm (ii)] $\EType_{\g g,\tau}(W)=\EType_{\g a,\tau_\g a}(W_\g a)\bullet \EType_{\g b,\tau_\g b}(W_\g b)$, where $\bullet$ denotes the product of $\BRm$. 
\end{itemize}
\end{thm}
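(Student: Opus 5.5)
The plan is to pass to universal enveloping superalgebras and then apply Propositions~\ref{prp:W=WAWB} and~\ref{prp:EtypeofWWAWB} directly. Set $A:=U(\g a)$ and $B:=U(\g b)$. Then $U(\g g)\cong A\otimes B$ as associative $\C$-superalgebras, via the super tensor product. The involutions $\tau_\g a,\tau_\g b$ extend uniquely to antilinear superalgebra involutions $\tau_A,\tau_B$ of $A,B$, and under the identification above $\tau$ extends to $\tau_A\otimes\tau_B$.

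The real forms are handled in the same way. The fixed points of $\tau_A$ form $U(\g a^\R)$, and similarly for $B$. Since $U(\g g^\R)\cong U(\g a^\R)\otimes_\R U(\g b^\R)$, the real form of $A\otimes B$ cut out by $\tau_A\otimes\tau_B$ is exactly $A^\R\otimes_\R B^\R$, as required in Proposition~\ref{prp:EtypeofWWAWB}.

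Next I would check that the categories match. Finite dimensional $\g g$-modules are the same as finite dimensional $U(\g g)$-modules, and likewise over $\R$ for $\g g^\R$. This identification respects $\Pi$ and respects the conjugation functor $\funcB$, since $\funcB$ is defined by twisting by $\tau$ and conjugating the complex structure, which commutes with passing to $U$. Hence $\catC_\g g=\catC_{A\otimes B}$ and $\catR_{\g g,\tau}=\catR_{A\otimes B,\tau_A\otimes\tau_B}$. The endotypes and the endomorphism superalgebras $\End^s$ are therefore the same whether computed for the Lie superalgebras or for the enveloping algebras. In particular $\mathbb D_\g a=\End^s_A(W_\g a)$.

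With these identifications, part (i) follows from Proposition~\ref{prp:W=WAWB} applied to $[W]\in\Irr(\catC_{A\otimes B})$. The formula~\eqref{eq:DWAWB=} is exactly the definition of $\mathbb D_{W_A,W_B}$. Part (ii) then follows from Proposition~\ref{prp:EtypeofWWAWB} with $C=A\otimes B$ and $\tau_C=\tau_A\otimes\tau_B$.

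I expect the only step needing real care to be verifying that $\tau_A\otimes\tau_B$ corresponds to the extension of $\tau$ under the isomorphism $U(\g a\oplus\g b)\cong U(\g a)\otimes U(\g b)$. This requires checking that the Koszul sign rule in the super tensor product is compatible with an antilinear parity-preserving map. I would verify it on generators $x\otimes 1$ and $1\otimes y$: $\tau_A\otimes\tau_B$ is multiplicative because the sign $(-1)^{|x||y|}$ is real and the parities are preserved.
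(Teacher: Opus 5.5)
Your proposal is correct and is essentially the paper's argument: the paper records this theorem as an immediate consequence of Propositions~\ref{prp:W=WAWB} and~\ref{prp:EtypeofWWAWB}, which amounts to the passage to $U(\g a)\otimes U(\g b)\cong U(\g g)$ with real structure $\tau_A\otimes\tau_B$ that you describe. Your extra checks are exactly the details the paper leaves implicit, namely that the categories of finite dimensional real and complex modules (together with $\Pi$ and the conjugation functor) coincide, and that $\tau_A\otimes\tau_B$ is multiplicative for the super tensor product.
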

Since abelian Lie algebras occur frequently as direct summands, we record the following simple fact. 
\begin{rmk}
\label{rmk-abbel}
If $\g g$ is an abelian Lie algebra, for $[W]\in\Irr(\catC_\g g)$ we have $\EType_{\g g,\tau}(W)\in\{0_\R,0_\C\}$. Furthermore,  
$\EType_{\g g,\tau}(W)=0_\R$
if and only if  elements of $\g g^\R$ act on $W$ by real scalars. To prove this, first note that every irreducible $\g g$-module $W$ is one-dimensional and corresponds to a linear functional $\mu\in\g g^*$. The existence of an antilinear map $\phi:W\to W$  as in Remark~\ref{remark:antilin-intertwiner} is equivalent to $\mu(\tau(x))=\oline{\mu(x)}$ for $x\in\g g$, or in other words to $\mu(\g g^\R)\sseq \R$. Furthermore, antilinear maps on a one-dimensional vector space are of the form $z\mapsto \alpha \bar z$, hence $\phi^2(z)=|\alpha|^2z$. This implies that $(\funcB,-)\not\in\mathfrak S_W$. 
\end{rmk}

\subsection{The triangular reduction}
\label{subsec:hyperb}
Recall that a triangular decomposition of $\g g$ is a vector space decomposition 
\begin{equation}
\label{eq:triang-decompo}
\g g=\oline{\g u}\oplus \g l\oplus \g u
\end{equation}
such that $\g u,\oline{\g u},\g l$ are subalgebras satisfying $[\g l,\g u]\sseq \g u$ and $[\g l,\oline{\g u}]\sseq \oline{\g u}$. We say this triangular decomposition is \emph{$\tau$-stable} when  $\tau(\g u)=\g u$, $\tau(\oline{\g u})=\oline{\g u}$, and $\tau(\g l)=\g l$. 

For any $\g g$-module $W$, the subspace  $W^{\g u}:=\left\{w\in W\,:\,x\cdot w=0\text{ for all }x\in\g u\right\}$ is an $\g l$-module. Set $\g q:=\g l\oplus\g u$ and let $
\mathrm{Ind}_{\g q}^{\g g}:\Rep(\g l)\to\Rep(\g g)$ denote the induction functor given by the assignment $
W\mapsto \g U(\g g)\otimes_{\g U(\g q)} W$. Given $[W]\in\Irr(\catC_\g g)$, if $\Ind_{\g q}^{\g g}(W^\g u)$  has a unique maximal proper $\g g$-submodule $M$, then the kernel of the canonical surjection 
$\Ind_{\g q}^{\g g}(W^\g u)\to W$ must be $M$, so that  $(\Ind_{\g q}^{\g g}(W^\g u))/M\cong W$.
For an $\g l$-module $W_\g l$, 
we denote the unique irreducible quotient of  
$\mathrm{Ind}_{\g q}^{\g g}
(W_\g l)$, if it exists, 
by 
$\underline{\mathrm{Ind}}_{\g q}^{\g g}
(W)$.

\begin{thm}
\label{thm:EtypeCplx1}
Assume that $\g g$ has a  $\tau$-stable triangular decomposition as in~\eqref{eq:triang-decompo}. Furthermore, assume that 
for every $W\in\Irr(\catC_\g g)$, the $\g l$-module $W^\g u$ is irreducible and the induced module $\Ind_{\g q}^{\g g}W^\g u$ has a unique maximal proper $\g g$-submodule. 
Then 
for every $[W]\in\Irr(\catC_{\g g})$ we have $\EType_{\g g,\tau}(W)= \EType_{\g l,\tau}(W^{\g u})$. 
\end{thm}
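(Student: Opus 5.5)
The plan is to reduce everything to Theorem~\ref{thm:Table}. By that theorem, the endotype of a simple object of $\catC_\g g$ (resp.\ $\catC_\g l$) is determined by its symmetry datum. So it suffices to show that the symmetry datum $\mathfrak S_W$ of $W$ (computed with respect to $\funcB_{\g g,\tau}$ and $\Pi$) equals the symmetry datum $\mathfrak S_{W^\g u}$ of the $\g l$-module $W^\g u$ (computed with respect to $\funcB_{\g l,\tau|_{\g l}}$ and $\Pi$). The hypothesis guarantees that $W^\g u$ is simple, so $\mathfrak S_{W^\g u}$ is defined.

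\textbf{Step 1: the functor $W\mapsto W^\g u$ commutes with $\Pi$ and $\funcB$.} I would first check that $(\Pi W)^\g u=\Pi(W^\g u)$ and $(\funcB_{\g g,\tau}W)^\g u=\funcB_{\g l,\tau}(W^\g u)$. The first is clear. For the second, use the concrete description in Remark~\ref{remark:antilin-intertwiner}: $x\in\g u$ acts on $\funcB W$ as $\tau(x)$ acts on $W$. Since $\tau(\g u)=\g u$, the $\g u$-invariants of $\funcB W$ are exactly the $\g u$-invariants of $W$ with the conjugate complex structure. The same argument with $\tau(\g l)=\g l$ shows that the $\g l$-action on this space is that of $\funcB_{\g l,\tau}(W^\g u)$.

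\textbf{Step 2: restriction to $W^\g u$ preserves isomorphisms and signs.} Let $\phi:W\to W'$ be a homogeneous isomorphism, where $W'$ is one of $\Pi W$, $\funcB W$, $\Pi\funcB W$. By Step 1, $\phi$ maps $W^\g u$ isomorphically onto $(W')^\g u$, which is the corresponding twist of $W^\g u$. So $\Pi\in\mathfrak S_W$, $(\funcB,\pm)\in\mathfrak S_W$, or $(\Pi\funcB,\pm)\in\mathfrak S_W$ implies that the same twist of $W^\g u$ is isomorphic to $W^\g u$. For the $\funcB$ and $\Pi\funcB$ cases, if $\phi^2=\bfc(\phi)1_W$, then the restriction $\phi'=\phi|_{W^\g u}$ satisfies $\phi'^2=\bfc(\phi)1_{W^\g u}$. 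Hence $\sgn(\phi')=\sgn(\phi)$. By Lemma~\ref{lem:sig}(ii), the sign is independent of the chosen isomorphism, so the sign recorded in $\mathfrak S_{W^\g u}$ agrees with the one in $\mathfrak S_W$.

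\textbf{Step 3: the converse direction.} This is the step I expect to be the main obstacle. Suppose $\psi:W^\g u\to X$ is an $\g l$-isomorphism, where $X$ is a twist of $W^\g u$ as above. The point is that $\Ind_{\g q}^{\g g}$ commutes with $\Pi$ and with conjugation, which holds because $\tau(\g q)=\g q$. Concretely, $\funcB_{\g g,\tau}\Ind_{\g q}^{\g g}(E)\cong\Ind_{\g q}^{\g g}\funcB_{\g l,\tau}(E)$ via the antilinear map $u\otimes e\mapsto \tau(u)\otimes e$ on $\g U(\g g)\otimes_{\g U(\g q)}E$. Here $\funcB_{\g l,\tau}(E)$ is regarded as a $\g q$-module with $\g u$ acting trivially, and one checks that this map is well defined. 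Consequently $\Ind(\psi)$ is an isomorphism $\Ind(W^\g u)\to\Ind(X)$, and $\Ind(X)$ is the corresponding twist of $\Ind(W^\g u)$. An isomorphism maps the unique maximal proper submodule of the source onto the unique maximal proper submodule of the target. The twisting functors $\Pi$ and $\funcB$ also preserve the lattice of submodules, since on underlying sets they are the identity. So $\Ind(\psi)$ descends to an isomorphism $W\cong \underline{\Ind}(X)$. The same reasoning identifies $\underline{\Ind}(X)$ with the corresponding twist of $W$, so $W$ is isomorphic to that twist.

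\textbf{Conclusion.} Steps 1--3 show that $\mathfrak S_W$ and $\mathfrak S_{W^\g u}$ contain the same elements $\Pi$, $\funcB$-type, and $\Pi\funcB$-type isomorphisms. Step 2 shows that in each case the sign is the same. Hence $\mathfrak S_W=\mathfrak S_{W^\g u}$, and Theorem~\ref{thm:Table}, applied once in $\catC_\g g$ and once in $\catC_\g l$, gives $\EType_{\g g,\tau}(W)=\EType_{\g l,\tau}(W^\g u)$.
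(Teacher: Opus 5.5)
Your proof is correct and follows essentially the same route as the paper. In both, Theorem~\ref{thm:Table} reduces the claim to comparing symmetry data, $\tau(\g u)=\g u$ gives $(\funcB_{\g g,\tau}W)^{\g u}=\funcB_{\g l,\tau}(W^{\g u})$, and $\g l$-level intertwiners are lifted through $\Ind_{\g q}^{\g g}(W^{\g u})$ via $D\otimes w\mapsto \tau(D)\otimes Tw$, passing to quotients by the unique maximal proper submodules.

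The only difference is packaging. The paper shows that the restriction maps $\wHom_{\catC_{\g g}}(W,W)\to\wHom_{\catC_{\g l}}(W^{\g u},W^{\g u})$ and $\wHom_{\catC_{\g g}}(W,\funcB W)\to\wHom_{\catC_{\g l}}(W^{\g u},\funcB W^{\g u})$ are bijective; injectivity holds because $W^{\g u}$ generates $W$. Signs then agree automatically, since the lifted map restricts to the given $T$. You instead prove that the existence of isomorphisms is equivalent at the two levels, and match signs using Lemma~\ref{lem:sig}(ii).
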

\begin{proof}
According to Theorem~\ref{thm:Table}, we need to verify that the symmetry data of $W$ and $W^{\g u}$ are the same. 
From $\tau(\g u)=\g u$ it follows that $(\funcB_{\g g,\tau}W)^\g u=\funcB_{\g l,\tau}(W^{\tau(\g u)})=\funcB_{\g l,\tau}\left(W^{\g u}\right)$. Since  $W^{\g u}$ generates $W$ as a $\g g$-module,  we have injective linear maps 
\begin{equation}
\label{eq:1sthom}
\wHom_{\catC_{\g g,\tau}}(W,W)\to 
\wHom_{\catC_{\g l,\tau}}(W^{\g u},W^{\g u})
\end{equation}
and
\begin{equation}
\label{eq:2ndhom}
\wHom_{\catC_{\g g,\tau}}(W,\funcB_{\g g,\tau}W)\to 
\wHom_{\catC_{\g l,\tau}}(W^{\g u},
\funcB_{\g l,\tau}
(W^{\g u})).
\end{equation}
To complete the proof, it suffices to prove that~\eqref{eq:1sthom} and~\eqref{eq:2ndhom} are surjections. We only give the argument for~\eqref{eq:2ndhom}, which is slightly less trivial.
By Remark~\ref{remark:antilin-intertwiner}, any homogeneous  $\phi\in \wHom_{\catC_{\g l,\tau}}(W^{\g u},
\funcB_{\g g,\tau}
W^{\g u})$ corresponds to an (even or odd) antilinear map 
$T:W^{\g u}\to W^\g u$ such that $T x=\tau(x)T$ for $x\in \g l$. From $T$ we obtain  an antilinear map
\[
U(\g g)\otimes_{U(\g q)}W^{\g u}\to 
U(\g g)\otimes_{U(\g q)}W^{\g u}\ ,\
D\otimes w\mapsto \tau(D)\otimes Tw,
\]
corresponding  to a $\g g$-module homomorphism
$U(\g g)\otimes_{U(\g q)}W^{\g u}\to \funcB_{\g g,\tau}(U(\g g)\otimes_{U(\g q)}W^{\g u})$. 
Passing to the quotients on both sides by the unique maximal proper invariant subspaces, we obtain a $\g g$-module homomorphism ${\tilde T}:W\to \funcB_{\g g,\tau}(W)$ that restricts to $T$, hence it corresponds to $\phi$ under~\eqref{eq:2ndhom}.    
\end{proof}


In the rest of this subsection, we describe two kinds of triangular decompositions that satisfy the assumption of Theorem~\ref{thm:EtypeCplx1}. See Definitions~\ref{dfn:hyper} and~\ref{dfn:gradedtyp}. 

One way to obtain a $\tau$-stable triangular decomposition is to choose  $x_\circ\in\g g^\R_\eev$ such that $\ad_{x_\circ}:\g g^\R\to\g g^\R$ is diagonalizable with real eigenvalues (i.e., a \emph{hyperbolic} element of $\g g^\R$) and decompose $\g g^\R$ as 
\begin{equation}
\label{eq:gR=}
\g g^\R=
\oline{\g u}^{\R}\oplus\g l^\R\oplus{\g  u}^{\R},
\end{equation} 
where $\g u^\R:=\oplus_{c>0}\g g^{\R,c}$,
$\bar{\g u}^\R:=\oplus_{c<0}\g g^{\R,c}$,
and $\g l^\R:=\left\{y\in\g g^\R\,:\,\ad_{x_\circ}(y)=0\right\}$.
Here $\g g^{\R,c}$ denotes the $c$-eigenspace of $\ad_{x_\circ}$ for $c\in\R$. Passing to complexifications, we obtain a $\tau$-stable triangular decomposition. 
\begin{dfn}
\label{dfn:hyper}
A triangular decomposition~\eqref{eq:triang-decompo} is said to be of  \emph{hyperbolic type} if it is  obtained as above from a hyperbolic $x_\circ\in\g g^\R_\eev$.  
\end{dfn}

\begin{rmk}
\label{rmk:eigx0}
Let $W$ be a $\g g$-module that is generated by an irreducible finite dimensional $\g l$-module $W'\sseq W$. Then by Schur's Lemma, 
 $x_\circ$ acts on $W'$ by a scalar $c\in\C$, and since $\g g$ is a direct sum of $\ad_{x_\circ}$-eigenspaces it follows that $W$ is a  direct sum of $x_\circ$-eigenspaces as well.
\end{rmk}

\begin{lem}
\label{lem:uniquemaxx0}
Assume that either $\oline{\g u}\sseq \g g_\ood$ or  the triangular decomposition~\eqref{eq:triang-decompo} is obtained by a hyperbolic element $x_\circ$. 
Then
for every 
$[W]\in\Irr(\Rep(\g l))$ the induced module
 $
\mathrm{Ind}_{\g q}^{\g g}(W)$ has a unique maximal proper $\g g$-submodule.
\end{lem}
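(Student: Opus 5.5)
The plan is the standard one: identify a canonical complement to any proper submodule, so that the sum of all proper submodules is still proper. Set $M:=\Ind_{\g q}^{\g g}(W)=U(\g g)\otimes_{U(\g q)}W$. By PBW, $M\cong U(\oline{\g u})\otimes W$ as an $\oline{\g u}$-module, and the subspace $1\otimes W$ is an $\g l$-submodule isomorphic to $W$ that generates $M$. It therefore suffices to exhibit an $\g l$-stable subspace $M^{\mathrm{top}}$ of $M$ with two properties: $M=M^{\mathrm{top}}\oplus M'$ as $\g l$-modules for some $\g l$-stable complement $M'$, and every proper $\g g$-submodule $N\subsetneq M$ is contained in $M'$ (equivalently, its projection to $M^{\mathrm{top}}=1\otimes W$ vanishes). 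Once this is shown, the sum of all proper submodules lies in $M'$ and is therefore proper, which gives the unique maximal proper submodule.

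In the hyperbolic case I would take $M^{\mathrm{top}}=1\otimes W$ and grade by $x_\circ$. By Remark~\ref{rmk:eigx0}, $x_\circ$ acts on $W$ by a scalar $c$ (if $W$ is infinite dimensional I would use Dixmier/Quillen's lemma, or simply note that $W$ is irreducible over $\g l$ and $x_\circ$ is central in $\g l$; alternatively restrict to finite-dimensional $W$, which is all that is used later). Then $U(\oline{\g u})$ is graded by the eigenvalues of $\ad_{x_\circ}$, which lie in $\R_{\le 0}$, with the eigenvalue $0$ part equal to $\C$. Hence $M=\bigoplus_{d\le 0} M_{c+d}$, with $M_c=1\otimes W$. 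Any submodule $N$ is $x_\circ$-stable, hence graded. If $N\cap M_c\neq 0$, then irreducibility of $W$ over $\g l$ gives $N\supseteq M_c$, and since $M_c$ generates $M$ we get $N=M$. So every proper $N$ lies in $M'=\bigoplus_{d<0}M_{c+d}$. A small point to check is that the eigenvalues $c+d$ with $d<0$ are distinct from $c$; this holds because the eigenvalues $d$ are real and negative.

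In the odd case $\oline{\g u}\sseq\g g_\ood$, there is no grading element, so I would use the filtration by PBW degree instead. Since $[\oline{\g u},\oline{\g u}]\sseq\oline{\g u}\cap\g g_\eev=0$, $\oline{\g u}$ is purely odd with zero bracket, so $U(\oline{\g u})=\Lambda(\oline{\g u})$ is finite dimensional and $M\cong\Lambda(\oline{\g u})\otimes W$. The key step, and the main obstacle, is to use the socle rather than the top: for $N\neq 0$, pick $0\neq n\in N$ and multiply by suitable elements of $\oline{\g u}$ to land in the nonzero space $\Lambda^{\mathrm{top}}(\oline{\g u})\otimes W$. This shows that every nonzero submodule meets $\Lambda^{\mathrm{top}}\otimes W$, which gives a unique minimal submodule. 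That is not yet the statement, so to get the maximal submodule I would dualize. I expect $\Ind$ to be isomorphic to a coinduced module, up to twisting by the one-dimensional character $\Lambda^{\mathrm{top}}\oline{\g u}$ of $\g l$, and the unique simple submodule of the coinduced module translates to a unique simple quotient of $\Ind$. Alternatively, I would try the direct argument: $\g u$ acts on $1\otimes W$ by zero, and a proper submodule $N$ must satisfy $N\cap(1\otimes W)=0$. I would then try to show, by an $\g l$-weight or degree argument on the top PBW component, that the projection of $N$ to $\Lambda^0\otimes W$ vanishes. This step is where I anticipate difficulty, and the coinduction duality is the route I would try first.
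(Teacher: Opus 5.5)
Your treatment of the hyperbolic case is correct and matches the paper's argument: $1\otimes W$ is the $x_\circ$-eigenspace with eigenvalue $c$, and all other eigenvalues are $c+d$ with $d<0$. A proper submodule is graded, so it lies in the sum of the lower eigenspaces.

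The case $\overline{\mathfrak u}\subseteq\mathfrak g_{\bar 1}$, however, is left unproved, and this is a genuine gap. Your socle observation (every nonzero submodule contains $\Lambda^{\mathrm{top}}(\overline{\mathfrak u})\otimes W$) is correct, but it concerns the wrong end of the module. The coinduction route does not repair this as sketched. An isomorphism $\mathrm{Ind}(W)\cong\mathrm{Coind}(W\otimes\chi)$ carries socles to socles. To turn a statement about simple submodules into one about simple quotients, you would also need the contravariant identification $\mathrm{Ind}(W)^*\cong\mathrm{Coind}(W^*)$. Neither isomorphism is established. Moreover, the duality step only works cleanly for finite-dimensional $W$ (otherwise $W^*$ need not be irreducible), whereas the lemma is stated for every $[W]\in\mathrm{Irr}(\mathrm{Rep}(\mathfrak l))$. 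The real obstruction is the one you name: without a grading element, $N\cap(1\otimes W)=0$ does not force the projection of $N$ onto $\Lambda^0(\overline{\mathfrak u})\otimes W$ to vanish.

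The missing idea is a short Shapovalov-type invertibility argument, which is what the paper uses.

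\begin{itemize}
\item Let $N$ be a submodule not contained in $\Lambda^{\geq 1}(\overline{\mathfrak u})\otimes W$. Pick a homogeneous $n\in N$ whose degree-zero component $1\otimes v_0$ is nonzero.
\item Write $n=\sum_I y_I\otimes v_I$ for a basis $\{y_I\}$ of $\Lambda(\overline{\mathfrak u})$. By irreducibility of $W$, each $v_I=a_Iv_0$ with $a_I\in\mathfrak U(\mathfrak l)$. Hence $n=(1+u)(1\otimes v_0)$ with $u=\sum_{I\neq\emptyset}y_Ia_I$.
\item Since $\overline{\mathfrak u}$ is odd and abelian and $[\mathfrak l,\overline{\mathfrak u}]\subseteq\overline{\mathfrak u}$, the subspace $J:=\Lambda^{\geq 1}(\overline{\mathfrak u})\,\mathfrak U(\mathfrak l)$ is a two-sided ideal of $\mathfrak U(\mathfrak l\oplus\overline{\mathfrak u})$. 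It satisfies $J^{\dim\overline{\mathfrak u}+1}=0$.
\item Therefore $1+u$ is invertible, and $1\otimes v_0=(1+u)^{-1}n\in N$. Since $1\otimes v_0$ generates $\mathrm{Ind}_{\mathfrak q}^{\mathfrak g}(W)$, $N$ is not proper.
\end{itemize}

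Hence every proper submodule lies in $\Lambda^{\geq 1}(\overline{\mathfrak u})\otimes W$. This is exactly the complement $M'$ your framework calls for, and it needs no duality and no finite-dimensionality of $W$.
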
 
\begin{proof}
A proper submodule $M$ of  $\Ind_{\g q}^{\g g}(W)=\mathfrak U(\oline{\g u})\otimes_\C W$ satisfies $M\cap (1\otimes W)=\{0\}$. Our strategy is to show that there is a proper subspace of $\Ind_{\g q}^{\g g}(W)$ that contains  every proper submodule; hence  the sum of proper submodules remains proper.

First assume that $\oline{\g u}\sseq \g g_\ood$. We follow an argument originally due to Shapovalov. It suffices to  prove that every proper submodule of $\Ind_{\g q}^{\g g}(W)$ is contained in  $\mathfrak U(\oline{\g u})_+\otimes_\C W$, where $\mathfrak U(\oline{\g u})_+$ is the augmentation ideal of $\mathfrak U(\oline{\g u})$. To this end, we prove that any element of $\Ind_{\g q}^{\g g}(W)$ outside $\mathfrak U(\oline{\g u})_+\otimes_\C W$ generates $\Ind_{\g q}^{\g g}(W)$.  Take such an element 
$w\in \Ind_{\g q}^{\g g}(W)$. 
From $\oline{\g u}\sseq \g g_\ood$ it follows that $\g U(\oline{\g u})$ is isomorphic to the exterior algebra on $\oline{\g u}$, hence $\g U(\oline{\g u})$ has a natural $\Z$-grading.
By irreducibility of $W$ we can express $w$ as $w=\sum_{k=0}^N w_k$ where $w_0\neq 0$ and $w_k\in \mathfrak U(\oline{\g u})^{(k)}\mathfrak U(\g l)w_0 $
for $k\geq 1$. Thus \[
w=(1+u_1+\cdots +u_N)w_0,
\] where $u_i\in \mathfrak U(\oline{\g u})^{(k)}\mathfrak U(\g l)$. Now $u:=u_1+\cdots+u_N$ is nilpotent in $\mathfrak U(\g l\oplus \oline{\g u})$, hence $1+u$ is invertible. Thus, the $\g g$-submodule generated by $w$ contains $w_0$, and the latter generates $\Ind_{\g q}^{\g g}(W)$.  

Next assume that the triangular decomposition is obtained by a hyperbolic $x_\circ$. From Remark~\ref{rmk:eigx0} It follows immediately that $\Ind_{\g q}^{\g g}(W)$ is a direct sum of eigenspaces for $x_\circ$.   By Schur's Lemma, $x_\circ$ acts on  the irreducible $\g l$-module $1\otimes W\sseq \Ind_{\g q}^{\g g}(W)$ by a scalar $c\in\C$.
The action of $\oline{\g u}$ moves each eigenspace $\Ind_{\g q}^{\g g}(W)_{c'}$ 
of $\Ind_{\g q}^{\g g}(W)$
with eigenvalue $c'$ into another one with eigenvalue $c''$ where $\Re(c'')<\Re(c')$. Thus, we have $1\otimes W=\Ind_{\g q}^{\g g}(W)_c$. Consequently, 
 every proper submodule  $M\sseq \Ind_{\g q}^{\g g}(W)$ must be a subspace of the sum of $x_\circ$-eigenspaces other than $c$, i.e.,
$
\textstyle M\sseq \bigoplus_{c'\neq c}\Ind_{\g q}^{\g g}(W)_{c'}$. 
The right hand side is a proper subspace of  $\Ind_{\g q}^{\g g}(W)$.
\end{proof}

\begin{dfn}
\label{dfn:gradedtyp}
The triangular decomposition~\eqref{eq:triang-decompo} is called of \emph{graded type} if the following hold:
\begin{itemize}
\item[(i)] $\g g=\bigoplus_{i\in \Z}\g g^{(i)}$ is $\Z$-graded, i.e., each $\g g^{(i)}$ is $\mathbb Z_2$-graded and 
$[\g g^{(i)},\g g^{(j)}]\sseq \g g^{(i+j)}$.

\item[(ii)] $\g l=\g g^{(0)}$, $\g u=\bigoplus_{i\geq 1}\g g^{(i)}$ and 
$\oline{\g u}=\bigoplus_{i\leq -1}\g g^{(i)}$.

\item[(iii)] For some Borel subalgebra $\g b^{(0)}\sseq (\g g^{(0)})_\eev$, if we set $\g b:=\g b^{(0)}\oplus \g u$ then 
$
[\g b_\ood,\g b_\ood]\sseq [\g b_\eev,\g b_\eev]
$ and 
$\left[\g b^{(0)},(\g g^{(1)})_\eev\right]=(\g g^{(1)})_\eev$.
\item[(iv)] $\g g^{(1)}$ generates $\g u$ and $\g g^{(-1)}$ generates $\oline{\g u}$. 

\end{itemize}

\end{dfn}

\begin{prp}
\label{prp:Wq+-inv} Let $\mathrm{Res}_{\g l}^{\g g}:\catC_\g g\to \catC_{\g l}$ be the functor $W\mapsto W^{\g u}$. 
\begin{itemize}
\item[\rm (i)] Assume that the triangular decomposition~\eqref{eq:triang-decompo} is of graded type. Then $\mathrm{Res}_{\g l}^{\g g}$ induces a bijection $\Irr(\catC_\g g)\to \Irr(\catC_\g l)$.  

\item[\rm (ii)] Assume that the triangular decomposition~\eqref{eq:triang-decompo}  is of hyperbolic type.
Then $\mathrm{Res}_{\g l}^{\g g}$ induces an injection $\Irr(\catC_\g g)\to \Irr(\catC_\g l)$.  
\end{itemize}
\end{prp}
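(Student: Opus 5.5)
We need three facts: for $[W]\in\Irr(\catC_\g g)$ the space $W^{\g u}$ is a simple $\g l$-module; $W$ is recovered from $W^{\g u}$ as $\underline{\Ind}_{\g q}^{\g g}(W^{\g u})$, which gives injectivity; and, in the graded case, every simple finite dimensional $\g l$-module arises this way, which gives surjectivity.

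\emph{Simplicity of $W^{\g u}$ and injectivity.} In both cases we exhibit $W^{\g u}$ as an eigenspace of a grading operator. In the hyperbolic case, $x_\circ\in\g l$ acts semisimply on the finite dimensional $W$. Pick an eigenvalue $c$ of maximal real part. Then $\g u$ kills the $c$-eigenspace $W_c$, and $\g l$ preserves it. Conversely, $W=\g U(\oline{\g u})\g U(\g l)\g U(\g u)w$ for any nonzero $w$, and for $w\in W^{\g u}$ this is $\g U(\oline{\g u})\g U(\g l)w$. This forces every $\g u$-invariant vector to lie in the top eigenspace, so $W^{\g u}=W_c$ and $W=\g U(\oline{\g u})W_c$. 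If $M\subsetneq W_c$ were a nonzero $\g l$-submodule, then $\g U(\oline{\g u})M$ would be a $\g g$-submodule whose intersection with $W_c$ is $M$, contradicting simplicity of $W$. Hence $W^{\g u}$ is simple.

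In the graded case, use the grading element $d$ (the derivation acting by $i$ on $\g g^{(i)}$). Since $\g g^{(1)}$ generates $\g u$, a vector is $\g u$-invariant iff it is killed by $\g g^{(1)}$. Take $W'\subseteq W$ a simple $\g l$-submodule annihilated by $\g u$; it exists because $\g u$ acts nilpotently on $W$ by finite dimensionality. Then $W=\g U(\oline{\g u})W'$ carries a compatible $\Z$-grading with top piece $W'$. The same argument as above gives $W^{\g u}=W'$ simple. Here one uses that $\g U(\oline{\g u})$ lowers degree, and that a $\g u$-invariant vector in lower degree would generate a proper submodule.

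In both cases $W$ is a simple quotient of $\Ind_{\g q}^{\g g}(W^{\g u})$. By Lemma~\ref{lem:uniquemaxx0} in the hyperbolic case, or the grading argument in the graded case, this induced module has a unique simple quotient, so $W\cong\underline{\Ind}_{\g q}^{\g g}(W^{\g u})$. This proves injectivity, and also proves (ii).

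\emph{Surjectivity in the graded case.} This is the main obstacle. Given a simple finite dimensional $\g l$-module $E$, we must show $L:=\underline{\Ind}_{\g q}^{\g g}(E)$ is finite dimensional. The plan is to use a Kac-module style bound. Condition (iii) together with (i) should give that the odd part of $\g u$ and $\oline{\g u}$ is controlled by the even part. The even subalgebra $\g g_\eev$ has the parabolic $\g b^{(0)}\oplus\g u_\eev$. Its Levi is $\g l_\eev$, and the condition $[\g b^{(0)},\g g^{(1)}_\eev]=\g g^{(1)}_\eev$ places the $\g g^{(1)}_\eev$ root vectors among the positive roots of a Borel $\g b_\eev$.

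First, $L$ is a quotient of $\Ind_{\g q}^{\g g}(E)$, and it is a highest weight module for the Borel $\g b$ with highest weight $\lambda$ the $\g b^{(0)}$-highest weight of $E$. Second, the condition $[\g b_\ood,\g b_\ood]\sseq[\g b_\eev,\g b_\eev]$ is what guarantees that the finite dimensionality criterion for $\g b$-highest weight modules reduces to dominance of $\lambda$ for $\g b_\eev$. Third, this dominance holds because $E$ is finite dimensional for $\g l$, and because $\g u_\eev$ is generated in degree one with positive roots. Concretely, we would argue that $\lambda$ is $\g g_\eev$-dominant integral via $\mathfrak{sl}_2$ triples: those in $\g l_\eev$ are handled by finite dimensionality of $E$, and those from $\g g^{(\pm1)}_\eev$ need an argument using the grading.

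Alternatively, avoid weight combinatorics: one tries to show that $\g g^{(-1)}$ acts locally nilpotently on $L$ via the grading, so that $L$ is $\g g_\eev$-locally finite and finitely generated, hence finite dimensional. Finally, $L^{\g u}=E$ by the top-degree argument, which completes the bijection in (i).
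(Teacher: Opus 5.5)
Your argument for (ii) is correct and is essentially the paper's. All $\mathfrak u$-invariants lie in a single $x_\circ$-eigenspace; to see this, apply your argument to each eigencomponent of a $\mathfrak u$-invariant vector, since each component is again $\mathfrak u$-invariant. A proper nonzero $\mathfrak l$-submodule $M\subseteq W^{\mathfrak u}$ would generate the proper submodule $\mathfrak U(\overline{\mathfrak u})M$, and injectivity then follows from Lemma~\ref{lem:uniquemaxx0}.

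The genuine gap is surjectivity in (i). The paper does not prove it; it imports it from Kac's Prop.~5.2.5. Your key step claims that the $\mathfrak b^{(0)}$-highest weight of an arbitrary finite dimensional simple $\mathfrak l$-module $E$ is $\mathfrak g_{\bar 0}$-dominant integral, with the $\mathfrak{sl}_2$-triples from $\mathfrak g^{(\pm 1)}_{\bar 0}$ ``handled by the grading''. This cannot be proved from Definition~\ref{dfn:gradedtyp}, and neither can your alternative via local nilpotency of $\mathfrak g^{(-1)}$. Take $\mathfrak g=\mathfrak{sl}_2$ with $\mathfrak g^{(1)}=\mathbb C e$, $\mathfrak g^{(0)}=\mathbb C h$, $\mathfrak g^{(-1)}=\mathbb C f$. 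All four conditions hold: with $\mathfrak b^{(0)}=\mathbb C h$ one has $[\mathfrak b^{(0)},\mathbb C e]=\mathbb C e$ and $\mathfrak b_{\bar 1}=0$. Yet the character $h\mapsto\frac12$ of $\mathfrak l=\mathbb C h$ is not $W^{\mathfrak u}$ for any finite dimensional $W$.

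So the bijection in (i) needs an extra hypothesis such as $\overline{\mathfrak u}\subseteq\mathfrak g_{\bar 1}$. This holds for Kac's consistent gradings $\bigoplus_{k\geq -1}\mathfrak g_k$, and it is the only setting in which the paper uses (i), namely Corollary~\ref{thm:EtypeCplx}. Under this hypothesis the missing idea needs no weights at all. $\overline{\mathfrak u}$ is then abelian, so $\mathfrak U(\overline{\mathfrak u})\cong\Lambda(\overline{\mathfrak u})$ and $\mathrm{Ind}_{\mathfrak q}^{\mathfrak g}(E)\cong\Lambda(\overline{\mathfrak u})\otimes E$ is already finite dimensional. Its unique simple quotient $L(E)$, given by the case $\overline{\mathfrak u}\subseteq\mathfrak g_{\bar 1}$ of Lemma~\ref{lem:uniquemaxx0}, is the required preimage of $E$ once one knows $L(E)^{\mathfrak u}=E$.

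Your graded-case argument for that last fact, and for simplicity of $W^{\mathfrak u}$, also has a gap: it lets the grading derivation $d$ act on $W$. But $d$ need not be inner; for example, for $\mathfrak{spe}(n)$ one has $\mathfrak g^{(0)}\cong\mathfrak{sl}(n)$. So the asserted compatible $\mathbb Z$-grading on $W$, and the ``grading argument'' for a unique simple quotient, are not justified as written. Similarly, nilpotency of $\mathfrak u$ on $W$ does not follow from finite dimensionality alone. It comes from conditions (iii) and (iv), which place $\mathfrak u_{\bar 0}$ and the squares of odd elements of $\mathfrak u$ inside $[\mathfrak b_{\bar 0},\mathfrak b_{\bar 0}]$, so that Lie's theorem applies.

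Replace the grading by the Shapovalov property. Every proper submodule of $\mathrm{Ind}_{\mathfrak q}^{\mathfrak g}(E)$ lies in $\mathfrak U(\overline{\mathfrak u})_+\otimes E$, hence $L(E)=E\oplus\overline{\mathfrak u}L(E)$. Suppose $v=e+n\in L(E)^{\mathfrak u}$ with $e\in E$ and $n\in\overline{\mathfrak u}L(E)$. Then $n$ is $\mathfrak u$-invariant, and $\mathfrak U(\mathfrak g)n=\mathfrak U(\overline{\mathfrak u})\mathfrak U(\mathfrak l)n\subseteq\overline{\mathfrak u}L(E)$, a proper subspace, which forces $n=0$. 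Thus $L(E)^{\mathfrak u}=E$. Applying this to a simple $E\subseteq W^{\mathfrak u}$, so that $W\cong L(E)$, gives simplicity of $W^{\mathfrak u}$, injectivity, and surjectivity at once.
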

\begin{proof}
(i) This is one of the steps in Kac's  classification of irreducible modules of Cartan-type and periplectic Lie superalgebras; see~\cite[Prop 5.2.5]{KacAdvances}. We remark that Kac's arguments hold under the slightly weaker assumptions stated in Definition~\ref{dfn:gradedtyp}. 

(ii) 
This is standard, but for the reader's convenience we sketch the argument. 
Assuming that $W^{\g u}$ is irreducible, injectivity of  $\Irr(\catC_\g g)\to\Irr(\catC_\g l)$ follows from Lemma~\ref{lem:uniquemaxx0} and 
the isomorphism $W\cong \underline{\mathrm{Ind}}_{\g l}^{\g g}(W^{\g u})$. 
Next we prove that $W^\g u$ is an irreducible $\g l$-module. 
Suppose that there exists a proper irreducible $\g l$-submodule $W'\sseq W^{\g u}$. By Schur's Lemma, $x_\circ$ acts on $W'$ by a scalar $c\in\C$. By Remark~\ref{rmk:eigx0}, $W$ (and hence $W^{\g u}$) is a direct sum of $x_\circ$-eigenspaces. 
For  any $c'\in\C$, let $W_{c'}$ be the $c'$-eigenspace of $x_\circ$ on $W$.
Every $W'':=W_{c'}\cap W^{\g u}\sseq W^{\g u}$  is an $\g l$-module and by the PBW Theorem if $W''\neq 0$ then  we have 
$W=\g U(\g g)W''=\g U(\oline{\g u})W''$. It follows that the set of $x_\circ$-eigenvalues on $W$ is a subset of $\left\{c'-t\,:t\in\R^{\geq 0}\right\}$ that contains $c'$. From this, it follows that $x_\circ$ acts on all of $W^{\g u}$ by the same scalar $c$. But then $
\mathfrak U(\g g)W'=\mathfrak U(\oline{\g u})W'\sseq W'\oplus\bigoplus_{t>0}W_{c-t}$,
and the right hand side is a proper subspace of $W$. This contradicts irreducibility of $W$. 
 \end{proof}

\begin{cor}
\label{thm:EtypeCplx}
Assume that the $\tau$-stable triangular decomposition~\eqref{eq:triang-decompo} is either of hyperbolic type, or of  graded type and  $\oline{\g u}\sseq \g g_\ood$.
 Then 
for $[W]\in\Irr(\catC_{\g g})$, we have $\EType_{\g g,\tau}(W)= \EType_{\g l,\tau}(W^{\g u})$. 
\end{cor}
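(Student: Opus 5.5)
The plan is to reduce the corollary to Theorem~\ref{thm:EtypeCplx1}. That theorem needs two hypotheses for every $[W]\in\Irr(\catC_\g g)$: that $W^{\g u}$ is an irreducible $\g l$-module, and that $\Ind_{\g q}^{\g g}W^{\g u}$ has a unique maximal proper $\g g$-submodule. The $\tau$-stability of the triangular decomposition is part of the assumptions, so nothing else is required. I will check the two hypotheses separately in the two cases of the statement.

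\emph{Hyperbolic type.} Irreducibility of $W^{\g u}$ is proved inside the proof of Proposition~\ref{prp:Wq+-inv}(ii). There, an argument with $x_\circ$-eigenvalues shows that $x_\circ$ acts on $W^{\g u}$ by a single scalar. It follows that any proper $\g l$-submodule would generate a proper $\g g$-submodule, which is impossible. Once $W^{\g u}$ is known to be irreducible, the hyperbolic case of Lemma~\ref{lem:uniquemaxx0} gives the unique maximal proper submodule of $\Ind_{\g q}^{\g g}W^{\g u}$. So both hypotheses of Theorem~\ref{thm:EtypeCplx1} hold, and that theorem yields the claim.

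\emph{Graded type with $\oline{\g u}\sseq\g g_\ood$.} Proposition~\ref{prp:Wq+-inv}(i), which rests on Kac's argument, shows that $W\mapsto W^{\g u}$ gives a bijection $\Irr(\catC_\g g)\to\Irr(\catC_\g l)$. In particular $W^{\g u}$ is irreducible for every simple $W$. Since $\oline{\g u}\sseq\g g_\ood$, the Shapovalov-type case of Lemma~\ref{lem:uniquemaxx0} gives the unique maximal proper submodule of $\Ind_{\g q}^{\g g}W^{\g u}$. Theorem~\ref{thm:EtypeCplx1} then applies as before.

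The main obstacle is in the graded case: I must make sure that Proposition~\ref{prp:Wq+-inv}(i) really gives irreducibility of $W^{\g u}$ itself, and not only a bijection on isomorphism classes. I would settle this with Kac's argument. Because $\g g^{(1)}$ generates $\g u$, the space $W^{\g u}$ is the top graded component of $W$ with respect to the $\Z$-grading induced from $\g g$, and that top component is irreducible for $\g g^{(0)}$ whenever $W$ is simple. Indeed, $W\cong \underline{\Ind}_{\g q}^{\g g}(W')$ for an irreducible $\g l$-submodule $W'$, and the grading forces $W^{\g u}=W'$. The remaining steps are direct citations of earlier results.
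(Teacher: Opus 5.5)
Your proposal is correct and follows the paper's argument: the paper's proof notes that Proposition~\ref{prp:Wq+-inv} (irreducibility of $W^{\g u}$) and Lemma~\ref{lem:uniquemaxx0} (a unique maximal proper submodule of the induced module) verify the hypotheses of Theorem~\ref{thm:EtypeCplx1} in both cases. Your extra worry about the graded case is already settled by the statement of Proposition~\ref{prp:Wq+-inv}(i), which asserts that $W\mapsto W^{\g u}$ lands in $\Irr(\catC_{\g l})$, so no further argument is needed there.
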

\begin{proof}
By Proposition~\ref{prp:Wq+-inv} and Lemma~\ref{lem:uniquemaxx0},  the hypotheses of 
Theorem~\ref{thm:EtypeCplx1} are satisfied.
\end{proof}

\subsection{The reductive reduction}
\label{subsec:Liered}
Recall that $\g f_\g g\sseq\g g$ denotes the common kernel of the irreducible finite dimensional $\g g$-modules, and set $\g g':=\g g/\g f_\g g$. We denote the image of $[W]\in\Irr(\catC_\g g)$ under the canonical 
 map 
$\Irr(\catC_\g g)\to\Irr(\catC_{\g g'})$ 
 by $[W]$. We have  $\tau(\g f_\g g)=\g f_\g g$ and thus $\tau$ induces an  antilinear involution $\tau':\g g'\to\g g'$. The following assertion is trivial.
\begin{prp}
\label{thm:triv-reduction} For $[W]\in\Irr(\catC_\g g)$ we have $\EType_{\g g,\tau}(W)=\EType_{\g g',\tau'}(W)$. 
\end{prp}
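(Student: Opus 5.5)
The plan is to reduce the statement to Theorem~\ref{thm:Table}: the endotype of $[W]$ is determined by its symmetry datum $\mathfrak S_W$. It therefore suffices to show that $\mathfrak S_W$ computed in $\catC_\g g$ coincides with $\mathfrak S_W$ computed in $\catC_{\g g'}$. Here $W$ is regarded as a $\g g'$-module via the quotient map $p:\g g\to\g g'$.

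First I check that $\tau(\g f_\g g)=\g f_\g g$, so that $\tau'$ is well defined. If $W$ is irreducible, then $\funcB_{\g g,\tau}W$ is irreducible, and by Remark~\ref{remark:antilin-intertwiner} $x$ acts on $\funcB_{\g g,\tau}W$ as $\tau(x)$ acts on $W$. Hence $x\in\g f_\g g$ implies that $\tau(x)$ kills every $\funcB_{\g g,\tau}W$. Since $\funcB$ is an involution on $\Irr(\catC_\g g)$, the modules $\funcB_{\g g,\tau}W$ run over all of $\Irr(\catC_\g g)$, and so $\tau(x)\in\g f_\g g$. The same argument shows that $\Pi$ and $\funcB$ commute with the inflation functor $\mathrm{Inf}:\catC_{\g g'}\to\catC_\g g$ along $p$. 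Indeed, $\tau'\circ p=p\circ\tau$, so $\funcB_{\g g,\tau}(\mathrm{Inf}\,W)=\mathrm{Inf}(\funcB_{\g g',\tau'}W)$ as modules on the same underlying space, and trivially $\Pi\,\mathrm{Inf}=\mathrm{Inf}\,\Pi$.

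Next, inflation is fully faithful. A linear map between $\g g'$-modules commutes with $\g g'$ if and only if it commutes with $\g g$, because $p$ is surjective. The same holds for the antilinear, even or odd, intertwiners $\phi$ satisfying $\phi(x\cdot w)=\tau(x)\cdot\phi(w)$. Consequently
\[
\wHom_{\catC_{\g g}}(W,\funcB_{\g g,\tau}W)=\wHom_{\catC_{\g g'}}(W,\funcB_{\g g',\tau'}W),
\]
and likewise for $\Pi W$ and $\Pi\funcB W$, with the same composition. In particular, the scalars $\bfc(\phi)$, and hence the signs $\sgn(\phi)$, agree. Therefore the two symmetry data are equal. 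Irreducibility is preserved in both directions, since any $\g g$-submodule of $W$ is a $\g g'$-submodule. Theorem~\ref{thm:Table} then gives $\EType_{\g g,\tau}(W)=\EType_{\g g',\tau'}(W)$.

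There is no real obstacle. The only point that needs care is the $\tau$-stability of $\g f_\g g$, which is what allows the conjugation functors to be compared, and this is handled in the first step.

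Alternatively, one can argue directly at the real level. Every irreducible real $\g g^\R$-module $V$ is a summand of $\funcF(W)$ for some irreducible $W$, so it is annihilated by $\g f_\g g\cap\g g^\R$. Hence $\catR_{\g g,\tau}$ and $\catR_{\g g',\tau'}$ have the same simple objects with the same enriched endomorphism superalgebras $\wEnd(V)$, and the endotypes coincide by definition.
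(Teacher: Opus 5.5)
Your proposal is correct. The paper calls this proposition trivial and gives no proof, and your argument is the routine verification it leaves implicit. The real-level version in your last paragraph is the most economical: the endotype is by definition read off from $\wEnd_{\catR}(V)$ for $[V]=\Phi([W])$, and this is literally unchanged when passing to $\g g^\R/(\g f_{\g g}\cap\g g^\R)=(\g g')^{\tau'}$. That equality holds because $\g f_{\g g}$ is $\tau$-stable. Your check of $\tau(\g f_{\g g})=\g f_{\g g}$, via $\funcB$ being an involution on $\Irr(\catC_{\g g})$, also justifies a claim the paper states without proof.
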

In the rest of this subsection we assume that $\g g=\g g_\eev$. Let $\g r$ denote the solvable radical of $\g g$, i.e., the sum of solvable ideals of $\g g$. Then $\g r$ is $\tau$-stable and $\g g/\g r$ is semisimple. Furthermore, $[\g g,\g r]$ is a $\tau$-stable ideal of $\g g$. 
\begin{lem}
$\g g/[\g g,\g r]$ is reductive.
\end{lem}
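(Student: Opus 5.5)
We must show that $\bar{\g g}:=\g g/[\g g,\g r]$ is reductive, i.e.\ that its radical coincides with its center. The plan is to identify the radical of $\bar{\g g}$ with the image $\bar{\g r}:=\g r/[\g g,\g r]$, and then observe that $\bar{\g r}$ is central.

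First, $\bar{\g r}$ is a solvable ideal of $\bar{\g g}$. The quotient $\bar{\g g}/\bar{\g r}\cong \g g/\g r$ is semisimple. Hence the radical $\mathrm{rad}(\bar{\g g})$ has semisimple image in $\g g/\g r$, and being solvable this image is zero. So $\mathrm{rad}(\bar{\g g})\sseq\bar{\g r}$, and therefore $\mathrm{rad}(\bar{\g g})=\bar{\g r}$.

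Next, $[\bar{\g g},\bar{\g r}]$ is the image of $[\g g,\g r]$, which is zero. So $\bar{\g r}$ lies in the center $\g z(\bar{\g g})$. Conversely, the center is an abelian, hence solvable, ideal, so $\g z(\bar{\g g})\sseq\mathrm{rad}(\bar{\g g})=\bar{\g r}$. Thus $\mathrm{rad}(\bar{\g g})=\g z(\bar{\g g})$.

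It remains to conclude reductivity, using the standard characterization that a Lie algebra whose radical equals its center is reductive. Concretely, Levi's theorem gives $\bar{\g g}=\g s\ltimes\bar{\g r}$ with $\g s$ semisimple. Since $\bar{\g r}$ is central, the semidirect product is a direct product, $\bar{\g g}=\g s\oplus\bar{\g r}$, which is semisimple plus abelian and hence reductive.

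There is no real obstacle here. The only point to state carefully is the identification $\mathrm{rad}(\bar{\g g})=\bar{\g r}$, which uses the semisimplicity of $\g g/\g r$ as above. Over $\C$ (or $\R$) Levi's theorem applies, so the argument works in either setting.
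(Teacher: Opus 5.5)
Your proof is correct and is essentially the paper's argument: both use Levi's theorem together with the observation that the image of $\mathfrak r$ in $\mathfrak g/[\mathfrak g,\mathfrak r]$ is central, so the semidirect product splits as semisimple plus abelian. The only difference is that the paper applies Levi to $\mathfrak g$ itself and pushes $\mathfrak g=\mathfrak s\ltimes\mathfrak r$ down to $\mathfrak s\ltimes(\mathfrak r/[\mathfrak g,\mathfrak r])$, which makes your separate identification of the radical of the quotient unnecessary.
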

\begin{proof}
By Levi's theorem, we have a semidirect sum decomposition $\g g=\g s\ltimes \g r$ where $\g s$ is semisimple. Thus  $\g g/[\g g,\g r]\cong \g s\ltimes (\g r/[\g g,\g r])$. Clearly $\g r/[\g g,\g r]$ is an abelian central ideal of $\g g/[\g g,\g r]$. Hence $\g g/[\g g,\g r]$ is reductive.  
\end{proof}

\begin{lem}
\label{lem:[r,r]triv}
Let $W$ be an irreducible $\g g$-module. Then $[\g g,\g r]$ acts on $W$ trivially.   
\end{lem}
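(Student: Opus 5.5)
The plan is to reduce to Lie's theorem applied to the solvable ideal $\g r$, together with the classical fact (a consequence of Lie's theorem and the invariance lemma) that $[\g g,\g r]$ acts nilpotently in every finite dimensional $\g g$-module. Here $\g g=\g g_\eev$ is an ordinary complex Lie algebra and $W$ is a finite dimensional irreducible $\g g$-module.

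\textbf{Step 1: a common weight space.} By Lie's theorem applied to the solvable Lie algebra $\g r$ acting on $W$, there exists a linear functional $\lambda\in\g r^*$ such that the weight space
\[
W_\lambda:=\{w\in W\,:\,y\cdot w=\lambda(y)w\text{ for all }y\in\g r\}
\]
is nonzero.

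\textbf{Step 2: the weight space is $\g g$-stable.} Since $\g r$ is an ideal of $\g g$, the standard invariance lemma (as used in the proof of Lie's theorem) shows that $W_\lambda$ is $\g g$-stable, and that $\lambda([\g g,\g r])=0$. For completeness, recall the argument. Fix $x\in\g g$, $0\ne w\in W_\lambda$, and $y\in\g r$. Consider the span $U$ of $w, xw, x^2w,\dots$. On $U$, the element $y$ acts upper-triangularly with diagonal entries $\lambda(y)$, because $[x,y]\in\g r$. Taking the trace of $[x,y]$ on $U$ gives $\dim U\cdot\lambda([x,y])=0$, so $\lambda([x,y])=0$. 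It then follows that $y(xw)=x(yw)+[y,x]w=\lambda(y)xw$, so $xw\in W_\lambda$.

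\textbf{Step 3: conclusion.} By irreducibility of $W$ and Step 2, we have $W=W_\lambda$. Hence every $y\in\g r$ acts on $W$ by the scalar $\lambda(y)$. In particular, every element of $[\g g,\g r]$ acts by $\lambda([\g g,\g r])=0$, that is, trivially.

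The only delicate point is the trace computation in Step 2, which uses characteristic zero; this is automatic over $\C$. I do not expect any real obstacle here.
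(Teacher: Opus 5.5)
Your proof is correct and follows essentially the same route as the paper. Lie's theorem gives a nonzero weight space $W_\lambda$ for $\g r$, the invariance lemma makes it $\g g$-stable, and a trace-of-commutator argument gives $\lambda([\g g,\g r])=0$. The differences are cosmetic: you take the trace on the cyclic subspace inside the invariance lemma rather than on $W_\lambda$ afterwards, and you conclude via $W=W_\lambda$ rather than via the submodule of vectors killed by $[\g g,\g r]$. The nilpotency fact mentioned in your plan is never actually used.
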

\begin{proof}
 By Lie's theorem, every irreducible $\g r$-module is one-dimensional. Thus, there exists $\lambda\in\g r^*$ such that the subspace $
W_\lambda:=\{w\in W\,:\,x\cdot w=\lambda(x)w\text{ for all }x\in\g r\}$
 is nonzero. For $x\in\g r$, we have $\tr(x|_{W_\lambda})=\lambda(x)\dim W_\lambda$. By a  well-known lemma from the proof of Lie's theorem (for example see \cite[Lemma 9.13]{FultonHarris1991}), $W_\lambda$ is $\g g$-invariant. But then the elements of $[\g g,\g r]$ are sums of commutators of linear maps on $W_\lambda$, hence their traces (as linear maps on $W_\lambda$) vanish. This proves that $\lambda(x)\dim W_\lambda=0$ for $x\in[\g g,\g r]$, hence $\lambda(x)=0$. In other words, $[\g g,\g r]$ acts by zero on $W_\lambda$. Next set $W':=\{w\in W\,:\,x\cdot w=0\text{ for all }x\in [\g g,\g r]\}$. Then $W_\lambda\sseq W'$, and $W'$ is a $\g g$-submodule (since $[\g g,\g r]$ is an ideal of $\g g$), hence $W'=W$.     
\end{proof}
Since $[\g g,\g r]$ is $\tau$-stable, $\tau$ induces an antilinear involution on $\g g/[\g g,\g r]$, which we will also denote by $\tau$. The following statement is an immediate consequence of Lemma~\ref{lem:[r,r]triv}. 
\begin{thm}
Set $\g g_\mathrm{red}:=\g g/[\g g,\g r]$. For $[W]\in\Irr(\catC_\g g)$ we have $\EType_{\g g,\tau}(W)=\EType_{\g g_\mathrm{red},\tau}(W)$. 
\end{thm}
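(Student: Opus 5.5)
By Lemma~\ref{lem:[r,r]triv}, every irreducible finite dimensional $\g g$-module is annihilated by $[\g g,\g r]$. The plan is to pass to the quotient by this ideal and check that nothing in the definition of the endotype changes. Set $\g h:=[\g g,\g r]$ and $p:\g g\to\g g_\mathrm{red}=\g g/\g h$. Since $\tau(\g h)=\g h$, the induced antilinear involution on $\g g_\mathrm{red}$ (still denoted $\tau$) satisfies $p\circ\tau=\tau\circ p$. Its real form is $\g g_\mathrm{red}^\R=\g g^\R/\g h^\R$, where $\g h^\R:=\g h\cap\g g^\R$ is a real form of $\g h$.

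The first step is to identify the relevant categories on both the complex and the real side.

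1. Pulling back along $p$ identifies $\catC_{\g g_\mathrm{red}}$ with the full subcategory of $\catC_\g g$ consisting of modules on which $\g h$ acts by zero. Every irreducible of $\catC_\g g$ lies in this subcategory, so inflation gives a bijection $\Irr(\catC_{\g g_\mathrm{red}})\to\Irr(\catC_\g g)$.
2. Similarly, $\catR_{\g g_\mathrm{red},\tau}$ is the full subcategory of $\catR_{\g g,\tau}$ of real modules killed by $\g h^\R$.
3. For $[V]\in\Irr(\catR_{\g g,\tau})$, the complexification $\funcE(V)$ is a complex $\g g$-module which, by Theorem~\ref{thm:Table} (or Theorem~\ref{thm:Irr=Irr}), is $W$ or $W\oplus\funcB W$ for an irreducible $W$.
4. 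Since $\g h$ kills $W$ and $\funcB W$ (the latter because $\tau(\g h)=\g h$), it kills $\funcE(V)$, and hence $\g h^\R$ kills $V\subseteq\funcF\funcE(V)$.

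So every real irreducible $\g g^\R$-module also factors through $\g g_\mathrm{red}^\R$.

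The second step is to compare the morphism spaces. The functors $\Pi$ and $\funcB$ preserve these full subcategories; for $\funcB$ this again uses $\tau(\g h)=\g h$, since the action on $\funcB W$ is through $\tau$ (Remark~\ref{remark:antilin-intertwiner}). Because the subcategories are full, $\wHom_{\catC_\g g}(W,\funcB W)=\wHom_{\catC_{\g g_\mathrm{red}}}(W,\funcB W)$, and likewise for $\wEnd$ and for $\Pi W$. The squares $\phi^2$, and hence the signs $\sgn(\phi)$, agree. Therefore the symmetry datum $\mathfrak S_W$ is the same whether $W$ is regarded as a $\g g$-module or a $\g g_\mathrm{red}$-module. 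Theorem~\ref{thm:Table} then gives $\EType_{\g g,\tau}(W)=\EType_{\g g_\mathrm{red},\tau}(W)$.

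Alternatively, one can argue directly: $\Phi(W)$ is the same real module in both settings, and $\wEnd_{\catR}(\Phi(W))$ coincides by fullness. This is essentially Proposition~\ref{thm:triv-reduction} applied with $\g h\subseteq\g f_\g g$.

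The only mildly delicate point is the real side in the first step: showing that $\g h^\R$ acts trivially on real irreducibles, and that the real form of the quotient is $\g g^\R/\g h^\R$. The first is handled by complexifying as above. The second holds because $\g h$ is $\tau$-stable, so $\g h=\g h^\R\oplus i\g h^\R$.
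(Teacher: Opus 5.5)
Your proof is correct and follows the paper's route. The paper states the theorem as an immediate consequence of Lemma~\ref{lem:[r,r]triv}, in the same trivial spirit as Proposition~\ref{thm:triv-reduction}. Your argument spells that reduction out: irreducibles factor through the quotient on both the complex and real sides, and fullness then makes the enriched morphism spaces agree, so the symmetry data, and hence the endotypes, coincide.
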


\section{Endotypes for realified complex Lie superalgebras}
\label{sec:realified}

The goal of this section is to prove Theorem~\ref{thm:W'W''D}, which classifies endotypes of real superalgebras that have a complex structure. Given a complex Lie superalgebra $\g g$,  
its complex conjugate $\oline{\g g}$  is the Lie superalgebra with the same superbracket but equipped with the conjugate structure as a complex vector space.  If $W$ is a $\g g$-module, then $\oline{W}$ is a $\oline{\g g}$-module, and vice versa.  
We have a $\C$-linear isomorphism
\[
\g g\otimes_\R\C\to \g g\oplus \oline{\g g}\ ,\ 
x\otimes z\mapsto (zx,z \bar x),
\]
where to avoid confusion we denote elements of $\oline{\g g}$ by $\bar x$ for $x\in\g g$. 
The Lie superalgebra $\g g$ is the real form of $\g g\oplus\oline{\g g}$ corresponding to $\tau(x,\bar y)=(y,\bar x)$.

\begin{details}
Given an irreducible $\g g$-module $W$ with $\mathbb D:=\End_{\g g}^s(W)$, for clarity we always assume that the latter isomorhism is $\C$-linear. This is related to the following subtlety: if we set $\mathbb D':=\End_{\oline{\g g}}^s(\oline W) $, then $\mathbb D\cong \mathbb D'$ but this isomorphism is \emph{antilinear}: the point is that the scalars $z\in \mathbb D_\eev\cong \C$ act on $\oline{W}$ as scalar multiplication by $\bar z$, not $z$. 
\end{details}

\begin{rmk}
\label{rmk:(-1)|x|isom}Recall that for a $\g g$-module $W$, we use $^\sigma W$ to denote the twist of $W$ by the automorphism $x\mapsto (-1)^{|x|} x$ of $\g g$. Then  $^\sigma W\cong W$ as $\g g$-modules via the map $w\mapsto (-1)^{|w|}w$.
\end{rmk}
\begin{rmk}
\label{rmk:PiWW}Let $W'$ and $W''$ be modules for Lie superalgebras $\g a$ and $\g b$, and let $\mathbb D$ be defined as in~\eqref{eq:DWAWB=}. Then we have  $\g a\oplus\g b$-module isomorphisms
$W'\otimes_\mathbb D W''\cong \Pi W'\otimes_\mathbb D\Pi W''$ given by $v\otimes w\mapsto (-1)^{|w|}v\otimes w$, and  $\Pi(W'\otimes_{\mathbb D} W'') \cong W'\otimes_{\mathbb D} \Pi W''$ given by the identity map. 
\end{rmk}

\begin{details}
For the first isomorphism:\[
\xymatrix{\ar[d] v\otimes w\ar[r]& 
 (-1)^w v\otimes w\ar[d]\\
(-1)^{yv}xv\otimes yw \ar[r] & (-1)^{yv+y+w}xv\otimes yw 
 }
\]
For the second isomorphism, we note that since $\Pi$ is on $W''$, there is no effect on sign changes. 
\end{details}

In the rest of this section we assume that $[W]\in\Irr(\catC_{\g g\oplus\oline{\g g}})$.  By Proposition~\ref{prp:W=WAWB} we have  $W= W'\otimes_{\mathbb D}W''$ where $W'$ and $W''$ are irreducible modules of $\g g$ and $\oline{\g g}$, respectively. 

\begin{lem}
\label{lem:PiSigmaWW}
$\Pi\in\mathfrak S_W$ if and only if $\Pi$ belongs to exactly one of $\mathfrak S_{W'}$ and $\mathfrak S_{W''}$. 
\end{lem}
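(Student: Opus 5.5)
We would prove the lemma by reducing the condition $\Pi\in\mathfrak S_W$ to a statement about the types ($\mathtt M$ or $\mathtt Q$) of the factors $W'$ and $W''$. The basic observation is the following. For an irreducible module $X$ of a Lie superalgebra, we have $X\cong\Pi X$ if and only if $X$ admits an odd automorphism. By Schur's Lemma this is equivalent to $\End^s(X)\cong\mathrm Q(1)$, that is, $X$ is of type $\mathtt Q$. So $\Pi\in\mathfrak S_X$ if and only if $\mathbb D_X\cong\mathrm Q(1)$. Applied to $W$, $W'$ and $W''$, the lemma then amounts to the claim that $W=W'\otimes_{\mathbb D}W''$ is of type $\mathtt Q$ if and only if exactly one of $W'$, $W''$ is of type $\mathtt Q$.

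To prove this claim we separate three cases, following the definition of $\mathbb D$ in~\eqref{eq:DWAWB=}.

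\emph{Both factors of type $\mathtt M$.} Then $\mathbb D=\C$ and $W=W'\otimes W''$. By Remark~\ref{rmk:Abar=}, the image of $\mathfrak U(\g g\oplus\oline{\g g})$ in $\End(W)$ is $\Mat_{m|n}(\C)\otimes\Mat_{m'|n'}(\C)\cong\Mat(\C)$. Hence $W$ is of type $\mathtt M$.

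\emph{Exactly one factor of type $\mathtt Q$, say $W'$.} We again have $\mathbb D=\C$. The image algebra is now $\mathrm Q(n)\otimes\Mat_{m|n'}(\C)$, which is isomorphic to some $\mathrm Q(N)$. Hence $W$ is of type $\mathtt Q$. Concretely, if $J$ is an odd automorphism of $W'$, then $J\otimes 1$, with the appropriate sign twist $v\otimes w\mapsto (-1)^{|w|}Jv\otimes w$, is an odd automorphism of $W$. This matches the identification $\Pi(W'\otimes W'')\cong W'\otimes\Pi W''\cong\Pi W'\otimes \Pi W''$ from Remark~\ref{rmk:PiWW}, which shows that $\Pi W\cong W$ whenever either factor satisfies $\Pi W'\cong W'$ (and $\mathbb D=\C$).

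\emph{Both factors of type $\mathtt Q$.} Here $\mathbb D=\mathrm Q(1)$. As recorded in the proof of Proposition~\ref{prp:W=WAWB}, $W'\otimes W''\cong W\oplus\Pi W$, and $W$ is of type $\mathtt M$, because $\mathrm Q(n)\otimes\mathrm Q(n')\cong\Mat_{N|N}(\C)\otimes\C^{1|1}$-type, that is, a matrix superalgebra tensored with a Clifford algebra on two generators. So $\Pi\notin\mathfrak S_W$.

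The main obstacle is this last case. We need $W\not\cong\Pi W$, which follows from $\End^s(W)\cong\C$. Equivalently, we need that $\mathrm Q(n)\otimes\mathrm Q(n')$ is a matrix superalgebra $\Mat_{N|N}(\C)$, whose simple module $\C^{N|N}$ has only even endomorphisms. We would verify this via the isomorphism $\mathrm Q(1)\otimes\mathrm Q(1)\cong \mathrm{Cl}_2\cong\Mat_{1|1}(\C)$. Once it is in place, the three cases combine to give the lemma.
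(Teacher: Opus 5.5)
Your proposal is correct and follows the paper's argument. The paper also reduces $\Pi\in\mathfrak S_W$ to $W$ being of type $\mathtt Q$ (i.e.\ $\End^s(W)\cong\mathrm Q(1)$), then cites Cheng--Wang for the rules on tensor products of type $\mathtt M$/$\mathtt Q$ modules, which you instead verify directly via the image superalgebras $\Mat_{m|n}(\C)$ and $\mathrm Q(n)$. One cosmetic fix: in the last case the product should read $\mathrm Q(n)\otimes\mathrm Q(n')\cong\Mat_{nn'}(\C)\otimes\mathrm{Cl}_2(\C)\cong\Mat_{nn'|nn'}(\C)$, which is what your final sentence actually uses.
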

\begin{proof}
Note that  $\Pi\in\mathfrak S_W$ if and only if 
$
\End_{\g g\oplus {\oline{\g g}}}(W'\otimes_{\mathbb D}W'')\cong \mathrm{Q}(1)$.
The assertion follows from  general facts about tensor products of modules of types $\mathtt{M}$ and $\mathtt{Q}$~\cite[Sec. 3.1.3]{ChengWang2012}.
\end{proof}
\begin{lem}
\label{lem:TT0T1}
Assume that $W \cong \funcB_{\g g\oplus\oline{\g g},\tau}W$, and the isomorphism corresponds to the even antilinear map $T: W'\otimes_\mathbb D W''\to W'\otimes_\mathbb D W''$. Fix a nonzero $w_0\in W''_\eev$. Then there exist $v_0\in W'_\eev$, $v_1\in W'_\ood$, an even antilinear map $T_0:W'\to W''$, and an odd antilinear map $T_1:W'\to W''$ satisfying the following:
\begin{itemize}
\item[\rm (i)] $T(v\otimes w_0)=v_0\otimes_\mathbb D T_0v+v_1\otimes_\mathbb D T_1v$ for $v\in W'$.
\item[\rm (ii)] $T_0x=\bar xT_0$ and $T_1x=(-1)^{|x|}\bar xT_1$ for $x\in\g g$.
\end{itemize}
If $\mathbb D\cong \mathrm Q(1)$, then we can assume that $T_1=0$.
\end{lem}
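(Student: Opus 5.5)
The plan is to freeze the second tensor factor at $w_0$ and study the resulting antilinear map out of $W'$. Set
\[
S:W'\to W,\qquad S(v):=T(v\otimes_{\mathbb D} w_0).
\]
Then $S$ is even and antilinear. For $x\in\g g$, the element $(x,0)$ acts only on the first factor, and $\tau(x,0)=(0,\bar x)$. The intertwining property of $T$ from Remark~\ref{remark:antilin-intertwiner} therefore gives
\[
S(x v)=(0,\bar x)\cdot S(v).
\]
So $S$ carries the $\g g$-action on $W'$ to the $\oline{\g g}$-action on $W$. Since $T$ is bijective and $W'$ is irreducible, $S$ is injective.

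The second step is to decompose $W$ as an $\oline{\g g}$-module. I choose a homogeneous basis $\{e_i\}$ of $W'$ as a right $\mathbb D$-module (Remark~\ref{rmk:Abar=}). As $\oline{\g g}$ acts only on the second factor,
\[
W=\bigoplus_i e_i\otimes_{\mathbb D} W''.
\]
Each summand is isomorphic to $W''$ or to $\Pi W''$ according to the parity of $e_i$. The relevant identifications are $w\mapsto e_i\otimes w$ and Remarks~\ref{rmk:(-1)|x|isom} and~\ref{rmk:PiWW}. When $e_i$ is odd, moving $\bar x$ past $e_i$ produces the sign $(-1)^{|x|}$, and this is exactly the twist by $\sigma$.

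Writing $S(v)=\sum_i e_i\otimes S_i(v)$ defines antilinear maps $S_i:W'\to W''$. Each $S_i$ has parity $|e_i|$ and satisfies
\[
S_i x=(-1)^{|x||e_i|}\,\bar x\, S_i .
\]
By Schur's Lemma (in the antilinear form of Lemma~\ref{lem:sig}, i.e.\ via composition with a fixed nonzero intertwiner), the space of such maps of a given parity is a line over the even part of $\mathbb D$. Hence I fix $T_0$ and $T_1$, one nonzero solution of each parity (or $0$ if none exists), and write $S_i=T_0c_i$ for even $e_i$ and $S_i=T_1c_i$ for odd $e_i$, with scalars $c_i$. Putting
\[
v_0:=\sum_{|e_i|=\eev} e_i\,\bar c_i,\qquad v_1:=\sum_{|e_i|=\ood} e_i\,\bar c_i,
\]
where the conjugates arise from pulling scalars through $\otimes_{\mathbb D}$, gives (i). The twisted intertwining relations above are precisely (ii).

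If $\mathbb D\cong\mathrm Q(1)$, let $d\in\mathbb D_\ood$ with $d^2=1$. Then $v_1\otimes T_1v=v_1d\otimes dT_1v$. Here $v_1d$ is even, and $dT_1$ is even and satisfies the untwisted relation, because the odd $d$ supercommutes with $\g g$ and so absorbs the sign. Hence $dT_1$ is a multiple of $T_0$, and the odd term merges into the even one, so we may take $T_1=0$.

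I expect the main obstacle to be the sign bookkeeping. This means, first, checking that $e_i\otimes W''$ with $e_i$ odd is really the $\sigma$-twist, which produces the factor $(-1)^{|x|}$ in (ii). It also means tracking the conjugation of scalars when moving them across the tensor product. The Schur-type one-dimensionality is routine once these signs are fixed.
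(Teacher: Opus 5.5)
Your proposal is correct and follows the paper's proof. Both arguments expand $T(v\otimes_{\mathbb D} w_0)$ in a homogeneous $\mathbb D$-basis of $W'$, read off the intertwining relations (twisted by $(-1)^{|x|}$ for odd basis vectors) for the coefficient maps $W'\to W''$, and use Schur's Lemma to merge the coefficients of each parity into a single term. The only cosmetic difference is in the $\mathrm Q(1)$ case: the paper chooses an all-even $\mathbb D$-basis from the start, so $q=0$, while you absorb the odd term afterwards using $d\in\mathbb D_\ood$, which is the same change of basis.
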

\begin{proof}
Fix a homogeneous $\mathbb D$-basis $v_1,\ldots,v_p,v_{\bar 1},\ldots,v_{\bar{q}}$ of $W'$, where the first $p$ are in $W'_\eev$ and the last $q$ are in $W'_\ood$ (and if $\mathbb D=\mathrm Q(1)$ then we assume $q=0$). Then we have
\[
T(v\otimes_\mathbb D w_0)=\sum_{i=1}^p v_i\otimes_\mathbb D T_iv+
\sum_{i=1}^p v_{\bar i}\otimes_\mathbb D T_{\bar i}v\quad \text{for }v\in W',
\]
where the $T_i:W'\to { W''}$ are even antilinear maps that satisfy $T_ix=\bar xT_i $ for $x\in\g g$, whereas the $T_{\bar i}:W'\to {W''}$ are odd  antilinear maps that satisfy $T_{\bar i}x=(-1)^{|x|}\bar xT_{\bar i}$.

 \begin{details}This follows from the commutative diagram
\[
\xymatrix{v\otimes w_1 \ar[r]^T\ar[d]&  \sum_i v_i\otimes T_iv\ar[d]\\
xv\otimes w_1\ar[r]& \sum_i v_i\otimes T_i(xv)=\sum_i (-1)^{|x|\cdot |v_i|}v_i\otimes \bar x T_iv
}\]
\end{details} 

By Schur's Lemma, any two $T_i$ are related by a scalar in $\mathbb D_\eev$. 
This allows us to combine the summands $v_i\otimes_\mathbb D T_iv$ into one term $v_0\otimes_\mathbb D T_0v$. 
The same argument applies to the summands $v_{\bar i}\otimes_\mathbb DT_{\bar i}v$. 
\end{proof}

\begin{prp}
\label{prp:funcB+}We have $(\funcB,+)\in \mathfrak S_W$ if and only if either $W'\cong\oline{W''}$ or $W'\cong\Pi \oline{W''}$ as $\g g$-modules. Furthermore, $(\funcB,-)\not\in \mathfrak S_W$.   
\end{prp}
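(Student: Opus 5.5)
We prove the two directions of the first assertion separately and then compute the sign $\bfc(T)$ to rule out $(\funcB,-)$.

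\emph{Sufficiency.} Suppose $W'\cong\oline{W''}$, and let $S:W'\to W''$ be the even antilinear map induced by this isomorphism, so that $Sx=\bar xS$ for $x\in\g g$. Here $\oline{W''}$ is $W''$ with the conjugate complex structure, regarded as a $\g g$-module. Recall that $\tau(x,\bar y)=(y,\bar x)$. We define
\[
T(v\otimes_{\mathbb D} w):=\pm\, S^{-1}w\otimes_{\mathbb D} Sv,
\]
with a parity-dependent sign $(-1)^{|v||w|}$ inserted so that $T$ is well defined on $\otimes_{\mathbb D}$ and satisfies $T\cdot(x,\bar y)=\tau(x,\bar y)\cdot T$. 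In words, $T$ is the antilinear ``swap'' of the two factors. If instead $W'\cong\Pi\oline{W''}$, we first use Remark~\ref{rmk:PiWW} to replace $W'\otimes_{\mathbb D}W''$ by $\Pi W'\otimes_{\mathbb D}\Pi W''$ or by $\Pi(W'\otimes W'')$, and then reduce to the previous case. For this we use that $\Pi W$ and $W$ share the relevant symmetry data, since $\funcB\Pi=\Pi\funcB$. A direct computation gives $T^2=S^{-1}S\otimes SS^{-1}$ up to the sign $(-1)^{|v||w|}(-1)^{|w||v|}=+1$. Hence $\bfc(T)>0$ and $(\funcB,+)\in\mathfrak S_W$.

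\emph{Necessity.} Assume $W\cong\funcB W$ via an even antilinear $T$, and apply Lemma~\ref{lem:TT0T1}. At least one of $T_0,T_1$ is nonzero, because $T$ is injective. Consider the two cases.
\begin{itemize}
\item[(a)] If $T_0\ne0$, then by Lemma~\ref{lem:TT0T1}(ii) it is a nonzero even antilinear $\g g$-intertwiner $W'\to W''$, which is exactly a $\g g$-module map $W'\to\oline{W''}$. By irreducibility it is an isomorphism, so $W'\cong\oline{W''}$.
\item[(b)] If $T_1\ne0$, then $T_1$ is odd and satisfies $T_1x=(-1)^{|x|}\bar xT_1$. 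It therefore gives a morphism $W'\to\Pi\,{}^\sigma\oline{W''}$, and Remark~\ref{rmk:(-1)|x|isom} yields $W'\cong\Pi\oline{W''}$.
\end{itemize}

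\emph{The sign.} It remains to show that every even antilinear isomorphism $W\to\funcB W$ has positive sign. By Lemma~\ref{lem:sig}(ii) the sign does not depend on the choice of isomorphism. Whenever an isomorphism exists, the necessity argument shows $W'\cong\oline{W''}$ or $W'\cong\Pi\oline{W''}$, and the sufficiency construction then produces an isomorphism with $\bfc>0$. Hence $(\funcB,-)\notin\mathfrak S_W$.

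The main obstacle is the sign bookkeeping in the construction of $T$. One must check that $T$ descends to $\otimes_{\mathbb D}$ when $\mathbb D\cong\mathrm Q(1)$; this requires choosing $S$ compatibly with the odd generators of $\mathbb D_{W'}$ and $\mathbb D_{W''}$, which are identified antilinearly. One must also check that $T^2$ is positive rather than merely real. I would first treat the case $\mathbb D=\C$ carefully, and then handle $\mathrm Q(1)$ by choosing $S$ intertwining the odd endomorphisms, rescaling by $i$ if necessary, so that the odd generator $J$ satisfies $SJ=J'S$.
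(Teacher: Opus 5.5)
Your proposal is correct and is essentially the paper's proof: necessity via Lemma~\ref{lem:TT0T1} (using $T_0\neq0$ or $T_1\neq0$), sufficiency via the antilinear swap $T$ with $T^2=1$, and exclusion of $(\funcB,-)$ by uniqueness of the sign from Lemma~\ref{lem:sig}(ii).

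The differences are minor. You handle $W'\cong\Pi\oline{W''}$ by the reduction $W\cong\Pi(W'\otimes_{\mathbb D}\oline{W'})$, whereas the paper writes a second explicit swap $v\otimes w\mapsto(-1)^{|v||w|+|v|}w\otimes v$. Before applying Lemma~\ref{lem:TT0T1}, you should first use Remark~\ref{rmk:PiWW} to arrange $W''_\eev\neq0$, as the paper does. Your $\mathrm Q(1)$ concern is harmless: $SJS^{-1}=\pm J'$ holds automatically, and the sign cancels in the well-definedness check.
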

\begin{proof}
Suppose that $\mathfrak S_W\cap\{(\funcB,+),(\funcB,-)\}\neq\varnothing$. 
By Remark~\ref{rmk:PiWW} we can assume that $W''_\eev$ is nonzero.
Now in  Lemma~\ref{lem:TT0T1}, 
if $T_0\neq 0$ then $\oline{W''}\cong W'$, and if $T_1\neq 0$ then ${{\oline {W''}}}\cong \Pi W'$ (for the latter, we use the isomorphism $W''\cong {^\sigma W''}$; see Remark~\ref{rmk:(-1)|x|isom}).  

\begin{details}
If some $T_i\neq 0$, we must have $\oline{W'}\cong W$ and if some $T_{\bar i}\neq 0$, it yields as isomorphism $W\cong\Pi{^\sigma\oline{ W'}}\cong \Pi \oline{W'}$.
\end{details}

Conversely, if $\oline{W''}\cong W'$ then define
\[
T:W'\otimes_\mathbb D \oline{W'}\to W'\otimes_\mathbb D \oline{W'}\ ,\
v\otimes_\mathbb D w\mapsto (-1)^{|v|\cdot |w|}w\otimes_\mathbb D v,
\]

\begin{details}

Recall that the left actions on $W'$ and $\oline{W'}$ are conjugate to each oteher: we have $d\bar\cdot w=\bar d\cdot w$, where $\bar \cdot$ denotes the action of $d$ on $\oline{W'}$. Well-definedness of $T$ follows from
\[
v\cdot d\otimes w\mapsto (-1)^{(d+v)w}w\otimes v\cdot d=(-1)^{(d+v)w+vd}w
\otimes \bar d\bar\cdot v,
=
\]
and
\[v\otimes d\bar\cdot w\mapsto 
(-1)^{(d+w)v}\bar d\cdot w\otimes v=
(-1)^{(d+w)v+dw}w\cdot \bar d\otimes  v.
\]
Commuting with action follows from
\[
\xymatrix{v\otimes w \ar[d]_{a\otimes b} \ar[r]& (-1)^{vw}w\otimes v\ar[d]^{(-1)^{ab}b\otimes a}\\
(-1)^{bv}av\otimes bw \ar[r]& (-1)^{bv+(a+v)(b+w)}bw\otimes av}
\]

\end{details}

and if $\oline{W''}\cong\Pi W'$ then define 
\[
T:W'\otimes_\mathbb D \Pi\oline{W'}\to W'\otimes_\mathbb D \Pi\oline{W'}\ ,\
v\otimes_\mathbb D w\mapsto (-1)^{|v|\cdot |w|+|v|}w\otimes_\mathbb D v.
\]

\begin{details}
For well-definedness of $T$ first note that the left actions of $\mathbb D$ on vectors of $W'$ and $\Pi \oline{W'}$ are the \emph{same} up to conjugation, but the right actions are \emph{different} up to a sign. More precisely,  
considering $w$ as a vector in $W'$, we have \[
w\cdot d:=(-1)^{dw}d\cdot w
\]
but
if $\cdot'$ denotes the (left or right) actions on $\Pi\oline{W'}$, then 
 considering $w\in \Pi\oline{W'}$, we have \[
w\cdot' d:=
(-1)^{(w+1)d}d\cdot' w=
(-1)^{(w+1)d}\bar d\cdot w=(-1)^d w\cdot \bar d.
\]
Now we have 
\[
v\cdot d\otimes w\mapsto (-1)^{(v+d)(w+1)}
w\otimes v\cdot d=
(-1)^{(v+d)(w+1)+d}
w\otimes v\cdot' \bar d
=(-1)^{(v+d)(w+1)+vd+d}w\otimes \bar d\cdot' v,
\]
and also 
\[
v\otimes d\cdot' w\mapsto 
(-1)^{v(d+w+1)}\bar d\cdot w\otimes v
=
(-1)^{v(d+w+1)+dw}w\cdot \bar d\otimes v,
\]
which complete the proof of well-definedness.

For compatibility with actions, we have 
\[
\xymatrix{v\otimes w \ar[r]\ar[d]_{a\otimes b}& (-1)^{v(w+1)}w\otimes v\ar[d]^{(-1)^{ab}b\otimes a}\\
(-1)^{bv}av\otimes bw\ar[r] &(-1)^{bv+(a+v)(b+w+1)}bw\otimes av}
\]
Note that in the latter case we have 
\[
v\otimes w\mapsto (-1)^{vw+v}w\otimes v
\mapsto (-1)^{vw+v}(-1)^{(v+1)(w+1)+(w+1)}v\otimes w=v\otimes w. 
\]

\end{details}

In both cases, $T$ is even, antilinear, and satisfies $T(x,\bar y)=(x,\bar y)T$  for $(x,\bar y)\in\g g\oplus\oline{\g g}$. Furthermore, $T^2$ is the identity map. This proves that in both cases we have $(\funcB,+)\in \mathfrak S_W$, and thus $(\funcB,-)\not\in\mathfrak S_W$ (this follows from Table~\ref{Table-1}).
\end{proof}

\begin{prp}
\label{prp:PiB+PiB-}
$\{(\Pi\funcB,+),(\Pi\funcB,-)\}\cap \mathfrak S_W=\varnothing$.
\end{prp}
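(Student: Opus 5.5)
The plan is to argue by contradiction: assume $W\cong\Pi\funcB_{\g g\oplus\oline{\g g},\tau}W$, and show that this forces $\Pi\in\mathfrak S_W$, which Lemma~\ref{lem:PiSigmaWW} then rules out. The argument has three steps.

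\textbf{Step 1: an odd analogue of Lemma~\ref{lem:TT0T1}.} Suppose $W\cong\Pi\funcB W$. By Remark~\ref{remark:antilin-intertwiner}, such an isomorphism is an \emph{odd} antilinear map $T:W'\otimes_\mathbb D W''\to W'\otimes_\mathbb D W''$ with $T(x,\bar y)=(y,\bar x)T$, up to the sign twist recorded in Remark~\ref{rmk:(-1)|x|isom}. By Remark~\ref{rmk:PiWW} we may assume $W''_\eev\neq 0$, and we fix a nonzero $w_0\in W''_\eev$. We repeat the proof of Lemma~\ref{lem:TT0T1} verbatim, using a homogeneous $\mathbb D$-basis of $W'$ and Schur's Lemma to combine terms. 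This gives
\[
T(v\otimes w_0)=v_0\otimes_\mathbb D T_0v+v_1\otimes_\mathbb D T_1v,
\]
where $v_0\in W'_\eev$ and $v_1\in W'_\ood$. Now $T_0:W'\to W''$ is \emph{odd} and $T_1:W'\to W''$ is \emph{even}; both are antilinear and intertwine $x$ with $\bar x$, up to the sign $(-1)^{|x|}$. At least one of $T_0,T_1$ is nonzero. Exactly as in the proof of Proposition~\ref{prp:funcB+}, a nonzero one gives an isomorphism $\oline{W''}\cong W'$ or $\oline{W''}\cong\Pi W'$; here Remark~\ref{rmk:(-1)|x|isom} absorbs the sign twist.

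\textbf{Step 2: deduce $\Pi\in\mathfrak S_W$.} By Proposition~\ref{prp:funcB+}, the isomorphism from Step 1 gives $(\funcB,+)\in\mathfrak S_W$, so $W\cong\funcB W$. Composing $W\cong\funcB W$ with $\funcB W\cong\funcB(\Pi\funcB W)=\Pi W$ (using $\funcB^2=\funcI$ and $\Pi\funcB=\funcB\Pi$) yields $W\cong\Pi W$. Hence $\Pi\in\mathfrak S_W$.

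\textbf{Step 3: contradiction via Lemma~\ref{lem:PiSigmaWW}.} That lemma says $\Pi\in\mathfrak S_W$ holds exactly when precisely one of $W',W''$ is of type $\mathtt Q$. But Step 1 gives $W''\cong\oline{W'}$ or $W''\cong\Pi\oline{W'}$. Complex conjugation and parity change both preserve the type ($\mathtt M$ versus $\mathtt Q$), because $\End^s$ is carried along up to an antilinear isomorphism, and being $\mathtt M$ or $\mathtt Q$ means $\End^s\cong\C$ or $\mathrm Q(1)$. So $W'$ and $W''$ have the same type, and therefore $\Pi\notin\mathfrak S_W$. This contradicts Step 2, so neither $(\Pi\funcB,+)$ nor $(\Pi\funcB,-)$ lies in $\mathfrak S_W$.

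The main obstacle is Step 1, namely the sign bookkeeping for an odd $T$. One has to check that the odd and even components $T_0,T_1$ really intertwine the actions up to the $\sigma$-twist, and that in the case $\mathbb D\cong\mathrm Q(1)$ the basis can still be chosen so that the Schur's Lemma combination goes through. I would handle this exactly as in the hidden computations for Lemma~\ref{lem:TT0T1}, tracking the extra factor $(-1)^{|x|}$ that appears because $T$ is odd.
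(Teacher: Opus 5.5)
Your proof is correct, and up to the last step it follows the paper's outline. You decompose the odd antilinear intertwiner $T$ as in Lemma~\ref{lem:TT0T1}, read off $W'\cong\oline{W''}$ or $W'\cong\Pi\oline{W''}$ from a nonzero component, and apply Proposition~\ref{prp:funcB+} to get $(\funcB,+)\in\mathfrak S_W$.

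The difference is in how you show $\Pi\notin\mathfrak S_W$.

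\begin{itemize}
\item The paper runs a second component analysis. It fixes a nonzero $v_1\in W'_\ood$, expands $T(v_1\otimes w)$ with maps $S_0,S_1$, and compares this with the first expansion to obtain \emph{both} isomorphisms at once. This gives $W'\cong\Pi W'$, so both factors are of type $\mathtt Q$, $\mathbb D=\mathrm Q(1)$, and $W$ is of type $\mathtt M$. Table~\ref{Table-1} then finishes the argument.
\item You observe instead that a single isomorphism already forces $W'$ and $W''$ to have the same type, since conjugation and $\Pi$ commute and preserve type. Lemma~\ref{lem:PiSigmaWW} then gives $\Pi\notin\mathfrak S_W$, whereas $W\cong\funcB W$ together with $W\cong\Pi\funcB W$ gives $W\cong\Pi W$.
\end{itemize}

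Your route bypasses the least transparent step of the paper's proof, namely the assertion $S_1\neq 0$. The paper's stronger intermediate conclusion (both factors of type $\mathtt Q$) is not needed, because the hypothesis is contradictory in any case.

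It also shows that the sign bookkeeping you flag as the main obstacle is harmless. Whichever component is nonzero, and whatever the $\sigma$-twist, you land in one of the two alternatives of Proposition~\ref{prp:funcB+}, and both lead to the same contradiction. In fact you can get Step 1 without decomposing $T$ at all: restrict an isomorphism $W\cong\Pi\funcB W$ to $\g g\oplus 0$. Comparing isotypic components (copies of $W'$, $\Pi W'$ on one side; copies of $\oline{W''}$, $\Pi\oline{W''}$ on the other) gives the same conclusion.
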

\begin{proof}
Suppose that $W\cong \Pi\funcB W$ and the isomorphism corresponds to the odd, antilinear map $
T:W'\otimes_\mathbb D W''\to W'\otimes_\mathbb D W''$.  
By Remark~\ref{rmk:PiWW}, we can assume that $W''_\eev$ is nonzero. As in Lemma~\ref{lem:TT0T1}, for a nonzero $w_0\in W''_\eev$ we can write $T$ as
\begin{equation}
\label{eq:Tvw0}
T(v\otimes_\mathbb D w_0)=v_\eev\otimes_\mathbb D T_1 v+v_\ood\otimes_\mathbb D T_0v,
\end{equation}
where $T_0:W'\to \oline{W''}$ is an even linear map that satisfies $T_0x=(-1)^{|x|}xT_0$ for $x\in\g g$, while $T_1:W'\to \oline{W''}$ is an odd linear map that  satisfies $T_1x= xT_1$ for $x\in\g g$. 
(If $\mathbb D=\mathrm Q(1)$, we can assume $T_1=0$.)

\begin{details}
For two even maps $T_0$ and $T'_0$ satisfying $T_0x=(-1)^{|x|}xT_0$ and $T'_0x=(-1)^{|x|}xT'_0$ we obtain an even map $S=T_0(T'_0)^{-1}:W\to W$ such that $Sx=xS$ for $x\in\g g$. By Schur's lemma, $S$ is a scalar in $\mathbb D_\eev=\C$. Thus, $T_0$ is unique up to a scalar. The same argument works for $T_1$. 
\end{details}

Assume that $T_0\neq 0$. Then $W'\cong {^\sigma\oline{W''}}\cong \oline{W''}$ and by Proposition~\ref{prp:funcB+} we have $(\funcB,+)\in\mathfrak S_W$. 
Since $T$ is odd, this also proves that $W'_\ood$ and $W''_\ood$ are nonzero. But now choosing a nonzero $v_1\in W'_\ood$ we obtain
\begin{equation}
\label{eq:Tjkdj}
T(v_1\otimes_\mathbb D w)=S_0 w\otimes_\mathbb D w''_0+S_1w\otimes_\mathbb D w''_1,
\end{equation}
where 
$w_0''\in W''_\eev$, $w''_1\in W''_\ood$, 
$S_0:W''\to \oline{W'}$ is even, $S_1:W''\to \oline{W'}$ is odd, and we have $S_0x=(-1)^{|x|}xS_0$ and $S_1x=(-1)^{|x|}xS_1$ for $x\in \g g$. 
By comparing~\eqref{eq:Tjkdj} with~\eqref{eq:Tvw0} at $v_1\otimes w_0$, it follows that  $S_1\neq 0$, hence  $W'\cong \Pi{^\sigma \oline{W''}}\cong \Pi \oline{W''}$.

\begin{details}
Here we use the isomorphism $W\cong {^\sigma W}$ given by $w\mapsto (-1)^{|w|}w$.
\end{details} 

From the above discussion it follows that in particular $W'\cong \Pi W'$, hence  $\mathbb D=\mathrm{Q}(1)$. Since the external tensor product (over $\C$) of two irreducible modules of type $\mathtt Q$ is a direct sum of two copies of an irreducible module of type $\mathtt M$, it follows that $\Pi\not\in\mathfrak S_W$. Furthermore, from $W'\cong \oline{W''}$ and Proposition~\ref{prp:funcB+}  we have $(\funcB,+)\in\mathfrak S_W$.  Finally, from  Table~\ref{Table-1} it follows that $(\Pi\funcB,\pm )\not\in\mathfrak S_W$. 

The argument for the case where  $T_1\neq 0$ is similar: in this case $W'\cong\Pi\oline{W''}$, and 
from~\eqref{eq:Tjkdj} it follows that $S_0\neq 0$, hence $W'\cong {^\sigma\oline{W''}}\cong \oline{W''}$.
\end{proof}
In the following theorem, we denote the symmetric difference of two sets $E$ and $E'$ by $E\Delta E'$, i.e., \[
E\Delta E':=(E\backslash E')\cup (E'\backslash E).
\] 
\begin{thm}
\label{thm:W'W''D}Let
$W=W'\otimes_\mathbb D W''$
be an irreducible  $\g g\oplus\oline{\g g}$-module.
\begin{itemize}
\item[\rm (i)] If $\Pi\in\mathfrak S_{W'}\Delta \mathfrak S_{W''}$, then $\EType_{\g g\oplus\oline{\g g}}(W)=1_\C$.
\item[\rm (ii)] If $\Pi\not\in \mathfrak S_{W'}\Delta \mathfrak S_{W''}$ and either $W''\cong\oline{W'}$ or $W''\cong \Pi\oline{W'}$, then $\EType_{\g g\oplus\oline{\g g}}(W)=0_\R$.
\item[\rm (iii)] If $\Pi\not\in \mathfrak S_{W'}\Delta \mathfrak S_{W''}$ and neither $W''\cong\oline{W'}$ nor $W''\cong \Pi\oline{W'}$, then $\EType_{\g g\oplus\oline{\g g}}(W)=0_\C$.

\end{itemize}
\end{thm}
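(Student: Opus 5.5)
The plan is to read off the endotype from Table~\ref{Table-1} (Theorem~\ref{thm:Table}), which expresses it through the symmetry datum $\mathfrak S_W$. So it suffices to determine $\mathfrak S_W$ in each of the three cases. Proposition~\ref{prp:PiB+PiB-} already gives $(\Pi\funcB,\pm)\notin\mathfrak S_W$, and Proposition~\ref{prp:funcB+} gives $(\funcB,-)\notin\mathfrak S_W$. Hence $\mathfrak S_W$ is a subset of $\{\Pi,(\funcB,+)\}$. Scanning the last column of Table~\ref{Table-1}, the only rows whose symmetry datum lies in $\{\Pi,(\funcB,+)\}$ are:
\begin{itemize}
\item[] $0_\C$, with datum $\varnothing$;
\item[] $1_\C$, with datum $\{\Pi\}$;
\item[] $0_\R$, with datum $\{(\funcB,+)\}$.
\end{itemize}
In particular, the set $\{\Pi,(\funcB,+)\}$ itself never occurs as a symmetry datum.

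Next I determine each membership separately. By Lemma~\ref{lem:PiSigmaWW}, $\Pi\in\mathfrak S_W$ if and only if $\Pi\in\mathfrak S_{W'}\Delta\mathfrak S_{W''}$. By Proposition~\ref{prp:funcB+}, $(\funcB,+)\in\mathfrak S_W$ if and only if $W'\cong\oline{W''}$ or $W'\cong\Pi\oline{W''}$. These conditions are equivalent to $W''\cong\oline{W'}$ or $W''\cong\Pi\oline{W'}$, by applying complex conjugation, which commutes with $\Pi$ and is an involution.

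Now the cases follow. In case (ii), the datum is $\{(\funcB,+)\}$, so the endotype is $0_\R$. In case (iii), the datum is empty, so the endotype is $0_\C$. In case (i), $\Pi\in\mathfrak S_W$. Since $\{\Pi,(\funcB,+)\}$ is not a datum in the table, we must have $\mathfrak S_W=\{\Pi\}$, so the endotype is $1_\C$. Case (i) also needs no hypothesis on $\oline{W'}$.

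The only step that needs real care is making sure the table-scan is exhaustive: Theorem~\ref{thm:Table} asserts that every simple $W$ has one of the listed data. One should also confirm directly that case (i) is consistent. If $\Pi$ lies in exactly one of $\mathfrak S_{W'}$ and $\mathfrak S_{W''}$, then $W'\not\cong\oline{W''}$ and $W'\not\cong\Pi\oline{W''}$. Indeed, conjugation preserves the type ($\mathtt M$ or $\mathtt Q$), i.e.\ whether $\Pi$ lies in the symmetry datum, so an isomorphism $W'\cong\oline{W''}$ or $W'\cong\Pi\oline{W''}$ would force $\Pi\in\mathfrak S_{W'}\Leftrightarrow\Pi\in\mathfrak S_{W''}$. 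Thus $(\funcB,+)\notin\mathfrak S_W$ in case (i), in agreement with the table.
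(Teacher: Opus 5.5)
Your proposal is correct and is essentially the paper's own proof. The paper cites Lemma~\ref{lem:PiSigmaWW}, Proposition~\ref{prp:funcB+}, and Proposition~\ref{prp:PiB+PiB-} and implicitly reads the answer off Table~\ref{Table-1}; you have simply made that lookup explicit, including the observation that $\{\Pi,(\funcB,+)\}$ never occurs as a symmetry datum, which is what settles case (i).
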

\begin{proof}
This follows from Lemma~\ref{lem:PiSigmaWW}, Proposition~\ref{prp:funcB+}, and Proposition~\ref{prp:PiB+PiB-}
\end{proof}

\section{Computing $\EType_{\g g,\tau}(W)$ when $\g g$ is of basic type or of type $\mathbf Q(n)$}
\label{sec:ETypeWbasic-Q}

Sections~\ref{sec:reduction-E0} and~\ref{sec:realified} reduce the characterization of irreducible real representations of $\g g^\R$ and their endotypes to the case of real forms of Lie superalgebras of Definition~\ref{dfn:Liestypes}. In the rest of this paper, we give explicit answers to this characterization problem for the latter types of Lie superalgebras. For the classification of real forms of simple Lie superalgebras, see~\cite{Serganova,Parker}.

\subsection{Endotypes for Clifford-type Lie superalgebras}  
In  this subsection we assume that  $\g h=\g h_\eev\oplus\g h_\ood$ is a Lie superalgebra such that $[\g h_\eev,\g h]=0$. Let $\g h^\R$ be a real form of $\g h$ corresponding to an antilinear involution $\tau:\g h\to \g h$. Given an irreducible $\g h$-module $(\rho,W)$, by Schur's Lemma there exists $\lambda\in\g h_\eev^*$ such that  $\rho(h)v=\lambda(h)1_W$ for every $h\in \g h_\eev$.
We define a symmetric bilinear form
\[
B_\lambda:\g h_\ood\times \g h_\ood\to \C\ ,\
B_\lambda(x,y):=\lambda([x,y]).
\]
Set $R_\lambda:=\mathrm{Rad}(B_\lambda)$. Then $W$ is an irreducible module for $U(\g h)/I_\lambda$, where $I_\lambda$ is the ideal of $U(\g h)$ generated by $R_\lambda$ and the elements $h-\lambda(h)1$ for $h\in\g h_\eev$. Note that $U(\g h)/I_\lambda$ is isomorphic to the  complex Clifford superalgebra $\mathrm{Cl}_r(\C)$ of rank $r:=\dim(\g h_\ood/R_\lambda)$.  

%

The action of $\g h_\eev$ on both $\funcB_{\g h,\tau} W$ and $\Pi\funcB_{\g h,\tau} W$ is by the $\C$-linear functional $\oline{\lambda\circ\tau}$. Thus, if $\wHom_{\catC_\g h}(W,\funcB_{\g h,\tau}W)\neq 0$ then 
we must have $\lambda=\oline{\lambda\circ\tau}$. 
Indeed it follows from Proposition~\ref{prp:ETClp,q} below that the converse also holds. Assume that $\lambda=\oline{\lambda\circ\tau}$. Then
\[
B_\lambda(\tau(x),\tau(y))=\lambda\circ\tau([x,y])=\oline{\lambda([x,y])}=\oline{B_\lambda(x,y)}.
\]
In particular, the restriction of $B_\lambda$ to $\g h_\ood^\R\times \g h_\ood^\R$  is real-valued, with radical equal to 
$R_\lambda\cap \g h_\ood^\R$. 
Thus, $B_{\lambda}$ induces  a non-degenerate real symmetric bilinear form 
\begin{equation}
\label{eq:Blatau}
B_{\lambda,\tau}:\g h_{\ood,\lambda}\times\g h_{\ood,\lambda}\to\R\quad,\quad \g h_{\ood,\lambda}:=\g h_\ood^\R/R_\lambda\cap \g h_\ood^\R.
\end{equation}
Suppose that the signature of $B_{\lambda,\tau}$ is $(p,q)$, i.e., the quadratic form ${B}_{\lambda,\tau}(x,x)$ is equivalent to $\sum_{k=1}^px_k^2-\sum_{k=1}^q x_{p+k}^2$. Then 
$\tau$ induces an antilinear involution on $\mathrm{Cl}_r(\C)$, whose fixed subalgebra is isomorphic to the real Clifford algebra $\mathrm{Cl}_{p,q}(\R)$.

\begin{prp}
\label{prp:ETClp,q}
Let $[W]\in\Irr(\catC_\g h)$ and assume that $W$ has  central character $\lambda\in\g h_\eev^*$.
\begin{itemize}
\item[\rm (i)] If $\lambda\neq \oline{\lambda\circ\tau}$, then $\EType_{\g h,\tau}(W)=r_\C$ for $r:=\mathrm{rank}(B_\lambda)\mod 2$. 
\item[\rm (ii)]
If $\lambda=\oline{\lambda\circ\tau}$, then $\EType_{{\g h,\tau}}(W)=t_\R$ for 
$t:=q-p\mod 8$, where $(p,q)$ is the signature of $B_{\lambda,\tau}$.  
\end{itemize}
\end{prp}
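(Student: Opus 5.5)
Since $W$ is irreducible, the action of $\g h$ on $W$ factors through $U(\g h)/I_\lambda\cong\mathrm{Cl}_r(\C)$, so I would compute the symmetry datum of $W$ as a $\mathrm{Cl}_r(\C)$-module and then read off the endotype from Table~\ref{Table-1} (Theorem~\ref{thm:Table}). The parity part is classical: $\mathrm{Cl}_r(\C)$ is $\Mat_{m|m}(\C)$ for $r$ even and $\mathrm Q(m)$ for $r$ odd. Hence $W$ is the unique irreducible module up to $\Pi$, and $\Pi\in\mathfrak S_W$ exactly when $r$ is odd.

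For part (i), the computation before the proposition shows that $\funcB W$ and $\Pi\funcB W$ have central character $\oline{\lambda\circ\tau}\neq\lambda$. So neither is isomorphic to $W$, and $\mathfrak S_W$ is either $\{\Pi\}$ or empty. Table~\ref{Table-1} then gives $1_\C$ or $0_\C$ according to the parity of $r$, which is the claim.

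For part (ii), I would avoid sign bookkeeping and pass directly to the real algebra. When $\lambda=\oline{\lambda\circ\tau}$, the ideal $I_\lambda$ is $\tau$-stable, because $\tau(R_\lambda)=R_\lambda$ by the identity $B_\lambda(\tau x,\tau y)=\oline{B_\lambda(x,y)}$. Every real $\g h^\R$-module in $\Phi([W])$ factors through the real form $(U(\g h)/I_\lambda)^\tau\cong\mathrm{Cl}_{p,q}(\R)$. More precisely, it factors through $U(\g h^\R)$ modulo $h-\lambda(h)$ with $\lambda(h)\in\R$ for $h\in\g h_\eev^\R$, and modulo $R_\lambda\cap\g h^\R_\ood$, which spans $R_\lambda$ over $\C$. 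So $\Irr(\catR)$ in this central-character block is the set of irreducible $\mathrm{Cl}_{p,q}(\R)$-supermodules, and $\wEnd(V)$ is the (super) commutant. Now $\mathrm{Cl}_{p,q}(\R)$ is central simple in the Brauer--Wall group $\BRm_\R\cong\Z_8$, and its class is determined by $p-q\bmod 8$. By the super double-centralizer theorem, the commutant of a simple supermodule of a central simple superalgebra $A$ represents the inverse class $[A]^{-1}$, possibly twisted by the $(\ )^\diamond$ of~\eqref{eq:soops}. This produces $t\equiv\pm(p-q)$.

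The main obstacle is fixing conventions so that the answer comes out as $q-p$ rather than $p-q$. One has to check the sign convention $\iota^2=\pm1$ in Table~\ref{Table-1} against $\mathrm{Cl}_{p,q}$ (where positive generators square to $+B$), and also the effect of the op/sop twist. I would settle this on the rank-one cases. For $(p,q)=(1,0)$, the algebra $\R\oplus\R e$ with $e^2=1$ has a one-dimensional-per-parity module whose enriched endomorphisms are generated by an odd $\iota$ supercommuting with $e$. I expect $\iota^2=-1$ after the sign twist, which is row $1_\R$, whereas the naive count would give $7_\R$. The case $(0,1)$ is checked the same way. By multiplicativity (Proposition~\ref{prp:EtypeofWWAWB}, applied to $\mathrm{Cl}_{p,q}=\mathrm{Cl}_{1,0}^{\hat\otimes p}\hat\otimes\mathrm{Cl}_{0,1}^{\hat\otimes q}$, i.e.\ to an abelian-even Clifford-type direct sum), the general formula $t=q-p\bmod 8$ follows.
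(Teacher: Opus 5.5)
Your part (i), and your parity analysis via $\mathrm{Cl}_r(\C)\cong\Mat_{m|m}(\C)$ or $\mathrm Q(m)$, are correct and agree with the paper. The gap is in part (ii), at the sign calibration you yourself identified as the main obstacle. Your rank-one value is backwards, and with it your argument would prove $t\equiv p-q$, the negative of the statement.

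The root cause is the identification ``$\wEnd(V)$ is the (super) commutant''. In this paper $\wEnd_\catR(V)_\ood=\Hom_\catR(V,\Pi V)$, and $\Pi V$ carries the same action as $V$. So odd enriched endomorphisms are odd maps that commute \emph{strictly} with the action. Supercommuting odd maps correspond instead to morphisms into $\Pi\,{}^\sigma V$ (Remark~\ref{rmk-superhoms}; see also Remark~\ref{remark:antilin-intertwiner}).

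Take $(p,q)=(1,0)$ and $V=\R^{1|1}$, with $e$ interchanging the even and odd lines. An odd map commuting with $e$ is a multiple of $e$ itself. Hence $\wEnd(V)=\R\oplus\R\iota$ with $\iota^2=+1$, which is row $7_\R$ of Table~\ref{Table-1}. This is exactly what $t=q-p=-1\equiv 7$ requires.

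The odd map with square $-1$ that you found is the supercommuting one, so it lies in $\End^s(V)$. By \eqref{eq:soops}, $\End^s(V)\cong(\wEnd(V)^{\mathrm{op}})^{\mathrm{sop}}$, and passing to $\mathrm{sop}$ flips the sign of the square of an odd element. So the twist takes $-1$ in $\End^s(V)$ to $+1$ in $\wEnd(V)$, the opposite of what you expect. Likewise $(p,q)=(0,1)$ gives $1_\R$. Your procedure gives $1_\R$ for $\mathrm{Cl}_{1,0}$ and, by the same procedure, $7_\R$ for $\mathrm{Cl}_{0,1}$. Feeding these into multiplicativity yields $t\equiv p-q$.

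With the rank-one values corrected, the proof is completed by multiplicativity via Proposition~\ref{prp:EtypeofWWAWB}. This requires first diagonalizing $B_{\lambda,\tau}$ so that $\tau$ becomes $\tau_+^{\otimes p}\otimes\tau_-^{\otimes q}$ on $\mathrm{Q}(1)^{\otimes r}$. That is exactly the paper's argument. The paper does the rank-one computation on the complex side: $\phi_\pm\colon z\oplus w\mapsto\bar z\oplus\pm\bar w$ gives $\Pi,(\funcB,+)\in\mathfrak S_W$, and $\phi=\pi\circ\phi_\pm$ satisfies $\phi^2=\pm 1_W$. Hence $\tau_+$ (that is, $\mathrm{Cl}_{1,0}$) gives $7_\R$ and $\tau_-$ gives $1_\R$. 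Your Brauer--Wall double-centralizer step only yields $t\equiv\pm(p-q)$, and it becomes redundant once the rank-one cases and multiplicativity are in place.
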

\begin{proof}
Set $s:={\lfloor\frac{r-1}{2}\rfloor}$. As superalgebras we have 
\begin{equation}
\label{eq:Cl_r}
\mathrm{Cl}_r(\C)\cong \mathrm{Cl}_1(\C)^{\otimes r}\cong\mathrm{Q}(1)^{\otimes r}\cong
\begin{cases}
\mathrm{Mat}_{2^s|2^s}(\C)&r\text{ even},\\
\mathrm{Q}(2^s)&r\text{ odd}.
\end{cases}
\end{equation}
It follows that $W\cong \Pi W$ if and only if $r$ is odd. This proves (i).

In the rest of the proof we assume that $W:=\C^{1|1}$ is the unique irreducible $\mathrm{Cl}_1(\C)$-module. Note that
$\mathrm{Cl}_1(\C)\cong \mathrm{Q}(1)$ and  
$
\mathrm{Cl}_{p,q}(\R)\cong \mathrm{Cl}_{1,0}(\R)^{\otimes p}\otimes \mathrm{Cl}_{0,1}(\R)^{\otimes q}
$. The real forms $\mathrm{Cl}_{1,0}(\R)$ and $\mathrm{Cl}_{0,1}(\R)$ of $\mathrm{Q}(1)$ correspond to the antilinear involutions
\[
\tau_\pm:\mathrm{Q}(1)\to\mathrm{Q}(1)
\ ,\
a+b\iota\mapsto \bar a\pm \bar b\iota.
\]
In both cases we have $\Pi,(\funcB,+)\in\mathfrak S_{W}$, with the corresponding isomorphisms $\phi_\pm : W\to \funcB W$ given by  $z\oplus w\mapsto \bar z\oplus\pm  \bar w$. Now let $\pi:W\to \Pi W$ be the parity switch and set $\phi:=\pi\circ \phi_\pm$. We denote the action map by $\rho:\g h\to \gl(W)$. Then $\phi:W\to \Pi \funcB W$ is an antilinear map that satisfies $\phi\rho(\tau(x))=\rho(x)\phi$ for $x\in\mathrm{Q}(1)$, and we need to compute $\bfc(\phi)$. We have $\pi\circ \phi_\pm \circ \pi=\pm \phi_\pm$, hence $\phi^2=\pm \phi_\pm \circ \phi_\pm=\pm 1_W$, where the sign $\pm $ agrees with  the subscript of $\tau_\pm$.  This proves that   
\begin{equation}
\label{eq:ETypeWCliff}
\EType_{\mathrm{Q}(1),\tau}(W)=
\begin{cases}
7_\R& \text{ if }\tau=\tau_+,\\
1_\R&\text{ if }\tau=\tau_-.
\end{cases}
\end{equation}
The assertion of the proposition follows from~\eqref{eq:ETypeWCliff} and Proposition~\ref{prp:EtypeofWWAWB}. 
\end{proof}

\subsection{Some general Lie theory notation}
\label{subsec:generalLie}
 Given a Cartan subalgebra $\g h\sseq \g g$, if $\g g$ has a root space decomposition
with respect to $\g h_\eev$ then  we denote the corresponding root system of $\g g$ by $\Phi_\g g$. 
We suppress $\g h $ in this notation as it will always be clear from the context. The root space of $\g g$ associated to $\alpha\in\Phi_\g g$ will be denoted $\g g_\alpha$.

If $\g g_\eev$ is reductive, then  $\Gund$ denotes the connected and simply connected complex semisimple algebraic group with Lie algebra $\gund:=[\g g_\eev,\g g_\eev]$, and 
$\Hund$ denotes the Cartan subgroup of $\Gund$ with Lie algebra $\g h_\eev\cap \gund$. The restriction of 
$\tau$  to $\gund$ can be integrated to an  anti-holomorphic
involution of $\Gund$, which we will still denote by $\tau$. 
Every finite dimensional $\gund$-module $(\rho,W)$ can be integrated to a representation of $\Gund$, which we still denote by $\rho$. Every $\lambda\in\hund^*$ that occurs as a weight in $(\rho,W)$ is then integrated to a multiplicative character $\lambda:\Hund\to \C^*$. The integrated adjoint representation of $\gund$ on $\g g$ is denoted $\Ad:\Gund\to\mathrm{GL}(\g g)$. 

Suppose that $\g g$ is either of basic type or of type $\mathbf Q(n)$. Any positive system $\Phi_\g g^+$ of $\Phi_\g g$ corresponds to a Borel subalgebra   $\g h\sseq \g b\sseq \g g$. 
We denote the fundamental system of $\Phi_\g g^+$ by $\Pi_\g b$. 
For  $w\in N_{\Gund}(\Hund)$, we denote the positive system of the Borel $\Ad_w\g b$ by $w\cdot \Pi_\g b$. 

Suppose that $\g g$ is of basic type (so that $\g h=\g h_\eev$). If $\alpha\in\Pi_\g b$ is isotropic, we use 
$r_{\g g,\alpha}\cdot \Pi_{\g b}$ to denote the fundamental system obtained from $\Pi_\g b$ by the odd reflection associated to  $\alpha$. 
Thus, $r_{\g g,\alpha}\cdot \Pi_{\g b}$ corresponds to the positive system $(\Phi_\g g^+\cup\{-\alpha\})\backslash\{\alpha\}$. \begin{rmk}
\label{rmk:hwodd-sl22}
We continue with $\g g$ of basic type. Let $\g b_{\alpha}$ denote the Borel subalgebra corresponding to $r_{\g g,\alpha}\Pi_\g b$, where $\alpha$ is an isotropic root.
If $\g g$ is not isomorphic to $\g{sl}(2|2)$ or $\g{psl}(2|2)$, then the subalgebra $\g g_\alpha\oplus \g g_{-\alpha}\oplus [\g g_\alpha,\g g_{-\alpha}]$ is isomorphic to $\g{sl}(1|1)$. This implies that  for  an irreducible $\g g$-module $W$ with $\g b$-highest weight $\lambda\in\g h^*$,  the $\g b_\alpha$-highest weight of $W$ is
\[
\lambda_\alpha:=\begin{cases}
\lambda& \text{ if }\lambda([\g g_\alpha,\g g_{-\alpha}])=0,\\
\lambda-\alpha&\text{ otherwise.}
\end{cases}
\] 
However,  for $\g g=\g{sl}(2|2)$ or $\g{psl}(2|2)$, the subalgebra 
$\g g_\alpha\oplus \g g_{-\alpha}\oplus [\g g_\alpha,\g g_{-\alpha}]$ is isomorphic to $\g{sl}(1|1)\oplus\g{sl}(1|1)$ or $(\g{sl}(1|1)\oplus\g{sl}(1|1))/\C$.
In these cases, the computation of the $\g b_\alpha$-highest weight of $W$ is explained in Proposition~\ref{prp:hwsl22}.\end{rmk}

\subsection{Odd reflections and the Harish-Chandra image}
In this subsection $\g g$ will be a Lie superalgebra that is either of basic type or of type $\mathbf Q(n)$. 
\begin{rmk}
\label{rmk-smallcases}
The Lie superalgebras $\g{sl}(1|1)$, $\g{psl}(1|1)$, $\g{q}(1)$, $\g{pq}(1)$ and  $\g{sq}(1)$  are of Clifford type and each of them has a unique real form (namely, the trivial one). Thus, in these  cases endotypes of irreducible modules are determined by Proposition~\ref{prp:ETClp,q}.   
\end{rmk}
In the rest of this section, we assume that $\g g$ is \emph{not} isomorphic to the Lie superalgebras of Remark~\ref{rmk-smallcases}.  
Let $\g h$ be a Cartan subalgebra of $\g g$.
Henceforth we assume that  $\tau(\g h)=\g h$, or equivalently, $\g h$ is the complexification of a Cartan subalgebra of $\g g^\R$. It is possible to modify our results slightly to be applicable to all Cartan subalgebras $\g h$ (all one needs is to take conjugacy of $\tau(\g h)$ and $\g h$ into consideration), but this will be at the expense of more complicated final statements.

Let 
$\g b=\g h\oplus\g n$ be a Borel subalgebra of $\g g$. We have $\Phi_{\g g}\neq \varnothing$. In Lemma~\ref{lem:hwB(W)}, $\oline{\lambda\circ\tau}$ means the linear map obtained by complex conjugation of the antilinear map $\lambda\circ\tau$.
\begin{lem}
\label{lem:hwB(W)}
Let $[W]\in\Irr(\catC_\g g)$ and assume that the $\g b$-highest weight of $W$ is $\lambda\in\g h_\eev^*$. Then the $\tau(\g b)$-highest weight of $\funcB_{\g g,\tau}(W)$ is $\oline {\lambda\circ\tau}$.
\end{lem}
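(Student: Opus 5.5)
We work with the concrete model of $\funcB_{\g g,\tau}(W)$ from Remark~\ref{remark:antilin-intertwiner}. As a real superspace it is $W$, but it carries the conjugate complex structure, and $x\in\g g$ acts through $\tau(x)$. Write $x\ast w:=\tau(x)\cdot w$ for this action and $z\ast w:=\bar z w$ for the new scalar multiplication. The plan is to exhibit the $\g b$-highest weight space of $W$ as the $\tau(\g b)$-highest weight space of $\funcB_{\g g,\tau}(W)$, and then compute the weight by which $\g h_\eev$ acts on it.

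First, irreducibility is preserved. A $\g g$-submodule of $\funcB_{\g g,\tau}(W)$ is a real subspace that is stable under $\ast$-scalars and $\tau(\g g)=\g g$. These are exactly the complex $\g g$-submodules of $W$, so $\funcB_{\g g,\tau}(W)$ is irreducible. Let $W^{\g n}$ be the space annihilated by $\g n$; it is the $\g b$-highest weight space, an irreducible $\g h$-module on which $\g h_\eev$ acts by $\lambda$. For $x\in\tau(\g n)$ we have $x\ast w=\tau(x)w$ with $\tau(x)\in\g n$. Hence $W^{\g n}$ is exactly the subspace of $\funcB_{\g g,\tau}(W)$ annihilated by $\tau(\g n)$, the nilradical of $\tau(\g b)=\g h\oplus\tau(\g n)$; this uses $\tau(\g h)=\g h$. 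By irreducibility this subspace is the $\tau(\g b)$-highest weight space. Since $\tau(\g h)=\g h$, it is also stable under the $\ast$-action of $\g h$.

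Next, compute the weight. For $h\in\g h_\eev$ and $w\in W^{\g n}$,
\[
h\ast w=\tau(h)\cdot w=\lambda(\tau(h))\,w=\oline{\lambda(\tau(h))}\ast w .
\]
So $h$ acts by the scalar $\oline{\lambda(\tau(h))}$ with respect to the new complex structure. The map $h\mapsto\oline{\lambda(\tau(h))}$ is $\C$-linear, being the conjugate of the antilinear map $\lambda\circ\tau$. Therefore the $\tau(\g b)$-highest weight is $\oline{\lambda\circ\tau}$.

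The main point requiring care is the scalar bookkeeping: one must read the eigenvalue in the conjugate complex structure, which is where the conjugation bar comes from. In the type $\mathbf Q(n)$ case, $\g h_\ood\neq 0$, and the highest weight space is a Clifford module rather than a line. One should check that only the $\g h_\eev$-weight is being asserted. This holds because $\g h_\eev$ acts on the whole highest weight space by scalars, so the same computation applies verbatim.
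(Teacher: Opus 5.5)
Your proof is correct and follows the paper's approach. The paper's proof only says the lemma follows from Remark~\ref{remark:antilin-intertwiner}. You have unpacked exactly that concrete model of $\funcB_{\g g,\tau}(W)$: the conjugate complex structure together with the $\tau$-twisted action, which identifies $W^{\g n}$ with the $\tau(\g n)$-invariants and gives the eigenvalue $\oline{\lambda(\tau(h))}$ in the conjugate structure.
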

\begin{proof}
Follows from Remark~\ref{remark:antilin-intertwiner}.\end{proof}

By Theorems~\ref{thm:Irr=Irr}
 and~\ref{thm:Table}, 
the computation  of endotypes in $\Irr(\catR_{\g g,\tau})$ is tantamount to the following two steps:
\medskip
\begin{description}\setlength\itemsep{.5em}
\item[Step 1] Computing the $\g b$-highest weights 
that lie in the same $\Theta_{\funcB_{\g g,\tau}}$-orbit.
\item[Step 2] Computing the symmetry datum of each $[W]\in \Irr(\catC_{\g g})$ from its $\g b$-highest weight. 
\end{description}
\medskip
Step 1 is fairly straightforward and will be accomplished in Proposition~\ref{prp:Step1}. Step 2 is substantially more delicate and will be carried out in 
Theorems~\ref{thm-endotypes-basic} and~\ref{thm-endotypes-Q(n)}.

In the following lemma (as well as  the other assertions of this subsection), when  $\g g$ is of type $\mathbf Q(n)$ we assume that  $k=0$, i.e., no odd reflections are involved. 
\begin{lem}
\label{lem:seqisotr}
There exist $w\in N_{\Gund}(\Hund)$ and isotropic roots $\alpha_1,\ldots,\alpha_k\in\Phi_\g g$ such that   $w^{-1}\tau(w^{-1})\in \Hund$ and
\begin{equation}
\label{eq:taub-and-b-again}
\Pi_{\tau(\g b)}=
w\cdot (r_{\g g,\alpha_k}\cdot (\cdots( r_{\g g,\alpha_1}\cdot \Pi_{\g b}))).
\end{equation}
\end{lem}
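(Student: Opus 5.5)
The plan is to use two standard facts. First, any two Borel subalgebras of $\g g$ containing $\g h$ are related by a chain of odd reflections followed by an even Weyl group element. Second, $N_{\Gund}(\Hund)/\Hund$ is the Weyl group $W_\eev$ of $\g g_\eev$.

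Since $\tau(\g h)=\g h$, the subalgebra $\tau(\g b)$ is again a Borel subalgebra containing $\g h$. Its positive system is $\overline{\tau^*}(\Phi^+_\g g)$, where $\tau^*$ denotes the induced action on $\g h_\eev^*$.

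\textbf{Step 1 (basic type).} Let $\g b_\eev\subseteq\g g_\eev$ and $\tau(\g b)_\eev$ be the even Borels. Choose $w_0\in W_\eev$ with $\tau(\g b)_\eev=w_0(\g b_\eev)$. Then $\tau(\g b)$ and $\Ad_{w_0}\g b$ share the same even part.

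By the standard result (e.g. Serganova's theorem on odd reflections), two Borels with the same even part are connected by a sequence of odd reflections through Borels with that same even part. So there are isotropic $\beta_1,\dots,\beta_k$ with $\Pi_{\tau(\g b)}=r_{\beta_k}\cdots r_{\beta_1}(w_0\cdot\Pi_\g b)$.

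We then conjugate the odd reflections past $w_0$, using $r_{w_0\alpha}\cdot(w_0\cdot\Pi)=w_0\cdot(r_\alpha\cdot\Pi)$. Setting $\alpha_i:=w_0^{-1}\beta_i$, this rewrites the chain in the form \eqref{eq:taub-and-b-again} with $w$ a lift of $w_0$.

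\textbf{Step 1 (type $\mathbf Q(n)$).} Here Borels containing $\g h$ are determined by their even parts. So $\tau(\g b)=\Ad_w\g b$ directly, with $k=0$.

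\textbf{Step 2: the condition $w^{-1}\tau(w^{-1})\in\Hund$.} Apply $\tau$ to the identity \eqref{eq:taub-and-b-again}. Since $\tau^2=1$, this gives $\Pi_\g b=\tau(w)\cdot\tau(\text{chain})\cdot\Pi_{\tau(\g b)}$.

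Pass to even parts. Odd reflections do not change the even Borel, so $\g b_\eev=\tau(w)w\,\g b_\eev$ after identifying $\tau(\g b)_\eev=w\g b_\eev$. Since $W_\eev$ acts simply transitively on even Borels containing $\g h_\eev$, the image of $\tau(w)w$ in $W_\eev$ is trivial. Hence $\tau(w)w\in\Hund$.

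Because $\Hund$ is $\tau$-stable (as $\tau(\g h)=\g h$) and normal in $N_{\Gund}(\Hund)$, we can rewrite: $w^{-1}\tau(w^{-1})=w^{-1}(\tau(w)w)^{-1}w\in\Hund$.

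\textbf{Main obstacle.} The step needing most care is justifying that $\tau(\g b)$ and $\Ad_{w_0}\g b$, which share an even part, are linked by odd reflections alone. This must also cover the exceptional cases $\g{sl}(2|2)$ and $\g{psl}(2|2)$ (cf. Remark~\ref{rmk:hwodd-sl22}). For these I would invoke the known connectivity result for Borels with fixed even part, which holds for all basic types. A secondary point is checking that the antilinear $\tau$ acts on $N_{\Gund}(\Hund)$ compatibly with its induced action on roots, so that Step 2 is legitimate.
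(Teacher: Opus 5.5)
Your proposal is correct and is essentially the paper's argument: match the even part of $\tau(\mathfrak b)$ with that of $\mathfrak b$ by a Weyl group element, link the two Borels with equal even parts by odd reflections, then apply $\tau$ to the even-Borel relation and use simple transitivity of the Weyl group. The paper compares $\mathrm{Ad}_{w^{-1}}\tau(\mathfrak b)$ with $\mathfrak b$ directly, which spares you the step of commuting odd reflections past $w$.

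One small slip: the identity $w^{-1}\tau(w^{-1})=w^{-1}(\tau(w)w)^{-1}w$ is false in general. You do not need it, because $w^{-1}\tau(w^{-1})=(\tau(w)w)^{-1}$ holds on the nose.
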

\begin{proof}
$\tau(\g b_\eev)$ and $\g b_\eev$ are Borel subalgebras of $\g g_\eev$, therefore they are conjugate by an  element of the Weyl group $N_{\Gund}(\Hund)/\Hund$. This proves that 
\begin{equation}
\label{eq:ADw-1}
\Ad_{w^{-1}}(\tau(\g b_\eev))=\g b_\eev\quad\text{for some }w\in N_{\Gund}(\Hund).
\end{equation}
The two Borel subalgebras $\Ad_{w^{-1}}(\tau(\g b))$ and $\g b$ of $\g g$ have identical even parts, therefore they are related by a sequence of odd reflections. This completes the proof of~\eqref{eq:taub-and-b-again}. 

Next we prove $w^{-1}\tau(w^{-1})\in\Hund$. Taking $\tau$ of both sides of~\eqref{eq:ADw-1} 
yields
$
\Ad_{\tau(w^{-1})}(\g b_\eev)=\tau(\g b_\eev)
$, 
hence
\[
\Ad_{w^{-1}\tau(w^{-1})}(\g b_\eev)=\Ad_{w^{-1}}(\Ad_{\tau(w^{-1})}\g b_\eev)=\g b_\eev.
\]
We have $\tau(\Hund)=\Hund$, hence $w^{-1}\tau(w^{-1})\in N_{\Gund}(\Hund)$. Since the Weyl group acts simply transitively on positive systems, we must have $w^{-1}\tau(w^{-1})\in \Hund$. 
\end{proof}
When $\g g$ is of basic type and $\g g_\ood\neq 0$, we will need 
the Harish-Chandra projection
\[
\HC_{\g b}:\mathfrak U(\g g)\to\mathfrak  U(\g h)
\] 
corresponding to the triangular decomposition
$\g g=\bar{\g n}\oplus\g h\oplus \g n$, where $\bar{\g n}$ denotes the nilradical opposite to $\g n$.  Thus $\HC_\g b(\mathfrak  U(\g g)\g n+\bar{\g n}\mathfrak  U(\g g))=0$. 

Our next goal is to define a weight $\lambda_\funcB\in \g h^*$ and a scalar $c_\lambda\in \R^*$ that are needed in Theorem~\ref{thm-endotypes-basic}. Because of 
Remark~\ref{rmk:hwodd-sl22}, first we address the case where $\g g$ is \emph{not} isomorphic to $\g{sl}(2|2)$ or $\g{psl}(2|2)$, and then we explain the necessary modifications for the latter two cases in Remark~\ref{rmk-lamBcsl22}.

Suppose that $\g g$ is  not isomorphic to $\g{sl}(2|2)$ or $\g{psl}(2|2)$. 
 For every isotropic root $\alpha\in\Phi_\g g$ (if they exist), we choose a root vector   $e_{\alpha}\in\g g_{\alpha}$. Suppose that $\alpha_1,\ldots,\alpha_k$ is the sequence of isotropic roots in Lemma~\ref{lem:seqisotr} (note that $k\geq 1 $ if and only if  $\g g$ is of basic type).  
Set $h_{\alpha_i}:=[e_{\alpha_i},e_{-\alpha_i}]$ for $1\leq i\leq k$. 
Given $\lambda\in \g h^*$, we define a subsequence $\alpha_{i_1},\ldots,\alpha_{i_r}$, where $1\leq i_1<\cdots< i_r\leq k$, inductively as follows. Having chosen $i_1<\ldots<i_{j-1}$,  we set \begin{equation}
\label{eq:lambdaseq}
i_j:=\min\left\{ i\,:\,
i_{j-1}<i\leq k\text{ and }
\lambda\left(h_{\alpha_i}\right)\neq \sum_{s=1}^{j-1}\alpha_{i_s}\left(h_{\alpha_{i}}\right)\right\},
\end{equation}
with the convention that $i_0=0$. Having found  the subsequence $i_1,\ldots,i_r$, we define 
\begin{equation}
\label{eqDlam}
D_\lambda:=e_{\alpha_{i_1}}\cdots e_{\alpha_{i_r}}(\Ad_{w^{-1}}^{}\tau(e_{\alpha_{i_1}}))\cdots (\Ad_{w^{-1}}^{}\tau(e_{\alpha_{i_r}}))\in \mathfrak  U(\g g).
\end{equation}
We extend $\lambda$ canonically to an algebra homomorphism $S(\g h)\to\C$. 
Finally, we define
\begin{equation}
\label{eq:lambdaB-clam}
\lambda_\funcB:=
\Ad^*_w\left(\lambda-\sum_{j=1}^r\alpha_{i_j}
\right)\quad\text{and}\quad
c_\lambda:=\lambda(\HC_\g b(D_\lambda))\lambda(w^{-1}\tau(w^{-1})),
\end{equation}
where $(\Ad^*_w\mu)
(h)=\mu(\Ad_{w^{-1}}h)$ for $\mu\in\g h_\eev^*$ and $h\in\g h_\eev$. Note that $\lambda(w^{-1}\tau(w^{-1}))$ means the evaluation of the exponentiated character $\lambda:\Hund\to\C^*$ at $w^{-1}\tau(w^{-1})$.
\begin{rmk}
We remind the reader that whenever $r=0$ (e.g., $\g g=\g g_\eev$ or  $\g g$ is of type $\mathbf Q(n)$),  we have $\lambda_\funcB=\Ad_w^*(\lambda)$ and $c_\lambda=\lambda(w^{-1}\tau(w^{-1}))$.
\end{rmk}

\begin{rmk}
\label{rmk-lamBcsl22}
When $\g g$ is isomorphic to $\g{sl}(2|2)$ or $\g{psl}(2|2)$, 
in~\eqref{eq:taub-and-b-again} we have $k\leq 1$. 
 If $k=1$, we define $e_i,f_i,h_i$ for $i\in\{1,2\}$ and $\omega_{\lambda,\alpha_1}$ as in Appendix~\ref{App-sl22}. We set $\lambda_\funcB:=\Ad_w^*\left(\lambda-\mathrm{rank}(\omega_{\lambda,\alpha_1})\alpha_1\right)$ and we define $e_{\alpha_1}\in \mathfrak U(\g g)$ by 
 \[
e_{\alpha_1}=\begin{cases}
1 & \text{ if }\mathrm{rank}(\omega_{\lambda,\alpha_1})=0,\\
e_i & \text{ if }\mathrm{rank}(\omega_{\lambda,\alpha_1})=1\text{ and }\lambda(h_i)\neq 0,\\
e_1e_2& \text{ if }\mathrm{rank}(\omega_{\lambda,\alpha_1})=2.
\end{cases}
 \] 
Correspondingly, we set $e_{-\alpha_1}=1$ or $f_i$ or $f_1f_2$. We define $D_\lambda$ and $c_\lambda$ as in~\eqref{eqDlam} and~\eqref{eq:lambdaB-clam}.  \end{rmk}

Let $(\rho,W)$ be an irreducible $\g g$-module with $\g b$-highest weight $\lambda\in\g h_\eev^*$.  By Lemma~\ref{lem:hwB(W)},  the $\tau(\g b)$-highest weight of $\funcB_{\g g,\tau}(W)$ is $\oline{\lambda\circ\tau}$.
Furthermore, 
if $W_\lambda$ denotes the $\g b$-highest weight space of $W$, 
then the $\tau(\g b)$-highest weight space of $W$ is 
\begin{equation}
\label{eq:WlamW'lam}
W'_\lambda:=
\rho(w)\rho(e_{-\alpha_{i_r}})\cdots \rho(e_{-\alpha_{i_1}})W_\lambda,
\end{equation}
and the  $\tau(\g b)$-highest weight of $W$ is $\lambda_\funcB$.
Thus, $\lambda_\funcB=\oline{\lambda\circ\tau}$ if and only if $\wHom_{\catC_\g g}(W,\funcB_{\g g,\tau}W)\neq 0$. This immediately implies the following statement, which addresses Step 1 above.
\begin{prp}
\label{prp:Step1}
Let $W$ and $W'$ be irreducible finite dimensional $\g g$-modules with $\g b$-highest weights $\lambda,\lambda'\in\g h_\eev^*$. Then up to parity $W$ and $W'$ are in the same $\Theta_{\g g,\tau}$-orbit if and only if $\lambda_\funcB=\oline{\lambda'\circ\tau}$.  
\end{prp}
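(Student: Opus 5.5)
The plan is to reduce the statement to a comparison of highest weights with respect to the single Borel subalgebra $\tau(\g b)$. The tools are Lemma~\ref{lem:hwB(W)}, the description~\eqref{eq:WlamW'lam} of the $\tau(\g b)$-highest weight of $W$, and the standard fact that an irreducible finite dimensional $\g g$-module is determined up to parity by its highest weight with respect to any fixed Borel subalgebra. (For type $\mathbf Q(n)$ the highest weight space is an irreducible Clifford module, which is determined by $\lambda$ up to parity.)

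\emph{Step 1.} By Lemma~\ref{lem:hwB(W)}, $\funcB_{\g g,\tau}(W')$ has $\tau(\g b)$-highest weight $\oline{\lambda'\circ\tau}$. On the other side, $W$ itself has $\tau(\g b)$-highest weight $\lambda_\funcB$. To see this, I would first apply Lemma~\ref{lem:seqisotr} to write $\Pi_{\tau(\g b)}$ as $w$ applied to a chain of odd reflections of $\Pi_\g b$. I would then follow the highest weight through this chain using Remark~\ref{rmk:hwodd-sl22}, or Remark~\ref{rmk-lamBcsl22} in the $\g{sl}(2|2)$ and $\g{psl}(2|2)$ cases.

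At each odd reflection $r_{\g g,\alpha_i}$, the current highest weight is $\lambda-\sum_{s<j}\alpha_{i_s}$. It drops by $\alpha_i$ exactly when this weight is nonzero on $h_{\alpha_i}$. Since $\alpha_i(h_{\alpha_i})=0$, that condition is precisely the defining condition~\eqref{eq:lambdaseq} for $i$ to be one of the $i_j$. The highest weight vector is correspondingly moved by $e_{-\alpha_i}$. Finally, applying $\rho(w)$ transports weights by $\Ad_w^*$. This yields both~\eqref{eq:WlamW'lam} and the formula $\lambda_\funcB=\Ad^*_w\bigl(\lambda-\sum_j\alpha_{i_j}\bigr)$.

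\emph{Step 2.} Next I interpret the orbit condition. $W$ and $W'$ lie in the same $\Theta_{\funcB}$-orbit up to parity if and only if $W\cong W'$ or $W\cong\funcB_{\g g,\tau}(W')$, with $\Pi$ allowed. The question is therefore whether the $\tau(\g b)$-highest weights of $W$ and $\funcB_{\g g,\tau}W'$ agree, that is, whether $\lambda_\funcB=\oline{\lambda'\circ\tau}$. Since both weights are taken with respect to the same Borel subalgebra $\tau(\g b)$, uniqueness of highest weight modules up to parity gives the equivalence.

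I should also account for the trivial case $W\cong W'$. It is covered in one of two ways. Either the statement is read as ``$W$ is in the orbit of $W'$'', which includes $\funcB_{\g g,\tau}W'$, or one checks that the relation is symmetric, using that $\funcB_{\g g,\tau}$ is an involution and that $(\lambda_\funcB)_\funcB$-type compatibility follows from the same highest-weight uniqueness.

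\emph{Main obstacle.} I expect the bookkeeping in Step 1 to be the hardest part. The difficulty is checking that the odd-reflection rule applies to the shifted weight $\lambda-\sum_{s<j}\alpha_{i_s}$ rather than to $\lambda$ itself, and handling the $\g{sl}(2|2)$ and $\g{psl}(2|2)$ exceptions, where the rank of $\omega_{\lambda,\alpha_1}$ replaces the simple dichotomy. My first attempt at this step would be an induction on the number of odd reflections, verifying at each stage that $\rho(e_{-\alpha_i})$ applied to the current highest weight vector is either zero or a highest weight vector for the reflected Borel.
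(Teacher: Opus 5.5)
Your argument is the one the paper gives in the paragraph preceding the proposition. Lemma~\ref{lem:hwB(W)} gives $\oline{\lambda'\circ\tau}$ as the $\tau(\g b)$-highest weight of $\funcB_{\g g,\tau}(W')$, \eqref{eq:WlamW'lam} gives $\lambda_\funcB$ as the $\tau(\g b)$-highest weight of $W$, and highest weights for the single Borel $\tau(\g b)$ determine irreducibles up to parity. Your induction, which applies Remark~\ref{rmk:hwodd-sl22} to the running weight $\lambda-\sum_{s<j}\alpha_{i_s}$ and matches it with~\eqref{eq:lambdaseq}, is exactly the bookkeeping the paper leaves implicit.

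The one step that fails is your treatment of the case $W\cong W'$. In Step 2 you replace \emph{$W\cong W'$ or $W\cong\funcB_{\g g,\tau}(W')$, up to parity} by \emph{the $\tau(\g b)$-highest weights of $W$ and $\funcB_{\g g,\tau}(W')$ agree}, which silently drops the first disjunct. Neither of your proposed repairs restores it. Reading the statement as \emph{$W$ lies in the orbit of $W'$} does not help, since that orbit contains $W'$ itself. Symmetry of the relation is simply irrelevant to this case.

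In fact the statement, read literally, is false here. Take $\g g^\R=\g{su}(3)$, let $\g b$ be upper triangular, and let $W=W'=\C^3$. Then $\lambda=\eps_1$ and $\lambda_\funcB=\Ad_w^*\eps_1=\eps_3$, while $\oline{\lambda\circ\tau}=-\eps_1\neq\eps_3$; this reflects $\C^3\not\cong(\C^3)^*$. Yet $W$ and $W'$ trivially lie in one orbit.

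What your argument, and the paper's, actually proves is the following: $W\cong\funcB_{\g g,\tau}(W')$ or $W\cong\Pi\funcB_{\g g,\tau}(W')$ if and only if $\lambda_\funcB=\oline{\lambda'\circ\tau}$. This is also how the proposition is used afterwards. You should state the result in that form rather than try to absorb the case $W\cong W'$.
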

In Lemma~\ref{lem:psiphitrick} below, let $\psi\in\wEnd_{\SVect}(\funcB_{\g g,\tau}W)$ be defined by
\[
\psi:=\rho(e_{\alpha_{i_1}})\cdots \rho(e_{\alpha_{i_r}})\rho(w^{-1})
.
\] 
Thus, $\psi$ is a $\C$-linear endomorphism of the vector superspace $\funcB_{\g g,\tau}W$. In the rest of this section, for 
any $\phi\in\wHom_{\catC_\g g}(W,\funcB_{\g g,\tau}W)$ we consider
$\psi\phi$ as an element of 
$\wHom_\SVect(W,\funcB_{\g g,\tau}W)$.
\begin{lem}
\label{lem:psiphitrick}
Let $[W]\in\Irr(\catC_\g g)$  with  $\g b$-highest weight $\lambda$ such that $\lambda_\funcB=\oline{\lambda\circ\tau}$. Also, let $\phi\in\wHom_{\catC_\g g}(W,\funcB_{\g g,\tau}W)$ be homogeneous and nonzero. 
Then the following  hold.
\begin{itemize}
\item[\rm (i)] $\phi(W_\lambda)=W_\lambda'$, $\phi(W_\lambda')=W_\lambda$, and $\psi(W_\lambda')=W_\lambda$. 
\item[\rm (ii)] $(\psi\phi)^2\big|_{W_\lambda}=c_\lambda\phi^2\big|_{W_\lambda}$. Furthermore, $c_\lambda\in\R^*$.
\end{itemize}
\end{lem}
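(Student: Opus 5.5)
Part (i) follows from weight and highest-weight-space considerations. Part (ii) is proved by computing $(\psi\phi)^2$ on $W_\lambda$ and moving everything to the front via the intertwining relation of Remark~\ref{remark:antilin-intertwiner}.

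\emph{Part (i).} By Remark~\ref{remark:antilin-intertwiner}, $\phi$ is an antilinear map $W\to W$ with $\phi\rho(x)=\rho(\tau(x))\phi$. Hence $\phi$ carries the $\g b$-highest weight space of $W$ onto the $\tau(\g b)$-highest weight space, and vice versa, because $\tau^2=1$. By \eqref{eq:WlamW'lam} the latter space is $W'_\lambda$. So $\phi(W_\lambda)=W'_\lambda$ and $\phi(W'_\lambda)=W_\lambda$; surjectivity holds since $\phi$ is invertible and each space is one-dimensional (or a $\mathbb D$-line in type $\mathbf Q$).

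For $\psi(W'_\lambda)=W_\lambda$, I first apply $\rho(w^{-1})$, which undoes the $\rho(w)$ in \eqref{eq:WlamW'lam}. I then apply $\rho(e_{\alpha_{i_1}})\cdots\rho(e_{\alpha_{i_r}})$ to $\rho(e_{-\alpha_{i_r}})\cdots\rho(e_{-\alpha_{i_1}})W_\lambda$. The defining condition \eqref{eq:lambdaseq} says exactly that at each odd-reflection step the relevant $\g{sl}(1|1)$ acts nontrivially. So each $e_{\alpha_{i_j}}e_{-\alpha_{i_j}}$ acts on the current highest weight vector by the nonzero scalar $(\lambda-\sum_{s<j}\alpha_{i_s})(h_{\alpha_{i_j}})$, and the composite lands in $W_\lambda$ nontrivially. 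For $\g{sl}(2|2)$ and $\g{psl}(2|2)$ I would replace this step with the analysis of Appendix~\ref{App-sl22} and Remark~\ref{rmk-lamBcsl22}.

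\emph{Part (ii).} Take $v\in W_\lambda$ and write $(\psi\phi)^2=\psi\phi\psi\phi$. I move $\phi$ past $\psi$ using $\phi\rho(x)=\rho(\tau(x))\phi$ and its group version $\phi\rho(g)=\rho(\tau(g))\phi$ for $g\in\Gund$; odd elements may contribute signs, which I would track. This gives
\[
\phi\psi=\rho(\tau(e_{\alpha_{i_1}}))\cdots\rho(\tau(e_{\alpha_{i_r}}))\rho(\tau(w^{-1}))\phi,
\]
so that
\[
(\psi\phi)^2=\rho(e_{\alpha_{i_1}})\cdots\rho(e_{\alpha_{i_r}})\rho(w^{-1})\rho(\tau(e_{\alpha_{i_1}}))\cdots\rho(\tau(e_{\alpha_{i_r}}))\rho(\tau(w^{-1}))\phi^2 .
\]
Next I insert $\rho(w^{-1})\rho(\tau(e))=\rho(\Ad_{w^{-1}}\tau(e))\rho(w^{-1})$, which collects the group elements into $\rho(w^{-1}\tau(w^{-1}))$ on the right. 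The operator in front then becomes $\rho(D_\lambda)\rho(w^{-1}\tau(w^{-1}))\phi^2$.

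Since $\phi^2$ is a scalar on $W$ and $w^{-1}\tau(w^{-1})\in\Hund$ by Lemma~\ref{lem:seqisotr}, the torus element acts on $W_\lambda$ by $\lambda(w^{-1}\tau(w^{-1}))$. By part (i) and weight considerations, $D_\lambda$ maps $W_\lambda$ to itself; $D_\lambda$ has weight zero because $\Ad_{w^{-1}}\tau(e_{\alpha_{i_j}})$ has weight $-\alpha_{i_j}$, by the computation of $\lambda_\funcB$ and the hypothesis $\lambda_\funcB=\oline{\lambda\circ\tau}$. On a highest weight vector $v$, any weight-zero element acts by $\lambda(\HC_\g b(D_\lambda))$, since $\g n v=0$ and $\bar{\g n}$ terms would push $v$ below the top weight. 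This yields $(\psi\phi)^2|_{W_\lambda}=c_\lambda\phi^2|_{W_\lambda}$.

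For $c_\lambda\in\R^*$: by part (i), $\psi\phi$ is an antilinear bijection of $W_\lambda$ intertwining nothing in particular, so its square is nonzero, which gives $c_\lambda\ne0$. When $W_\lambda$ is a line, the square of an antilinear map $z\mapsto a\bar z$ is $|a|^2>0$, and $\bfc(\phi)$ is real by Lemma~\ref{lem:sig}(i); together these make $c_\lambda$ real. In type $\mathbf Q$, $W_\lambda$ is a Clifford module, and I would use the commutation with $\g h$ to reduce to an even vector.

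The main obstacle will be sign bookkeeping when moving odd $\phi$ past odd root vectors, which must come out consistent with the definition of $D_\lambda$.
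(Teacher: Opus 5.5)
Your outline is the paper's proof. Part (i) comes from the intertwining relation $\phi\rho(x)=\rho(\tau(x))\phi$; your step-by-step $\g{sl}(1|1)$ check that $\psi(W'_\lambda)=W_\lambda$ usefully fills in the paper's ``similar to''. Part (ii) comes from $(\psi\phi)^2=\rho(D_\lambda)\rho(w^{-1}\tau(w^{-1}))\phi^2$ followed by the Harish-Chandra projection.

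\textbf{Gap: reality of $c_\lambda$ in type $\mathbf Q(n)$.} There $W_\lambda$ is not a line, and not a ``$\mathbb D$-line'' either. It is an irreducible module over the Clifford-type Cartan subalgebra $\g h$, and typically has dimension greater than one. The antilinear bijection $T:=\psi\phi|_{W_\lambda}$ then need not preserve any line, and $T^2$ can be negative. For instance, for the split form of $\g q(4)$ with $\lambda=(-1,-2,-3,-4)$ one gets endotype $4_\R$ and $c_\lambda=1$, so an even $\phi$ has $T^2<0$. This is exactly how the Clifford contribution enters Theorem~\ref{thm-endotypes-Q(n)}. So no reduction to an even vector can give positivity. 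You only need reality, and that is easy. Since $r=0$, $T^2$ is the scalar $c':=\lambda(w^{-1}\tau(w^{-1}))\bfc(\phi)$. Arguing as in Lemma~\ref{lem:sig}(i), $c'T=T^3=Tc'=\oline{c'}\,T$ forces $c'\in\R^*$, hence $c_\lambda=c'/\bfc(\phi)\in\R^*$. The paper argues directly instead. Exponentiating the hypothesis $\Ad_w^*\lambda=\oline{\lambda\circ\tau}$ gives $\lambda(\tau(s))=\oline{\lambda(w^{-1}sw)}$ for $s\in\Hund$. Taking $s=\tau(w^{-1})w^{-1}$ yields $\lambda(w^{-1}\tau(w^{-1}))=\oline{\lambda(w^{-1}\tau(w^{-1}))}$.

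\textbf{A smaller error in the weight claim.} It is false that each $\Ad_{w^{-1}}\tau(e_{\alpha_{i_j}})$ has weight $-\alpha_{i_j}$. In the paper's $\g{u}(1,0|2,0)$ example, $\Ad_{\tilde w_\circ^{-1}}\tau(E_{1,2})$ is a multiple of $E_{3,1}$, whose weight is $-\alpha_2$. In general the weight is $\sigma(\alpha_{i_j})$, where $\sigma(\mu):=\oline{\mu\circ\tau\circ\Ad_w}$. This $\sigma$ is an involution of $\g h^*$ because $\tau(w)w\in\Hund$. The hypothesis reads $\lambda-\sum_j\alpha_{i_j}=\sigma(\lambda)$, so $\sigma(\sum_j\alpha_{i_j})=-\sum_j\alpha_{i_j}$, and only the total weight of $D_\lambda$ vanishes. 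You do not need this anyway. Part (i) already gives $\rho(D_\lambda)W_\lambda=W_\lambda$, so only the weight-zero component of $D_\lambda$ contributes on $W_\lambda$, and it acts by $\lambda(\HC_{\g b}(D_\lambda))$.

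Finally, the sign bookkeeping you flagged as the main obstacle does not arise. With the conventions of Remark~\ref{remark:antilin-intertwiner}, the relation $\phi\rho(x)=\rho(\tau(x))\phi$ holds with no signs, whatever the parities of $\phi$ and $x$.
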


\begin{proof} 
As before let $\rho:\g g\to \g{gl}(W)$ denote the action map. By Remark~\ref{remark:antilin-intertwiner} we can think of $\phi$ as an antilinear map $\phi:W\to W$ satisfying 
\begin{equation}
\label{eq:phitaurho}
\phi\rho(x)=\rho(\tau(x))\phi\quad\text{ for }x\in\g g.
\end{equation} 
From~\eqref{eq:phitaurho} it follows that $\phi(W_\lambda)=W_\lambda'$ and 
$\phi(W_\lambda')=W_\lambda$. This proves (i).
Similar to~\eqref{eq:WlamW'lam}, 
we have
$
W_\lambda=\rho(e_{\alpha_{i_1}})\cdots \rho(e_{\alpha_{i_r}})\rho(w^{-1})W_\lambda'=\psi(W_\lambda')$.
Equality~\eqref{eq:phitaurho} holds for the integrated representation $\rho$ and 
$x\in \Gund$. Thus,
\begin{align*}
\phi\psi
&=\phi\rho(e_{\alpha_{i_1}})\cdots
\rho(e_{\alpha_{i_r}})
\rho(w^{-1})=
\rho(\tau(e_{\alpha_{i_1}}))
\cdots
\rho(\tau(e_{\alpha_{i_r}}))
\rho(\tau(w^{-1}))\phi.
\end{align*}
Hence
\begin{align*}(\psi\phi)^2&=
\psi(\phi\psi)\phi=
\rho(e_{\alpha_{i_1}})\cdots \rho(e_{\alpha_{i_r}})
\rho(w^{-1})
\rho(\tau(e_{\alpha_{i_1}}))
\cdots
\rho(\tau(e_{\alpha_{i_r}}))
\rho(\tau(w^{-1}))\phi^2\\
&=
\rho(e_{\alpha_{i_1}})\cdots \rho(e_{\alpha_{i_r}})
\rho(\Ad_{w^{-1}}\tau(e_{\alpha_{i_1}}))
\cdots
\rho(\Ad_{w^{-1}}\tau(e_{\alpha_{i_r}}))\rho(w^{-1})\rho(\tau(w^{-1}))
\phi^2\\
&=\rho(D_\lambda)\rho(w^{-1}\tau(w^{-1}))\phi^2.
\end{align*}
Note that $\phi^2(W_\lambda)=W_\lambda$
and $\rho(w^{-1}\tau(w^{-1}))\big|_{W_\lambda}=\lambda\big(w^{-1}\tau(w^{-1})\big)\cdot 1_{W_\lambda}$. 

If $\g g$ is of type $\mathbf Q(n)$ then $r=0$  and 
we have 
\[
\lambda(w^{-1}\tau(w^{-1}))=
\lambda\circ\tau(\tau(w^{-1})w^{-1})=
\oline{\lambda(w^{-1}(\tau(w^{-1})w^{-1})w)}
=\oline{\lambda(w^{-1}\tau(w^{-1}))},
\]
hence $\lambda(w^{-1}\tau(w^{-1}))\in \R^*$. This proves $c_\lambda\in\R^*$.  

If $\g g$ is not of type $\mathbf Q(n)$ then we express $D_\lambda$ as 
\[
D_\lambda=D_{\bar{\g  n}}+\HC_\g b(D_\lambda)+D_{\g n}\quad\text{where}\quad D_{\g n}\in\mathfrak  U(\g g)\g n\text{ and }
D_{\bar{\g  n}}\in \bar{\g n}\mathfrak  U(\g h).
\]
Since $W_\lambda$ is the $\g b$-highest weight space we must have $\rho(D_{\g n})W_\lambda=0$. Also, 
$\rho(D_{\bar {\g n}})W_\lambda$
is in the sum of weight spaces of $W$ with weight  $\lambda-\beta$ where $\beta\neq 0$ and $\beta=\sum_{\alpha\in \Pi_{\g b}}n_\alpha \alpha$ for some $n_\alpha\geq 0$. 
But we also have $\psi\phi(W_\lambda)=W_\lambda$. This proves that $\rho(D_{\bar{\g n}})W_\lambda=0$. 
Consequently, 
\[
(\psi\phi)^2|_{W_\lambda}=\rho(\HC_{\g b}(D_\lambda))\lambda(w^{-1}\tau(w^{-1}))\phi^2|_{W_\lambda}. 
\]
 But  by the definition of $\phi$ we have $\phi^2\big|_{W_\lambda}=\bfc(\phi)1_{W_\lambda}$ where $\bfc(\phi)\in\R^*$ (see Lemma~\ref{lem:sig}). Since $\dim(W_\lambda)=1$, as in Remark~\ref{rmk-abbel} for the antilinear map $\psi\phi:W_\lambda\to W_\lambda$  we obtain $(\psi\phi)^2=c'1_{W_\lambda}$ for some $c'\in \R^+$. In particular, we must have $c_\lambda\in\R^*$. 
\end{proof}
\begin{thm}
\label{thm-endotypes-basic}
Let $\g g$ be a Lie superalgebra of basic type, let $\tau:\g g\to\g g$ be an antilinear involution, and let $\g b=\g h\oplus \g n$ be a Borel subalgebra of $\g g$ such that $\tau(\g h)=\g h$. Let $[W]\in\Irr(\catC_\g g)$. Let the $\g b$-highest weight of $W$ be $\lambda\in\g h^*$, and let $\lambda_\funcB$, $c_\lambda$, and $r$  be  as in~\eqref{eq:lambdaB-clam}.
\begin{itemize}
\item[\rm (i)] If $\lambda_\funcB\neq \oline{\lambda\circ\tau}$, then $\EType_{\g g,\tau}(W)=0_\C$.
\item[\rm (ii)] If $\lambda_\funcB= \oline{\lambda\circ\tau}$, then $c_\lambda\in \R^*$ and $\EType_{\g g,\tau}(W)$ is determined from the sign of $c_\lambda$ and parity of $r$ according to Table~\ref{Table-ETBasic} below.

\end{itemize}
\begin{table}[ht]
\centering
\begin{tabular}{|c|c|c|}
\hline 
$r$ & $\sgn(c_\lambda)$ & $\EType_{\g g,\tau}(W)$\\
 \hline 
\rm{even} & $+$ & $0_\R$\\
 \hline \rm{odd} & $-$ & $2_\R$\\
 \hline \rm{even} & $-$ & $4_\R$\\
 \hline \rm{odd} & $+$ & $6_\R$\\
 \hline
\end{tabular}
\vspace{3mm}
\caption{\label{Table-ETBasic} $\EType(W)$ when $\g g$ is a basic Lie superalgebra. }
\vspace{-8mm}
\end{table}
\end{thm}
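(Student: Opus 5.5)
We prove both parts by computing the symmetry datum $\mathfrak S_W$ and then reading the endotype off Table~\ref{Table-1}, as in Theorem~\ref{thm:Table}. First we note that $\Pi\notin\mathfrak S_W$ for every $[W]\in\Irr(\catC_\g g)$ when $\g g$ is of basic type. Indeed, $\g h=\g h_\eev$, so the $\g b$-highest weight space $W_\lambda$ is one-dimensional and therefore purely even or purely odd. Any $\g g$-isomorphism $W\to\Pi W$ would have to map $W_\lambda$ to the highest weight space of $\Pi W$, which has the opposite parity. That is impossible for an even map, so $\mathfrak S_W\subseteq\{(\funcB,\pm),(\Pi\funcB,\pm)\}$.

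For part (i), suppose $\lambda_\funcB\neq\oline{\lambda\circ\tau}$. By the discussion preceding Proposition~\ref{prp:Step1}, $\wHom_{\catC_\g g}(W,\funcB_{\g g,\tau}W)=0$. This space contains both the even and the odd intertwiners, so neither $\funcB W$ nor $\Pi\funcB W$ is isomorphic to $W$. Hence $\mathfrak S_W=\varnothing$, and the row $0_\C$ of Table~\ref{Table-1} gives $\EType_{\g g,\tau}(W)=0_\C$.

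For part (ii), suppose $\lambda_\funcB=\oline{\lambda\circ\tau}$, so a nonzero homogeneous $\phi\in\wHom_{\catC_\g g}(W,\funcB_{\g g,\tau}W)$ exists.

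\emph{Parity of $\phi$.} By Schur's Lemma $\phi$ is unique up to a scalar, so it has a well-defined parity. By Lemma~\ref{lem:psiphitrick}(i), $\phi(W_\lambda)=W'_\lambda$. By~\eqref{eq:WlamW'lam}, $W'_\lambda$ is obtained from $W_\lambda$ by applying $\rho(w)$, which is even, and the $r$ odd operators $\rho(e_{-\alpha_{i_j}})$. Hence $|W'_\lambda|=|W_\lambda|+r \pmod 2$. Since $W_\lambda$ and $W'_\lambda$ are each one-dimensional and homogeneous, $\phi$ is even if and only if $r$ is even. So $(\funcB,\pm)\in\mathfrak S_W$ when $r$ is even, and $(\Pi\funcB,\pm)\in\mathfrak S_W$ when $r$ is odd. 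In the $\g{sl}(2|2)$ case of Remark~\ref{rmk-lamBcsl22} the count is different: $e_{\alpha_1}$ is a product of $\mathrm{rank}(\omega_{\lambda,\alpha_1})$ odd vectors, so the relevant parity is that of the number of odd factors in $D_\lambda$. I will check that this number agrees with the quantity called $r$ in that case.

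\emph{Sign of $\phi$.} By Lemma~\ref{lem:psiphitrick}(ii), $(\psi\phi)^2|_{W_\lambda}=c_\lambda\,\phi^2|_{W_\lambda}=c_\lambda\bfc(\phi)$. The map $\psi\phi$ is an antilinear endomorphism of the line $W_\lambda$, so it has the form $z\mapsto a\bar z$ and its square is $|a|^2>0$, with $a\neq 0$ because $\psi$ and $\phi$ are bijections between the relevant spaces. Therefore $c_\lambda\in\R^*$ and $\sgn\bfc(\phi)=\sgn c_\lambda$.

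\emph{Reading the table.} $\mathfrak S_W$ is then one of $\{(\funcB,+)\}$, $\{(\funcB,-)\}$, $\{(\Pi\funcB,-)\}$, $\{(\Pi\funcB,+)\}$. The corresponding rows of Table~\ref{Table-1} are $0_\R$, $4_\R$, $2_\R$, $6_\R$, which is exactly Table~\ref{Table-ETBasic}.

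The main obstacle is the odd case. There I must check that the scalar $\bfc(\phi)$ from the definition of $\sgn$, which involves identifying $\phi$ with an antilinear odd map via Remark~\ref{remark:antilin-intertwiner} and composing through $\Pi$, is literally the operator $\phi^2$ appearing in Lemma~\ref{lem:psiphitrick}, with no hidden sign from the parity switch. I would settle this first by testing it on $\g{sl}(1|1)$-type examples against Proposition~\ref{prp:ETClp,q}.
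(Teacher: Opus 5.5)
Your proposal is correct and follows the paper's proof essentially step for step: $\Pi\notin\mathfrak S_W$ because $\dim W_\lambda=1$; part (i) follows from the vanishing of $\wHom_{\catC_\g g}(W,\funcB_{\g g,\tau}W)$; the parity of $\phi$ is read off from $W_\lambda\mapsto W'_\lambda$ (the parity of $r$); and $\sgn(\phi)=\sgn(c_\lambda)$ comes from Lemma~\ref{lem:psiphitrick}(ii) applied to the antilinear map $\psi\phi$ on the line $W_\lambda$. Two small corrections: in the $\g{sl}(2|2)$ case the relevant count is the number of odd factors in $e_{\alpha_1}$ (equivalently in $\psi$), namely $\mathrm{rank}(\omega_{\lambda,\alpha_1})$, not the number in $D_\lambda$, which is always even; and the sign issue you flag is harmless, because under the identification of Remark~\ref{remark:antilin-intertwiner} the paper takes $\phi^2$ to be the literal square of the antilinear map, exactly as in the proof of Proposition~\ref{prp:ETClp,q}.
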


\begin{proof} 
If $\lambda_\funcB\neq\oline{\lambda\circ\tau}$, then $\wHom_{\catC_\g g}(W,\funcB_{\g g,\tau}W)=0$ and 
$\Pi\not\in \mathfrak S_W$ (because $\dim W_\lambda=1$). This implies
(i). For (ii), note that   
$\wHom_{\catC_\g g}(W,\funcB_{\g g,\tau}W)\cong \C^{1|0}$ or $\C^{0|1}$, again because $\Pi\not\in \mathfrak S_W$. 
Furthermore, the  parity of the (unique up to scalar) nonzero elements of 
$\wHom_{\catC_\g g}(W,\funcB_{\g g,\tau}W)$ is the same as the number of odd reflections that result in changing the parity of the highest weight, i.e, it equals the parity of $r$.
Now suppose that $\phi\in\wHom_{\catC_\g g}(W,\funcB_{\g g,\tau}W)$ is a nonzero homogeneous element. 
Since $\phi^2(W_\lambda)=W_\lambda$, to determine $\sgn(\phi)$ it suffices to consider its restriction to $W_\lambda$. 
Since $\dim W_\lambda=1$, the antilinear map $\psi \phi:W_\lambda\to W_\lambda$ is of the form $z\mapsto \alpha \bar z$ for some  $\alpha\in\C^*$. It follows that $\sgn(\psi\phi)=+$, hence by Lemma~\ref{lem:psiphitrick}(ii) we have $\sgn(\phi)=\sgn(c_\lambda)$. Putting together these facts and Table~\ref{Table-1}, we obtain (ii). 
\end{proof}
\begin{rmk}
\label{rmk:specialBor}
For special choices of the Borel subalgebra $\g b\sseq \g g$, the number of odd reflections   that relate $\Pi_{\g b}$ to $\Pi_{\tau(\g b)}$ can be small and, as a result, the calculation of  $\HC_\g b(D_\lambda)$ can be trivial. For instance, for real forms $\g{su}(p,q|r,s)$ of $\g{sl}(p+q,r+s)$ where $0\leq p\leq q$ and $0\leq r\leq s$, and $\lfloor \frac{q-p}{2}\rfloor \geq \lfloor\frac{s-r}{2}\rfloor$, we can consider the Borel subalgebra corresponding to the 
$\eps\delta$-sequence $\eps^{2p}\omega \delta^{2r}$ where
\[
\omega:=\begin{cases}
\eps^{k-l}(\eps\delta)^{l}
(\delta\eps)^{l}
\eps^{k-l} &\text{if }q-p=2k,s-r=2l.\\
\eps^{k-l}\delta(\eps\delta)^{2l}\eps^{k-l} &\text{if }q-p=2k,\ s-r=2l+1.\\
\eps^{k-l}(\eps\delta)^{2l}\eps^{k-l+1} &\text{if }q-p=2k+1,\ s-r=2l.\\
\eps^{k-l}(\eps\delta)^{l+1}(\delta\eps)^l\eps^{k-l}&\text{if }q-p=2k+1,\ s-r=2l+1.
\end{cases}
\]
For this $\g b$, in the first three cases no odd reflections are required (hence $D_\lambda=1$), and in the fourth case there is only one odd reflection corresponding to $\alpha=\eps_{k+1}-\delta_{l+1}$. Thus,  $D_\lambda=1$ if $\lambda([e_\alpha,\tau(e_\alpha)])=0$, and $D_\lambda=[e_\alpha,\tau(e_\alpha)]$ otherwise. Special Borel subalgebras of this kind have been considered in~\cite[Sec. 5]{Hayashi}.

\end{rmk}

We now characterize endotypes when $\g g$ is of type $\mathbf Q(n)$. 
Let $\tau_w:\g h\to\g h$ be defined by 
\[
\tau_w(x)=\Ad_{w^{-1}}(\tau(x))\quad\text{for }x\in\g h.
\] 
Note that $\tau_w$ is an antilinear involution of $\g h$ because by Lemma~\ref{lem:seqisotr}  we have
\begin{align*}
\tau_w^2(x)
=\Ad_{w^{-1}}\big(\tau(\Ad_{w^{-1}}(\tau(x))\big)
&=
\Ad_{w^{-1}}
\big(
\Ad_{\tau(w^{-1})}x
\big)
=\Ad_{w^{-1}\tau(w^{-1})}(x)=x
.\end{align*}

\begin{thm}
\label{thm-endotypes-Q(n)}
Let $\g g$ be of type $\mathbf Q(n)$,
let $\tau:\g g\to\g g$ be an antilinear involution, and let $\g b=\g h\oplus \g n$ be a Borel subalgebra of $\g g$ such that $\tau(\g h)=\g h$. Let $[W]\in\Irr(\catC_\g g)$  and let the  $\g b$-highest weight of $W$ be $\lambda\in\g h_\eev^*$.
Let $\lambda_\funcB$ and $c_\lambda$ be as in~\eqref{eq:lambdaB-clam}, and 
let $B_{\lambda,\tau_w}$ be  as in~\eqref{eq:Blatau}. 

\item[\rm (i)] Suppose that $\lambda\neq \oline{\lambda\circ\tau_w}$. Then $\EType_{\g g,\tau}(W)=t_\C$ for $t:=\mathrm{rank}(B_{\lambda,\tau_w})\mod 2$. 

\item[\rm (ii)] Suppose that $\lambda= \oline{\lambda\circ\tau_w}$. Then 
$\EType_{\g g,\tau}(W)=t_\R$ for $t:=(q-p+s)\mod 8$, where $(p,q)$ denotes the signature of $B_{\lambda,\tau_w}$, and $s$ is defined by
\[
s:=\begin{cases}
0 & \text{ if }c_\lambda>0,\\
4 & \text{ if }c_\lambda<0.
\end{cases}
\] 
\end{thm}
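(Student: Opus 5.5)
The plan is to reduce to the Clifford-type Cartan subalgebra $\g h$ and apply Proposition~\ref{prp:ETClp,q}, using the highest weight space as a bridge. For $\g g$ of type $\mathbf Q(n)$ there are no odd reflections ($k=r=0$), so Lemma~\ref{lem:seqisotr} gives $w$ with $\Ad_{w^{-1}}\tau(\g b)=\g b$ and $w^{-1}\tau(w^{-1})\in\Hund$. The highest weight space $W_\lambda$ is an irreducible module over $\g h$. Here $\g h_\eev$ acts by $\lambda$ and $\g h_\ood$ acts through $\mathrm{Cl}(B_\lambda)$, and $W$ is generated by $W_\lambda$ as the unique irreducible quotient of the induced module. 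The key observation is that the twisted map $\psi\phi=\rho(w^{-1})\phi$ preserves $W_\lambda$. By~\eqref{eq:phitaurho} it satisfies $(\psi\phi)\rho(x)=\rho(\tau_w(x))(\psi\phi)$ for $x\in\g h$. So $\psi\phi$ is an antilinear intertwiner between $W_\lambda$ and $\funcB_{\g h,\tau_w}W_\lambda$, up to the parity twist if $\phi$ is odd.

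First I would show that restriction to $W_\lambda$ followed by composition with $\psi$ gives an isomorphism between $\wHom_{\catC_\g g}(W,\funcB_{\g g,\tau}W)$ and $\wHom_{\catC_\g h}(W_\lambda,\funcB_{\g h,\tau_w}W_\lambda)$. The same restriction gives $\wEnd_{\catC_\g g}(W)\cong\wEnd_{\catC_\g h}(W_\lambda)$. Injectivity holds because $W_\lambda$ generates $W$. For surjectivity I would argue as in Theorem~\ref{thm:EtypeCplx1}: extend $T$ to the induced module by $D\otimes v\mapsto \tau(D)\otimes \rho(w)Tv$, then pass to the unique irreducible quotient. Consequently $\Pi\in\mathfrak S_W$ if and only if $\Pi\in\mathfrak S_{W_\lambda}$. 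Moreover $W\cong\funcB W$ (up to parity) if and only if $\lambda=\oline{\lambda\circ\tau_w}$, which matches $\lambda_\funcB=\Ad_w^*\lambda$. Part (i) then follows directly from Proposition~\ref{prp:ETClp,q}(i), since the $\Pi$-datum is determined by the parity of $\mathrm{rank}(B_\lambda)=\mathrm{rank}(B_{\lambda,\tau_w})$.

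For (ii), Lemma~\ref{lem:psiphitrick}(ii) gives $(\psi\phi)^2|_{W_\lambda}=c_\lambda\phi^2|_{W_\lambda}$, with $c_\lambda=\lambda(w^{-1}\tau(w^{-1}))\in\R^*$. So $\sgn(\phi)=\sgn(c_\lambda)\sgn(\psi\phi)$, with the same parity for $\phi$ and $\psi\phi$. Hence the symmetry datum of $W$ is obtained from that of the $\g h$-module $W_\lambda$ relative to $\tau_w$ by flipping both $\funcB$- and $\Pi\funcB$-signs exactly when $c_\lambda<0$. By Proposition~\ref{prp:ETClp,q}(ii), $W_\lambda$ has endotype $(q-p)_\R$.

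The remaining step is to check on Table~\ref{Table-1} that flipping all signs corresponds to adding $4$ in $\Z_8$. The flip exchanges $0_\R\leftrightarrow4_\R$, $1_\R\leftrightarrow5_\R$, $2_\R\leftrightarrow6_\R$ and $3_\R\leftrightarrow7_\R$. Conceptually, this is the product with the class $4_\R=[\qH]$ via Proposition~\ref{prp:EtypeofWWAWB}. This yields $t=q-p+s$.

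I expect the main obstacle to be the surjectivity and compatibility step for odd $\phi$ and for the sign bookkeeping. One must verify that $\psi\phi$ intertwines $\tau_w$ including the sign conventions for $\Pi\funcB$, and that the sign of $\psi\phi$ computed in $\catC_\g h$ agrees with $\bfc$ as defined there. The identity $(\psi\phi)^2=c_\lambda\phi^2$, and the involutivity of $\tau_w$, both rely on $\rho(w^{-1}\tau(w^{-1}))$ acting on $W_\lambda$ by the scalar $c_\lambda$. That in turn uses that $\Hund$ acts on $W_\lambda$ through $\lambda$, which holds because $\g h_\eev$ is central in $\g h$.
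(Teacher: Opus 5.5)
Your proposal is correct and follows the paper's proof essentially step by step. Both arguments pass to the highest weight space via $\phi\mapsto(\psi\phi)|_{W_\lambda}$, which is a $\tau_w$-twisted intertwiner of $\mathfrak{h}$-modules, and use Lemma~\ref{lem:psiphitrick}(ii) to get $\mathrm{sgn}(\phi)=\mathrm{sgn}(c_\lambda)\,\mathrm{sgn}(\psi\phi|_{W_\lambda})$; both then read off from Table~\ref{Table-1} that a global sign flip is the product with $4_{\mathbb R}$, and conclude with Proposition~\ref{prp:ETClp,q}. You additionally justify surjectivity of the restriction map (via the induced module and its unique irreducible quotient) and check the sign flip row by row in Table~\ref{Table-1}, whereas the paper only asserts these steps.
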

\begin{proof}
Part (i) is proved as in Theorem~\ref{thm-endotypes-basic}(i), using Proposition~\ref{prp:ETClp,q}(i). Note that the given constraint on $\lambda$ is equivalent to $\lambda_\funcB\neq\oline{\lambda\circ\tau}$.  For part (ii), we use Lemma~\ref{lem:psiphitrick}. Denoting (the exponentiation to $\Gund$ of) the action map  by $\rho$, we have $\psi=\rho(w^{-1})$ and from Remark~\ref{remark:antilin-intertwiner} it follows that 
$\psi\phi\rho(x)=\rho(\tau_w(x))\psi\phi$  for $x\in\g h$. Thus, 
we have a parity-preserving $\C$-linear isomorphism
\[
\wHom_{\catC_\g g}(W,\funcB_{\g g,\tau}W)
\cong \wHom_{\catC_\g h}(W,\funcB_{\g h,\tau}W)\to 
\wHom_{\catC_{\g h}}(W_\lambda,\funcB_{\g h,\tau_w}W_\lambda)
\ ,\ 
\phi\mapsto (\psi\phi)\big|_{W_\lambda}.
\]  
Lemma~\ref{lem:psiphitrick}(ii) implies that 
$
\sgn(\phi)=\sgn(c_\lambda)\sgn\big(\psi\phi\big|_{W_\lambda}\big)$. 
From this and Table~\ref{Table-1} we obtain
\begin{equation}
\label{eq:ETYpe4R}
\EType_{\g g,\tau}(W)=\begin{cases}
\EType_{\g h,\tau_w}(W_\lambda)&\text{ if }c_\lambda>0,\\
\EType_{\g h,\tau_w}(W_\lambda)\bullet 4_\R&\text{ if }c_\lambda<0.\\
\end{cases}
\end{equation}
Part (ii) follows from~\eqref{eq:ETYpe4R} and Proposition~\ref{prp:ETClp,q}(ii).
\end{proof}

\subsection{Further remarks on endotypes when  $\g g$ is of basic type}
When  $\g g=\g g_\eev$, for any $[W]\in\Irr(\catC_\g g)$ we have  
$\EType_{\g g,\tau}(W)\in\{0_\R,0_\C,4_\R\}$, corresponding to the division algebras $\R,\C,\qH$. 

For $\g g$ of basic type, the highest weight space of $W$ is one-dimensional and thus $\Pi\not \in \mathfrak S_W$.
Consequently,  $\EType_{\g g,\tau}(W)\in\{0_\R,2_\R,4_\R,6_\R, 0_\C\}$. However, the endotypes $2_\R$ and $6_\R$ can only occur for the following real forms:
\begin{itemize}
\item[(i)] Unitary real forms of $\g{gl}(m|n)$, $\g{sl}(m|n)$, and $\g{psl}(m|n)$, where $m$ and $n$ are both odd (i.e., the algebras $\g{u}(p,m-p|q,n-q)$ and their $\g{su}$ and $\g{psu}$ variants). 

\item[(ii)] Real forms of $\gl(n|n)$,
$\g{sl}(n|n)$, and $\g{psl}(n|n)$,  corresponding to $\tau(X)=(\bar X)^\Pi$ 
where 
\[
X^\Pi:=\begin{bmatrix}
D & C\\
B & A
\end{bmatrix}
\text{ for }X=\begin{bmatrix}
A& B\\C & D\end{bmatrix}\in\g{gl}(n|n).
\]
Due to the resemblance of the matrix realization of the latter real form of $\g{gl}(n|n)$  to the  Lie superalgebra $\g{q}(n)$,
we denote this real form by $\oline{\g q}(n)$.
\end{itemize}

\begin{table}[ht]
\renewcommand{\arraystretch}{1.4}
\begin{tabular}{|c|c|c|}
\hline
$\g g^\R$ & $\g l^\R$ & \makecell{$2_\R$ or $6_\R$\\ possible?}\\
\hline 
$\g{gl}(m|n,\R)$ & $\R^{m+n}$ & No\\

\hline 
\makecell{$\g{u}(p,q|r,s)$, $p\leq q$, $r\leq s$\\ $(p+q)(r+s)$ even} & $\g u(q-p,0|s-r,0)\oplus \R^{2p+2r}$ & No\\

\hline 
\makecell{$\g{u}(p,q|r,s)$, $p\leq q$, $r\leq s$\\ $(p+q)(r+s)$ odd} & $\g u(q-p,0|s-r,0)\oplus \R^{2p+2r}$ & Yes\\

\hline $\g u^*(2m|2n)$ & $\g{u}(2)^{m+n}$ & No\\

\hline $\oline{\g q}(n)$ &  $\oline{\g q}(1)^{\oplus n}$ & Yes\\

\hline 
$\oline{\g{pe}}(n)$, $n\geq 2$ & $\R^{n}$ & No\\

\hline 
$\g{osp}(p,q|2n)$, $p\leq q$ & $\g{so}(q-p,\R)\oplus \R^{p+n}$ & No\\

\hline 
$\g{osp}^*(2m|p,q)$, $p\leq q$, $m=2r$ & $\g{u}(2)^{\oplus (p+r)}\oplus \g{sp}(q-p,0)$ & No\\
\hline $\g{osp}^*(2m|p,q)$, $p\leq q$, $m=2r+1$ & $\g{u}(2)^{\oplus (p+r)}\oplus \g{osp}^*(2|q-p,0)$ & No\\
\hline 
$G(2|1)$, $\g g_\eev^\R=\g{sl}(2,\R)\oplus G(2)_\mathrm{split}$ & $\R^3$ & No\\
\hline 
$G(2|1)$, $\g g_\eev^\R=\g{sl}(2,\R)\oplus G(2)_\mathrm{compact}$ & $G(2)_\mathrm{compact}$ & No\\
\hline 
$F(3|1)$, $\g g_\eev^\R=\g{sl}(2,\R)\oplus\g{so}(7,\R)$ & $\g{so}(7,\R)$& No \\
\hline 
$F(3|1)$, $\g g_\eev^\R=\g{sl}(2,\R)\oplus\g{so}(3,4)$ & $\R^4$& No\\
\hline 
$F(3|1)$, $\g g_\eev^\R=\g{su}(2)\oplus \g{so}(2,5)$ & $\g{u}(2)\oplus \R\oplus \g{so}(3,\R)$& No\\
\hline 
$F(3|1)$, $\g g_\eev^\R=\g{su}(2)\oplus \g{so}(1,6)$ & $\g{u}(2)\oplus \g{so}(5,\R)$& No\\
\hline 
$D(2|1,\alpha)$, $\alpha\in\R$, $\mathrm{rank}_\R(\g g_\eev^\R)=3$ & $\R^3$& No\\
\hline   
$D(2|1,\alpha)$, $\alpha+\bar \alpha=-1$, $\mathrm{rank}_\R(\g g_\eev^\R)=2$ & $\R^3$& No\\
\hline 
$D(2|1,\alpha)$, $\alpha\in\R$, $\mathrm{rank}_\R(\g g_\eev^\R)=1$ & $\g{su}(2)\oplus \g u(2)$& No\\
\hline 
\end{tabular}
\vspace{3mm}

\caption{\label{Table-hyper} 
Hyperbolic reduction for Lie superalgebras of basic type}
\vspace{-8mm}
\end{table}

For all other Lie superalgebras of basic type, we have
$\EType_{\g g,\tau}(W)\in\{0_\R,0_\C,4_\R\}$. 
This is because in these cases we can find a hyperbolic reduction such that either $\g l=\g l_\eev$ or $\g l$ has a Borel subalgebra $\g b_\g l$ such that $\g b_\g l$ and $\tau(\g b_\g l)$ are conjugate by some $w\in N_{\Gund}(\Hund)$ (i.e., $k=0$ in Lemma~\ref{lem:seqisotr}). Then the claim follows from   
Corollary~\ref{thm:EtypeCplx} or Theorem~\ref{thm-endotypes-basic} (in the latter case, note that $r=0$ since $r\leq k$).  
In Table~\ref{Table-hyper}, we describe the subalgebras $\g l^\R$ of the hyperbolic triangular decomposition obtained by taking a generic element $x_\circ$ of a split maximal torus of $\g g_\eev^\R$. (For brevity, we have excluded real forms of $\g{sl}$ and $\g{psl}$, but the calculations are similar to $\gl$.) Indeed, 
the combination of the reduction to $\g l^\R$,  Corollary~\ref{thm:EtypeCplx}, and Proposition~\ref{prp:EtypeofWWAWB}  can be utilized as an alternate method to prove Theorem~\ref{thm-endotypes-basic}. However, this approach entails tedious  case-by-case analysis. For instance, sometimes $\g l^\R$ has a center that does not split as a summand. Furthermore, when abelian summands are present in $\g l^\R$, one has to determine whether the restriction of the highest weight of $W$ to those summands remains real, or could be complex. Our strategy to prove  Theorem~\ref{thm-endotypes-basic} circumvents such laborious details and provides a uniform, more elegant statement. 

Examples of real forms for which $\g g$ has a Borel $\g b$ such that $\g b$ and $\tau(\g b)$ are conjugate under $N_{\Gund}(\Hund)$ are the split forms (for which $\g b=\tau(\g b)$), the unitary forms  $\g u(p,q|r,s)$ with at least one of $p+q$ and $r+s$ even (see Remark~\ref{rmk:specialBor}) and $\g{osp}^*(2|n,0)$, where $\g b$ is the Borel with  the positive system 
$\{\delta_1-\delta_2,\cdots,\delta_{n-1}-\delta_n,\delta_n-\eps,\delta_n+\eps\}$. We caution the reader that $\g{osp}^*(2m|p,q)$ is a real form of $\g{osp}(2m|2p+2q)$, and  $\oline{\g{pe}}(n)$ denotes the real form of $\g{gl}(n|n)$ corresponding to $\tau(X)=((-\bar X)^\mathrm{str})^\Pi$.

\section{Families of  $\tau$-compatible Borel subalgebras}
\label{sec:tau-compatible-parab}
In this section  $\g g$ will be a Lie superalgebra that is either of basic type or of type $\mathbf Q(n)$. We assume that $\g g$ is not isomorphic to the Lie superalgebras of Remark~\ref{rmk-smallcases}.  
The goal of this section is to explicitly construct families of Borel subalgebras $\g b\sseq \g g$ for which we obtain an alternate formula for  $c_\lambda$ in~\eqref{eq:lambdaB-clam} that is purely Lie algebraic, i.e., it does not require the evaluation of the integrated character $\lambda$ on the  group element $w^{-1}\tau(w^{-1})\in\Hund$. This is established in Theorem~\ref{thm:alternateclam}. In particular, if $\g g=\g g_\eev$, then $c_\lambda$ is obtained by the evaluation of $\lambda$  at a semisimple element of  $[\g g,\g g]$. One interesting outcome  of Theorem~\ref{thm:alternateclam} is that it establishes a connection between computing endotypes and Kostant's cascade of strongly orthogonal roots.

We retain the notation of 
Section~\ref{sec:ETypeWbasic-Q}. 
We denote the centre of a Lie algebra $\g m$ by $\Centre(\g m)$, and the centralizer of a Lie algebra $\g s$ in the Lie algebra $\g m$ by $\Centre_\g m(\g s)$. 

\subsection{Cartan decomposition and compact Cartans}
We begin with some general facts about semisimple Lie algebras with a compact Cartan subalgebra. 
In this subsection $\g g$ will be a semisimple Lie algebra. 
To the  real form $\g g^\R$ of $\g g$ corresponding to the antilinear involution $\tau:\g g\to \g g$, one can associate a \emph{Cartan involution}, i.e., a $\C$-linear involution $\theta:\g g\to\g g$  with the following properties:
\begin{itemize}
\item[(i)] $\theta\tau=\tau\theta$. This implies that $\theta(\g g^\R)=\g g^\R$.
\item[(ii)] The antilinear involution $\theta\tau$ corresponds to the  compact real form of $\g g$. 
\item[(iii)] The symmetric bilinear form  
$\kappa_\theta:\g g^\R\times \g g^\R\to\R$ defined by $\kappa_\theta(x,y):=-\kappa(x,\theta(y))$ is positive definite. Here $\kappa(\cdot,\cdot)$ denotes the Killing form of $\g g^\R$.
\end{itemize}
Let 
$\g k^\R$ and $\g p^\R$ be the $+1$ and $-1$ eigenspaces of $\theta$, respectively. 
The decomposition 
\begin{equation}
\label{eq:Cartandecomp1}
\g g^\R=\g k^\R\oplus\g p^\R 
\end{equation}
is called the Cartan decomposition of $\g g^\R$. Complexifying~\eqref{eq:Cartandecomp1}, we obtain a decomposition
\[
\g g=\g k\oplus \g p.
\]
Let $\g h^\R$ be a $\theta$-stable  Cartan subalgebra of $\g g^\R$, i.e.,  $\theta(\g h^\R)=\g h^\R$. Then  $\g h^\R=\g t^\R\oplus \g a^\R$ where $\g t^\R:=\g h^\R\cap\g k^\R$ and $\g a^\R:=\g h^\R\cap \g p^\R$. 
\begin{dfn}
We say $\g h^\R$ is a \emph{compact Cartan subalgebra} if $\g h^\R=\g t^\R$.  
\end{dfn}
Recall that  $\Phi_\g g$ denotes the root system of $\g g$ (with respect to an a priori  chosen  Cartan subalgebra $\g h\sseq \g g$). 
\begin{lem}
\label{lem:kcompalpha-alpha}
  Let $\g g$ be a complex semisimple Lie algebra and let $\g g^\R$, $\tau$, $\theta$, $\g k^\R$, and $\g p^\R$  be as above.  Let $\g h^\R$ be a compact $\theta$-stable Cartan subalgebra of $\g g^\R$, with complexification $\g h$. Then  
$\tau(\g g_\alpha)=\g g_{-\alpha}$
 for every  $\alpha\in\Phi_\g g$.
 \end{lem}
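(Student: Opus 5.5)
The plan is to show that every root takes purely imaginary values on $\g h^\R$. Combined with the relation between $\tau$ and the action of $\g h$ on root spaces, this forces $\tau$ to send $\g g_\alpha$ to $\g g_{-\alpha}$.

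First, since $\g h^\R=\g t^\R\sseq\g k^\R$, the form $\kappa_\theta$ restricted to $\g h^\R$ equals $-\kappa|_{\g h^\R}$, and it is positive definite. So $\kappa$ is negative definite on $\g h^\R$. Next take $h\in\g h^\R$. Then $\ad_h$ preserves $\g g^\R$ and is skew-adjoint with respect to the positive definite form $\kappa_\theta$ on $\g g^\R$. To check this, use invariance of $\kappa$ together with $\theta(h)=h$:
\[
\kappa_\theta([h,x],y)=-\kappa([h,x],\theta y)=\kappa(x,[h,\theta y])=\kappa(x,\theta[h,y])=-\kappa_\theta(x,[h,y]).
\]
Hence $\ad_h$ is diagonalizable over $\C$ with purely imaginary eigenvalues. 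Therefore $\alpha(h)\in i\R$ for every $\alpha\in\Phi_\g g$ and every $h\in\g h^\R$.

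Now fix $\alpha\in\Phi_\g g$ and $x\in\g g_\alpha$. For $h\in\g h^\R$ we have $\tau(h)=h$, and since $\tau$ is an antilinear automorphism,
\[
[h,\tau(x)]=\tau([h,x])=\tau(\alpha(h)x)=\oline{\alpha(h)}\,\tau(x)=-\alpha(h)\tau(x).
\]
Because $\g h=\g h^\R\otimes_\R\C$ and both sides of the eigenvalue equation are $\C$-linear in $h$, it follows that $[h,\tau(x)]=-\alpha(h)\tau(x)$ for all $h\in\g h$. Thus $\tau(\g g_\alpha)\sseq\g g_{-\alpha}$. The same argument applied to $-\alpha$, together with $\tau^2=1$, gives equality.

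The only step requiring care is the imaginarity of the roots on $\g h^\R$. This rests on the skew-adjointness computation above, which uses that $\g h^\R\sseq\g k^\R$ is $\theta$-fixed and that $\kappa_\theta$ is positive definite. Everything else is formal.
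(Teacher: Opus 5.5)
Your proposal is correct and follows the paper's proof essentially step for step. Skew-adjointness of $\mathrm{ad}_h$ with respect to $\kappa_\theta$ (using $\theta(h)=h$) forces $\alpha(h)\in i\mathbb{R}$ on the compact Cartan subalgebra, and then $[h,\tau(x)]=\overline{\alpha(h)}\,\tau(x)=-\alpha(h)\tau(x)$, extended $\mathbb{C}$-linearly, gives $\tau(\mathfrak g_\alpha)\subseteq\mathfrak g_{-\alpha}$. Your final use of $\tau^2=1$ to upgrade this inclusion to equality is a small step the paper leaves implicit.
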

\begin{proof}
For any $h\in\g h^\R$, the map $\ad_h:\g g^\R\to\g g^\R$ is skew-symmetric because
\[
\kappa_\theta(\ad_hx,y)=
-\kappa(\ad_hx,\theta(y))=
\kappa(x,\ad_h\theta(y))=\kappa(x,\theta(\ad_hy))=-\kappa_\theta(x,\ad_hy).
\]
It follows that the eigenvalues of $\ad_h$ are imaginary.
Now for $x\in\g g_\alpha$ and  $h\in\g h^\R$ we have
\[
\ad_h\tau(x)=[h,\tau(x)]=\tau([h,x])=\tau(\alpha(h)x)=\oline{\alpha(h)}\tau(x)=
-\alpha(h)\tau(x).
\]
Since $\alpha$ is $\C$-linear, it follows that $x\in\g g_{-\alpha}$. 
\end{proof}

\begin{lem}
\label{lem:tau(b)=bop}
Let $\g g$ be a complex reductive Lie algebra with a real form $\g g^\R$. Let $\theta$ be a Cartan involution of $[\g g^\R,\g g^\R]$. Let $\g h^\R$ be a Cartan subalgebra of $\g g^\R$, with complexification $\g h$, such that $\g h^\R\cap [\g g^\R,\g g^\R]$ is a compact $\theta$-stable Cartan subalgebra of $[\g g^\R,\g g^\R]$.
Then for every Borel subalgebra $\g b\sseq \g g$ that satisfies $\g h\sseq \g b$ we have $\tau(\g b)=\g b^\mathrm{op}$. 
 \end{lem}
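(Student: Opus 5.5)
The plan is to reduce everything to Lemma~\ref{lem:kcompalpha-alpha}, applied to the semisimple part, together with a direct check on the centre. Write $\g g=\Centre(\g g)\oplus[\g g,\g g]$. Since $\tau$ is an antilinear automorphism of $\g g$, it preserves both $\Centre(\g g)$ and $[\g g,\g g]$. Moreover, $[\g g,\g g]$ is the complexification of $[\g g^\R,\g g^\R]$, which is semisimple. The Cartan subalgebra decomposes as $\g h=\Centre(\g g)\oplus(\g h\cap[\g g,\g g])$. By hypothesis, $\g h\cap[\g g,\g g]$ is the complexification of a compact $\theta$-stable Cartan subalgebra of $[\g g^\R,\g g^\R]$. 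The roots of $\g g$ with respect to $\g h$ vanish on $\Centre(\g g)$, and their restrictions to $\g h\cap[\g g,\g g]$ are exactly the roots of the semisimple algebra $[\g g,\g g]$. The root spaces themselves are the same subspaces, all lying in $[\g g,\g g]$. Lemma~\ref{lem:kcompalpha-alpha}, applied to $[\g g,\g g]$ with the restriction of $\tau$ and the given $\theta$, then gives $\tau(\g g_\alpha)=\g g_{-\alpha}$ for every $\alpha\in\Phi_\g g$.

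Next, fix a Borel subalgebra $\g b\supseteq\g h$. Since $\g g$ is a Lie algebra, we may write $\g b=\g h\oplus\bigoplus_{\alpha\in\Phi^+}\g g_\alpha$ for a positive system $\Phi^+$, and then $\g b^{\mathrm{op}}=\g h\oplus\bigoplus_{\alpha\in\Phi^+}\g g_{-\alpha}$. Two things remain. First, $\tau(\g h)=\g h$: this holds because $\g h$ is the complexification of the real subalgebra $\g h^\R\subseteq\g g^\R$, so $\tau$ fixes $\g h^\R$ pointwise and preserves its complex span. Second, by the previous paragraph, $\tau$ maps $\bigoplus_{\alpha\in\Phi^+}\g g_\alpha$ onto $\bigoplus_{\alpha\in\Phi^+}\g g_{-\alpha}$. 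Combining these gives $\tau(\g b)=\g b^{\mathrm{op}}$.

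The only point needing care is compatibility of the decomposition with $\tau$ and $\theta$. Here $\theta$ is defined only on $[\g g^\R,\g g^\R]$, and the compact-Cartan hypothesis concerns only $\g h^\R\cap[\g g^\R,\g g^\R]$. So I must verify that the part of the proof of Lemma~\ref{lem:kcompalpha-alpha} we use is unaffected by the central directions. Concretely, I would check that for $h$ in the compact Cartan of $[\g g^\R,\g g^\R]$ the eigenvalues of $\ad_h$ are purely imaginary; the skew-symmetry argument with $\kappa_\theta$ on $[\g g^\R,\g g^\R]$ gives this. I would then check that $\alpha$ is determined by its values on that compact Cartan, since roots vanish on the centre. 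With this, the conclusion $\oline{\alpha(h)}=-\alpha(h)$ identifies $\tau(\g g_\alpha)$ with $\g g_{-\alpha}$ exactly as in the lemma, and no further obstacle arises.
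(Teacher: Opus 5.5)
Your proposal is correct and follows essentially the same route as the paper: split off the centre via $\g g=\Centre(\g g)\oplus[\g g,\g g]$ and apply Lemma~\ref{lem:kcompalpha-alpha} to the semisimple part. The paper phrases this reduction as the fact that Borel subalgebras have the form $\g b=\g b'\oplus\Centre(\g g)$ with $\g b'$ a Borel subalgebra of $[\g g,\g g]$, which is equivalent to your root-space description together with $\tau(\g h)=\g h$.
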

\begin{proof}
This follows from Lemma~\ref{lem:kcompalpha-alpha}, noting that Borel subalgebras of $\g g$  are of the form $\g b'\oplus \Centre(\g g)$ for a Borel subalgebra $\g b'$ of $[\g g,\g g]$. 
\end{proof}

\begin{lem}
\label{lem:Z(a)has-cpct-Cartan}
Let $\g g$ be a semisimple Lie algebra with a real form $\g g^\R$, and let $\theta$ be a Cartan involution of $\g g^\R$.  Let $\g h^\R=\g t^\R\oplus\g a^\R$ be a $\theta$-stable Cartan subalgebra of $\g g$, and set $\g l^\R:=\Centre_{\g g^\R}(\g a^\R)$. Then $\theta|_{[\g l^\R,\g l^\R]}$ is a Cartan involution of $[\g l^\R,\g l^\R]$ and $\g h^\R\cap [\g l^\R,\g l^\R]$ is  a compact $\theta$-stable Cartan subalgebra of $[\g l^\R,\g l^\R]$. 
\end{lem}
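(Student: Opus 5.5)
We want two facts: $\theta$ restricts to a Cartan involution of $[\g l^\R,\g l^\R]$, and $\g h^\R\cap[\g l^\R,\g l^\R]$ is a compact $\theta$-stable Cartan subalgebra there. The plan is to exploit that $\g a^\R\subseteq\g p^\R$ is $\theta$-stable, so its centralizer $\g l^\R$ is $\theta$-stable.

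\textbf{$\theta$-stability and reductivity of $\g l^\R$.} For $x\in\g l^\R$ and $a\in\g a^\R$ we have $[\theta x,a]=-\theta[x,a]=0$, since $\theta a=-a$. Hence $\theta(\g l^\R)=\g l^\R$. Moreover $\g a^\R$ consists of commuting semisimple elements, so $\g l^\R$ is the centralizer of a toral subalgebra and is therefore reductive. Alternatively, $\kappa_\theta$ is positive definite and $\ad$-invariant up to $\theta$, so any $\theta$-stable subalgebra is reductive: the $\kappa_\theta$-orthogonal complement of an ideal is again an ideal. Thus $\g l^\R=\Centre(\g l^\R)\oplus[\g l^\R,\g l^\R]$, with $[\g l^\R,\g l^\R]$ semisimple and $\theta$-stable.

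\textbf{$\theta$ is a Cartan involution of $\g s:=[\g l^\R,\g l^\R]$.} Let $\g s=\g s_{\g k}\oplus\g s_{\g p}$ be the eigenspace decomposition of $\theta|_{\g s}$. The criterion is that $-B_{\g s}(x,\theta y)$ is positive definite, where $B_{\g s}$ is the Killing form of $\g s$. This is the step needing care, because $\kappa_\theta$ uses the Killing form of $\g g^\R$, not of $\g s$. I would invoke the standard fact that a $\theta$-stable semisimple subalgebra inherits a Cartan involution. One route is to use that $\ad_{\g g}(x)$ for $x\in\g s_{\g p}$ is symmetric with respect to $\kappa_\theta$, hence has real eigenvalues, and for $x\in\g s_{\g k}$ is skew-symmetric. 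Then $\g s_{\g k}\oplus i\,\g s_{\g p}$ acts by skew-symmetric operators on a real form of $\g g$ and so is a compact Lie algebra. Since $\g s$ is semisimple, $\g s_{\g k}\oplus i\g s_{\g p}$ is a compact semisimple real form of $\g s\otimes\C$ commuting with the conjugation for $\g s$. This is exactly the characterization of a Cartan involution, and it is equivalent to the positivity criterion.

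\textbf{The Cartan subalgebra.} Since $\g h^\R\subseteq\g l^\R$ is abelian and contains $\g a^\R$, and since the elements of $\g g^\R$ centralizing $\g h^\R$ lie in $\g h^\R$, $\g h^\R$ is a Cartan subalgebra of the reductive algebra $\g l^\R$. It therefore contains $\Centre(\g l^\R)$, and $\g h^\R=\Centre(\g l^\R)\oplus(\g h^\R\cap\g s)$, with $\g h^\R\cap\g s$ a Cartan subalgebra of $\g s$. This intersection is $\theta$-stable because both $\g h^\R$ and $\g s$ are.

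\textbf{Compactness.} We need $\g h^\R\cap\g s\subseteq\g k^\R$. Since $\g a^\R$ is central in $\g l^\R$, it suffices to show that $\g a^\R\subseteq\Centre(\g l^\R)$ coincides with the $\g p$-part of $\g h^\R$. For this, decompose $x\in\g h^\R\cap\g s$ as $x=t+a$ with $t\in\g t^\R$ and $a\in\g a^\R$; both components lie in $\g s$ by $\theta$-stability. Then $a\in\g a^\R\cap\g s\subseteq\Centre(\g l^\R)\cap[\g l^\R,\g l^\R]=0$, so $x\in\g t^\R\subseteq\g k^\R$, as required.

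The main obstacle is the Cartan-involution verification for $\g s$, where the Killing forms differ. The skew/symmetric-operator argument above is how I would handle it.
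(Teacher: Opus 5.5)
Your proof is correct, but for two of its three steps it takes a different route from the paper.

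\textbf{The paper's route.} The paper complexifies and argues with roots. It treats $\g l$ as the Levi subalgebra with root system $\Phi_{\g l}=\{\alpha\in\Phi_{\g g}:\g a^\R\sseq\ker\alpha\}$. It proves the Cartan-involution claim by comparing the Killing form $\kappa$ of $\g g$ with the Killing form $\kappa'$ of $[\g l,\g l]$: the ratio is shown to be positive by evaluating on coroots, so positivity of $\kappa_\theta$ transfers. It then identifies the Cartan subalgebra explicitly as $\g h'=\sum_{\alpha\in\Phi_{\g l}}[\g g_\alpha,\g g_{-\alpha}]=\g h\cap[\g l,\g l]$ and checks $\tau$- and $\theta$-stability directly.

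\textbf{Your route.} You stay inside $\g g^\R$. You prove $\theta(\g l^\R)=\g l^\R$, which the paper only asserts. You get $\g h^\R=\Centre(\g l^\R)\oplus(\g h^\R\cap\g s)$ from the general structure of Cartan subalgebras of reductive Lie algebras. You verify the Cartan involution via the compact-real-form criterion, using $\g s_{\g k}\oplus i\g s_{\g p}\sseq\g k^\R\oplus i\g p^\R$. The final compactness step is the same in both proofs: $\g a^\R$ is central in $\g l^\R$, hence meets $[\g l^\R,\g l^\R]$ trivially.

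\textbf{Comparison.} Your treatment of the Cartan involution is the more robust one. The paper's step ``$\kappa'=c\kappa$ for a scalar $c$'' holds only when $[\g l,\g l]$ is simple. For example, take a $\theta$-stable Cartan subalgebra of $\g{sp}_6(\R)$ with one-dimensional $\g a^\R$ and $\g l^\R=\g{gl}_2(\R)\oplus\g{sp}_2(\R)$. There the restriction of $\kappa$ is $4\kappa'$ on one $\g{sl}_2$ factor and $2\kappa'$ on the other. So the paper's argument must really be run one simple factor at a time. Yours needs no comparison of Killing forms and works for any $\theta$-stable semisimple subalgebra. In exchange, the paper's route yields the explicit coroot description of $\g h\cap[\g l,\g l]$, which fits the root-theoretic framework in which the lemma is applied afterwards.

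\textbf{One phrase to tighten.} For $y\in\g s_{\g p}$, $\ad(iy)$ does not preserve $\g g^\R$. So the real form on which $\g s_{\g k}\oplus i\g s_{\g p}$ acts by skew-symmetric operators should be named explicitly: it is $\g u=\g k^\R\oplus i\g p^\R$ with the inner product $-\kappa|_{\g u}$. Skewness together with semisimplicity of $\g s_{\g k}\oplus i\g s_{\g p}$ then gives a negative definite Killing form, which is what the criterion requires.
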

\begin{proof}
As before, let $\g h=\g t\oplus\g a$ denote the complexification of $\g h^\R=\g t^\R\oplus\g a^\R$ and let $\Phi_\g g$ denote the $\g h$-root system of $\g g$. The complexification $\g l$ of  $\g l^\R$ is a Levi subalgebra of $\g g$, with Cartan subalgebra $\g h$, and with root system 
$
\Phi_\g l:=\{\alpha\in\Phi_\g g\,:\,\g a_\R\sseq \ker(\alpha)\}$. Henceforth we assume $\Phi_\g l\neq \varnothing$, as otherwise there is nothing to prove. 

Let $\kappa$ and $\kappa'$ denote the Killing forms of $\g g$ and $[\g l,\g l]$, respectively. The restriction of  $\kappa$ to $[\g l,\g l]$ is a non-degenerate invariant form, hence $\kappa'=c\kappa$ for a nonzero scalar $c$. Note that $c>0$ because for a coroot vector $h_\alpha$ with $\alpha\in\Phi_\g l$ we have 
$\kappa(h_\alpha,h_\alpha)=
\sum_{\beta\in\Phi_\g g}\beta(h_\alpha)^2>0$, and for a similar reason $\kappa'(h_\alpha,h_\alpha)>0$. Furthermore, $\theta$ restricts to an involution of $[\g l^\R,\g l^\R]$, and $\kappa_\theta$ is positive definite on $[\g l^\R,\g l^\R]$. These prove that $\theta$ induces a Cartan involution of $[\g l^\R,\g l^\R]$. 

We have  $\g l=[\g l,\g l]\oplus \Centre(\g l)$ where  
$\Centre(\g l)=\bigcap_{\alpha\in\Phi_\g l}\ker(\alpha)$, and  $\g h':=\sum_{\alpha\in\Phi_\g l}[\g g_\alpha,\g g_{-\alpha}]$ is a Cartan subalgebra of $[\g l,\g l]$ (see for example~\cite[Cor 5.94]{Knapp}). 
Clearly $\g h'\sseq \g h$, hence $\g h'=\g h\cap [\g l,\g l]$.
This in particular implies that $\tau(\g h')=\g h'$.
Consequently, the $\tau$-fixed real subalgebra of $\g h'$ is a Cartan subalgebra $(\g h')^\R$ of $[\g l^\R,\g l^\R]$.  
Since $\theta(\g g _\alpha)=\g g_\alpha$ for $\alpha\in\Phi_\g l$, 
it follows that $\g h'$ is $\theta$-stable. From $\theta\tau=\tau\theta$ it follows that $(\g h')^\R$ is $\theta$-stable as well.  
We have $\g h'\cap\g a\sseq [\g l,\g l]\cap \g a=\{0\}$, 
hence $\g h'\sseq \g t$. Taking $\tau$-fixed points we obtain $(\g h')^{\R}\sseq\g t^\R\sseq \g k^\R$. 
Thus, $(\g h')^\R=\g h^\R\cap[\g l^\R,\g l^\R]$ is a compact $\theta$-stable Cartan subalgebra of $[\g l^\R,\g l^\R]$. 
%
%
%

%
\end{proof}

\subsection{Construction of $\tau$-compatible Borel subalgebras}
\label{subsec:taucompat-construct}
In this subsection  $\g g$ will be a Lie superalgebra that  is either of basic type, or of type $\mathbf Q(n)$. We will use the notation introduced in Subsection~\ref{subsec:generalLie}. 
Recall that $\gund:=[\g g_\eev,\g g_\eev]$
and $\gund^\R:=\gund\cap \g g_\eev^\R$. We assume that    
\[
\gund^\R=\g k^\R_{}\oplus \g p^\R_{}
\quad\text{and}\quad
\hund^\R:=\g t^\R\oplus\g a^\R
\] 
are a Cartan decomposition and a $\theta$-stable Cartan subalgebra of $\gund^\R$. Taking complexifications, we obtain a Cartan subalgebra 
 $\hund=\g t\oplus \g  a$  of $\gund$. Set 
 \begin{equation}
 \label{eq:hzero}
 \g h_\eev:=\hund\oplus \Centre(\g g_\eev)\quad
 \text{and}\quad
 \g h_\eev^\R:=\g h_\eev\cap \g g_\eev^\R,
 \end{equation} 
 so that ${\g h}_\eev$ is a Cartan subalgebra of $\g g_\eev$. Let $\Phi_\g g$ and $\Phi_{\gund}$ denote the root systems   of $\g g$ and $\gund$ with respect to $\g h_\eev$ and $\hund$. Let $\g h=\g h_\eev\oplus\g h_\ood$ be the (unique) Cartan subalgebra of $\g g$ that contains ${\g h}_\eev$. 
 Set \begin{equation}
\label{lRdf}
\g l^\R:=\Centre_{\g g^\R}(\g a^\R),
\end{equation} 
and  let $\g l\sseq \g g$ be the complexification of $\g l^\R$. Note that $\g l_\eev=\Centre_{\g g_\eev}(\g a)$. Set
\begin{equation}
\label{eq:lunddfn}
\lund:=[\g l_\eev,\g l_\eev].
\end{equation} From Lemma~\ref{lem:Z(a)has-cpct-Cartan} it follows that $\g h^\R\cap\lund$ is a compact $\theta$-stable Cartan subalgebra of $\lund$. The $\g h_\eev$-root system of $\g l$ is 
\[
\Phi_\g l=\{\alpha\in\Phi_\g g\ ,\ \g a^\R\sseq \ker(\alpha)\}.
\]
\begin{lem}
\label{lem:roots-real-on-a}
 Every $\alpha\in\Phi_\g g$ is real-valued on $i\g t^\R\oplus \g a^\R$. 
\end{lem}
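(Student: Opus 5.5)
The plan is to reduce the claim to a statement about the adjoint action of $i\g t^\R\oplus\g a^\R$ on $\g g$: a root $\alpha$ is real-valued there exactly when, for every $h$ in that space, $\ad_h$ acts on the root space $\g g_\alpha$ by a real scalar. Since $\g h_\eev=\hund\oplus\Centre(\g g_\eev)$ and roots are taken with respect to $\g h_\eev$, the weights $\alpha$ restricted to $\hund=\g t\oplus\g a$ are what matter. The space $i\g t^\R\oplus\g a^\R$ is a real form of $\hund$, so it suffices to show that $\ad_h$ is diagonalizable on $\g g$ with real eigenvalues for $h\in i\g t^\R$ and for $h\in\g a^\R$. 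Linearity then gives the claim on the sum, because all these operators commute and are simultaneously diagonalized by the root decomposition.

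For $h\in\g a^\R\sseq\g p^\R$, I would use the form $\kappa_\theta$ on $\gund^\R$. As in the proof of Lemma~\ref{lem:kcompalpha-alpha}, $\ad_h$ is symmetric with respect to $\kappa_\theta$ when $\theta(h)=-h$, since
\[
\kappa_\theta(\ad_hx,y)=-\kappa([h,x],\theta y)=\kappa(x,[h,\theta y])=-\kappa(x,\theta[h,y])=\kappa_\theta(x,\ad_hy).
\]
Hence $\ad_h$ has real eigenvalues on $\gund^\R$. For $h\in\g t^\R$, the same computation shows that $\ad_h$ is skew-symmetric, so its eigenvalues are imaginary and those of $\ad_{ih}$ are real. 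This handles the even roots, namely the roots of $\gund$.

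The odd roots are the main obstacle, because $\g g_\ood$ is not covered by $\kappa_\theta$. My first approach is representation-theoretic. The restriction of $\ad$ to $\gund^\R$ makes $\g g_\ood^\R$ a finite dimensional real representation of the real semisimple Lie algebra $\gund^\R$. Integrate it to the simply connected group, or more simply to the analytic group of $\gund^\R$. For $h\in\g a^\R$, the element $\ad_h$ acts semisimply, since $\g h_\eev$ acts semisimply on $\g g$. Its eigenvalues are values of weights of a finite dimensional representation of $\gund$, and these weights are integral combinations of fundamental weights. It is standard that every weight of a finite dimensional $\gund$-module is real on the real span of coroots. So the cleaner route is to show that $i\g t^\R\oplus\g a^\R$ equals the real span of the coroots $h_\beta$, $\beta\in\Phi_{\gund}$.

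To prove this, first note that $\gund^\R$ has compact form $\g u=\g k^\R\oplus i\g p^\R$, in which $\g t^\R\oplus i\g a^\R$ is a maximal torus. The coroots $h_\beta$ lie in $i(\g t^\R\oplus i\g a^\R)=i\g t^\R\oplus\g a^\R$, because coroots of a compact form lie in $i$ times its torus. By dimension count they span this real space. Consequently every weight of every finite dimensional $\gund$-module is real on it; in particular this holds for the roots of $\g g$ restricted to $\hund$, both even and odd. This argument subsumes the even case, and the $\kappa_\theta$ argument serves only as a sanity check.

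The remaining subtlety is the identification of the maximal torus of $\g u$. It uses only that $\theta\tau$ is the compact real form and that $\theta$ fixes $\g t^\R$ while negating $\g a^\R$. I expect this to be routine.
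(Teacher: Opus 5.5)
Your proof is correct, and its overall plan is the paper's: handle $\Phi_{\gund}$ first, then obtain the odd roots as $\hund$-weights of the finite dimensional $\gund$-module $\g g_\ood$. The difference lies in how you pass from the roots of $\gund$ to arbitrary weights.

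The paper uses the $\kappa_\theta$-(skew-)symmetry of $\ad_h$ to show that the roots of $\gund$ are real on $i\g t^\R\oplus\g a^\R$. It then uses that the weight lattice of $\gund$ lies in the $\Q$-span of $\Phi_{\gund}$, so reality passes to all weights at once. You argue on the dual side instead. You identify $i\g t^\R\oplus\g a^\R$ with the $\R$-span of the coroots, via the maximal torus $\g t^\R\oplus i\g a^\R$ of the compact form $\g k^\R\oplus i\g p^\R$, and then use that weights take integer values on coroots.

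The fact you quote as standard, that coroots of a compact form lie in $i$ times its torus, itself comes from the roots being imaginary on that torus. That in turn is the skew-symmetry of $\ad$ for the negative definite Killing form, which is the same computation you call a sanity check. So that computation carries real weight in your argument as well, and the paper's route is the shorter of the two. What your version adds is the slightly sharper statement that $i\g t^\R\oplus\g a^\R$ is exactly the real span of the coroots.
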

\begin{proof}
When $\alpha\in \Phi_{\gund}$, the argument is similar to the proof of Lemma~\ref{lem:kcompalpha-alpha}: again the map $\ad_h:\g g^\R\to\g g^\R$ is symmetric for $h\in\g a^\R$ and skew symmetric for $h\in\g t^\R$ (see also~\cite[Cor. 6.49]{Knapp}). Since the weight lattice of $\gund$ is contained in the $\Q$-span of $\Phi_{\gund}$, the assertion also holds for $\hund$-weights of finite dimensional $\gund$-modules. By considering  $\g g_\ood$  as a  $\gund$-module we obtain the assertion for all $\alpha\in\Phi_\g g$.
\end{proof}
We now construct the $\tau$-compatible Borel subalgebras of $\g g$.
\begin{dfn}
Let $\g h=\g h_\eev\oplus \g h_\ood$ be the Cartan subalgebra of $\g g$ corresponding to $\g h_\eev$ defined in~\eqref{eq:hzero}.  
Let $\g l$ be the complexification of $\g l^\R$, defined in~\eqref{lRdf}. 
We choose any positive system $\Phi_\g l^+$ for  $\Phi_{\g l}$.
Then we choose $h_\circ\in\g a^\R$ such that for every $\alpha\in\Phi_\g g\backslash \Phi_\g l$ we have $\alpha(h_\circ)\neq 0$, and 
  we extend $\Phi_{\g l}^+$  to a positive system $\Phi^+_{\g g}$ of  $\Phi_\g g$  by setting
\[
\Phi_{\g g}^+:=\Phi_{\g l}^+\cup\Phi^+_{\g u},\quad\text{ where}\quad\Phi^+_{\g u}:=\{\alpha\in\Phi_\g g\,:\,\alpha(h_\circ)>0\}.
\]
Let $\g b_\g l$ and $\g b$ be the Borel subalgebras of $\g l$ and $\g g$ that contain $\g h$ and correspond to $\Phi_{\g l}^+$ and $\Phi_{\g g}^+$, so that
\begin{equation}
\label{eq:b=bl+u}
\g b=\g b_{\g l}\oplus\g u\quad\text{ for   }\g u:=\bigoplus_{\alpha\in\Phi_\g u^+}\g g_\alpha.
\end{equation} 
We call $\g b$ the \emph{$\tau$-compatible} Borel subalgebra of $\g g$ associated to $\Phi^+_{\g l}$ and $h_\circ$.
We denote the fundamental systems of $\g b_\g l$ and $\g b$ by $\Pi_{\g b_{\g l}}$ and $\Pi_\g b$. Note that 
$\Pi_{\g b_{\g l}}\sseq \Phi_{\g l}^+\subseteq\Phi_{\g g}^+$.  
\end{dfn}

\begin{lem}
\label{lem:PiblPibr}
We have $\Pi_{\g b_{\g l}}=\Pi_{\g b}\cap\Phi_{\g l}$.
\end{lem}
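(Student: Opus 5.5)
The plan is to prove the two inclusions separately. The key tool is the linear functional $\alpha\mapsto\alpha(h_\circ)$. It vanishes on $\Phi_\g l$: roots of $\g l$ vanish on $\g a^\R\ni h_\circ$. On $\Phi_\g g\backslash\Phi_\g l$ it is nonzero and real-valued, by the choice of $h_\circ$ and Lemma~\ref{lem:roots-real-on-a}. In particular every $\alpha\in\Phi^+_\g g$ satisfies $\alpha(h_\circ)\geq 0$, with equality exactly when $\alpha\in\Phi^+_\g l$. Both inclusions will use the characterization of simple roots as positive roots not expressible as sums of other positive roots. In the super setting I will phrase this as a positive root that is not the sum of two positive roots; for basic type and type $\mathbf Q(n)$ this holds because $\g b=\g h\oplus\g n$ with $\g n$ generated by the simple root vectors.

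For the inclusion $\Pi_\g b\cap\Phi_\g l\sseq\Pi_{\g b_\g l}$, let $\alpha\in\Pi_\g b\cap\Phi_\g l$. Then $\alpha\in\Phi^+_\g g\cap\Phi_\g l=\Phi^+_\g l$. Suppose $\alpha$ decomposed inside $\Phi^+_\g l$ as a sum of positive roots of $\g l$. Since $\Phi^+_\g l\sseq\Phi^+_\g g$, this would also be a decomposition in $\Phi^+_\g g$, contradicting simplicity of $\alpha$ in $\Pi_\g b$. Hence $\alpha\in\Pi_{\g b_\g l}$.

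For the reverse inclusion, take $\alpha\in\Pi_{\g b_\g l}$ and suppose $\alpha=\beta+\gamma$ with $\beta,\gamma\in\Phi^+_\g g$. Evaluating at $h_\circ$ gives $0=\beta(h_\circ)+\gamma(h_\circ)$. Both terms are $\geq0$, so both vanish, which forces $\beta,\gamma\in\Phi_\g l$ and hence $\beta,\gamma\in\Phi_\g l^+$. This contradicts $\alpha$ being simple for $\g b_\g l$. Therefore $\alpha\in\Pi_\g b$.

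The main obstacle is making the simple-root characterization rigorous in the super case. Isotropic roots, and for type $\mathbf Q(n)$ roots that are both even and odd, mean the usual root-system argument does not apply verbatim. To handle this, I would argue via root vectors. The simple roots of $\g b$ are exactly the $\g h$-weights of $\g n/[\g n,\g n]$, and those of $\g b_\g l$ are the weights of $\g n_\g l/[\g n_\g l,\g n_\g l]$. Then $[\g n,\g n]_\alpha$ for $\alpha\in\Phi_\g l$ is spanned by brackets $[\g g_\beta,\g g_\gamma]$ with $\beta+\gamma=\alpha$. By the $h_\circ$-argument above, every such pair has $\beta,\gamma\in\Phi_\g l^+$, so $[\g n,\g n]_\alpha=[\g n_\g l,\g n_\g l]_\alpha$. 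This gives equality of the two sets directly, handling both inclusions at once.
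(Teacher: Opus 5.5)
Your proposal is correct and is essentially the paper's proof. The inclusion $\Pi_{\mathfrak b_{\mathfrak l}}\subseteq\Pi_{\mathfrak b}$ comes, exactly as in the paper, from evaluating a putative decomposition $\alpha=\beta+\gamma$ with $\beta,\gamma\in\Phi^+_{\mathfrak g}$ at $h_\circ$, and the reverse inclusion is the same triviality; your reformulation via the weights of $\mathfrak n/[\mathfrak n,\mathfrak n]$ (read these as $\mathfrak h_{\bar 0}$-weights in type $\mathbf Q(n)$) is a harmless refinement that the paper skips by taking the ``not a sum of two positive roots'' description of simple roots as the definition.
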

\begin{proof}
By definition, the elements of $\Pi_{\g b_{\g l}}$ are the roots in $\Phi_{\g l}^+$ that cannot be expressed as a sum of two roots in $\Phi_{\g l}^+$. Let $\alpha\in\Pi_{\g b_{\g l}}$. If $\alpha\not\in\Pi_\g b$, then $\alpha=\beta+\gamma$ for some $\beta,\gamma\in\Phi_{\g g}^+$. At least one of $\beta,\gamma$ must be in $\Phi_{\g u}^+$. Thus,
$
0=\alpha(h_\circ)=\beta(h_\circ)+\gamma(h_\circ)>0
$,
which is a contradiction. This proves $\Pi_{\g b_{\g l}}\sseq \Pi_\g b\cap\Phi_\g l$. The reverse inclusion is trivial. 
\end{proof}
Let $\Gund$ and $\Hund$ be as in Subsection~\ref{subsec:generalLie}, and 
let $\Lund$ be the connected  algebraic subgroup of $\Gund$ with Lie algebra $\lund$.
\begin{rmk}
\label{rmk:Normalizers}
Let $L_\eev\sseq \Gund$ be the connected algebraic subgroup of $\Gund$ with Lie algebra $\g l_\eev$. Then  $[L_\eev,L_\eev]$ is connected and has Lie algebra $\lund$, hence 
$[L_\eev,L_\eev]=\Lund$.
As $L_\eev$ is reductive, we have   $L_\eev=\Centre(L_\eev)^\circ\Lund$ 
where  $\Centre(L_\eev)^\circ$ is the connected centre of $L_\eev$. Now $\Hund$ is a maximal torus of $L_\eev$, hence $\Centre(L_\eev)^\circ\sseq \Hund$.
Consequently, $\Hund=\Centre(L_\eev)^\circ(\Hund\cap \Lund)$. The Lie algebra of $\Hund\cap \Lund$ is $\hund\cap\lund$, which is a Cartan subalgebra of $\lund$. Thus, $(\Hund\cap \Lund)^\circ$ is a maximal torus of $\Lund$. But then  $(\Hund\cap \Lund)^\circ=\Hund\cap \Lund$, because a maximal torus in a connected reductive group is self-centralizing
(see \cite[Corollary 11.12]{Borel}).
Furthermore, 
$N_{\Lund}(\Hund\cap\Lund)=N_{\Lund}(\Hund)$.
\end{rmk}

As in Subsection~\ref{subsec:generalLie}, 
 for a fundamental system $\Pi_{\g b_{\g l}}$ of the $\g h$-root system $\Phi_{\g l}$ of $\g l$ corresponding to a Borel subalgebra $\g h\sseq \g b_{\g l}\sseq\g l$,  the actions of odd reflections (associated to $\alpha$) and elements $w$ of $N_{\Lund}(\Hund)=N_{\Hund}(\Hund\cap\Lund)$ on $\Pi_{\g b_{\g l}}$ are denoted by $r_{\g l,\alpha}\cdot \Pi_{\g b_{\g l}}$ and $w\cdot \Pi_{\g b_{\g l}}$, respectively. We denote the positive systems associated to $r_{\g g,\alpha}\cdot \Pi_\g b$ and $r_{\g l,\alpha}\cdot \Pi_{\g b_{\g l}}$ by $\Phi_{\g g,\alpha}^+$ and $\Phi_{\g l,\alpha}^+$, respectively. 

\begin{lem}
\label{lem:rlaPipb}
Let $\alpha\in\Pi_{\g b_{\g l}}$ be isotropic. Then  $r_{\g l,\alpha}\cdot \Pi_{\g b_{\g l}}=
\Phi_{\g l}\cap
(r_{\g g,\alpha}\cdot \Pi_\g b)$ and $\Phi_{\g g,\alpha}^+=\Phi_{\g l,\alpha}^+\cup \Phi_{\g u}^+$. 
\end{lem}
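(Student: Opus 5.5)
We first establish the second equality $\Phi_{\g g,\alpha}^+=\Phi_{\g l,\alpha}^+\cup \Phi_{\g u}^+$, and then deduce the statement about fundamental systems from it. By definition of odd reflections, $\Phi_{\g g,\alpha}^+=(\Phi_{\g g}^+\cup\{-\alpha\})\backslash\{\alpha\}$ and $\Phi_{\g l,\alpha}^+=(\Phi_{\g l}^+\cup\{-\alpha\})\backslash\{\alpha\}$. The definition of $r_{\g l,\alpha}$ requires $\alpha\in\Pi_{\g b_{\g l}}$. By Lemma~\ref{lem:PiblPibr} we also have $\alpha\in\Pi_\g b$, so $r_{\g g,\alpha}$ is defined as well. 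Since $\alpha\in\Phi_{\g l}$, we have $\alpha(h_\circ)=0$, so neither $\alpha$ nor $-\alpha$ lies in $\Phi^+_{\g u}$. Using $\Phi_{\g g}^+=\Phi_{\g l}^+\sqcup\Phi_{\g u}^+$ (a disjoint union, because roots of $\Phi_{\g l}$ vanish on $h_\circ$), we get
\[
(\Phi_{\g g}^+\cup\{-\alpha\})\backslash\{\alpha\}=\big((\Phi_{\g l}^+\cup\{-\alpha\})\backslash\{\alpha\}\big)\cup\Phi_{\g u}^+,
\]
which is the second claim.

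For the first claim, we repeat the argument of Lemma~\ref{lem:PiblPibr} with the new positive systems. The set $\Phi_{\g g,\alpha}^+$ is a positive system of $\Phi_\g g$ of the form $\Phi_{\g l,\alpha}^+\cup\Phi_{\g u}^+$, where every root in $\Phi_{\g u}^+$ is strictly positive on $h_\circ$ and every root in $\Phi_{\g l}$ vanishes on $h_\circ$. Here $r_{\g g,\alpha}\cdot\Pi_\g b$ is the fundamental system of $\Phi_{\g g,\alpha}^+$, i.e.\ the set of positive roots that are not a sum of two positive roots. Likewise $r_{\g l,\alpha}\cdot\Pi_{\g b_{\g l}}$ is the set of elements of $\Phi_{\g l,\alpha}^+$ that are not a sum of two elements of $\Phi_{\g l,\alpha}^+$.

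Take $\beta\in r_{\g l,\alpha}\cdot\Pi_{\g b_{\g l}}$ and suppose $\beta=\gamma+\delta$ with $\gamma,\delta\in\Phi_{\g g,\alpha}^+$. Evaluating at $h_\circ$ gives $0=\gamma(h_\circ)+\delta(h_\circ)$ with both terms nonnegative, so both vanish. Hence $\gamma,\delta\in\Phi_{\g l,\alpha}^+$, which is a contradiction. This gives $r_{\g l,\alpha}\cdot\Pi_{\g b_{\g l}}\sseq \Phi_\g l\cap (r_{\g g,\alpha}\cdot\Pi_\g b)$. Conversely, an element of $\Phi_\g l\cap(r_{\g g,\alpha}\cdot\Pi_\g b)$ lies in $\Phi_{\g l,\alpha}^+$. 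It is not a sum of two roots of $\Phi_{\g l,\alpha}^+\sseq\Phi_{\g g,\alpha}^+$, so it lies in $r_{\g l,\alpha}\cdot\Pi_{\g b_{\g l}}$.

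The one point needing care is the convention for fundamental systems in the super setting, where $2\gamma$ may fail to be a root and the decomposition into simple roots is somewhat subtler. I would take the characterization ``indecomposable as a sum of two positive roots'' as the definition, exactly as in the proof of Lemma~\ref{lem:PiblPibr}. I would also verify that this convention is compatible with how odd reflections act on fundamental systems.
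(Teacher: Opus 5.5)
Your proof is correct, and for the first assertion it takes a different route from the paper's. For the second assertion you do what the paper does: a direct set computation using $\Phi^+_{\mathfrak g}=\Phi^+_{\mathfrak l}\sqcup\Phi^+_{\mathfrak u}$ and $\alpha(h_\circ)=0$.

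For the first assertion, the paper recalls the explicit odd-reflection formula, $r_{\mathfrak g,\alpha}\cdot\Pi_{\mathfrak b}=\{-\alpha\}\cup\{\beta\in\Pi_{\mathfrak b}:(\beta,\alpha)=0,\ \beta\neq\alpha\}\cup\{\beta+\alpha:\beta\in\Pi_{\mathfrak b},\ (\beta,\alpha)\neq 0\}$, and the analogous formula for $\mathfrak l$. It then intersects with $\Phi_{\mathfrak l}$ and invokes Lemma~\ref{lem:PiblPibr}, implicitly using that $\beta+\alpha\in\Phi_{\mathfrak l}$ if and only if $\beta\in\Phi_{\mathfrak l}$.

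You instead prove the second assertion first. You then rerun the $h_\circ$-evaluation argument of Lemma~\ref{lem:PiblPibr} for the reflected positive systems $\Phi^+_{\mathfrak g,\alpha}=\Phi^+_{\mathfrak l,\alpha}\cup\Phi^+_{\mathfrak u}$.

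The paper's route is a short citation and shows explicitly which roots make up the reflected fundamental system. Yours never uses the shape of the reflected system or the invariant form. It relies only on characterizing fundamental systems as indecomposable positive roots, which is the convention the paper itself uses in Lemma~\ref{lem:PiblPibr}. This makes your argument more uniform. In effect it shows that Lemma~\ref{lem:PiblPibr} holds for every positive system of the form $\Psi^+\cup\Phi^+_{\mathfrak u}$, where $\Psi^+$ is any positive system of $\Phi_{\mathfrak l}$. It also sidesteps the bilinear form, which does not descend to $\mathfrak h^*$ for $\mathfrak{sl}(2|2)$ and $\mathfrak{psl}(2|2)$.

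Your closing worry needs no extra verification. By the paper's definition, $r_{\mathfrak g,\alpha}\cdot\Pi_{\mathfrak b}$ is simply the fundamental system of $(\Phi^+_{\mathfrak g}\cup\{-\alpha\})\setminus\{\alpha\}$.
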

\begin{proof}
Recall that 
\[
r_{\g g,\alpha}\cdot \Pi_{\g b}=\{-\alpha\}
\cup
\{\beta\in\Pi_{\g b}\,:\,(\beta,\alpha)=0,\beta\neq \alpha\}
\cup
\{\beta+\alpha\,:\,\beta\in\Pi_{\g b},(\beta,\alpha)\neq 0\}.
\]
The first assertion is a consequence of  this, the analogous description of $r_{\g l,\alpha}\cdot \Pi_{\g b_{\g l}}$, and Lemma~\ref{lem:PiblPibr}. 
For the second assertion, note that  \[
\Phi^+_{\g g,\alpha}=(\Phi^+_{\g g}\backslash\{\alpha\})\cup\{-\alpha\}
=(\Phi_{\g l}^+\bls\{\alpha\}\cup \{-\alpha\})\cup\Phi_{\g u}^+=\Phi_{\g l,\alpha}^+\cup\Phi_{\g u}^+.\qedhere
\] \end{proof}

\begin{prp}
\label{prp:goodBorel}
Let $w\in N_{\Lund}(\Hund\cap\Lund)$ and 
$\alpha_1,\ldots,\alpha_k$ be  as in the statement of Lemma~\ref{lem:seqisotr} for $\g b_\g l$, so that 
$w^{-1}\tau(w^{-1})\in \Hund\cap\Lund$ and
\begin{equation}
\label{eq:Pitabll}
\Pi_{\tau(\g b_\g l)}=
w\cdot (r_{\g l,\alpha_k}\cdot (\cdots( r_{\g l,\alpha_1}\cdot \Pi_{\g b_\g l}))).
\end{equation}
Then the following assertions hold.
\begin{itemize}
\item[\rm (i)] The element $w$  corresponds to the longest element of the Weyl group 
\[
N_{\Lund}(\Hund\cap\Lund)/\Hund\cap\Lund
\]
of the root system $\Phi_{\lund}$.
\item[\rm (ii)]
We have $w\in N_{\Gund}(\Hund)$ and 
\begin{equation}
\label{eq:taub-and-b}
\Pi_{\tau(\g b)}=
w\cdot (r_{\g g,\alpha_k}\cdot (\cdots( r_{\g g,\alpha_1}\cdot \Pi_{\g b}))).
\end{equation}
\end{itemize}
\end{prp}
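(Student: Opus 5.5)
Part (i) follows from Lemma~\ref{lem:tau(b)=bop}, applied to the reductive Lie algebra $\g l_\eev$ with real form $\g l_\eev^\R$. By Lemma~\ref{lem:Z(a)has-cpct-Cartan}, $\g h^\R\cap\lund$ is a compact $\theta$-stable Cartan subalgebra of $\lund$, so $\tau(\g b_{\g l,\eev})=\g b_{\g l,\eev}^{\mathrm{op}}$. The even part of~\eqref{eq:Pitabll} says that $\Ad_{w^{-1}}\tau(\g b_{\g l,\eev})=\g b_{\g l,\eev}$, since odd reflections do not change the even part of a Borel subalgebra. Hence $\Ad_{w^{-1}}$ sends the Borel subalgebra $\g b_{\g l,\eev}^{\mathrm{op}}$ of $\g l_\eev$ to $\g b_{\g l,\eev}$. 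Equivalently, $w$ maps the positive system $\Phi^+_{\lund}$ to its negative. Because the Weyl group acts simply transitively on positive systems, the class of $w$ is the longest element $w_0$ of the Weyl group of $\Phi_{\lund}$. We use here that the roots of $\lund$ and the even roots of $\g l$ coincide, and that $w\in\Lund$ acts trivially on the centre of $\g l_\eev$.

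For part (ii), $w\in N_{\Lund}(\Hund\cap\Lund)=N_{\Lund}(\Hund)\subseteq N_{\Gund}(\Hund)$ by Remark~\ref{rmk:Normalizers}. To establish~\eqref{eq:taub-and-b}, compare positive systems and show
\[
\Phi^+_{\tau(\g b)}=w\cdot\Phi^+_{\g g,\alpha_1,\dots,\alpha_k},
\]
where the right-hand side is the positive system obtained from $\Phi^+_\g g$ by the successive odd reflections. Apply Lemma~\ref{lem:rlaPipb} inductively, first checking that each $\alpha_j$ is simple for the intermediate $\g l$-Borel. Since each $\alpha_j$ lies in $\Phi_\g l$ and is simple for the intermediate Borel of $\g l$, Lemma~\ref{lem:PiblPibr} (applied to the reflected positive system, which still has the form $\Phi^+_{\g l,\ast}\cup\Phi^+_{\g u}$) shows that it is simple for the corresponding Borel of $\g g$. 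The result is
\[
\Phi^+_{\g g,\alpha_1,\dots,\alpha_k}=\Phi^+_{\g l,\alpha_1,\dots,\alpha_k}\cup\Phi^+_{\g u}.
\]
Applying $w$ and using~\eqref{eq:Pitabll}, the $\g l$-part becomes $\Phi^+_{\tau(\g b_\g l)}$.

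It remains to treat $\Phi^+_\g u$ and to show that
\[
w\cdot\Phi^+_\g u=\{\alpha\in\Phi_\g g\setminus\Phi_\g l:\ \alpha\text{ is a root of }\tau(\g b)\}.
\]
A root $\alpha$ of $\tau(\g b)$ has the form $\alpha=\overline{\beta\circ\tau}$ with $\beta\in\Phi^+_\g g$. We have $h_\circ\in\g a^\R$, and $w\in\Lund$ fixes $\g a$ pointwise, because $\g a$ is central in $\g l_\eev$. So we must check that $\alpha(h_\circ)>0$ exactly when $\alpha\in\Phi^+_\g u$, i.e. that $\tau(\g u)=\g u$. For $x\in\g g_\beta$ we have $[h_\circ,\tau(x)]=\tau([h_\circ,x])=\overline{\beta(h_\circ)}\tau(x)=\beta(h_\circ)\tau(x)$, since $\tau(h_\circ)=h_\circ$ and $\beta(h_\circ)\in\R$ by Lemma~\ref{lem:roots-real-on-a}. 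Thus $\tau$ preserves $\ad_{h_\circ}$-eigenvalues, and $\tau(\g u)=\g u$. Also $\Ad_w$ fixes $h_\circ$, so $w\cdot\Phi^+_\g u=\Phi^+_\g u$. Finally, $\tau(\g b)=\tau(\g b_\g l)\oplus\tau(\g u)=\tau(\g b_\g l)\oplus\g u$, so
\[
\Phi^+_{\tau(\g b)}=\Phi^+_{\tau(\g b_\g l)}\cup\Phi^+_\g u,
\]
which matches the right-hand side. Since fundamental systems are determined by positive systems, this proves~\eqref{eq:taub-and-b}.

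The main obstacle is the inductive application of Lemma~\ref{lem:rlaPipb}. That lemma is stated only for the original $\tau$-compatible Borel, so the induction needs its hypotheses to hold after each reflection. The intermediate Borels are still of the form "Borel of $\g l$ plus $\g u$", so I would restate Lemmas~\ref{lem:PiblPibr} and~\ref{lem:rlaPipb} for any positive system of the form $\Phi^+_{\g l,\ast}\cup\Phi^+_\g u$. Their proofs used only $\alpha(h_\circ)=0$ on $\Phi_\g l$ and $\alpha(h_\circ)>0$ on $\Phi^+_\g u$, so they apply verbatim.
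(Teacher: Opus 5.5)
Your proof is correct and follows the paper's argument essentially step by step. For (i) you use that $\tau((\mathfrak b_{\mathfrak l})_{\bar 0})$ is the opposite Borel; the paper cites Lemma~\ref{lem:kcompalpha-alpha} directly, and Lemma~\ref{lem:tau(b)=bop} is a corollary of it. For (ii) you use Remark~\ref{rmk:Normalizers}, successive application of Lemma~\ref{lem:rlaPipb}, the decomposition $\tau(\mathfrak b)=\tau(\mathfrak b_{\mathfrak l})\oplus\mathfrak u$, and $\mathrm{Ad}_w(h_\circ)=h_\circ$, exactly as the paper does, and you are more explicit in two places: you verify $\tau(\mathfrak u)=\mathfrak u$ via Lemma~\ref{lem:roots-real-on-a} (the paper asserts it), and you check that Lemmas~\ref{lem:PiblPibr} and~\ref{lem:rlaPipb} still apply to the intermediate Borel subalgebras $\mathfrak b_{\mathfrak l}'\oplus\mathfrak u$.
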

\begin{proof}
(i)
Recall from~\eqref{eq:ADw-1} that $\Ad_w((\g b_{\g l})_\eev)=\tau((\g b_{\g l})_\eev)$.  From Lemma~\ref{lem:kcompalpha-alpha} it follows that $\tau((\g b_{\g l})_\eev)$ is the Borel subalgebra opposite to $(\g b_{\g l})_\eev$. Hence $w$ must correspond to the longest element of the Weyl group of $\Phi_{\lund}$. 

(ii) 
From Remark~\ref{rmk:Normalizers} it follows that $w\in N_{\Gund}(\Hund)$. 
Since $\tau(\g u)=\g u$, we also have \begin{equation}
\label{eq:taub=taubl+}
\tau(\g b)=\tau(\g b_{\g l})\oplus \g u.
\end{equation} 
Successive application of Lemma~\ref{lem:rlaPipb} implies that \[
\alpha_i\in r_{\g g,\alpha_{i-1}}\cdot(\cdots(r_{\g g,\alpha_1}\cdot \Pi_{\g b}))
\quad\text{for }2\leq i\leq k.
\]
The positive system corresponding to the right hand side 
of~\eqref{eq:taub-and-b} is obtained from $\Phi_{\g g}^+$ by the following procedure: we remove $\alpha_i$ and include $-\alpha_i$ for $1\leq i\leq k$, and subsequently apply the map $\gamma\mapsto \Ad_w^*(\gamma)$. To complete the proof of~\eqref{eq:taub-and-b}, we must show that the latter positive system is identical to the positive system of $\tau(\g b)$. By~\eqref{eq:taub=taubl+}, the last assertion reduces to the fact that $\Ad_w(\g u)=\g u$. But 
note that $\g l_\eev=\Centre(\g l_\eev)\oplus \lund$, and since $\alpha(h_\circ)=0$ for every $\alpha\in\Phi_{\lund}$, we must have $h_\circ\in \Centre(\g l_\eev)$. As $\Lund$ is connected, we obtain $\Ad_g(h_\circ)=h_\circ$ for every $g\in \Lund$. In particular, for every $\gamma\in\Phi_{\g u}^+$ we have $\Ad_w^*(\gamma)(h_\circ)=\gamma(\Ad_{w^{-1}}(h_\circ))=\gamma(h_\circ) >0$, hence $\Ad_w^*(\gamma)\in \Phi_{\g u}^+$ as well. 
This completes the proof of~\eqref{eq:taub-and-b}. 
\end{proof} 
\subsection{Kostant's cascade and a Lie algebraic formula for $c_\lambda$}
In this subsection we continue with the notation of 
Subsection~\ref{subsec:taucompat-construct}. Thus, $\hund=\g t\oplus \g a$ is a $\theta$-stable Cartan subalgebra of $\gund$ such that $\tau(\g h)=\g h$, and $\g h_\eev=\hund \oplus \Centre(\g g_\eev)$. Let $\lund$ be defined as in~\eqref{eq:lunddfn}, and let $\g b$ be a $\tau$-compatible Borel subalgebra of $\g g$, so that~\eqref{eq:b=bl+u} holds. 

We recall the construction of Kostant's cascade of strongly orthogonal roots in the case of the root system $\Phi_{\lund}$. Let $\beta_1,\ldots,\beta_{N_1}$ denote the highest roots of the irreducible components 
$\Phi^{(k)}$, $1\leq k\leq N_1$, 
of $\Phi_{\lund}$, with respect to the positive systems induced from $\Phi_{\lund}^+=\Phi_{\lund}\cap\Phi_{\g l}^+$.
Next set \[
\Phi^{(k),\perp}:=\{\alpha\in\Phi^{(k)}\,:\,(\alpha,\beta_k)=0\} \quad\text{for }1\leq k\leq N_1.
\] Add the highest roots $\beta_{N_1+1},\ldots,\beta_{N_2}$ of the irreducible components of the $\Phi^{(k),\perp}$ to the sequence of the $\beta$'s in some order, and repeat this process until no more roots are left. The resulting sequence of roots $\beta_1,\ldots, \beta_N$ is a maximal set of strongly orthogonal roots in $\Phi_{\g l}$, that is, $\beta_i\pm\beta_j\not\in\Phi_{\lund}$ for $1\leq i,j\leq N$.

For the following theorem, recall that $c_\lambda$ is the real scalar defined in~\eqref{eq:lambdaB-clam}.
\begin{thm}
\label{thm:alternateclam}
Let $h_{\beta_1},\ldots,h_{\beta_N}\in\g h_\eev$ denote the coroots of $\beta_1,\ldots,\beta_N$, so that $\beta_i(h_{\beta_i})=2$ for $1\leq i\leq N$. Then for every $[W]\in\Irr(\catC_\g g)$  with $\g b$-highest weight $\lambda\in\g h_\eev^*$ we have 
 \[
 c_\lambda=(-1)^{\sum_{i=1}^N\lambda(h_{\beta_i})}\lambda(\HC_\g b(D_\lambda)).
 \]
\end{thm}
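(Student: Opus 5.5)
By the definition of $c_\lambda$ in~\eqref{eq:lambdaB-clam}, we have $c_\lambda=\lambda(\HC_\g b(D_\lambda))\lambda(w^{-1}\tau(w^{-1}))$. So the statement reduces to the identity
\[
\lambda\big(w^{-1}\tau(w^{-1})\big)=(-1)^{\sum_{i=1}^N\lambda(h_{\beta_i})},
\]
where $w\in N_{\Lund}(\Hund\cap\Lund)$ is the element of Proposition~\ref{prp:goodBorel}. Proposition~\ref{prp:goodBorel}(i) says $w$ represents the longest element $w_0$ of the Weyl group of $\Phi_{\lund}$. Moreover $w^{-1}\tau(w^{-1})\in\Hund\cap\Lund$, so only the restriction of $\lambda$ to $\hund\cap\lund$ matters. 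Hence everything happens inside $\Lund$, which has a compact Cartan $\g h^\R\cap\lund$ by Lemma~\ref{lem:Z(a)has-cpct-Cartan}.

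\textbf{Choice of representative.} The value $\lambda(w^{-1}\tau(w^{-1}))$ does not depend on the representative $w$ of $w_0$. If $w$ is replaced by $wt$ with $t\in\Hund\cap\Lund$, the element becomes $t^{-1}w^{-1}\tau(w^{-1})\tau(t^{-1})$. Since $\Hund$ is commutative, $\lambda$ of it changes by the factor $\lambda(t^{-1}\tau(t)^{-1})$. For the compact torus, $\tau(t)=\bar t^{-1}$ in suitable coordinates, so this factor is $1$ on weights where $\lambda=\oline{\lambda\circ\tau}$ holds. Where that condition fails, I would check that the factor is still trivial, using $\tau(\alpha)=-\alpha$ from Lemma~\ref{lem:kcompalpha-alpha}.

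We may then take the Cascade representative. Kostant's cascade $\beta_1,\dots,\beta_N$ consists of strongly orthogonal roots, and $w_0=s_{\beta_1}\cdots s_{\beta_N}$ whenever $-1\in W(\Phi_{\lund})$ acts on $\hund\cap\lund$. In general, $w_0$ is the product of the $s_{\beta_i}$ on each component. This is standard: $w_0$ acts as $-1$ on the span of the cascade, and the cascade spans precisely when $w_0=-1$. For components where $w_0\neq -1$ (types $A_n$, $D_{2k+1}$, $E_6$), I would still use $\dot w=\prod_i \exp(\tfrac{\pi}{2}(e_{\beta_i}-f_{\beta_i}))$. I would then argue via the $\g{sl}_2$-triples that the relevant Weyl group element maps $\g b_{\g l}$ to its opposite modulo the Levi piece. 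This is the main obstacle: verifying that this product represents $w_0$, or else working around it.

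\textbf{Computing $\tau(\dot w)$.} For each cascade root, choose $e_{\beta_i}$ so that the triple is compatible with the compact form. By Lemma~\ref{lem:kcompalpha-alpha}, $\tau(e_{\beta})=\pm f_{\beta}$, with sign $-$ for compact roots and $+$ for noncompact roots. Then
\[
\tau(\dot w_i)=\exp\big(\tfrac{\pi}{2}\,\tau(e_{\beta_i}-f_{\beta_i})\big),
\]
and a computation in $SL_2$ gives $\tau(\dot w_i)=\dot w_i^{\pm1}$. Hence $\dot w_i^{-1}\tau(\dot w_i^{-1})\in\{1,\dot w_i^{-2}\}$, with $\dot w_i^{-2}=\exp(\pi i h_{\beta_i})$. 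The factors for distinct $i$ commute, because strongly orthogonal roots give commuting $\g{sl}_2$'s. So $w^{-1}\tau(w^{-1})=\prod_i \exp(\pi i\,\epsilon_i h_{\beta_i})$, and
\[
\lambda\big(w^{-1}\tau(w^{-1})\big)=\prod_i(-1)^{\epsilon_i\lambda(h_{\beta_i})}.
\]

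\textbf{Sign bookkeeping.} To obtain the formula exactly as stated, with every cascade root contributing, one must show $\epsilon_i=1$ for all $i$. If the computation instead produces the compact/noncompact dichotomy, I would use the freedom in the representative: replace $\dot w_i$ by $\exp(\tfrac{\pi}{2}(ce_{\beta_i}-\bar c f_{\beta_i}))$ with $|c|=1$, chosen so that $\tau$ inverts it. This is consistent with the independence from the representative established above. Combining this with the definition of $c_\lambda$ completes the proof.
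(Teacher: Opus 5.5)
Your skeleton is the same as the paper's. You reduce to $\lambda(w^{-1}\tau(w^{-1}))$, take the cascade representative $\dot w=\prod_i\dot w_i$ of the longest element, compute each factor in $\mathrm{SL}_2$, and use that the root $\mathrm{SL}_2$'s of strongly orthogonal roots commute and are $\tau$-stable. What you call the main obstacle is not one. The identity $w_0=s_{\beta_1}\cdots s_{\beta_N}$ holds in every type, whether or not $w_0=-1$; in type $A_{n-1}$ it reads $w_0=\prod_i(i,\,n+1-i)$. This is the fact of Kostant that the paper quotes.

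The genuine gap is the final sign bookkeeping, and it cannot be closed the way you propose. Your $\mathrm{SL}_2$ computation is correct:
\begin{itemize}
\item For a compact cascade root ($\g g_{\beta_i}\sseq\g k$) one has $\tau(e_{\beta_i})=-f_{\beta_i}$ and $\tau(\dot w_i)=\dot w_i$, so it contributes $(-1)^{\lambda(h_{\beta_i})}$.
\item For a noncompact one, $\tau(e_{\beta_i})=f_{\beta_i}$ and $\tau(\dot w_i)=\dot w_i^{-1}$, so it contributes $1$.
\end{itemize}
No change of representative alters this. For $|c|=1$ and $\beta_i$ noncompact, $\tau(ce_{\beta_i}-\bar cf_{\beta_i})=-(ce_{\beta_i}-\bar cf_{\beta_i})$, so your modified $\dot w_i$ is again inverted by $\tau$. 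Being inverted by $\tau$ is exactly what gives the trivial factor, not the one you need.

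More conceptually, when $\lambda_\funcB=\oline{\lambda\circ\tau}$, Lemma~\ref{lem:psiphitrick}(ii) and the proof of Theorem~\ref{thm-endotypes-basic} show that the sign of $c_\lambda$ equals the sign of the intertwiner $\phi$. That sign is an invariant of $W$, so no choice can flip it. Your invariance computation also needs repair. In fact $(wt)^{-1}\tau((wt)^{-1})=t^{-1}\bigl(w^{-1}\tau(t)^{-1}w\bigr)w^{-1}\tau(w^{-1})$; you swapped two factors. For $r=0$ the extra factor is $|\lambda(t)|^{-2}$, which is positive but not $1$.

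So the formula as stated needs every cascade root to be compact. This is also the unproved input in the paper's proof. It asserts that after rescaling $e_\alpha$ (and possibly replacing $\alpha$ by $-\alpha$), $(e_\alpha,-\tau(e_\alpha),-[e_\alpha,\tau(e_\alpha)])$ is a standard triple with $\tau$ acting as $X\mapsto -X^*$. But for a noncompact root, $\alpha(-[e_\alpha,\tau(e_\alpha)])<0$ for every scaling and for both $\pm\alpha$. Having a compact Cartan in $\lund$ (Lemma~\ref{lem:Z(a)has-cpct-Cartan}) does not make the roots compact.

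Without this hypothesis the statement fails. Take $\g{su}(1,1)$ with its compact Cartan. Then $\g l=\g g$ and $D_\lambda=1$, and $w^{-1}\tau(w^{-1})=1$ for $w=\bigl(\begin{smallmatrix}0&1\\-1&0\end{smallmatrix}\bigr)$. Hence $c_\lambda>0$ for all $\lambda$, which is correct since $\g{su}(1,1)\cong\g{sl}(2,\R)$ has only real irreducibles. The formula instead gives $-1$ when $\lambda(h_\alpha)=1$.

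You have two options:
\begin{itemize}
\item Add the hypothesis that all $\beta_i$ are compact; then your argument goes through. This is automatic when $\g a^\R$ is maximal abelian in $\g p^\R$, since then $\lund^\R\sseq\g k^\R$.
\item State what your computation actually proves: the exponent is $\sum\lambda(h_{\beta_i})$ taken over the compact $\beta_i$ only.
\end{itemize}
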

\begin{proof}
We have $\theta(\lund)=\lund$, and the Cartan decomposition of 
$\lund^\R:=\lund\cap\g g^\R$
 is 
\[
\lund^\R=(\lund^\R\cap\g k^\R)\oplus(\lund^\R\oplus \g p^\R)
.\] By Lemma~\ref{lem:Z(a)has-cpct-Cartan},  $\hund^\R\cap\lund^\R$ is a 
compact $\theta$-stable Cartan subalgebra of $\lund^\R$.  Let $\alpha\in\Phi_{\lund}$ and set
\[
\bfe:=e_\alpha\quad,\quad
\bff:=-\tau(e_\alpha)\quad,\quad
\bfh:=-[e_\alpha,\tau(e_\alpha)],
\] 
where the $e_\alpha\in\g l_\alpha$ are choices of root vectors. 
By Lemma~\ref{lem:kcompalpha-alpha} we have  $\tau(e_\alpha)\in\g g_{-\alpha}$, and since $\g a^\R\sseq \ker(\alpha)$ we have $\theta(\alpha)=\alpha$. Furthermore,  
 $\bfh\in i(\hund^\R\cap \lund^\R)$
because $\tau(\bfh)=-\bfh$. 
Lemma~\ref{lem:roots-real-on-a} implies that $\alpha(\bfh)\in\R$. Hence, by suitably scaling $e_\alpha$ and substituting $\alpha$ by $-\alpha$ if necessary, we can assume that $\bfe,\bff,\bfh$ is a  standard $\g{sl}_2$-triple. In matrix notation, the real form of the spanned $\g{sl}_2$ subalgebra that is induced by $\tau$ corresponds to the antilinear involution $X\mapsto -X^*$, and we have 
\[
\bfe=\begin{bmatrix}
0 & 1\\
0 & 0
\end{bmatrix}\quad,\quad
\bff=\begin{bmatrix}
0 & 0\\1 & 0
\end{bmatrix}\quad,\quad
\bfh=\begin{bmatrix}
1 & 0\\0 & -1
\end{bmatrix}.
\]
The algebraic subgroup of $\Gund$ corresponding to this $\g{sl}_2$ subalgebra of $\gund$ is isomorphic to either  $\mathrm{SL}_2(\C)$ or its quotient by $\{\pm I\}$, 
and the antiholomorphic involution of $\Gund$ restricts to the map $X\mapsto (X^*)^{-1}$. 
The simple reflection corresponding to $\alpha$ corresponds to 
\begin{equation}
\label{eq:w00}
w_\alpha=\begin{bmatrix}
0 & 1\\-1 & 0
\end{bmatrix}\in\mathrm{SL}_2(\C),
\end{equation}
and we have $w_\alpha\tau(w_\alpha)=w_\alpha^2=-I$, where $I\in\mathrm{SL}_2(\C)$ is the identity matrix. it follows that $\lambda(w_\alpha\tau(w_\alpha))=(-1)^{\lambda(\bfh)}$. 

It is a well known fact (originally due to Kostant) that the longest element of the Weyl group of a semisimple Lie algebra is equal to the product of reflections associated to the roots in the Kostant cascade. These reflections commute because of strong orthogonality. This fact together with the above $\mathrm{SL}_2(\C)$ calculation implies the assertion of the corollary. 
\end{proof}

\subsection{Revisiting the classification of modules  of real reductive Lie algebras}
Combining Theorems~\ref{thm-endotypes-basic} and~\ref{thm:alternateclam}, we obtain the classification of irreducible modules of real reductive Lie algebras, which we state below independently as it might be of independent interest. 

Given a reductive Lie algebra $\g g$, we have a decomposition 
$\g g=\Centre(\g g)\oplus \g g'$, where $\g g'=[\g g,\g g]$. 
For every $[W]\in\Irr(\catC_\g g)$, the elements of $\Centre(\g g)$ act by scalars (because of Schur's Lemma), hence $W$ is an irreducible $\g g'$-module.

For a real form $\g g^\R$ of $\g g$ that is the fixed subalgebra of the antilinear involution $\tau$, we have $\g g^\R=(\Centre(\g g)\cap \g g^\R)\oplus (\g g'\cap \g g^\R)$.  
Let $\theta$ be a Cartan involution of the real semisimple Lie algebra $\g g'\cap \g g^\R$, corresponding to the decomposition  
\[
\g g'\cap \g g^\R=\g k^\R\oplus\g p^\R
.
\] Let $\g h^\R=\g t^\R \oplus\g a^\R$ be a $\theta$-stable Cartan subalgebra of $\g g'\cap \g g^\R$, with complexification $\g h=\g t\oplus\g a$. Set $\g l:=\Centre_{\g g'}(\g a)$.  
As before, 
 $\Phi_{\g g'}$ and $\Phi_\g l$ denote the root systems of  $\g g'$ and $\g l$ with respect to $\g h$.

Choose a Borel subalgebra $\g b_\g l$ of $\g l$ that contains $\g h$, and 
an element $h_\circ\in\g a^\R$ such that $\alpha(h_\circ)\neq 0$ for every $\alpha\in\Phi_{\g g'}\bls \Phi_\g l$. Let $\g b$ be the Borel subalgebra of $\g g'$ obtained from $\g b_\g l$ and $h_\circ$ as follows:   $\g b_\g l\sseq \g b$ and for every $\alpha\in\Phi_{\g g'}\bls \Phi_{\g l}$ we have 
$
\g g'_\alpha\sseq \g b$ if and only if $\alpha(h_\circ)>0$.
 Let $w_\circ$ denote the longest element of the Weyl group of $\g h\cap [\g l,\g l]$, considered as an element of the Weyl group of $\g h$ in a canonical way (see Remark~\ref{rmk:Normalizers}). Finally, let $\beta_1,\ldots,\beta_N$ be Kostant's cascade of strongly orthogonal roots for the positive system of $\g b_\g l$, and let $h_{\beta_i}\in\g h$ denote the coroot corresponding to $\beta_i$ for $1\leq i\leq N$. 

\begin{thm}
\label{thm-reductivespecial}
Let $[W]\in\Irr(\catC_\g g)$ and let $\lambda\in\g h^*$ be the $\g b$-highest weight of $W$ as a $\g g'$-module.
\begin{itemize}
\item[\rm(i)] If $\Centre(\g g)\cap\g g^\R$ acts on $W$ by real scalars, $\Ad^*_{w_\circ}(\lambda)=\oline{\lambda\circ\tau}$, and $\sum_{i=1}^N\lambda(h_{\beta_i})$ is even, then $\EType_{\g g,\tau}(W)=0_\R$, i.e., it is isomorphic to $\R$.

\item[\rm(ii)] If $\Centre(\g g)\cap\g g^\R$ acts on $W$ by real scalars, $\Ad^*_{w_\circ}(\lambda)=\oline{\lambda\circ\tau}$, and $\sum_{i=1}^N\lambda(h_{\beta_i})$ is odd, then $\EType_{\g g,\tau}(W)=4_\R$, i.e., it is isomorphic to $\qH$.

\item[\rm(iii)] Otherwise, $\EType_{\g g,\tau}(W)=0_\C$, i.e., it is isomorphic to $\C$. 

\end{itemize} 
\end{thm}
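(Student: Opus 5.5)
The plan is to reduce to the semisimple part via the direct sum decomposition, and then read off the answer from Theorem~\ref{thm-endotypes-basic} combined with Theorem~\ref{thm:alternateclam}. Write $\g g=\Centre(\g g)\oplus\g g'$ with $\tau=\tau|_{\Centre(\g g)}\oplus\tau|_{\g g'}$. By Theorem~\ref{thm:DirectSum}, $W\cong W_{\Centre}\otimes W'$, where $W_{\Centre}$ is the one-dimensional module of $\Centre(\g g)$ and $W'$ is $W$ regarded as a $\g g'$-module. All these modules are purely even, so $\mathbb D=\C$, and
\[
\EType_{\g g,\tau}(W)=\EType_{\Centre(\g g),\tau}(W_{\Centre})\bullet\EType_{\g g',\tau}(W').
\]
By Remark~\ref{rmk-abbel}, the first factor is $0_\R$ exactly when $\Centre(\g g)\cap\g g^\R$ acts by real scalars; otherwise it is $0_\C$. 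Since $0_\C\bullet x$ lies in $\BRm_\C$, and the only possible value there for purely even modules is $0_\C$, the non-real-central case falls into (iii). It therefore remains to compute $\EType_{\g g',\tau}(W')$.

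For $\g g'$, which is semisimple of basic type with $\g g'_\ood=0$, apply Theorem~\ref{thm-endotypes-basic} with the $\tau$-compatible Borel $\g b$ constructed in Subsection~\ref{subsec:taucompat-construct}, taking $\g l=\Centre_{\g g'}(\g a)$. Because there are no odd roots, $k=0$, so $r=0$ and $D_\lambda=1$; hence $\HC_\g b(D_\lambda)=1$. By Proposition~\ref{prp:goodBorel}(i), the element $w$ of Lemma~\ref{lem:seqisotr} is a representative of the longest element $w_\circ$ of the Weyl group of $\g h\cap[\g l,\g l]$, and by (ii) it also serves for $\g b$ in $\g g'$. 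Consequently $\lambda_\funcB=\Ad^*_{w_\circ}(\lambda)$. Note that $\Ad^*_w$ on $\g h^*$ depends only on the Weyl group class, so the representative chosen does not matter. Theorem~\ref{thm-endotypes-basic}(i) then gives $0_\C$ when $\Ad^*_{w_\circ}(\lambda)\neq\oline{\lambda\circ\tau}$.

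When equality holds, Theorem~\ref{thm:alternateclam} gives $c_\lambda=(-1)^{\sum_i\lambda(h_{\beta_i})}$. Since $r$ is even, Table~\ref{Table-ETBasic} yields $0_\R$ or $4_\R$ according to the parity of $\sum_i\lambda(h_{\beta_i})$. Here $\lambda(h_{\beta_i})\in\Z$ because $\lambda$ is dominant integral for $\g g'$. Multiplying by the central factor $0_\R$ (the identity of $\BRm_\R$) gives (i) and (ii). If the central factor is $0_\C$, the product is $0_\C$ in either case, which covers the rest of (iii).

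The main obstacle is checking that the hypotheses of Theorem~\ref{thm:alternateclam} and Proposition~\ref{prp:goodBorel} hold in this setting. Specifically, one must confirm that $\g h$, as the complexification of a $\theta$-stable Cartan subalgebra $\g t^\R\oplus\g a^\R$, satisfies $\tau(\g h)=\g h$, and that the cascade used in Theorem~\ref{thm:alternateclam} is the one built for the positive system of $\g b_\g l$ restricted to $[\g l,\g l]$, which here is all of $\lund$. Beyond that, the argument is bookkeeping in the Brauer--Wall monoid.
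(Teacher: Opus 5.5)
Your proposal is correct and follows essentially the same route as the paper's proof. You split off $\Centre(\g g)$ using Theorem~\ref{thm:DirectSum} and Remark~\ref{rmk-abbel}. You then apply Theorem~\ref{thm-endotypes-basic} to $\g g'$ with the $\tau$-compatible Borel, where $r=0$ and $D_\lambda=1$, $\lambda_\funcB=\Ad^*_{w_\circ}(\lambda)$ by Proposition~\ref{prp:goodBorel}, and $c_\lambda=(-1)^{\sum_i\lambda(h_{\beta_i})}$ by Theorem~\ref{thm:alternateclam}. The points you flag as the main obstacle are immediate here; for example, $\tau(\g h)=\g h$ holds because $\g h$ is the complexification of the real Cartan subalgebra $\g h^\R$.
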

\begin{proof}
From Theorem~\ref{thm:DirectSum}(ii) it follows that the exterior tensor product of two irreducible representations has endotype $0_\C$ if and only if at least one of them has endotype $0_\C$. Combining this  observation with Theorem~\ref{thm-endotypes-basic}, Remark~\ref{rmk-abbel}, Proposition~\ref{prp:goodBorel}, and Theorem~\ref{thm:alternateclam}, we obtain the assertions of the theorem. Note that in the notation of~\eqref{eq:lambdaB-clam} we have $r=0$, so that $\lambda_\funcB=\Ad_{w_\circ}(\lambda)$ and $D_\lambda=1$. 
\end{proof}

\section{Computing $\EType_{\g g,\tau}(W)$ when $\g g$ is of Cartan type or of type $\mathbf P(n)$}
  
In this section  $\g g$ will be  either 
of type $\mathbf P(n)$ or 
a  Cartan type Lie superalgebra.
Similar to Remark~\ref{rmk-smallcases}, we exclude $\g{pe}(1)$ and $\g{spe}(1)$ which are of Clifford type. 
Note that up to isomorphism, each of the Lie superalgebras $\mathbf{W}(n)$, $\mathbf{S}(n)$, and $\tilde{\mathbf S}(n)$ has a unique real form. (We remind the reader that  $\tilde{\mathbf S}(n)$ exists only for $n$ even). 

Our strategy is to use Corollary~\ref{thm:EtypeCplx} for a $\tau$-stable triangular decomposition of $\g g$ that is either of hyperbolic type, or of graded type  (see Definitions~\ref{dfn:hyper} and~\ref{dfn:gradedtyp}).
First we consider the case where $\g g$ is not isomorphic to 
$\mathbf{S}(n)$ or 
 $\tilde{\mathbf S}(n)$.
Then $\g g$ has a $\Z$-grading
\[
\g g=\bigoplus_{k\geq -1}\g g_k
\] 
that is consistent with the $\Z_2$-grading of $\g g$.  Furthermore,  for every real form of $\g g$,  we can assume that the corresponding  $\tau:\g g\to \g g$ preserves the $\mathbb Z$-grading. 
We have 
\[
\g g^\R=
\oline{\g u}^{\R}\oplus\g l^\R\oplus{\g  u}^{\R},
\]
where
 \[
\g u^\R:=\bigoplus_{k\geq 1}(\g g^\R)_k\quad,\quad
\g l^\R:=(\g g^\R)_0 
\quad,\quad
\oline{\g u}^\R:=(\g g^\R)_{-1}
.\]
Passing to complexifications, we obtain a direct sum decomposition
$\g g=\oline{\g u}\oplus\g l\oplus \g u$. This is Kac's triangular decomposition, and it  is of graded type. 

\begin{rmk}
\label{rmk:KacClassification}
When $\g g\neq \tilde{\mathbf S}(n)$, Kac gave a classification of irreducible $\g g$-modules by proving that  the functor $\catC_{\g g}\to  \catC_{\g l}$ defined by $W\mapsto W^{\g u}$ induces a bijection $\Irr(\catC_{\g g})\to \Irr(\catC_{\g l})$; see~\cite[Prop 5.2.5]{KacAdvances}. Note that $\g l$ is a reductive Lie algebra. Together with Corollary~\ref{thm:EtypeCplx}, this provides a complete answer to the computation of endotypes for irreducible $\g g$-modules. 
For $\g g=\tilde{\mathbf S}(n)$, we do not have a bijection $\Irr(\catC_{\g g})\to \Irr(\catC_{\g l})$, but we still have an injection for a hyperbolic triangular decomposition that will be described below; see Proposition~\ref{prp:Wq+-inv}(ii).
\end{rmk}

Next, suppose that  $\g g$ is a Lie superalgebra of type 
$\mathbf{S}(n)$ or 
$\tilde{\mathbf S}(n)$. The Lie superalgebra $\tilde{\mathbf S}(n)$ is not $\mathbb Z$-graded, but it has a subalgebra  isomorphic to 
\[
\underline{\g g}:=\bigoplus_{k\geq 0}\mathbf S(n)_k.
\] Furthermore, in both cases $\g g$ has a unique real form, the corresponding $\tau$ preserves the graded components of $\underline{\g g}$, and 
$(\underline{\g g}^\R)_{0}^{}\cong \g{sl}(n,\R)$. Let $\g h^\R$ be a  Cartan subalgebra of $(\underline{\g g}^\R)_{0}^{}$ and let $\g h$ be the complexification of $\g h^\R$. Then we have an $\g h^\R$-root decomposition of $\g g^\R$ with roots
\[
\eps_{i_1}+\cdots +\eps_{i_k}\quad\text{and}\quad
\eps_{i_1}+\cdots +\eps_{i_k}-\eps_j,
\]
where $\leq i_1<\cdots<i_k\leq n$ with $0\leq k\leq n-1$,  and $j\not\in\{i_1,\ldots,i_k\}$. 
Here as usual the $\eps_i$ are standard characters of the diagonal Cartan subalgebra of $\g{sl}(n,\R)$, so that \[
\eps_1+\cdots+\eps_n=0.
\] Following the notation of Section~\ref{subsec:generalLie}, let $\Phi_\g g$ denote the set of $\g h$-roots of $\g g$. 
We choose $x_\circ\in \g h^\R$ such that $\alpha(x_\circ)\neq 0$ for every $\alpha\in \Phi_\g g$, and we consider the 
corresponding $\tau$-stable triangular decomposition of hyperbolic type, as in Subsection~\ref{subsec:hyperb}. Then we  have 
\[
\oline{\g u}=\bigoplus_{\begin{subarray}{c}
\alpha\in\Phi_\g g\\
\alpha(x_\circ)<0\end{subarray}}
\g g_\alpha\quad,\quad
{\g l}=\g h
\quad,\quad
{\g u}=\bigoplus_{\begin{subarray}{c}
\alpha\in\Phi_\g g\\
\alpha(x_\circ)>0\end{subarray}}
\g g_\alpha
.\]

Recall that  in all cases $\g l$ is a reductive Lie algebra. 
We fix a Borel subalgebra $\g b_\g l\sseq \g l$. 
In Theorem~\ref{cor:Etype} below, by the  highest weight of an irreducible  $\g g$-module $W$ we mean  the $\g b_\g l$-highest weight of $W^{\g u}$. 
\begin{thm}
\label{cor:Etype}
Suppose that $\g g$ is of one of the types $\mathbf P(n)$, $\mathbf W(n)$, $\mathbf H(n)$,  $\mathbf S(n)$, or  $\tilde{\mathbf S}(n)$.  Let $[W]\in\Irr(\catC_{\g g})$. Then the following statements hold.
\begin{itemize}
\item[\rm (i)] If $\g g=\mathbf{S}(n)$ or $\tilde{\mathbf S}(n)$, then for the unique real form  of $\g g$ we always have $\EType_{\g g,\tau}(W)=0_\R$.  

\item[\rm (ii)] If  $\g g=\mathbf{W}(n)$, then for the unique real form of $\g g$ we have
\[
\EType_{\g g,\tau}(W)=\begin{cases}
0_\R & \text{if }\lambda\big|_{\Centre\left((\g g^\R)_0^{}\right)}\sseq \R,\\
0_\C&\text{otherwise,}
\end{cases}
\] 
where $\lambda$ is the highest weight of $W$ with respect to some choice of a Borel $\g b_\g l\sseq\g l$.

\item[\rm (iii)] If $\g g$ is of type $\mathbf{P}(n)$ or $\mathbf{H}(n)$, then $\EType_{\g g,\tau}(W)$ is either $0_\C$, $0_\R$, or $4_\R$, and determined from the highest weight $\lambda$ of $W$ according to Corollary~\ref{thm:EtypeCplx} and Theorem~\ref{thm-endotypes-basic}.
\end{itemize}
\end{thm}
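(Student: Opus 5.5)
The plan is to reduce everything, via Corollary~\ref{thm:EtypeCplx}, to the endotype of the $\g l$-module $W^{\g u}$. After that, the reductive computation of Theorem~\ref{thm-reductivespecial}, Remark~\ref{rmk-abbel} and Theorem~\ref{thm:DirectSum} finish each case.

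\textbf{Types $\mathbf W(n)$, $\mathbf H(n)$, $\mathbf P(n)$.} Here the decomposition $\g g=\oline{\g u}\oplus\g l\oplus\g u$ with $\oline{\g u}=\g g_{-1}$ is Kac's decomposition. It is of graded type and $\tau$-stable, since $\tau$ preserves the $\Z$-grading. Moreover $\oline{\g u}=\g g_{-1}\sseq\g g_\ood$, because the $\Z$-grading is consistent with the $\Z_2$-grading. Corollary~\ref{thm:EtypeCplx} therefore gives $\EType_{\g g,\tau}(W)=\EType_{\g l,\tau}(W^{\g u})$, and $W^{\g u}$ is irreducible by Proposition~\ref{prp:Wq+-inv}(i).

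For $\mathbf W(n)$ we have $\g l\cong\g{gl}(n)$ with real form $\g{gl}(n,\R)$, so $\g l^\R=\R\oplus\g{sl}(n,\R)$. The split form admits a $\tau$-stable Borel, so for the $\g{sl}(n,\R)$ factor we may take $\g b=\tau(\g b)$, hence $w=1$, $r=0$ and $c_\lambda=1>0$. Theorem~\ref{thm-endotypes-basic} then gives endotype $0_\R$ for every irreducible module of $\g{sl}(n,\R)$. For the central factor, Remark~\ref{rmk-abbel} gives $0_\R$ or $0_\C$ according to whether the centre of $(\g g^\R)_0$ acts by real scalars. Theorem~\ref{thm:DirectSum}(ii), or equivalently Theorem~\ref{thm-reductivespecial}, combines these two into (ii).

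For $\mathbf H(n)$ and $\mathbf P(n)$, $\g l$ is reductive: an orthogonal algebra in the first case and $\g{gl}(n)$ in the second. Any real form of it is a real reductive Lie algebra, so Theorem~\ref{thm-reductivespecial} yields an endotype in $\{0_\C,0_\R,4_\R\}$, computed from $\lambda$ as in (iii).

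\textbf{Types $\mathbf S(n)$ and $\tilde{\mathbf S}(n)$.} Here I use the hyperbolic decomposition with regular $x_\circ\in\g h^\R$, for which $\g l=\g h$. This decomposition is $\tau$-stable and of hyperbolic type, so Corollary~\ref{thm:EtypeCplx} applies and reduces the problem to the one-dimensional $\g h$-module $W^{\g u}$. By Remark~\ref{rmk-abbel}, the endotype is $0_\R$ precisely when the weight $\lambda$ is real on $\g h^\R$.

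The main obstacle is proving this reality of $\lambda$, since a priori $\lambda$ is only a linear functional on $\g h\cong\C^{n-1}$. My plan is the following. The even part contains $\g{sl}(n)=\mathbf S(n)_0$, and $W$ is a finite-dimensional module for it. Hence the restriction of $\lambda$ to $\g h\sseq\g{sl}(n)$ must be an integral weight of $\g{sl}(n)$ with respect to the coroots. Integral weights take real values on the split Cartan $\g h^\R\sseq\g{sl}(n,\R)$, which proves the claim.

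The one point to verify is that $\g h$ lies entirely inside $\g{sl}(n)$, i.e.\ that $\g l=\g h$ has no extra central directions beyond the Cartan of $\g{sl}(n,\R)$. This holds because the roots listed above involve only the $\eps_i$ subject to $\sum\eps_i=0$. Consequently every irreducible module has endotype $0_\R$, which is (i).
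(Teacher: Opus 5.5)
Your proposal is correct and follows essentially the same route as the paper. You reduce via Corollary~\ref{thm:EtypeCplx} (Kac's graded decomposition for $\mathbf W(n)$, $\mathbf H(n)$, $\mathbf P(n)$, and the hyperbolic decomposition with $\mathfrak l=\mathfrak h$ for $\mathbf S(n)$, $\tilde{\mathbf S}(n)$), use integrality of $\mathfrak{sl}(n)$-weights to get reality on the split Cartan, and then apply the reductive results (your appeal to Theorem~\ref{thm-reductivespecial} and Theorem~\ref{thm:DirectSum} is slightly more precise than the paper's citation of Theorem~\ref{thm-endotypes-basic}). The one thing to add is that in (ii) the hypothesis of Theorem~\ref{thm-endotypes-basic}(ii) for the $\mathfrak{sl}(n,\mathbb R)$ factor must also be checked, and this follows from the same integrality argument.
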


\begin{proof}
All of the assertions are immediate consequences of Corollary~\ref{thm:EtypeCplx} and Theorem~\ref{thm-endotypes-basic}. For (i), note that $\g l^\R$ is the Cartan subalgebra of a Lie subalgebra of $\g g^\R$ that is isomorphic to $\g{sl}(n,\R)$ and $\lambda$ is the highest weight of an irreducible $\g{sl}(n,\C)$-module. Thus,  $\lambda$ is integral, hence real valued on $\g l^\R$. For (ii), note that $(\g g^\R)_0^{}\cong \g{gl}(n,\R)=\R\oplus \g{sl}(n,\R)$ and the endotype of every 
irreducible $\g{sl}(n,\R)$ module is $0_\R$. Finally, for (iii) recall that  irreducible modules of a reductive Lie algebra can only have endotypes $0_\C$, $0_\R$, and $4_\R$. 
\end{proof}

\section{Examples of explicit computations of endotypes}

\label{sec-examples}

This section is devoted to concrete examples that demonstrate how our results
in Sections~\ref{sec:ETypeWbasic-Q} and~\ref{sec:tau-compatible-parab}
classify  endotypes of irreducible modules of Lie superalgebras of basic type. 
\begin{ex}
Let $\g g=\g{sl}_n(\C)$ and $\tau(X):=-X^*$, so that $\g g^\R=\g{su}(n)$. The Cartan involution $\theta$ is the identity map. Let $\g b=\g h\oplus\g n$ denote the standard upper triangular Borel subalgebra of $\g g$. Then $\tau(\g b)=\g b^\mathrm{op}$, so that in Theorem~\ref{thm-endotypes-basic} we have $r=0$ and $w$ is the longest element of the Weyl group (represented by an antidiagonal matrix). Furthermore, $\g h^\R$ is a compact Cartan subalgebra, and the Kostant cascade is $\beta_1,\ldots,\beta_{\lfloor \frac{n}{2}\rfloor}$ where $\beta_i=\eps_i-\eps_{n+1-i}$ for $1\leq i\leq \lfloor \frac{n}{2}\rfloor$. We denote the coroots of the simple roots of $\g b$ by 
$h_i:=h_{\eps_i-\eps_{i+1}}$.
The $\g b$-highest weight $\lambda$ of an irreducible $\g g$-module $W$ corresponds to a sequence of non-negative integers $(\lambda_1,\ldots,\lambda_{n-1})$ where $\lambda_i:=\langle \lambda,h_i\rangle$. Since $\bar \lambda_i=\lambda_i$ for all $i$, the condition $\lambda_\funcB=\oline{\lambda\circ\tau}$ is equivalent to
\begin{equation}
\label{eq:lai=lani}
\lambda_i=\lambda_{n+1-i}\quad\text{for every }
1\leq i\leq n.
\end{equation} 
Let $ E_{i,j}$ denote the $n\times n$ matrices with a 1 in the $(i,j)$ position and 0's elsewhere. Then 
$h_i=E_{i,i}-E_{i+1,i+1}$ and $h_{\beta_i}=E_{i,i}-E_{n+1-i,n+1-i}$. It follows that 
\[
\sum_{i=1}^{\lfloor \frac{n}{2}\rfloor}h_{\beta_i}=
\begin{cases}
\sum_{i=1}^m ih_i+\sum_{i=1}^{m-1}(m-i)h_{m+i}&\text{ if }n=2m,\\[4mm]
\sum_{i=1}^m ih_i+
\sum_{i=0}^{m-1}(m-i)h_{m+i+1}&\text{ if }n=2m+1.
\end{cases}
\]
By a direct calculation, if tollows that if~\eqref{eq:lai=lani} holds, then 
\[
\textstyle \lambda\left(\sum_{i=1}^{\lfloor \frac{n}{2}\rfloor}h_{\beta_i}\right)=\begin{cases}
m\lambda_m+\sum_{i=1}^{m-1}2i\lambda_i&\text{ if }n=2m,\\
\sum_{i=1}^{m-1}2i\lambda_i&\text{ if }n=2m+1.
\end{cases}
\]
From Theorem~\ref{thm:alternateclam} and Theorem~\ref{thm-endotypes-basic} it follows that:

\begin{itemize}
\item[(i)] If $\lambda_i=\lambda_{n+1-i}$ for at least one $1\leq i\leq n$, then $\EType_{\g g,\tau}(W)=0_\C$.

\item[(ii)] If $\lambda_i=\lambda_{n+1-i}$ for every $1\leq i\leq n$ and $n=2m+1$, then $\EType_{\g g,\tau}(W)=0_\R$. 
 
\item[(iii)] If $\lambda_i+\lambda_{n+1-i}=0$ for every $1\leq i\leq n$ and $n=2m$, then \[
\EType_{\g g,\tau}(W)=\begin{cases}
0_\R & \text{ if }(-1)^{m\lambda_m}=1,\\
4_\R & \text{ if }(-1)^{m\lambda_m}=-1.
\end{cases},
\] 
 \end{itemize}

\end{ex}

\begin{ex}
\label{ex-gl11}
let $\g g=\gl(1|1)$ and let $\g b$ denote the standard upper triangular Borel subalgebra of $\g g$. The split real form of $\g g$ corresponds to $\tau(X)=\bar X$. We have $\tau(\g b)=\g b$, hence $c_\lambda=1$ for every $\lambda$. This proves that for every irreducible $\g g$-module $W$ we have $\EType_{\g g,\tau}(W)=0_\R$. 

Next we consider the compact form $\g{u}(1,0|1,0)$, corresponding to \[
\tau\left(\begin{bmatrix}
x & y\\z& w
\end{bmatrix}
\right)=
\begin{bmatrix}
-\bar x & -\bar z\sqrt{-1}\\-\bar y\sqrt{-1}& -\bar w
\end{bmatrix}
.\]  Then $\tau(\g b)=\g b^\mathrm{op}$, and 
in the notation of Lemma~\ref{lem:seqisotr} we have $\Pi_{\tau(\g b)}=r_{\g g,\eps_1-\delta_1}\Pi_\g b$. The highest weight of an irreducible $\g g$-module $W$ is of the form $\lambda=a\eps_1+b\delta_1$ where $a,b\in\C$. We have
\[
\lambda_\funcB=\begin{cases}
a\eps_1+b\delta_1 & \text{ if }a+b=0,\\
(a-1)\eps_1+(b+1)\delta_1& \text{ if }a+b\neq 0. 
\end{cases}
\]
We have $\oline{\lambda\circ\tau}=-\bar a \eps_1-\bar b\delta_1$.
\begin{itemize}
\item[(i)] Suppose $a+b=0$. Then the condition $\lambda_\funcB=\oline{\lambda\circ\tau}$ is equivalent to $a=\alpha\sqrt{-1}$ and $b=-\alpha\sqrt{-1}$, where $\alpha\in\R$. In this case $D_\lambda=1$ and $c_\lambda=1$, hence $\EType_{\g g,\tau}(W)=0_\R$. If $a+b=0$ but $a$ and $b$ are not purely imaginary, then $\EType_{\g g,\tau}(W)=0_\C$.

\item[(ii)] Suppose that $a+b\neq 0$. Then the condition $\lambda_\funcB=\oline{\lambda\circ\tau}$ is equivalent to
\begin{equation}
\label{eq:a=b=12}
a=\frac12+\alpha\sqrt{-1}\ \text{ and }\ 
b=-\frac12+\beta\sqrt{-1},\text{ for }\alpha,\beta\in\R,\ \alpha\neq \beta. 
\end{equation}
If~\eqref{eq:a=b=12} does not hold, then $\EType(W)=0_\C$. 
When~\eqref{eq:a=b=12} is satisfied, we have $D_\lambda=
E_{1,2}(-\sqrt{-1}E_{2,1})$, so that $\HC_{\g b}(D_\lambda)=-\sqrt{-1}(E_{1,1}+E_{2,2})$ and $c_\lambda=\alpha+\beta$. From Theorem~\ref{thm-endotypes-basic} it follows that if $\alpha+\beta>0$ then $\EType_{\g g,\tau}(W)=6_\R$, whereas if 
$\alpha+\beta<0$ then $\EType_{\g g,\tau}(W)=2_\R$. 
\end{itemize}
Note that if we take the realization of $\g{u}(1,0|1,0)$ 
by\[
\tau\left(\begin{bmatrix}
x & y\\z& w
\end{bmatrix}
\right)=
\begin{bmatrix}
-\bar x & \bar z\sqrt{-1}\\\bar y\sqrt{-1}& -\bar w
\end{bmatrix}
,\]
then the constraint on $\alpha+\beta$ in (ii) will be flipped, i.e., the endotype will be $2_\R$ if $\alpha+\beta>0$, and $6_\R$ if $\alpha+\beta<0$.

\end{ex}

\begin{rmk}
The dependence on $\tau$ of the correspondence between highest weights and endotypes that we observed in Example~\ref{ex-gl11}  can also occur for Lie algebras. For instance, let $\g g=\g{so}(8)$ and let $W=\C^8$ denote the standard representation of $\g g$.  Then the real form $\g{so}(2,6)$ of $\g g$  has a realization as $8\times 8$ real matrices, and this proves that 
with respect to this real form the endotype of $W$ is $0_\R$. Furthermore,  the real form $\g{so}^*(8)$ can be realized as $4\times 4$ matrices over quaternions, and this proves that the endotype of $W$ with respect to  $\g{so}^*(8)$ is $4_\R$. But $\g{so}^*(8)\cong \g{so}(2,6)$. Indeed the latter isomorphism is obtained by a diagram automorphism of $D_4$ (triality). 
This shows that the description of endotypes by highest weights does not depend solely on the isomorphism class of a real form, but on the conjugacy classes of the real forms under the adjoint group. 
\end{rmk}

\begin{ex}
Let $\g g=\g{gl}(1|2)$ and $\tau(X)=-M\bar X^\mathrm{str}M^{-1}$ where $M$ is the $3\times 3$ diagonal matrix with diagonal entries $1,\sqrt{-1},\sqrt{-1}$, so that $\g g^\R=\g{u}(1,0|2,0)$. Then $\tau(\g b)=\g b^\mathrm{op}$ and with $w_\circ$ as in~\eqref{eq:w00}, 
in the notation of Lemma~\ref{lem:seqisotr} we have
\[
\Pi_{\tau(\g b)}=\tilde w_\circ \cdot
(r_{\g g,\eps_1-\delta_2}\cdot 
(r_{\g g,\eps_1-\delta_1}\cdot 
\Pi_\g b)))),\quad\tilde w_\circ:=\begin{bmatrix}
1  & 0\\0 & w_\circ\end{bmatrix}.
\]
It follows that 
\[
e_{\alpha_1}=E_{1,2}\quad,\quad
e_{\alpha_2}=E_{1,3},
\]
hence
\[
\Ad_{{\tilde w}_\circ^{-1}}(\tau(e_{\alpha_1}))=-\sqrt{-1}E_{3,1}\ ,\
\Ad_{{\tilde w}_\circ^{-1}}(\tau(e_{\alpha_2}))=\sqrt{-1} E_{2,1}.
\]
The $\g b$-highest weight $\lambda$ of an irreducible $\g g$-module $W$ is of the form $\lambda=a_1\eps_1+b_1\delta_1+b_2\delta_2$ where  $b_1-b_2$ is  a non-negative integer. For conciseness, we denote this highest weight by a triple $(a_1,b_1,b_2)$. 
Then we have  
\[
\lambda_\funcB=
\begin{cases}
(a_1-2,\; b_2+1,\; b_1+1) & \text{if } a_1+b_1\neq 0 \text{ and } a_1+b_2\neq 1,\\
(a_1-1,\; b_2,\; b_1+1) & \text{if } a_1+b_1\neq 0 \text{ and } a_1+b_2=1,\\
(a_1-1,\; b_2+1,\; b_1) & \text{if } a_1+b_1=0 \text{ and } a_1+b_2\neq 0,\\
(a_1,\; b_2,\; b_1) & \text{if } a_1+b_1=0 \text{ and } a_1+b_2=0.
\end{cases}
\]
Now $\oline{\lambda\circ\tau}$ corresponds to $(-\bar a_1,-\bar b_1,-\bar b_2)$, hence the condition $\lambda_\funcB=\oline{\lambda\circ \tau}$ only occurs in the first and fourth cases, under the following restrictions:
\begin{itemize}
\item[(i)] From the first case,  we obtain that $a_1=1+\alpha\sqrt{-1}$ and  $b_2=-1-\bar b_1$, where $\alpha\in\R$, $b_1\in\C$, and $2\Re(b_1)+1\in\mathbb Z^{\geq 0}$. In this case
there are two odd reflections (i.e., $r=2$) and
 $D_\lambda=E_{1,2}E_{1,3}E_{3,1}E_{2,1}$, so that \[
\HC_\g b(D_\lambda)=(E_{1,1}+E_{2,2})(E_{1,1}+E_{3,3}-1).
\]
The Kostant cascade of $\lund=\g{sl}_2(\C)$ is $\delta_1-\delta_2$, and the corresponding coroot is the $3\times 3$ matrix $E_{2,2}-E_{3,3}$. Thus
by Theorem~\ref{thm:alternateclam} we have 
\[
c_\lambda=(-1)^{b_1-(-1-\bar b_1)}(1+\alpha\sqrt{-1}+b_1)(\alpha\sqrt{-1}-1-\bar b_1)=
(-1)^{2\Re(b_1)+1}
(-z\bar z),
\]
where $z=1+\alpha\sqrt{-1}+b_1$. 
From $\Re(b_1)\geq -\frac12$ it follows that $\Re(z)\neq 0$, hence $-z\bar z<0$.
From Theorem~\ref{thm-endotypes-basic} it follows that:
\begin{itemize}
\item If $2\Re(b_1)$ is even then $\EType_{\g g,\tau}(W)=0_\R$.
\item If $2\Re(b_1)$ is odd then $\EType_{\g g,\tau}(W)=4_\R$.
 
\end{itemize}

\item[(ii)] From the last case, we obtain $a_1=\alpha\sqrt{-1}$, $b_1=b_2=-\alpha\sqrt{-1}$, where $\alpha\in\R$. In this case there are no odd reflections so that $D_\lambda=1$, and $\EType_{\g g,\tau}(W)=0_\R$ because $c_\lambda=(-1)^{b_1-b_2}=1$.

\item[(iii)] If $\lambda$ is not one of the highest weights described in (i) and (ii), then $\EType_{\g g,\tau}(W)=0_\C$. 
\end{itemize}

\end{ex}

\appendix

\section{Proofs of Theorems~\ref{thm:Irr=Irr} and~\ref{thm:Table}} 
\label{sec:proofsthms}

We denote the identity functor of $\catR$ by $\funcI_\catR$.
\begin{lem}
\label{lem:EFandEF}
We have the following natural isomorphisms of functors:
\begin{itemize}
\item[\rm (i)]
$\funcF\funcE\cong \funcI_\catR\oplus\funcI_\catR$.
\item[\rm (ii)]
$\funcE\funcF\cong \funcI_\catC\oplus\funcB$.
\item[\rm (iii)] $\funcB\funcE\cong \funcE$. 
\end{itemize}
\end{lem}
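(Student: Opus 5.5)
The plan is to write down explicit natural transformations in each case, using only the definitions of $\funcE$, $\funcF$, $\funcB$ and the biproduct structure of $\catR$. Throughout, $\epsilon_i,\pi_i$ denote the canonical embeddings and projections for $V\oplus V$, and $\iota_V:=-\epsilon_1\pi_2+\epsilon_2\pi_1$ is the complex structure on $\funcE(V)$. Note that $\iota_V^2=-1$.

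For (i), $\funcF\funcE(V)$ is literally $V\oplus V$, and $\funcF\funcE(\phi)=\epsilon_1'\phi\pi_1+\epsilon_2'\phi\pi_2=\phi\oplus\phi$. So the identity morphism is a natural isomorphism $\funcF\funcE\cong\funcI_\catR\oplus\funcI_\catR$, and nothing further needs checking.

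For (iii), I would define $\eta_V:\funcE(V)\to\funcB\funcE(V)=(V\oplus V,-\iota_V)$ by $\eta_V:=\epsilon_1\pi_1-\epsilon_2\pi_2$, i.e.\ $(v_1,v_2)\mapsto(v_1,-v_2)$. A direct check gives $\eta_V\iota_V=-\iota_V\eta_V$, so $\eta_V$ is a $\catC$-morphism. It is invertible, being its own inverse. It is natural because $\funcE(\phi)=\phi\oplus\phi$ commutes with $\eta$.

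For (ii), which is the only step needing care, let $W=(V,\iota)$. Then $\funcE\funcF(W)=(V\oplus V,\iota_V)$, and I would define
\[
\alpha_W:\funcE\funcF(W)\to W\oplus\funcB(W),\qquad \alpha_W:=\epsilon_1(\pi_1+\iota\pi_2)+\epsilon_2(\pi_1-\iota\pi_2),
\]
that is, $(v_1,v_2)\mapsto(v_1+\iota v_2,\ v_1-\iota v_2)$; this is the usual splitting $\C\otimes_\R V\cong V\oplus\oline V$. The following checks are needed.
\begin{itemize}
\item[(a)] $\alpha_W$ intertwines $\iota_V$ with $\iota\oplus(-\iota)$. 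We have $\iota_V(v_1,v_2)=(-v_2,v_1)$, whose image is $(-v_2+\iota v_1,\,-v_2-\iota v_1)$, and this equals $(\iota(v_1+\iota v_2),\,-\iota(v_1-\iota v_2))$ since $\iota^2=-1$.
\item[(b)] $\alpha_W$ is invertible, with inverse $\tfrac12\bigl(\epsilon_1(\pi_1+\pi_2)-\epsilon_2\iota(\pi_1-\pi_2)\bigr)$. Here one uses that $\catR$ is $\R$-linear, so $\tfrac12$ is available.
\item[(c)] $\alpha$ is natural. For a $\catC$-morphism $f:W_1\to W_2$, the morphism $f$ commutes with $\iota_1,\iota_2$, hence with each component of $\alpha$.
\end{itemize}
The main obstacle is only bookkeeping: expressing these maps in terms of $\epsilon_i,\pi_i$ rather than elements. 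Alternatively, one can invoke the Freyd–Mitchell embedding mentioned in Section~\ref{sec:cat-gener} and verify the identities elementwise for $S$-modules.
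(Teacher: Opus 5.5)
Your proposal is correct and is essentially the paper's proof: (i) is identical, and in (ii) your $\alpha_W$ is exactly $2(\epsilon_1\tilde\pi_1+\epsilon_2\tilde\pi_2)$ in the paper's notation, with inverse $\tfrac12(\tilde\epsilon_1\pi_1+\tilde\epsilon_2\pi_2)$. The only deviation is in (iii), where you use the sign change $\epsilon_1\pi_1-\epsilon_2\pi_2$ instead of the paper's swap $\epsilon_1\pi_2+\epsilon_2\pi_1$; both anticommute with $\iota_V$, commute with $\funcE(\phi)$, and square to $1$, which is the property the paper later uses in Lemma~\ref{lem:EndRCH}(i), so either choice works.
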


\begin{proof}
(i) Follows immediately from the definitions of $\funcE$ and $\funcF$. 

(ii) Let $W:=(V,\iota)\in \Obj(\catC)$ and let $\tilde\epsilon_1,\tilde\epsilon_2\in
\Hom_{\catR}(V,V\oplus V)$ be defined by 
\[
\tilde\epsilon_1:=\epsilon_1-\epsilon_2\iota\quad\text{and}\quad
\tilde\epsilon_2:=\epsilon_1+\epsilon_2\iota.
\]
Recall that 
$\funcE\funcF (W)=(V\oplus V,-\epsilon_{1}\pi_{2}+\epsilon_{2}\pi_{1})$. A direct calculation yields
\[
\tilde\epsilon_1\in \Hom_{\catC}(W,\funcE\funcF (W))\quad\text{and}\quad
\tilde\epsilon_2\in \Hom_{\catC}(\funcB (W),\funcE\funcF (W)).
\]
Next set $\tilde\pi_1:=\frac12(\pi_1+\iota\pi_2)$ and $\tilde\pi_2:=\frac12(\pi_1-\iota\pi_2)$, so that $\tilde\pi_1,\tilde\pi_2\in\Hom_{\catR}(V\oplus V,V)$. Again by direct calculations we have 
\[
\tilde\pi_1\in\Hom_\catC(\funcE\funcF (W),W)\quad\text{and}
\quad
\tilde\pi_2\in\Hom_\catC(\funcE\funcF (W),\funcB (W)).
\]
Also, $\tilde \pi_1\tilde\epsilon_1=1_W$, 
$\tilde \pi_2\tilde\epsilon_2=1_{\funcB W}$, and 
$\tilde \pi_i\tilde\epsilon_j=0$ for $i\neq j$. This proves that $\funcE\funcF (W)\cong W\oplus \funcB (W)$.  
For naturality of the latter  isomorphism note that 
for 
$\phi\in\Hom_{\catC}(W,W')$
where  $W,W'\in\Obj(\catC)$ 
we have 
\[
\tilde \pi'_1(\funcE\funcF(\phi))\tilde\epsilon_1=1_W\quad,\quad
\tilde \pi'_2(\funcE\funcF(\phi))\tilde\epsilon_2=1_{\funcB(W)}\quad,\quad
\tilde \pi'_i(\funcE\funcF(\phi))\tilde\epsilon_j=0
\text{ for }i\neq j, 
\]
where $\tilde\pi_1',\tilde\pi'_2$ are defined similar to $\tilde\pi_1,\tilde\pi_2$, but with respect to $W'$. 

(iii) It is straightforward to check that the $\catC$-isomorphisms $\funcE(V)\to\funcB\funcE(V)$ given by 
$\epsilon_2\pi_1+\epsilon_1\pi_2$ are natural.
\end{proof}

\begin{dfn}
Let $f:X\to Y$ be a monomorphism in an abelian category $\catA$. We call $f$  a \emph{proper embedding} if $X\not\cong \mathbf 0_\catA$ and $\coker(f)\neq 0$. 
\end{dfn}

\begin{rmk}
\label{rmk:WsimpleBWsimple}
Let $W\in\Obj(\catC)$ be simple. Then $\funcB(W)$ is also a simple object of $\catC$. Indeed   for any $W'\in\Obj(\catC)$ the isomorphism $\Hom_\catC(W',\funcB(W))\xrightarrow{\cong}
\Hom_\catC(\funcB(W'),W)$ given by $f\mapsto \funcB(f)$ maps proper embeddings to proper embeddings.

\end{rmk}

\begin{lem}
\label{lem:E2}
Let $V\in\Obj(\catR)$ be a simple object. Then either $\funcE (V)$ is simple or $\funcE (V)\cong W\oplus \funcB(W)$ where $W\in\Obj(\catC)$ is simple and  $\funcF(W)\cong V$.  
\end{lem}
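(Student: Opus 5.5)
We need to prove Lemma~\ref{lem:E2}: for simple $V$ in $\catR$, either $\funcE(V)$ is simple or $\funcE(V)\cong W\oplus\funcB(W)$ with $W$ simple and $\funcF(W)\cong V$.

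The plan is to work entirely with the adjunction-type isomorphisms of Lemma~\ref{lem:EFandEF} and the exactness/faithfulness of $\funcF$. First, $\funcF$ is exact and faithful, and it reflects zero objects. Hence for any subobject $W\hookrightarrow\funcE(V)$ in $\catC$, the image $\funcF(W)$ is a subobject of $\funcF\funcE(V)\cong V\oplus V$ by Lemma~\ref{lem:EFandEF}(i). Since $V$ is simple, $V\oplus V$ has length $2$ in $\catR$. It follows that $\funcE(V)$ has length at most $2$ in $\catC$: a chain $0\subsetneq W_1\subsetneq W_2\subsetneq\funcE(V)$ would give a strictly increasing chain of length $3$ after applying $\funcF$, using faithfulness and exactness on the successive quotients.

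Suppose $\funcE(V)$ is not simple, and let $W\subsetneq\funcE(V)$ be a proper nonzero subobject. By the length count, $W$ is simple, $\funcF(W)$ is a proper nonzero subobject of $V\oplus V$, and therefore $\funcF(W)\cong V$ (any such subobject is isomorphic to $V$ by Jordan--Hölder). Next I apply Lemma~\ref{lem:EFandEF}(ii): $\funcE\funcF(W)\cong W\oplus\funcB(W)$. On the other hand, $\funcF(W)\cong V$ gives $\funcE\funcF(W)\cong\funcE(V)$. Combining the two, $\funcE(V)\cong W\oplus\funcB(W)$. By Remark~\ref{rmk:WsimpleBWsimple}, $\funcB(W)$ is simple as well, which gives the claimed decomposition.

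The only point needing care is the length bound, namely that $\funcF$ detects strict inclusions. This holds because $\funcF$ is the identity on underlying morphisms of $\catR$, and a $\catC$-monomorphism is a $\catR$-monomorphism whose cokernel (computed in $\catR$ with the induced $\iota$) is zero iff it is zero in $\catC$. Abelianness of $\catC$ with kernels and cokernels computed as in $\catR$, which was already noted via the Freyd--Mitchell description, supplies this. I expect no serious obstacle beyond this bookkeeping.
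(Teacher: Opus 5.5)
Your proposal is correct and follows the paper's proof essentially step by step. Both arguments bound the length of $\funcE(V)$ by two using $\funcF\funcE(V)\cong V\oplus V$ and exactness of $\funcF$, take a simple proper subobject $W$ with $\funcF(W)\cong V$, apply Lemma~\ref{lem:EFandEF}(ii) to get $\funcE(V)\cong\funcE\funcF(W)\cong W\oplus\funcB(W)$, and invoke Remark~\ref{rmk:WsimpleBWsimple} for simplicity of $\funcB(W)$; your justification of the length bound (faithfulness and reflection of zero objects by $\funcF$) is, if anything, more explicit than the paper's.
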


\begin{proof}
By Lemma~\ref{lem:EFandEF}(i) we have $\funcF\funcE(V)\cong V\oplus V$, hence $\funcE(V)$ has length at most 2. 
Suppose that $\funcE (V)$ is not simple.
Then there exists a proper embedding  $f\in\Hom_{\catC}(W, \funcE (V))$ for a simple object $W\in\Obj(\catC)$.  Since $\funcF$ is exact,  $\funcF(W)$ is a proper embedding  as well. From these and the isomorphism $\funcF\funcE(V)\cong V\oplus V$ it follows that  $\funcF (W)\cong V$. Thus, Lemma~\ref{lem:EFandEF}(ii) implies that 
\[
\funcE(V)\cong\funcE\funcF(W)\cong
W\oplus \funcB (W),
\] 
and $\funcB(W)$ is a simple object because of Remark~\ref{rmk:WsimpleBWsimple}.
Finally, we have
\[
V\oplus V\cong \funcF\funcE(V)\cong \funcF(W)\oplus \funcF\funcB(W)
\cong \funcF(W)\oplus \funcF(W),
\]
from which it follows that $\funcF(W)\cong V$. 
 \end{proof}

\begin{lem}
\label{lem:F2}
Let $W\in\Obj(\catC)$ be a simple object. Then either $\funcF (W)$ is simple or $\funcF (W)\cong V\oplus V$ where $V\in\Obj(\catR)$ is simple
and  $\funcE(V)\cong W\cong \funcB(W)$.  
\end{lem}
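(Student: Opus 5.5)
We will mirror the proof of Lemma~\ref{lem:E2}, interchanging the roles of $\funcE$ and $\funcF$. Let $W\in\Obj(\catC)$ be simple and suppose $\funcF(W)$ is not simple; note $\funcF(W)\neq\mathbf 0_\catR$ since $W\neq \mathbf 0_\catC$. We choose a simple subobject $V\hookrightarrow\funcF(W)$ in $\catR$; it exists because $\funcF(W)$ is nonzero and of finite length. To bound the length of $\funcF(W)$, we use Lemma~\ref{lem:EFandEF}(ii), $\funcE\funcF(W)\cong W\oplus\funcB(W)$. By Remark~\ref{rmk:WsimpleBWsimple} this object has length $2$ in $\catC$.

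Next we observe that $\funcE$ is exact and faithful, since $\funcF\funcE\cong\funcI_\catR\oplus\funcI_\catR$ by Lemma~\ref{lem:EFandEF}(i). Hence $\funcE$ sends nonzero objects to nonzero objects, and so $\ell_\catC(\funcE(X))\ge \ell_\catR(X)$ for all $X$. Applied to $X=\funcF(W)$, this gives $\ell_\catR(\funcF(W))\le 2$. Since $\funcF(W)$ is not simple, its length is exactly $2$. Consequently $\funcE$ of each composition factor of $\funcF(W)$ has length exactly $1$, so it is simple.

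Now $\funcE(V)$ is simple, and since $V$ is a subobject of $\funcF(W)$ and $\funcE$ is exact, $\funcE(V)$ embeds in $\funcE\funcF(W)\cong W\oplus\funcB(W)$. A simple subobject of this semisimple object is isomorphic to $W$ or to $\funcB(W)$. By Lemma~\ref{lem:EFandEF}(iii) we have $\funcB\funcE(V)\cong\funcE(V)$, so in either case $\funcE(V)\cong W\cong\funcB(W)$. It remains to identify $\funcF(W)$. We have
\[
\funcF(W)\cong\funcF\funcE(V)\cong V\oplus V
\]
by Lemma~\ref{lem:EFandEF}(i), which gives the claimed decomposition with $V$ simple.

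The main obstacle is the length bound. Specifically, we need that $\funcE$ preserves nonzero-ness and that $\catC$ is abelian with finite-length objects, so that the length comparison is meaningful. Both follow from the exactness of $\funcE$ together with the formula $\funcF\funcE\cong\funcI_\catR\oplus\funcI_\catR$. If the length argument proves awkward, an alternative is to argue directly: a simple object $W$ is a quotient of $\funcE\funcF(W)$, so $\funcE(V)\to W$ is nonzero via adjunction-like reasoning using $\tilde\pi_1$. We would try the length argument first.
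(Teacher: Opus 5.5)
Your proof is correct and follows essentially the same route as the paper: take a simple subobject $V$ of $\funcF(W)$, push it through the exact functor $\funcE$ into $\funcE\funcF(W)\cong W\oplus\funcB(W)$, conclude that $\funcE(V)\cong W$ or $\funcE(V)\cong\funcB(W)$, and finish with $\funcB\funcE\cong\funcE$ and $\funcF\funcE\cong\funcI_\catR\oplus\funcI_\catR$. Your length comparison, which uses that $\funcE$ is exact and sends nonzero objects to nonzero objects, justifies the existence of the simple subobject $V$ and the simplicity of $\funcE(V)$ more carefully than the paper, which handles these points tersely via ``proper embeddings''.
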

\begin{proof}
If $\funcF(W)$ is not simple then there exists a proper embedding $f\in\Hom_{\catR}(V,\funcF(W))$ for a simple object  $V\in\Obj(\catR)$.
 Since $\funcE$ is exact, $\funcE(f)$ is also a proper embedding. However, by Lemma~\ref{lem:EFandEF}(ii) we have $\funcE\funcF(W)\cong W\oplus \funcB(W)$. It follows that either $\funcE(V)\cong W$ or $\funcE(V)\cong \funcB(W)$. Since $\funcB^2\cong \funcI_\catC$, we obtain $W\cong \funcE(V)$ or $W\cong \funcB\funcE(V)$. Since $\funcF\funcB\cong\funcF$, from either of the latter isomorphisms and Lemma~\ref{lem:EFandEF}(i)
it follows that 
$
\funcF(W)\cong \funcF\funcE(V)\cong V\oplus V
$. Finally, from Lemma~\ref{lem:EFandEF}(iii) we obtain 
$\funcE(V)\cong W\cong \funcB(W)$.
\end{proof}

\begin{lem}
\label{lem:Hom(EV,EV)}
Let $V\in\Obj(\catR)$ and set $A:=\End_{\catR}(V)$. Then $\End_{\catC}(\funcE (V))\cong A\otimes_{\R}\C$.
\end{lem}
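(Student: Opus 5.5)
The plan is to compute $\End_\catC(\funcE(V))$ directly from the definitions. Write $\funcE(V)=(V\oplus V,J)$ with $J:=-\epsilon_1\pi_2+\epsilon_2\pi_1$. A morphism $\phi\in\End_\catR(V\oplus V)$ is determined by its matrix entries $\phi_{ij}:=\pi_i\phi\epsilon_j\in A$. So $\End_\catR(V\oplus V)\cong\Mat_2(A)$, and in this identification
\[
J=\begin{bmatrix}0&-1\\1&0\end{bmatrix}.
\]
By definition, $\End_\catC(\funcE(V))$ is the subalgebra of $\Mat_2(A)$ consisting of the matrices that commute with $J$. A direct computation shows that these are exactly the matrices
\[
\begin{bmatrix}a&-b\\b&a\end{bmatrix},\qquad a,b\in A.
\]

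Next define the map
\[
\Psi:A\otimes_\R\C\to\End_\catC(\funcE(V)),\qquad a\otimes 1\mapsto \funcE(a)=\begin{bmatrix}a&0\\0&a\end{bmatrix},\quad a\otimes i\mapsto \funcE(a)J=\begin{bmatrix}0&-a\\a&0\end{bmatrix}.
\]
Because $J$ has scalar entries, it commutes with every $\funcE(a)$, and $J^2=-1$. Hence $\Psi$ is an $\R$-algebra homomorphism. It is also $\C$-linear, since the complex structure on $\Hom_\catC$ is given by composition with $\iota=J$. Finally, $\Psi$ is a bijection, because every matrix of the above form equals $\funcE(a)+\funcE(b)J$ for unique $a,b\in A$.

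The only point requiring care is the bookkeeping: the matrix description of $\End_\catR(V\oplus V)$ must be justified by the biproduct identities $\pi_i\epsilon_j=\delta_{ij}$ and $\epsilon_1\pi_1+\epsilon_2\pi_2=1$, which are available in any additive category. I do not expect any genuine obstacle.
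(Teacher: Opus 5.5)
Your proposal is correct and is essentially the paper's proof. The paper also writes $f\in\End_\catC(\funcE(V))$ through its matrix entries $\pi_jf\epsilon_i\in A$, observes that commuting with $-\epsilon_1\pi_2+\epsilon_2\pi_1$ forces the form $\begin{bmatrix}a&-b\\ b&a\end{bmatrix}$, and uses the isomorphism $f\mapsto f_{1,1}+\sqrt{-1}f_{1,2}$, which is exactly the inverse of your $\Psi$ (your check of multiplicativity and $\C$-linearity via $J^2=-1$ and $J\funcE(a)=\funcE(a)J$ just spells out the verification the paper leaves to the reader).
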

\begin{proof}
Let $f\in \End_{\catC}(\funcE V)$. Since $f\in\End_{\catR}(V\oplus V)$, we can define $f_{i,j}:\End_{\catR}(V)$ 
for $i,j\in\{1,2\}$
by $f_{i,j}:=\pi_jf\epsilon_i$, so that
$
f=\sum_{i,j}\epsilon_j f_{i,j}\pi_i
$. 
The morphism $f$  commutes with 
$-\epsilon_1\pi_2+\epsilon_2\pi_1$ if and only if  $f_{2,2}=f_{1,1}$ and $f_{2,1}=-f_{1,2}$. We can readily verify that the map
\[
\End_{\catC}(\funcE (V))\to A\otimes_\R\C
\quad,\quad
f\mapsto f_{1,1}+\sqrt{-1}f_{1,2}
\] 
is a $\C$-linear isomorphism of algebras.
\end{proof}

\begin{lem}
\label{lem:EndRCH}
Let $V\in\Obj(\catR)$ be simple. 
\begin{itemize}
\item[\rm (i)]
If $\End_\catR(V)\cong\R$ then $W:=\funcE(V)$ is simple and $(\funcB,+)\in\Sigma_W$ but $(\funcB,-)\not\in\mathfrak S_W$.
\item[\rm (ii)]
If $\End_\catR(V)\cong \C$ or $\qH$, then $\funcE(V)\cong W\oplus \funcB(W)$ where $W:=(V,\iota)$ for $\iota\in\End_\catR(V)$ corresponding to $i\in\C$ or $\qH$, respectively. \item[\rm (iii)] For $V$ and $W$ as in (ii), if
 $\End_\catR(V)\cong \C$  then we have $(\funcB,\pm)\not\in\mathfrak S_W$. If 
  $\End_\catR(V)\cong \qH=\C\oplus \C j$, then 
$(\funcB,-)\in\mathfrak S_W$ but $(\funcB,+)\not\in\mathfrak S_W$, and the isomorphism $W\cong \funcB(W)$ is induced by $j\in\End_\catR(V)$.

\end{itemize}
\end{lem}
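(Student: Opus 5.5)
The plan is to compute everything through Lemma~\ref{lem:Hom(EV,EV)}, which gives $\End_\catC(\funcE(V))\cong \End_\catR(V)\otimes_\R\C$, together with Lemma~\ref{lem:E2}. Throughout, a $\catC$-morphism $W\to\funcB(W)$ will be viewed as an element $\phi\in\End_\catR(V)$ satisfying $\phi\iota=-\iota\phi$, with $\phi^2=\bfc(\phi)1$.

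For (i), assume $\End_\catR(V)\cong\R$. Then $\End_\catC(\funcE(V))\cong\C$ is a division algebra. By Lemma~\ref{lem:E2}, if $\funcE(V)$ were not simple we would have $\funcE(V)\cong W\oplus\funcB(W)$, and its endomorphism algebra would have dimension at least $2$ over $\C$ and contain idempotents; this is a contradiction. So $W=\funcE(V)$ is simple. Lemma~\ref{lem:EFandEF}(iii) gives $W\cong\funcB(W)$, and the explicit isomorphism is $\epsilon_2\pi_1+\epsilon_1\pi_2$ (the swap). The swap squares to the identity, so its sign is $+$. By Lemma~\ref{lem:sig}(ii), all isomorphisms $W\to\funcB(W)$ have the same sign, hence $(\funcB,-)\notin\mathfrak S_W$.

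For (ii), let $\iota\in\End_\catR(V)$ be the element corresponding to $i$, so $\iota^2=-1$ and $W:=(V,\iota)\in\Obj(\catC)$. Since $\funcF(W)=V$ is simple, $W$ is simple. Lemma~\ref{lem:EFandEF}(ii) then gives $\funcE(V)=\funcE\funcF(W)\cong W\oplus\funcB(W)$.

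For (iii), note that $\Hom_\catC(W,\funcB(W))$ consists of the $\phi\in\End_\catR(V)$ that anticommute with $\iota$.
\begin{itemize}
\item[(a)] If $\End_\catR(V)\cong\C$, it is commutative, so any such $\phi$ satisfies $\phi\iota=\iota\phi=-\iota\phi$. Hence $2\phi\iota=0$, and $\phi=0$ because $\iota$ is invertible. So neither $(\funcB,+)$ nor $(\funcB,-)$ lies in $\mathfrak S_W$.
\item[(b)] If $\End_\catR(V)\cong\qH$, then $j$ anticommutes with $i$, so $j$ gives an isomorphism $W\to\funcB(W)$. Since $j^2=-1$, its sign is $-$. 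By Lemma~\ref{lem:sig}(ii), every nonzero element of $\Hom_\catC(W,\funcB(W))$ has the same sign, so $(\funcB,+)\notin\mathfrak S_W$.
\end{itemize}

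No step looks like a real obstacle. The most care is needed in (i), where the simplicity of $\funcE(V)$ is argued from $\End_\catC(\funcE(V))\cong\C$; as a cross-check, Lemma~\ref{lem:F2} applied to a hypothetical simple summand should also force $\funcF$ of that summand to be $V$, giving the same conclusion.
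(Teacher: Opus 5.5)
Your proposal is correct and follows essentially the same argument as the paper: in (i) you get simplicity of $\funcE(V)$ from $\End_\catC(\funcE(V))\cong\C$ together with Lemma~\ref{lem:E2}, then use the swap isomorphism (which squares to $1$) and Lemma~\ref{lem:sig}(ii); (ii) comes from Lemma~\ref{lem:EFandEF}(ii); and (iii) compares commutativity of $\C$ with the element $j$, which anticommutes with $i$. You also note explicitly that $W=(V,\iota)$ is simple because $\funcF(W)=V$ is simple; this is needed for Lemma~\ref{lem:sig}(ii), and the paper leaves it implicit.
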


\begin{proof}
(i) From Lemma~\ref{lem:E2} it follows that $\funcE(V)$ is simple if and only if $\End_\catC(\funcE(V))$ is a division algebra. This and Lemma~\ref{lem:Hom(EV,EV)} imply that $W:=\funcE(V)$ is simple. 
As noted in the proof of
Lemma~\ref{lem:EFandEF}(iii), the morphism $\phi\in\End_\catR(\funcE(V))$ defined by  $\phi:=\epsilon_1\pi_2+\epsilon_2\pi_1$ is an isomorphism between $W$ and $\funcB(W)$.  Clearly 
 $\phi^2=1$, hence  $(\funcB,+)\in\mathfrak S_W$.  Lemma~\ref{lem:sig}(ii) implies that $(\funcB,-)\not\in \mathfrak S_W$.

(ii) Since $V=\funcF(W)$, the assertion follows from Lemma~\ref{lem:EFandEF}(ii). 

(iii) An isomorphism $W\cong \funcB(W)$ in $\catC$ is an isomorphism of $V$ in $\catR$ that does not commute with $\iota\in\End_\catR(V)$. In other words, from
$W\cong \funcB(W)$ it follows that
$\End_\catR(V)$ must be noncommutative. This proves the first assertion. 
Next suppose that $\End_\catR(\funcE(V))\cong \qH$ and recall that $W:=(V,\iota)$ where $\iota$ corresponds to $i\in\End_\catR(V)$. From $ij=-ji$ it follows that  the $\catR$-isomorphism $j\in\qH$ belongs to $\Hom_\catC(W,\funcB(W))$, hence $(\funcB,-)\in \mathfrak S_W$ and thus by Lemma~\ref{lem:sig} we have $(\funcB,+)\not\in\mathfrak S_W$
\end{proof}


\begin{prp}
\label{prp:IrrI-IrrII}
The map $[V]\mapsto [\funcE(V)]$ is a bijection from $\Irr_+(\catR)$ onto $\Irr_+(\catC)$.

\end{prp}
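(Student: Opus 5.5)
We need to show that $[V]\mapsto[\funcE(V)]$ is well defined from $\Irr_+(\catR)$ into $\Irr_+(\catC)$, injective, and surjective.

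\emph{Well-definedness.} Let $[V]\in\Irr_+(\catR)$, so $\End_\catR(V)\cong\R$. By Lemma~\ref{lem:EndRCH}(i), $W:=\funcE(V)$ is simple. By Lemma~\ref{lem:EFandEF}(i), $\funcF(W)=\funcF\funcE(V)\cong V\oplus V$ with $V$ simple. Hence $[W]\in\Irr_+(\catC)$. The class $[\funcE(V)]$ depends only on $[V]$ because $\funcE$ is a functor.

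\emph{Injectivity.} Suppose $\funcE(V)\cong\funcE(V')$ for $[V],[V']\in\Irr_+(\catR)$. Applying $\funcF$ and Lemma~\ref{lem:EFandEF}(i) gives $V\oplus V\cong V'\oplus V'$. Since $\catR$ has finite-length simple summands here, the Krull--Schmidt / Jordan--H\"older uniqueness of simple multiplicities yields $V\cong V'$.

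\emph{Surjectivity.} Let $[W]\in\Irr_+(\catC)$, so $\funcF(W)\cong V\oplus V$ with $V$ simple. By Lemma~\ref{lem:F2}, $\funcE(V)\cong W$. It remains to show $\End_\catR(V)\cong\R$.

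This is the main step. By Lemma~\ref{lem:Hom(EV,EV)}, $\End_\catC(\funcE(V))\cong\End_\catR(V)\otimes_\R\C$. Since $\funcE(V)\cong W$ is simple, this algebra is a finite-dimensional division algebra over $\C$, hence isomorphic to $\C$. By Schur's Lemma $\End_\catR(V)\in\{\R,\C,\qH\}$, and
\[
\C\otimes_\R\C\cong\C\times\C,\qquad \qH\otimes_\R\C\cong\Mat_2(\C),
\]
neither of which is a division algebra. Therefore $\End_\catR(V)\cong\R$.

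Equivalently, one can use Lemma~\ref{lem:EndRCH}(ii): if $\End_\catR(V)\cong\C$ or $\qH$, then $\funcE(V)$ is not simple, contradicting $\funcE(V)\cong W$. So $[V]\in\Irr_+(\catR)$ maps to $[W]$.
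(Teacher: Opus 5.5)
Your proposal is correct and follows essentially the same route as the paper: well-definedness from Lemma~\ref{lem:EndRCH}(i), injectivity by applying the forgetful functor together with Lemma~\ref{lem:EFandEF}(i) and simplicity of $V,V'$, and surjectivity from Lemma~\ref{lem:F2} combined with Lemma~\ref{lem:EndRCH}. Your extra justification of the last step via Lemma~\ref{lem:Hom(EV,EV)} (neither $\mathbb C\otimes_{\mathbb R}\mathbb C$ nor $\mathbb H\otimes_{\mathbb R}\mathbb C$ is a division algebra) only spells out what the paper obtains by citing Lemma~\ref{lem:EndRCH}(ii).
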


\begin{proof}
From Lemma~\ref{lem:EndRCH}(i)
it follows that 
if 
$[V]\in\Irr_+(\catR)$ then
$[\funcE(V)]\in \Irr_+(\catC)$.
To verify injectivity, it suffices to prove that for simple objects $V,V'$ of $\catR$, if $\funcE(V)\cong \funcE(V')$ then $V\cong V'$.
By Lemma~\ref{lem:EFandEF}(i) 
we have $V\oplus V\cong \funcF\funcE(V)\cong \funcF\funcE(V')\cong V'\oplus V'$. Since $V$ and $V'$ are both simple we obtain $V\cong V'$, as desired.
For surjectivity,  note that given any $W\in\Irr_+(\catC)$, by Lemma~\ref{lem:F2} we have  
$\funcF(W)=V\oplus V$ and $\funcE(V)\cong W$. Finally, Lemma~\ref{lem:EndRCH} implies that $[V]\in\Irr_+(\catR)$.     
\end{proof}

\begin{proof}[Proof of Theorem~\ref{thm:Irr=Irr}]
(i) This is a trivial consequence of the fact that the endomorphism ring of a simple object of $\catR$ is a division algebra over $\R$. 

(ii) Follows immediately from Lemma~\ref{lem:F2}. 

(iii) From $\funcF\funcB=
\funcF$ it follows that the sets $\Irr_\star(\catC)$ are $\Theta_\funcB$-stable. 
From Lemma~\ref{lem:F2} it follows that $\Irr_+(\catC)$ is $\Theta_\funcB$-fixed. 

(iv)
Assume that $[W]\in\Irr_+(\catC)$. 
Then by Proposition~\ref{prp:IrrI-IrrII} we have $W=\funcE(V)$ where $[V]\in\Irr_+(\catR)$. Furthermore $\Phi([W])=[V]$. Thus $\Phi$ is a bijection from $\Irr_+(\catC)$ onto $\Irr_+(\catR)$. 

Next assume that $[W]\in \Irr_\circ(\catC)$. Then 
$\Phi([W])=[\funcF(W)]$ and 
by Lemma~\ref{lem:EFandEF}(ii) we have 
$\funcE\funcF(W)\cong W\oplus \funcB(W)$. From $\funcB(W)\not\cong W$ it follows that 
$\End_{\catC}(\funcE\funcF(W))\cong \C\times \C$, hence from Lemma~\ref{lem:Hom(EV,EV)} it follows that $\End_\catR(\funcF(V))\cong\C$, i.e., $[V]\in\Irr_\circ(\catR)$. 

An argument similar to the case $[W]\in\Irr_\circ(\catC)$ implies that if $[W]\in\Irr_-(\catC)$ then $\Phi([W])\in\Irr_-(\catR)$. It remains to prove that the restricted map \[\Phi:\Irr_\circ(\catC)/\funcB\cup\Irr_-(\catC)\to 
\Irr_\circ(\catR)\cup\Irr_-(\catR)
\] is a bijection. For surjectivity, note that if $[V]\in\Irr_\circ(\catR)\cup\Irr_-(\catR)$ then  Lemma~\ref{lem:EndRCH} and Lemma~\ref{lem:E2} imply that 
$\funcE(V)\cong W\oplus \funcB(W)$ such that $\funcF(W)=V$, hence $\Phi([W])=[V]$. 
For injectivity, assume that 
$\Phi([W])=\Phi([W'])$ for $[W],[W']\in\Irr_\circ(\catC)\cup\Irr_-(\catC)$. Then 
\[
W\oplus \funcB(W)\cong 
\funcE\funcF(W)\cong \funcE\funcF(W')\cong W'\oplus \funcB(W').
\]
It follows that $W\cong W'$ or $W\cong \funcB(W')$, hence $[W]$ and $[W']$ are in the same $\Theta_\funcB$-orbit. 
\end{proof}

\begin{proof}[Proof of Theorem~\ref{thm:Table}]
By Lemma~\ref{lem:Hom(EV,EV)}
the $2^\mathrm{nd}$ column in Table~\ref{Table-1} determines the $3^\mathrm{rd}$ column. 
By Lemma~\ref{lem:EndRCH}  the $2^\mathrm{nd}$ column also detemines the $4^\mathrm{th}$ column. Existence of symmetries of the form $(\funcB,\pm )$
follows from Lemma~\ref{lem:EndRCH} as well. 
It remains to determine the existence of the symmetries $\Pi$ and $(\Pi\funcB,\pm)$. 
The latter symmetries can only exist when $\wEnd_\catR(V)_{\ood}\neq0$ (because of Lemma~\ref{lem:F2}), and in the rest of the proof we focus on these cases.  
Let  $\varepsilon\in\End_\catR(V)_\ood$ be an element satisfying $\varepsilon^2=\pm 1$. Then $\eps$ corresponds to $\phi_\eps\in\Hom_\catR(V,\Pi(V))$. There are two cases to consider:

\noindent\textbf{Case I:}
$\funcE(V)=W$, for a simple $W$.  Then  $\Pi\in\mathfrak S_W$ 
via $\tilde\phi_\eps:=\funcE(\phi_\eps)$.
Also, by Lemma~\ref{lem:EndRCH} we have $(\funcB,+)\in\mathfrak S_W$ and the isomorphism $W\cong \funcB(W)$ is by the morphism
$\psi:=\epsilon_1\pi_2+\epsilon_2\pi_1$. From $\funcB(\tilde\phi_\eps)\psi=\Pi(\psi)\tilde \phi_\eps$ it follows that 
$c(\Pi(\psi)\phi_\eps)=c(\psi)\mathrm{sign}(\eps^2)=\mathrm{sign}(\eps^2)$, so that 
$(\Pi\funcB,\mathrm{sign}(\eps^2))\in \mathfrak S_W$. 

\noindent\textbf{Case II:}
$\funcE(V)=W\oplus \funcB(W)$. Then  by Lemma~\ref{lem:EndRCH} we have $\C\sseq\End_\catR(V)$ and $W=(V,\iota)$ where $\iota$ corresponds to $i\in\End_\catR(V)$.  If $\eps i=i\eps$ then $\phi_\varepsilon\iota=\Pi(\iota)\phi_\varepsilon$ and thus $\phi_\eps\in\Hom_\catC(W,\Pi(W))$. Similarly, if $\eps i=-i\eps$ then $\phi_\eps\in \Hom_\catC(W,\Pi\funcB(W))$ and $c(\phi_\eps)=\eps^2$. Finally, recall that the $W\cong \funcB(W)$ if and only if $\End_\catR(V)\cong \qH$, and in the latter case the isomorphism corresponds to $j\in\End_\catR(V)_\ood$.
These facts and Lemma~\ref{lem:EndRCH} determine the existence of symmetries $\Pi$ and $(\Pi\funcB,*)$ for $*\in\{\pm 1\}$. 
\end{proof}

\section{highest weights and  odd reflections for $\g{sl}(2|2)$ and $\g{psl}(2|2)$}
\label{App-sl22}
Let $\g g=\g{sl}(2|2)$. Without loss of generality we assume that $\g h$ is the diagonal Cartan subalgebra of $\g g$. As usual,  we choose the standard spanning set $\eps_1,\eps_2,\delta_1,\delta_2$ of $\g h^*$. Note that $\eps_1+\eps_2=\delta_1+\delta_2$.   There are two Borel subalgebras $\g b^1,\g b^2$ of $\g g$ that contain the upper triangular Borel subalgebra $\g b_\eev$ of $\g g_\eev$. They correspond to the fundamental systems 
\[
\Pi_{\g b^1}=\{\eps_1-\delta_1,\delta_1-\delta_2\}\
\quad\text{ for the positive system }
\Phi^{1,+}:=\{
\eps_1-\delta_1,\delta_1-\eps_2,\eps_1-\eps_2, \delta_1-\delta_2
\},\] and 
\[\Pi_{\g b^2}=\{\delta_1-\eps_1,\eps_1-\eps_2\}
\quad\text{ for the positive system }
\Phi^{2,+}:=\{\delta_1-\eps_1, \eps_1-\eps_2, \delta_1-\eps_2,\delta_1-\delta_2
\}.\]
For an odd root $\alpha_1$,   the subalgebra $\g g_{\alpha_1}\oplus[\g g_{\alpha_1},\g g_{-\alpha_1}]\oplus\g g_{-\alpha_1}$ is isomorphic to $\g{sl}(1|1)\oplus\g{sl}(1|1)$.  We choose standard bases $e_i,f_i,h_i$, $i\in\{1,2\}$, for the $\g{sl}(1|1)$ summands such that $e_1,e_2\in\g g_{\alpha_1}$,  $f_1,f_2\in\g g_{-\alpha_1}$, $[e_i,f_j]=0$ for $i\neq j$, and  $h_i=[e_i,f_i]$. It is straightforward to verify that the choices of $e_i,f_i,h_i$ are unique up to scaling. 

Let $W$ be an irreducible $\g g$-module with $\g b^1$-highest weight $\lambda\in\g h^*$, corresponding to the highest weight vector $w_\lambda$. In what follows, we set $\alpha_1:=\eps_1-\delta_1$.
\begin{lem}
With $W$ and $w_\lambda$ as above, the have the following assertions.
\label{lem:B1}
\begin{itemize} 
\item[\rm (i)]
Let $e_{\alpha}\in \g g_{\alpha}$ where $\alpha\in\Phi^{2,+}\backslash \{-\alpha_1\}$.
Then  $e_{\alpha} w_\lambda=0$.

\item[\rm (ii)]  Let  $e_\alpha\in \g g_\alpha$, where $\alpha\in\Phi^{2,+}\backslash \{-\alpha_1\}$. Then 
$e_\alpha f_1w_\lambda=e_\alpha f_2w_\lambda=0$. 

\item[\rm (iii)]  Let 
 $e_\alpha\in \g g_\alpha$, where $\alpha\in\Phi^{2,+}\backslash \{-\alpha_1\}$. Then 
$e_\alpha f_1f_2w_\lambda=0$. 
\end{itemize}
\end{lem}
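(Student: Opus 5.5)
The plan is to reduce all three parts to weight and root bookkeeping. First I will record that $\Phi^{2,+}\backslash\{-\alpha_1\}=\{\eps_1-\eps_2,\ \delta_1-\eps_2,\ \delta_1-\delta_2\}$. Here $-\alpha_1=\delta_1-\eps_1$. Each of these roots also lies in $\Phi^{1,+}$, so $\g g_\alpha\sseq \g b^1$ for all three. Part (i) then follows immediately, since $w_\lambda$ is a $\g b^1$-highest weight vector and is annihilated by every positive root vector of $\g b^1$.

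For (ii), write $e_\alpha f_i w_\lambda=[e_\alpha,f_i]w_\lambda \pm f_i e_\alpha w_\lambda$. The second term vanishes by (i). It remains to show that $[e_\alpha,f_i]w_\lambda=0$, and I will check the three roots one at a time. The bracket $[e_\alpha,f_i]$ lies in $\g g_{\alpha-\alpha_1}$.
\begin{itemize}
\item For $\alpha=\eps_1-\eps_2$, we get $\alpha-\alpha_1=\delta_1-\eps_2\in\Phi^{1,+}$, so the bracket kills $w_\lambda$.
\item For $\alpha=\delta_1-\delta_2$, we get $\alpha-\alpha_1=2\delta_1-\delta_2-\eps_1$. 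Using $\eps_1+\eps_2=\delta_1+\delta_2$, this equals $\delta_1-\eps_1+\delta_1-\delta_2$. I expect it is not a root, so the bracket is $0$; to confirm, I will rewrite it as $\eps_1... $ only if needed and otherwise test it directly against the list of roots of $\g{sl}(2|2)$.
\item For $\alpha=\delta_1-\eps_2$, which is odd, we get $\alpha-\alpha_1=\delta_1-\eps_2-\eps_1+\delta_1=2\delta_1-(\delta_1+\delta_2)=\delta_1-\delta_2$. This is a positive even root in $\Phi^{1,+}$, so the bracket again kills $w_\lambda$.
\end{itemize}
The root identity $\eps_1+\eps_2=\delta_1+\delta_2$ is exactly what makes some of these sums collapse, so I will use it in each computation. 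One conceptual point needs care: in $\g{sl}(2|2)$ root spaces need not be one-dimensional, since $\g g_{\pm\alpha_1}$ is two-dimensional. That causes no trouble, because the argument only uses that $[\g g_\alpha,\g g_{-\alpha_1}]\sseq \g g_{\alpha-\alpha_1}$ and never uses dimension counts.

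For (iii), I will apply the same commutation once more. Write $e_\alpha f_1f_2w_\lambda=[e_\alpha,f_1]f_2w_\lambda\pm f_1e_\alpha f_2w_\lambda$. The second term is zero by (ii). For the first term, $[e_\alpha,f_1]$ is either $0$ or lies in $\g g_\beta$ with $\beta\in\{\delta_1-\eps_2,\ \delta_1-\delta_2\}$. In the latter case $\beta$ is itself in $\Phi^{2,+}\backslash\{-\alpha_1\}$, so $\g g_\beta f_2w_\lambda=0$ by (ii).

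The main obstacle is the middle case of (ii), namely deciding whether $2\delta_1-\delta_2-\eps_1$ is a root modulo the relation $\eps_1+\eps_2=\delta_1+\delta_2$. The way I would settle it is to compute the bracket of matrix units directly: $E_{\delta_1\delta_2}$ against the elementary matrices spanning $\g g_{-\alpha_1}$. In $\g{sl}(2|2)$ these span $\{E_{\delta_1\eps_1},E_{\delta_2\eps_2}\}$, and the bracket gives $\pm E_{\delta_1\eps_2}$-type terms. If the bracket turns out to be nonzero, it lands in $\g g_{\delta_1-\eps_2}$, which lies in $\Phi^{1,+}$, and the conclusion still holds. So in every case I will finish by reading off that the resulting root vector is positive for $\g b^1$.
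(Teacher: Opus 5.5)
Your argument is correct and is essentially the paper's. Parts (i) and (ii) rest on the same observation: $\Phi^{2,+}\backslash\{-\alpha_1\}=\Phi^{1,+}\backslash\{\alpha_1\}$, and $\alpha-\alpha_1$ is either not a root or lies in $\Phi^{1,+}$. For (iii) you differ slightly: you feed $[e_\alpha,f_1]\in\g g_{\alpha-\alpha_1}$, with $\alpha-\alpha_1\in\Phi^{2,+}\backslash\{-\alpha_1\}$, back into (ii), whereas the paper checks $\alpha-2\alpha_1$. Both work, and yours avoids computing $\alpha-2\alpha_1$ altogether.

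There is one slip, in your fallback matrix check for $\alpha=\delta_1-\delta_2$. The root space $\g g_{-\alpha_1}$ is spanned by $E_{\delta_1\eps_1}$ and $E_{\eps_2\delta_2}$, not by $E_{\delta_1\eps_1}$ and $E_{\delta_2\eps_2}$. Indeed $E_{\delta_2\eps_2}$ has weight $\delta_2-\eps_2=\eps_1-\delta_1=\alpha_1$, so it is one of the $e_i$, not an $f_i$. With the correct vectors, both brackets with $E_{\delta_1\delta_2}$ vanish. Your hedge that a nonzero bracket would land in $\g g_{\delta_1-\eps_2}$ contradicts $[\g g_\alpha,\g g_{-\alpha_1}]\subseteq\g g_{\alpha-\alpha_1}$, and you should drop it.

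To show definitively that $2\delta_1-\delta_2-\eps_1$ is not a root, evaluate on $h=\mathrm{diag}(0,0\,|\,1,-1)\in\g h$. This functional gives $3$ there, whereas every root of $\g{sl}(2|2)$ takes a value in $\{0,\pm1,\pm2\}$ on $h$.
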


\begin{proof}
(i) follows from $\Phi^{2,+}\backslash\{-\alpha_1\}= \Phi^{1,+}\backslash \{\alpha_1\}$.
 For (ii), either $\alpha-\alpha_1$ is not a root or $\alpha-\alpha_1\in \Phi^{1,+}$. For (iii), either $\alpha-2\alpha_1$ is not a root or $\alpha-2\alpha_1\in \Phi^{1,+}$.
\end{proof}
Let $\omega_{\lambda,\alpha_1}:\g g_{\alpha_1}\times \g g_{-\alpha_1}\to\C$ denote the bilinear form defined by $\omega_{\lambda,\alpha_1}(x,y):=\lambda([x,y])$. Then  $\mathrm{rank}(\omega_{\lambda,\alpha_1})=\#\{i:\lambda(h_i)\neq 0\}$.  
\begin{prp}\label{prp:hwsl22}
The $\g b^2$-highest weight of $W$ is obtained as follows.
\begin{itemize}
\item[\rm (i)]
If $\mathrm{rank}(\omega_{\lambda,\alpha_1})=0$, then the $\g b^2$-highest weight of $W$ is $\lambda$ and the corresponding highest weight vector is $w_\lambda$. 
\item[\rm (ii)]
If $\mathrm{rank}(\omega_{\lambda,\alpha_1})=1$, then 
the $\g b^2$-highest weight of $W$ is $\lambda-\alpha_1$ and the corresponding  highest weight vector is $e_{-\alpha_1}w_\lambda$ for any $e_{-\alpha_1}\in\g g_{-\alpha_1}$ such that  $\lambda([e_{-\alpha_1},\g g_{\alpha_1}])\neq \{0\}$.
\item[\rm (iii)]
If $\mathrm{rank}(\omega_{\lambda,\alpha_1})=2$, then 
the $\g b^2$-highest weight of $W$ is $\lambda-2\alpha_1$ and the corresponding  highest weight vector is $f_1f_2w_\lambda$.

\end{itemize}
\end{prp}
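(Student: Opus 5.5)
The plan is to build, in each case, a nonzero vector of weight $\lambda$, $\lambda-\alpha_1$ or $\lambda-2\alpha_1$, check that every root vector of $\Phi^{2,+}$ kills it, and then invoke irreducibility. Once such a vector is found, it is a $\g b^2$-singular vector in the irreducible module $W$. Hence it generates $W$, and its weight is the $\g b^2$-highest weight. So the whole argument reduces to two things: nonvanishing of the candidate vector, and annihilation by $\g g_\alpha$ for each $\alpha\in\Phi^{2,+}$.

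For all $\alpha\in\Phi^{2,+}\backslash\{-\alpha_1\}$, annihilation is exactly Lemma~\ref{lem:B1}(i)--(iii), applied to $w_\lambda$, to $f_iw_\lambda$, and to $f_1f_2w_\lambda$ respectively. For case (ii) with a general $e_{-\alpha_1}$, note that $e_{-\alpha_1}$ is a linear combination of $f_1,f_2$, so Lemma~\ref{lem:B1}(ii) applies by linearity. What remains is the root $-\alpha_1$, whose root space $\g g_{-\alpha_1}$ is spanned by $f_1,f_2$. We use the relations $f_i^2=0$ and $[f_1,f_2]=0$; the latter holds because $2\alpha_1$ is not a root, and since $f_1,f_2$ are odd, $[f_1,f_2]=f_1f_2+f_2f_1$.

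Case (iii): here $f_if_1f_2w_\lambda=0$ is immediate from these relations. For nonvanishing, apply $e_2e_1$. Since $[e_i,f_j]=0$ for $i\neq j$ and $e_iw_\lambda=0$,
\[
e_1f_1f_2w_\lambda=h_1f_2w_\lambda-f_1e_1f_2w_\lambda=(\lambda-\alpha_1)(h_1)\,f_2w_\lambda+f_1f_2e_1w_\lambda.
\]
To evaluate this I expect to need $\alpha_1(h_i)=0$, which holds since $h_i$ is central in its $\g{sl}(1|1)$ summand. It then gives $e_1f_1f_2w_\lambda=\lambda(h_1)f_2w_\lambda$, and similarly $e_2f_2w_\lambda=\lambda(h_2)w_\lambda$. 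Thus $e_2e_1f_1f_2w_\lambda=\pm\lambda(h_1)\lambda(h_2)w_\lambda\neq0$.

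Case (i): here $\lambda(h_1)=\lambda(h_2)=0$, and we need $f_iw_\lambda=0$. The vector $f_iw_\lambda$ has weight $\lambda-\alpha_1$. It is killed by $e_i$, since $e_if_iw_\lambda=\lambda(h_i)w_\lambda=0$, and by $e_j$, since $[e_j,f_i]=0$ for $j\neq i$. By Lemma~\ref{lem:B1}(ii) it is also killed by the other roots in $\Phi^{1,+}$, so it is $\g b^1$-singular. Being $\g b^1$-singular of weight different from $\lambda$, it must vanish by irreducibility.

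Case (ii): by symmetry assume $\lambda(h_1)\neq0$ and $\lambda(h_2)=0$, and write $e_{-\alpha_1}=af_1+bf_2$ with $a\neq0$; this is exactly the condition $\lambda([e_{-\alpha_1},\g g_{\alpha_1}])\neq0$. The same singular-vector argument as in case (i) gives $f_2w_\lambda=0$, so $e_{-\alpha_1}w_\lambda=af_1w_\lambda$. This vector is nonzero because $e_1f_1w_\lambda=\lambda(h_1)w_\lambda\neq0$. It is killed by $f_1$ since $f_1^2=0$, and by $f_2$ since $f_2f_1w_\lambda=-f_1f_2w_\lambda=0$.

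The main obstacle is the sign and commutator bookkeeping in case (iii), in particular confirming $\alpha_1(h_i)=0$ and $[e_i,f_j]=0$ for the chosen bases. Both come from the decomposition of the subalgebra generated by $\g g_{\pm\alpha_1}$ into two commuting copies of $\g{sl}(1|1)$.
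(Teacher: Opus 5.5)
Your proposal is correct and follows the paper's proof essentially step by step. The common ingredients are:
\begin{itemize}
\item the $\g b^1$-singular-vector contradiction, which forces $f_iw_\lambda=0$ whenever $\lambda(h_i)=0$;
\item Lemma~\ref{lem:B1} for all roots of $\Phi^{2,+}$ other than $-\alpha_1$;
\item the relations $f_i^2=0$ and $f_1f_2=-f_2f_1$ for the root $-\alpha_1$;
\item the computation of $e_2e_1f_1f_2w_\lambda$, which gives nonvanishing in case (iii).
\end{itemize}
Your additional checks ($\alpha_1(h_i)=0$, and that $a\neq 0$ is exactly the condition on $e_{-\alpha_1}$ in case (ii)) make explicit details that the paper leaves implicit.
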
 
\begin{proof}
(i) By Lemma~\ref{lem:B1}(i) it suffices to show that $f_1w_\lambda=f_2w_\lambda=0$. We have 
$e_if_jw_\lambda=
-f_je_iw_\lambda+\lambda([e_i,f_j])w_\lambda=0$ for $1\leq i,j\leq 2$. From this and
Lemma~\ref{lem:B1}(ii) it follows 
that if $f_jw_\lambda\neq 0$ for some $j\in\{1,2\}$, then
$f_j w_\lambda$ is a $\g b^1$-highest weight of $W$. But this contradicts the uniqueness (up to a scalar) of $w_\lambda$.

(ii) Without loss of generality we can assume that $\lambda(h_1)=0$ and $\lambda(h_2)=1$. By the argument of (i), we  must have $f_1w_\lambda=0$. Since \[
e_2f_2w_\lambda=-f_2e_2w_\lambda+\lambda(h_2)w_\lambda=w_\lambda,
\] we must have $f_2w_\lambda\neq 0$.  By Lemma~\ref{lem:B1}(ii) and the relations $f_1f_2w_\lambda=-f_2f_1w_\lambda=0$ and 
$f_2(f_2w_\lambda)=(f_2)^2w_\lambda=0$ it follows that $f_2w_\lambda$ (or $(f_2+cf_1)w_\lambda$ for any $c\in\C$) is a $\g b^2$-highest weight vector of $W$. 

(iii) Moving the $e_i$ past the $f_j$, we obtain  $e_2e_1f_1f_2w_\lambda=\lambda(h_1)\lambda(h_2)w_\lambda\neq 0$, hence in particular $f_1f_2w_\lambda\neq 0$. 
Lemma~\ref{lem:B1}(iii) together with the relations \[
f_1(f_1f_2w_\lambda)=(f_1^2)f_2w_\lambda=0
\] 
and \[
f_2(f_1f_2w_\lambda)=-f_1(f_2^2)w_\lambda=0,
\] imply that  $f_1f_2w_\lambda$ is a $\g b^2$-highest weight vector of $W$. 
\end{proof}
The analysis of the case of $\g g=\g{psl}(2|2)$
is the same. 
Irreducible modules of  $\g{psl}(2|2)$ are the same as irreducible $\g{sl}(2|2)$-modules satisfying $\lambda(I)=0$ where $I$ spans the center of $\g{sl}(2|2)$. The latter constraint and uniqueness up to scaling of $h_1,h_2$  imply that $\lambda(h_1)=c\lambda(h_2)$ for some $c\in\C^*$. In particular, 
the assumption of Proposition~\ref{prp:hwsl22}(ii) cannot happen. Consequently,  the $\g b_\alpha$-highest weight of $W$ is determined by parts (i) and (iii) of Proposition~\ref{prp:hwsl22}.

\end{document}